\DeclareMathOperator{\image}{im}
\newenvironment{Japanese}{%
  \CJKfamily{min}%
  \CJKtilde
  \CJKnospace}{}
\newcommand{\CC}{\mathbb{C}}
\newcommand{\FF}{\mathbb{F}}
\newcommand{\NN}{\mathbb{N}}
\newcommand{\QQ}{\mathbb{Q}}
\newcommand{\RR}{\mathbb{R}}
\newcommand{\ZZ}{\mathbb{Z}}
\newcommand{\NNN}{{\NN^*}}
\newcommand{\NNs}{{\NN^*_s}}
\newcommand{\NNodd}{{\NN^*_2}}
\newcommand{\floor}[1]{{\lfloor{#1}\rfloor}}
\newcommand{\Set}{{\mathscr{S}\textnormal{et}}}
\newcommand{\Kei}{{\mathscr{K}\textnormal{ei}}}
\newcommand{\AKei}{{\mathscr{A}\textnormal{ug}\Kei}}
\newcommand{\Grp}{{\mathscr{G}\textnormal{rp}}}
\newcommand{\Hom}{\textnormal{Hom}}
\newcommand{\fin}{\textnormal{fin}}
\newcommand{\Fin}{{\Set^\fin}}
\newcommand{\Col}{\textnormal{Col}}
\newcommand{\col}{\textnormal{col}}
\newcommand{\ccol}{{\textnormal{c}\widehat{\textnormal{o}} \textnormal{l}}}
\newcommand{\dom}{\textnormal{dom}}
\newcommand{\KeiFunc}[1]{{\textnormal{Kei}_{#1}}}
\newcommand{\AKeiFunc}[1]{{\mathscr{A}_{#1}}}
\newcommand{\m}{\mathfrak{m}}
\newcommand{\cont}{\textnormal{cont}}
\newcommand{\p}{\mathfrak{p}}
\newcommand{\q}{\mathfrak{q}}
\newcommand{\Inn}{\textnormal{Inn}}
\newcommand{\Top}{{\mathscr{T}\textnormal{op}}}
\newcommand{\qq}[2]{{{}^{#1}{#2}}}
\newcommand{\Aut}{\textnormal{Aut}}
\newcommand{\surj}{\textnormal{surj}}
\newcommand{\concise}{\textnormal{cs}}
\newcommand{\Stab}{\textnormal{Stab}}
\newcommand{\Pro}{\textnormal{Pro-}}
\newcommand{\op}{\textnormal{op}}
\newcommand{\GenType}[2]{{\mathscr{W}(#1,#2)}}
\newcommand{\counter}{\mathcal N}
\newcommand{\ccounter}{\mathcal M}
\newcommand{\avg}{\mathcal E}
\newcommand{\D}{{\mathscr D}}
\renewcommand{\a}{\mathscr{A}}
\newcommand{\Spec}{\textnormal{Spec}}
\newcommand{\Id}{\textnormal{Id}}
\newcommand{\iso}{{\xrightarrow{\sim}}}
\newcommand{\ab}{{ab}}
\newcommand{\Disc}{\textnormal{Disc}}
\newcommand{\F}{\mathfrak{F}}
\renewcommand{\L}{\mathfrak L}
\newcommand{\K}{\mathfrak{K}}
\newcommand{\G}{\mathfrak{G}}
\newcommand{\A}{\mathfrak{A}}
\renewcommand{\k}{\mathscr{K}}
\newcommand{\triv}{\mathscr{T}}
\newcommand{\kk}{{\k'}}
\newcommand{\kl}{\mathscr{L}}
\newcommand{\joyce}{\mathscr{J}}
\newcommand{\RGB}{\mathscr{R}_3}
\newcommand{\uno}{\mathds 1}
\newcommand{\ley}{\mathds 1_{(s)}}
\newcommand{\gy}{\mathds 1_{s}}
\newcommand{\muy}{\mu_{(s)}}
\newcommand{\fya}{f_s^{(\powa)}}
\newcommand{\powa}{{ \mathcal{a}}}
\newcommand{\powb}{{ \mathcal{b}}}
\newcommand{\kei}{
\text{\begin{CJK}{UTF8}{}
\!\!\!\!\!\!
\begin{Japanese}
圭
\end{Japanese}
\!\!\!\!
\end{CJK}}
}
\newcommand{\Jac}[2]{\left(\frac{#1}{#2}\right)}
\newcommand{\Gal}{\textnormal{Gal}}
\renewcommand{\O}{\mathcal O}
\newtheorem{theorem}{Theorem}[section]
\newtheorem{proposition}[theorem]{Proposition}
\newtheorem{corollary}[theorem]{Corollary}
\newtheorem{lemma}[theorem]{Lemma}
\newtheorem{conjecture}[theorem]{Conjecture}
\theoremstyle{definition}
\newtheorem{definition}[theorem]{Definition}
\newtheorem{example}[theorem]{Example}
\newtheorem{remark}[theorem]{Remark}
\newcommand{\SQ}{\mathscr{Q}}
\newcommand{\SG}{\mathscr{G}}
\title{Arithmetic Kei Theory}
\author{Ariel Davis, Tomer M. Schlank}
\date{}
\begin{document}
\maketitle

\abstract{A $\kei$ (kei), or 2-quandle, is an algebraic structure one can use to produce a numerical invariant of links, known as coloring invariants. Motivated by Mazur's analogy between prime numbers and knots, we define for every finite kei $\k$ an analogous coloring invariant $\col_\k(n)$ of square-free integers. This is achieved by defining a fundamental kei $\kei_n$ for every such $n$. We conjecture that the asymptotic average order of $\col_\k$ can be predicted to some extent by the colorings of random braid closures. This conjecture is fleshed out in general, building on previous work, and then proven for several cases.}
\begin{center}  
\includegraphics[width=0.86\textwidth]{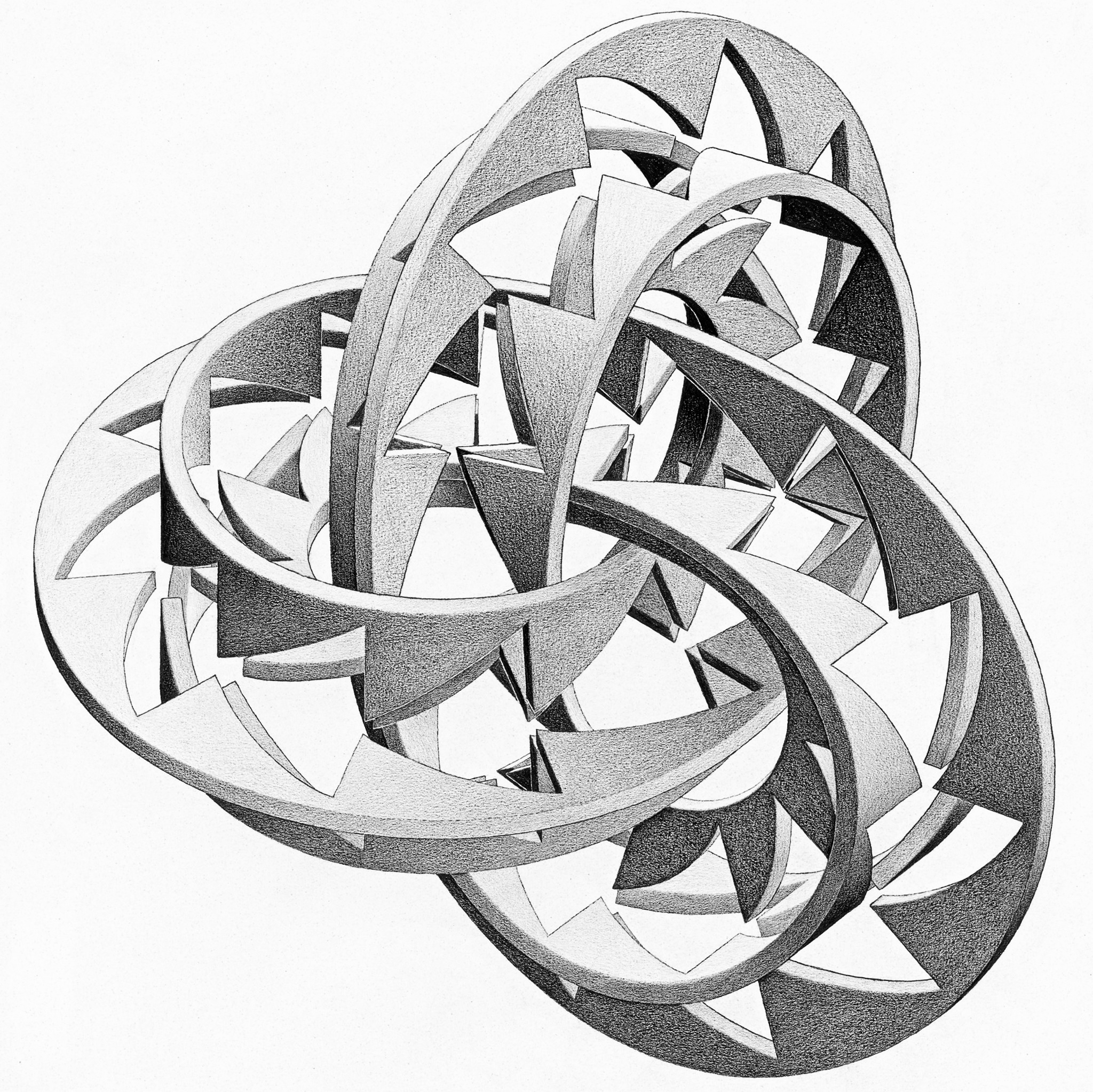}\\
M. C. Escher, \emph{Knot}, 1966
\end{center}
\pagebreak

\tableofcontents

\section{Introduction}
There is an analogy that likens prime numbers $p$ to knots in the sphere $S^3$.
This analogy is the observation of Barry Mazur regarding the embedding
\begin{equation}
\label{Eqn: Fp in ZZ}
\Spec(\FF_p)\hookrightarrow\Spec(\ZZ)\end{equation}
of schemes, as seen through the eyes of \'etale topology and homotopy. \emph{\'Etale topology} is a tool in algebraic geometry developed by Grothendieck et al in \cite{BOOK:SGA4}. It enriches the theory of schemes with topological notions such as well-behaved cohomology groups for finite locally-constant sheaves. While the concept was originally conceived with smooth varieties over fields in mind, one can attach an \'etale topology (or more precisely, an \'etale \emph{site}) to any scheme. In the case of spectra of rings of integers in number fields, this provides the framework for Artin-Verdier duality. Through Artin-Verdier duality we are justified in thinking about the geometric objects attached to rings of integers as smooth threefolds - see \cite{ARTICLE:ArtinVerdier1964} or \cite{ARTICLE:Mazur1973} for details. The \emph{\'etale homotopy type} defined by M. Artin and Mazur in \cite{BOOK:ArtinMazur}
takes this idea even further, in granting access to a wider array of hompotopical tools by producing for any scheme $X$ a profinite approximation $\acute Et(X)$ of a homotopy type. The homotopy type $\acute Et(\Spec(\ZZ))$ is simply-connected, and $\acute Et(\Spec(\FF_p))$ is a profinite circle.
Therefore \'etale homotopy sees (\ref{Eqn: Fp in ZZ}) as the embedding of a circle into a simply-connected threefold - that is, a knot - and  for square-free $n\in\NN$, the embedding
$$\Spec(\ZZ/n\ZZ)\simeq\coprod_{p|n}\Spec(\FF_p)\hookrightarrow \Spec(\ZZ)$$
as a disjoint union of knots - a link. One is motivated to understand how far we can take this analogy. Various notions from knot theory have been interpreted in number theory based on this analogy, many of which can be found in Morishita's book \cite{BOOK:Morishita}. One salient example is the linking number $link(K_1,K_2)$ of two oriented knots $K_1,K_2$, which corresponds to the homology class
$$[K_1]\in H_1(S^3\setminus K_2;\ZZ)\simeq\ZZ.$$
A theorem by Gauss famously states that the linking number is symmetric in $K_1,K_2$.
The Jacobi symbol $\Jac pq$ of odd primes $p\neq q$ is seen as analogous to
$$(-1)^{link(K_1,K_2)},$$
and quadratic reciprocity as a manifestation the linking number's symmetry. A more sophisticated example is the Alexander module of a knot $K$, closely related to the Alexander polynomial of $K$. This has an arithmetic interpretation as the Iwasawa module of a prime $p$. The analogy between knots and primes does not produce de facto links in $S^3$ for individual square-free integers $n\in\NN$. That is, while certain knot-theoretical notions are emulated in number theory, the values that they take are not recovered knot-theoretically from any knot or link. Indeed if that were the case, we would expect actual symmetry in quadratic reciprocity. 

There is however a way in which one can say that \emph{random} square-free integers resemble \emph{random} links. Alexander's theorem states in \cite{ARTICLE:Alexander1923} that every link is obtained as the closure $\overline \sigma$ of a braid $\sigma$. For $k\in\NN$, the set of all braids on $k$ strands, together with concatenation, form the Artin braid group $B_k$ - see \cite{ARTICLE:Artin1947}. The naive notion of a random braid on $k$ strands is ill-defined because $B_k$ is infinite discrete for $k\ge 2$. 
That said, if a function $f\colon B_k\to \RR$ factors through a finite quotient of $B_k$, then we consider the locally-constant extension $\widehat f$ of $f$ to $\widehat B_k$, the profinite completion of $B_k$. In this case we interpret ``the distribution of $f$''
to mean the distribution of $\widehat f$, where $\widehat B_k$ has normalized Haar measure. 
Such is the case with the number of connected components in the closure $\overline\sigma$. For $\sigma\in B_k$, the number of connected components of $\overline\sigma$ equals
$$|\pi_0(\overline \sigma)|=|\{1,\dots, k\}/\sigma|.$$
This is determined by how $\sigma$ permutes the endpoints of the $k$ strands. Hence
$$|\pi_0(\overline \sigma)|\colon B_k\to\NN$$
factors through the symmetric group $S_k$.
As per the above, the closure of a random braid $\sigma\in B_k$ is a knot with probability $\frac 1k$. For square-free $n\in\NN$, the connected components of $\acute Et(\Spec(\ZZ/n\ZZ))$ correspond to the prime factors of $n$. A square-free number $n\in\NN$ of magnitude $X$ is prime with probability roughly $\frac 1{\log X}$. From here one draws the analogy that 
\begin{equation}
\label{Eqn: braid philosophy}
\begin{matrix}
	\textnormal{Random square-free integers $n\in\NN$ of magnitude $X$ resemble}\\
	\textnormal{closures of random braids on approximately $\log X$ strands.}
\end{matrix}
\end{equation}

Another numerical invariant of links, introduced by Fox in \cite{BOOK:CrowellFox1963}\footnote{See \cite{ARTICLE:Przytycki1998} for a more comprehensive treatment of the subject.}, is the number of \emph{$3$-colorings} of a link $L$. A $3$-coloring of a link diagram $D$ constitutes a choice of color $x\in \ZZ/3\ZZ$ for each arc in $D$ such that at each crossing, the colors $x,y,z$ as in \cref{Fig: crossing coloring} must satisfy $y+z=2x$. This condition ensures that the number of $3$-colorings is invariant under Reidemeister moves, and by \cite{BOOK:Reidemeister} is therefore an isotopy invariant of the link.

\begin{figure}[H]
    $$\begin{tikzpicture}
			\draw[-] (2,2)--(0,0);
			\draw[-,line width=10pt, draw=white] (0,2)--(2,0);
			\draw[-] (0,2)--(2,0);
			\draw (0.5,1.5)node [anchor=south]{$\;\;x$};
			\draw (1.5,1.5)node[anchor = west]{$\;y$};
			\draw (0.5,0.5)node[anchor=east]{$z\;$};
	\end{tikzpicture}$$
    \caption{}
    \label{Fig: crossing coloring}
\end{figure}

The fact that $z$ in \cref{Fig: crossing coloring} is determined by $x,y$ is necessary in order to transport colorings across a Reidemeister $2$ move, as depicted in \cref{fig: R2}.
The concept of $3$-colorings thus 
generalizes to coloring links using a set of colors $\k$ equipped with a binary operator
$$(x,y)\mapsto \qq xy.$$
Here a $\k$-coloring of a crossing as in \cref{Fig: crossing coloring} is valid if $z=\qq xy$. 
For $\k$-colorings to be transported bijectively across all Reidemeister moves,

\begin{figure}[H]
\begin{minipage}{.3\textwidth}
    \subfloat[R1]{
    $$\begin{tikzpicture}
    	\begin{scope}[scale=0.75]
			\draw[-] (0,0)--(0,2);
			\draw (0,1)node [anchor=east, style={scale=0.7}]{$\;\;x$};
			\draw (0.5,1)node{$\rightsquigarrow$};
			\begin{scope}[xshift=35pt]
			\draw[-] (0,0).. controls (0,1) and (0.2,1.4) ..(0.6,1.4) .. controls (0.8,1.4) and (1,1.25) .. (1,1);
			\draw[-, line width=5pt, white] (0,2).. controls (0,1) and (0.2,0.6) ..(0.6,0.6) .. controls (0.8,0.6) and (1,0.75) .. (1,1);			\draw[-] (0,2).. controls (0,1) and (0.2,0.6) ..(0.6,0.6) .. controls (0.8,0.6) and (1,0.75) .. (1,1);
			\draw (0,2)node [anchor=north east, style={scale=0.7}]{$\;\;x$};
			\draw (0,0)node [anchor=south east, style={scale=0.7}]{$\;\;x$};
			\end{scope}
			\end{scope}
	\end{tikzpicture}$$
    \label{Fig: R1}}
\end{minipage}
\hfill    
\begin{minipage}{.3\textwidth}
    \subfloat[R2]{
    $$\begin{tikzpicture}
    	\begin{scope}[scale=0.75]
			\draw (-0.15,1)node [anchor=east, style={scale=0.7}]{$x$};
			\draw[-] (-0.15,0)--(-0.15,2);
			\draw[-] (0.5,0)--(0.5,2);
			\draw (0.5,1)node [anchor=west, style={scale=0.7}]{$y$};
			\draw (1.35,1)node{$\rightsquigarrow$};
			\begin{scope}[xshift=60]
			\draw[-] (0.5,0) .. controls (-0.25,0.75) and (-0.25,1.25) .. (0.5,2);
			\draw[-, line width = 5pt, white] (-0.15,0).. controls (0.6,0.75) and (0.6,1.25) .. (-0.15,2);
			\draw[-] (-0.15,0).. controls (0.6,0.75) and (0.6,1.25) .. (-0.15,2);
			\draw (0.85,1)node [anchor=east, style={scale=0.7}]{$x$};
			\draw (-0.05,1)node [anchor=east, style={scale=0.7}]{$z$};
			\draw (0.85,2)node [anchor=east, style={scale=0.7}]{$y$};
			\draw (0.85,0)node [anchor=east, style={scale=0.7}]{$y$};
			\end{scope}
			\end{scope}
	\end{tikzpicture}$$
    \label{fig: R2}}
\end{minipage}
	\hfill    
\begin{minipage}{.3\textwidth}
    \subfloat[R3]{
    $$\begin{tikzpicture}
    \begin{scope}[scale=0.75]
			\draw[-] (0,0.6)--(1.6,1.72);
			\draw (1.6,1.72)node[anchor=south, style={scale=0.7}] {$z$};
			\draw (-0.1,0.6)node[anchor=north, style={scale=0.7}] {$\qq x(\qq yz)$};
			\draw[-, line width = 5pt, white][-] (1,0)--(1,2);
			\draw[-] (1,0)--(1,2);
			\draw (1,2)node[anchor=east, style={scale=0.7}] {$y$};
			\draw (1,0)node[anchor=east, style={scale=0.7}] {$\qq xy$};
			\draw[-, line width = 5pt, white] (2,0)--(-0.5,1.75);
			\draw[-] (1.6,0.28)--(0,1.4);
			\draw (0,1.4)node[anchor=south, style={scale=0.7}] {$x$};
			\draw (1.6,0.28)node[anchor=north, style={scale=0.7}] {$x$};
			\draw (2.2,1)node{$\rightsquigarrow$};
			
			\begin{scope}[xshift=75]
			\draw[-] (2,1.4)--(0.4,0.28);
			\draw (2,1.4)node[anchor=south, style={scale=0.7}] {$z$};
			\draw[-, line width = 5pt, white][-] (1,2)--(1,0);
			\draw[-] (1,2)--(1,0);
			\draw (1,0)node[anchor=west, style={scale=0.7}] {$\qq xy $};
			\draw (1,2)node[anchor=west, style={scale=0.7}] {$y$};
			\draw[-, line width = 5pt, white] (0,2)--(2.5,0.25);
			\draw[-] (0.4,1.72)--(2,0.6);
			\draw (2,0.6)node[anchor=north, style={scale=0.7}] {$x$};
			\draw (0.4,1.72)node[anchor=south, style={scale=0.7}] {$x$};
			\draw (0.2,0.38)node[anchor=north, style={scale=0.7}] {${\qq{(\qq xy)}{(\qq xz)} }$};
			\end{scope}
			\end{scope}
	\end{tikzpicture}$$
    \label{fig: R3}}
\end{minipage}
    \label{fig:sub}
    \caption{}
\end{figure}

this operator must satisfy the following corresponding axioms:
\begin{definition}[\cite{ARTICLE:Takasaki1943}\footnote{The terminology and Kanji notation $\kei$ are taken from \cite{ARTICLE:Takasaki1943}, where the notion was first defined. See \cite{ARTICLE:Joyce1982} for keis in relation to knot theory, where they are called \emph{$2$-quandles}.}]
A $\kei$, or \emph{kei}, is a set $\k$ with binary operator
$$\k\times\k\to\k \;\;,\;\;\;\;x,y\mapsto \qq xy$$
satisfying
	\begin{enumerate}
	\item $\qq xx=x$ for all $x\in\k$.
	\item $\qq x{(\qq xy)}=y$ for all $x,y\in\k$.
	\item $\qq x{(\qq yz)}= \qq{(\qq xy)} {(\qq xz)} $ for all $x,y,z\in\k$.
\end{enumerate}
\end{definition}

Keis are therefore an abstaction of Fox's three colors, and kei-colorings a generalization of $3$-colorings. For finite kei $\k$, the number $\col_\k(L)$ 
of $\k$-colorings of a link $L$ is therefore an isotopy invariant of $L$.

The first goal of this paper is to define a number-theoretical analogue of $\k$-colorings.
As is turns out, the kei-colorings of a link $L$ are goverened by  a universal kei $\kei_L$ associated to $L$.
That is, the colorings of $L$ are in natural bijection with representations of $\kei_L$:
$$\Col_\k(L)\simeq \Hom_{\Kei}(\kei_L,\k),$$
where $\Kei$ denotes the category of keis and suitable morphisms.
The kei $\kei_L$ is constructed explicitly with generators and relations extracted from a diagram $D$ of $L$: to every arc in $D$ there is a corresponding generator, and to every crossing a corresponding relation. It is not difficult to see that $\k$-colorings of a diagram $D$ correspond to morphisms $\kei_L\to \k$. The road to $\k$-coloring squarefree integers $n$ rests on the construction of an analogous $\kei_n$ for squarefree $n\in\NN$. For this, a presentation extracted from a diagram is unsuitable. While an instrinsically topological description of $\kei_L$ is already given in \cite{ARTICLE:Joyce1982}, it is the following presentation, based on the work of Winker (\cite{WEBSITE:Winker1984}), that we make the most of (for details see \cref{Subsection: Fund kei of link}).
We take $M_L$ to be the unique branched double cover of $S^3$ with ramification locus equal to $L$, and $\widetilde M_L$ to be the universal cover of $M_L$. Then as a set,  
$$\kei_L=\pi_0(\widetilde M_L\times_{S^3}L).$$
The arithmetic analogue of $\kei_L$ follows immediately. For simplicity assume $n$ is odd:

\begin{definition}[\cref{Def: Ln Quad Field}]
	Let $n\in\NN$ be odd squarefree. By $\L_n$ we denote the  field
$$\L_n=\QQ(\sqrt{n^*})\;\;,\;\;\;\;n^*:=(-1)^{\frac{n-1}2}n.$$
\end{definition}

The field $\L_n$ is the unique\footnote{This assuming $n$ is odd. For even $n$ there are three such fields. In the body of the paper we extend this definition for even $n$. Of the three quadratic fields that ramify precisely at $n$ we choose $\QQ(\sqrt{n^*})$ with $n^*= \pm n$ s.t. $n^*=2\mod 8$. While one can consider the other options, this is the one we find most suitable to our needs.} quadratic number field ramified at precisely $n$. As such,
the morphism
$$\Spec(\O_{\L_n})\to\Spec(\ZZ)$$
is analogous to the branched double cover $M_L\to S^3$.

\begin{definition}[\cref{Def: kei_n as set}]
	Let $n\in\NN$ be squarefree. By $\F_n$ we denote the maximal unramified extension of $\L_n$. We then define $\kei_n$ to be the profinite set
	$$\kei_n:= \Spec (\O_{\F_n})\times_{\Spec(\ZZ)}\Spec(\ZZ/n\ZZ)\simeq\{\p \textnormal{ prime in }\O_{\F_n}\textnormal{ s.t. } \p|n\}.$$
\end{definition}

\begin{theorem}[\cref {Prop: arith fun qdl small} + \cref{Prop: An is Top AugQdl} + \cref{Def: AKei_n and Kei_n}]
${}$
\begin{enumerate}
	\item The set $\kei_n$ has a natural structure of a (profinite) kei.
	\item For finite $\k\in\Kei$, the set of continuous morphisms
	$$\Hom_\Kei^\cont(\kei_n,\k)$$
	is finite.
\end{enumerate}
\end{theorem}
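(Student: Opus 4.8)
The plan is to put the kei structure on $\kei_n$ by letting $\qq\p\q$ be the action of the inertia involution at $\p$, and then to deduce the finiteness in (2) from Hermite--Minkowski applied to the subfields of $\F_n$. For \textbf{part (1)} I would first record two standard facts about $\F_n$: it is Galois over $\QQ$ (the maximal unramified extension of $\L_n$ is intrinsic and $\L_n/\QQ$ is Galois, so $\F_n$ is stable under $\Gal(\overline\QQ/\QQ)$), and $\F_n/\QQ$ is ramified only at the primes dividing $n$, where moreover the inertia has order $2$: for $\p\in\kei_n$ the inertia group $I_\p\le G:=\Gal(\F_n/\QQ)$ meets $\Gal(\F_n/\L_n)$ trivially (as $\F_n/\L_n$ is unramified) but surjects onto the inertia of $\p\cap\L_n$ in $\Gal(\L_n/\QQ)\cong\ZZ/2\ZZ$, which is everything since every $p\mid n$ ramifies in $\L_n$. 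Writing $\tau_\p\in G$ for the nontrivial element of $I_\p$, and noting that $G$ permutes $\kei_n$, I set $\qq\p\q:=\tau_\p(\q)$. Axiom 1 holds because $\tau_\p$ lies in the decomposition group of $\p$; axiom 2 because $\tau_\p^2=1$; axiom 3 because conjugate primes have conjugate inertia, so $\tau_{\tau_\p(\q)}=\tau_\p\,\tau_\q\,\tau_\p^{-1}$, which applied to a third prime $\mathfrak r$ is exactly $\qq\p{(\qq\q{\mathfrak r})}=\qq{(\qq\p\q)}{(\qq\p{\mathfrak r})}$. Performing the same construction over the (cofinal) system of finite Galois subextensions $\L_n\subseteq E\subseteq\F_n$ gives finite keis $\kei_n^E:=\{\mathfrak q\text{ prime of }\O_E:\mathfrak q\mid n\}$ whose transition maps are kei morphisms (restriction carries $\tau$ to $\tau$, since inertia surjects onto inertia along $E'/E$ and both groups are $\ZZ/2\ZZ$); hence $\kei_n=\varprojlim_E\kei_n^E$ is a profinite kei with continuous operations.

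For \textbf{part (2)}, fix a finite kei $\k$; the crux is that every continuous kei morphism $f\colon\kei_n\to\k$ factors through $\kei_n^E$ for a field $E$ whose degree over $\QQ$ is bounded in terms of $\k$ alone. Let $\k_0:=f(\kei_n)$, a finite sub-kei. For $g\in G$ set $R_g:=\{(f(\q),f(g\q)):\q\in\kei_n\}\subseteq\k_0\times\k_0$. The morphism property gives $f(\tau_\p\q)=\inn{f(\p)}(f(\q))$, so $R_{\tau_\p}$ is the graph of $\inn{f(\p)}|_{\k_0}\in\Inn(\k_0)$; a direct check shows $R_{gh}$ (resp.\ $R_{g^{-1}}$) is the graph of the composite (resp.\ inverse) whenever $R_g,R_h$ are graphs of bijections, so $\{g\in G:R_g\text{ is the graph of a bijection of }\k_0\}$ is a closed subgroup of $G$ containing all the $\tau_\p$. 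But the $\tau_\p$ topologically generate $G$ — their common fixed field is a subextension of $\F_n/\QQ$ unramified at every finite prime, hence equal to $\QQ$ by Minkowski — so for every $g\in G$ there is a unique $\psi_f(g)\in\Inn(\k_0)$ with $f(g\q)=\psi_f(g)(f(\q))$, and $g\mapsto\psi_f(g)$ is a continuous homomorphism $G\to\Inn(\k_0)$ (continuity from that of the $G$-action on $\kei_n$ and of $f$, with $\Inn(\k_0)$ finite). Then $f$ is constant on $\ker\psi_f$-orbits; since $\ker\psi_f$ is open of index at most $|\Inn(\k_0)|\le|\k|!$, it equals $\Gal(\F_n/E_f)$ for a subfield $E_f\subseteq\F_n$ with $[E_f:\QQ]\le|\k|!$, and $f$ factors as $\kei_n\twoheadrightarrow\kei_n/\Gal(\F_n/E_f)=\kei_n^{E_f}\to\k$, the second map being a kei morphism because the first is a surjective one.

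To finish I would apply Hermite--Minkowski: any subfield $E\subseteq\F_n$ is ramified over $\QQ$ only at the primes dividing $n$ and there with ramification index dividing $2$, whence $|\Disc(E/\QQ)|\le C_n^{[E:\QQ]}$ for a constant $C_n$ depending only on $n$; so $\F_n$ has only finitely many subfields $E$ with $[E:\QQ]\le|\k|!$. Taking $E_0$ to be their compositum — a finite extension of $\QQ$ inside $\F_n$ — every continuous kei morphism $\kei_n\to\k$ factors through $\kei_n^{E_0}$, and therefore $\Hom_\Kei^\cont(\kei_n,\k)$ injects into $\Hom_\Kei(\kei_n^{E_0},\k)\subseteq\k^{\kei_n^{E_0}}$, a finite set. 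The step I expect to be the main obstacle is the construction of $\psi_f$: a kei morphism a priori respects only the binary operation, and producing from it a map equivariant for the finite group $\Inn(\k_0)$ requires using both that $f$ is surjective onto $\k_0$ and that inertia topologically generates $G$. Once $\psi_f$ is in hand, the finiteness is precisely the statement that $\F_n$ has finitely many subfields of each bounded degree — a form of Hermite--Minkowski.
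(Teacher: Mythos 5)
Your proposal is correct and follows essentially the same route as the paper: the kei structure via the inertia involutions $\m_{n,\p}\in\Gal(\F_n/\QQ)$ is identical, and your homomorphism $\psi_f$ is precisely the paper's $\rho_{\A_n,f}$, obtained there from conciseness of $(\kei_n,\G_n,\m_n)$ (the inertia elements topologically generate $\G_n$, by Minkowski) together with a general lifting lemma for surjective kei morphisms out of concise augmented keis. The finiteness in part (2) then follows, exactly as in the paper, from Hermite--Minkowski applied to the subfields of $\F_n$ of degree bounded in terms of $|\Inn(f(\kei_n))|$.
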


This allows us to define $\k$-colorings of squarefree integers $n\in\NN$ using $\kei_n$ by mimicking the coloring of links:

\begin{definition}[\cref {Def: k-col arith}]
	Let $n\in\NN$ be squarefree and let $\k$ be a finite kei. We define
	$$\col_\k(n):=
\left| \Hom_\Kei^\cont(\kei_n,\k)\right|
\in\NN.$$
\end{definition}

Next, it is our goal to understand the behavior of this function. In particular, we wish to understand the average asymptotic order of $\col_\k(n)$.

\begin{definition}[\cref {Def: curly N and E}]
	Let $\k$ be a finite kei and let $X\ge 1$. We denote by
	$$\textnormal{Avg}_\k(X)=\frac{\sum\limits_{\substack{1\le n\le X\\n\;\textrm{sqr-free}}}\col_\k(n)}{\sum\limits_{\substack{1\le n\le X\\n\;\textrm{sqr-free}}}1}\in\QQ$$
	the average of $\col_\k(n)$ over the set of squarefree integers $1\le n\le X$.
\end{definition}

Recall the idea in (\ref{Eqn: braid philosophy}), as pertaining to certain random variables on $\widehat B_k$. In similar fashion we may interpret the distribution of $\k$-colorings of closures of random braids as above.
In \cite{WEBSITE:DavisSchlankHilbert2023} we compute the average number of $\k$-colorings of braid closures $\overline\sigma$ of braids $\sigma$ in $B_k$, and prove the following theorem:
\begin{theorem}[{\cite[Theorem 1.1]{WEBSITE:DavisSchlankHilbert2023}}]
	Let $\k$ be a finite kei. There exists an integer-valued polynomial $P_\k\in\QQ[x]$ such that for all $k\gg 0$,
	$$\int\limits_{\widehat B_k}\ccol_\k(\sigma)d\mu=P_\k(k).$$
\end{theorem}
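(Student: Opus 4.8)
The plan is to reduce the integral to a count of orbits of the braid group on colour tuples, and then to prove a stabilization statement of Conway--Parker type for those orbits.

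\emph{Step 1: reduction to orbit counting.} Writing $S_x\colon\k\to\k$, $y\mapsto\qq xy$ (an involution by axiom (2)), a $\k$-colouring of the closure $\overline\sigma$ of $\sigma\in B_k$ is exactly a tuple $(x_1,\dots,x_k)\in\k^k$ fixed by the coloring action $\rho_k\colon B_k\to\mathrm{Sym}(\k^k)$ under which $\sigma_i$ sends $(\dots,x_i,x_{i+1},\dots)$ to $(\dots,x_{i+1},S_{x_{i+1}}(x_i),\dots)$; this is read off a braid diagram exactly as in \cref{Fig: crossing coloring}. Hence $\col_\k(\overline\sigma)=|\mathrm{Fix}(\rho_k(\sigma))|$, and since $\k$ is finite this factors through the finite group $G_k:=\rho_k(B_k)$. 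Thus $\ccol_\k$ is the locally constant extension of $\col_\k(\overline{\,\cdot\,})$ to $\widehat B_k$, the push-forward of $\mu$ along $\widehat B_k\twoheadrightarrow G_k$ is the uniform probability measure, and Burnside's lemma gives
$$\int_{\widehat B_k}\ccol_\k(\sigma)\,d\mu=\frac1{|G_k|}\sum_{g\in G_k}|\mathrm{Fix}(g)|=\#\bigl(\k^k/B_k\bigr).$$
So it suffices to show that the number of orbits of $B_k$ on $\k^k$ agrees with an integer-valued polynomial in $k$ for $k\gg0$.

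\emph{Step 2: stratification by component type.} From $S_{\qq xy}=S_xS_yS_x$ and $S_x^2=\Id$ one checks that along an orbit the multiset of $\Inn(\k)$-orbits (the connected components $Q_1,\dots,Q_r$ of $\k$) realized by the coordinates is constant --- equivalently the ``type'' $(k_1,\dots,k_r)$ with $\sum_j k_j=k$ is constant --- and so is the generated sub-kei $\langle x_1,\dots,x_k\rangle\le\k$. Moreover $\sigma_i$ changes the component labels only by transposing those at positions $i$ and $i+1$, so the type is permuted through $B_k\twoheadrightarrow S_k$ alone; fixing a standard pattern of a given type identifies the orbits of that type with the orbits of the coloured braid group $B_{k_1,\dots,k_r}$ (the preimage of the Young subgroup $\prod_jS_{k_j}$) on the tuples realizing it. Writing $N_\k(k_1,\dots,k_r)$ for that number,
$$\#\bigl(\k^k/B_k\bigr)=\sum_{k_1+\dots+k_r=k}N_\k(k_1,\dots,k_r).$$

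\emph{Step 3: a Conway--Parker stabilization (the crux).} The main point is to prove that there are constants $k_0=k_0(\k)$ and $C(\k)\ge1$ with $N_\k(k_1,\dots,k_r)=C(\k)$ as soon as every $k_j\ge k_0$, and more generally that fixing a proper subset of the $k_j$ at bounded values leaves an eventually-polynomial function of the remaining arguments of strictly smaller degree. The mechanism is the usual one behind Hurwitz/Conway--Parker transitivity results: attach to each $B_{k_1,\dots,k_r}$-orbit a monodromy invariant built from the total inner automorphism $S_{x_1}\cdots S_{x_k}\in\Inn(\k)$ together with a Schur-multiplier-type correction valued in a fixed finite abelian group, verify it is orbit-invariant, and then show that once all $k_j$ are large it is a complete invariant which surjects onto its admissible values, via ``shuffle and absorb'' manipulations of tuples, each of which consumes only boundedly many coordinates. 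I expect this to be the real obstacle: pinning down the correct finite receptacle of the monodromy invariant for a \emph{twisted} union of components (already $S_0$ acts nontrivially on the second component of the Takasaki kei on $\ZZ/4\ZZ$) and establishing the limiting bijectivity are exactly where the work concentrates.

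\emph{Step 4: assembly.} Granting Step 3, split the sum of Step 2 according to the subset $T\subseteq\{1,\dots,r\}$ of indices with $k_j\ge k_0$. The term $T=\{1,\dots,r\}$ contributes $C(\k)\cdot\#\{(k_1,\dots,k_r):\sum_jk_j=k,\ k_j\ge k_0\}=C(\k)\binom{k-rk_0+r-1}{r-1}$, an integer-valued polynomial of degree $r-1$; each term with $|T|<r$ is, by the smaller-degree clause of Step 3 (formally, induction on $r$), eventually polynomial of degree $<r-1$. Summing, $\int_{\widehat B_k}\ccol_\k\,d\mu=\#(\k^k/B_k)$ agrees for $k\gg0$ with an integer-valued polynomial $P_\k$, of degree one less than the number of connected components of $\k$. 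Equivalently, $\sum_{k\ge0}\#(\k^k/B_k)\,t^k$ is a rational function whose only pole is at $t=1$, which is precisely the form that forces eventual polynomiality.
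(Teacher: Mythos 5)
This theorem is quoted from the companion paper and not proved here, so the only internal check available is the degree formula recorded in \S 4.1, namely $\deg P_\k=-1+\max_{\kk\le\k}\left|\Inn(\kk)\backslash\kk\right|$. Your Steps 1--2 are sound: the Burnside reduction of the integral to $\#(\k^k/B_k)$ is correct, and stratifying by the multiset of $\Inn(\k)$-orbits hit by the coordinates is a legitimate first cut. The gap is in Step 3, and it propagates into the degree claim of Step 4. The stabilization you assert --- that $N_\k(k_1,\dots,k_r)$ becomes a \emph{constant} once all $k_j$ are large --- is false in general, because the $B_k$-orbit of a tuple is controlled by the sub-kei $\kk=\langle x_1,\dots,x_k\rangle$ it generates, and the coordinates then move only within $\Inn(\kk)$-orbits, which can be strictly finer than the $\Inn(\k)$-orbits you stratified by. Concretely, take $\k$ to be the set of transpositions in $S_4$ with conjugation. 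This kei is connected ($r=1$), so your argument predicts $\deg P_\k=0$; but $\{(12),(34)\}\le\k$ is a trivial sub-kei isomorphic to $\triv_2$, the $B_k$-orbits of tuples valued in it are indexed by the compositions $k=k_1+k_2$, and already these contribute $\Theta(k)$ orbits to $\#(\k^k/B_k)$. This matches the paper's formula, which gives $\deg P_\k\ge 1$ here via $\kk=\{(12),(34)\}$, and contradicts your conclusion that $\deg P_\k=r-1$ with $r$ the number of components of $\k$ itself.

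The repair is to stratify orbits first by the generated sub-kei $\kk\le\k$ (which you correctly observe is an orbit invariant) and only then by the composition of $k$ indexed by $\Inn(\kk)\backslash\kk$; the Conway--Parker--type stabilization to a constant is then the right statement for each fixed $\kk$ with all parts of the composition large, and the leading term of $P_\k$ comes from the sub-kei maximizing $\left|\Inn(\kk)\backslash\kk\right|$, exactly as the quoted degree formula indicates. With that modification your Step 4 assembly (binomial count of admissible compositions for the dominant stratum, induction for the boundary strata) goes through in the intended way; without it, Step 3 as written is simply not true.
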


This, and (\ref{Eqn: braid philosophy}) motivate us to form a conjecture about the growth of $\textnormal{Avg}_\k(X)$ as $X$ tends to infinity. In simplified form:

\begin{conjecture}[\cref{Def: gen sum type} + \cref{Conj: Main conjecture}]
\label{Conj: main conj in intro}
Let $\k\in\Kei$ be finite. Then there exists $a_\k>0$ such that
$$\textnormal{Avg}_\k(X)= a_\k \log^{\deg P_\k}(X)\cdot (1+o(1)) $$
as $X\to\infty$.
\end{conjecture}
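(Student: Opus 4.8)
The plan is to reinterpret $\col_\k(n)$ as a weighted count of number fields of controlled ramification, rewrite the average as a partial sum of an arithmetic counting function, and then extract its asymptotics with a Tauberian theorem, matching the exponent of $\log X$ to $\deg P_\k$ through the combinatorics behind \cite{WEBSITE:DavisSchlankHilbert2023}.

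First I would unwind the definitions. Writing $\mathrm{As}(\k)=\langle e_x\ (x\in\k)\mid e_x^2=1,\ e_xe_ye_x=e_{\qq xy}\rangle$ for the associated group of $\k$ — the arithmetic analogue of the group of the double branched cover with meridians sent to reflections, and note it always carries an augmentation onto $\ZZ/2$ sending every $e_x$ to $1$ — I would show, using the profinite kei structure on $\kei_n$ from the Theorem together with the identification $\kei_n=\pi_0(\Spec\O_{\F_n}\times_{\Spec\ZZ}\Spec(\ZZ/n\ZZ))$, that a continuous morphism $\kei_n\to\k$ corresponds (up to the appropriate conjugacy equivalence) to a continuous homomorphism $\Gal(\F_n/\QQ)\to\mathrm{As}(\k)$ compatible with the quotients onto $\ZZ/2$ recording $\L_n$, and sending each inertia generator above $n$ to a reflection $e_x$. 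Equivalently, $\col_\k(n)$ counts Galois extensions $N/\QQ$ with $\L_n\subseteq N\subseteq\F_n$ and $\Gal(N/\QQ)$ a subquotient of $\mathrm{As}(\k)$, weighted by a combinatorial factor built from the $\Inn(\k)$-orbit structure of $\k$. Summing over squarefree $n\le X$ then turns $\sum_n\col_\k(n)$ into a count of pairs $(n,N)$, which I would organize by the isomorphism type of the image group $H\le\mathrm{As}(\k)$ up to conjugacy; each admissible $H$ is a generalized dihedral group generated by reflections whose quadratic part is $\L_n$. Because $N$ is unramified over $\L_n$, its conductor is forced to be comparable to a fixed power of $n$, so $n\le X$ is equivalent to a bound on the conductor of $N$ and the problem stays genuinely one-parameter.

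For each $H$ the inner count is a Malle-type problem with congruence conditions: count $H$-extensions $N/\QQ$ that are unramified over their quadratic subfield and in which every prime dividing $n$ ramifies with inertia of order $2$. I would package this in a Dirichlet series $D_H(s)=\sum_N\mathrm{cond}(N)^{-s}$, which converges for $\Re s>1$ and, I expect, admits near $s=1$ a boundary behaviour $D_H(s)=c_H(s-1)^{-b(H)}(1+o(1))$ with $b(H)>0$ an explicit rational read off from the local densities (essentially the number of "free" ramified primes allowed by $H$). A Tauberian theorem of Landau/Selberg--Delange type then gives $\#\{N:\mathrm{cond}(N)\le X\}\sim c_H'\,X(\log X)^{b(H)-1}$. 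Assembling the $H$-pieces, weighting by the orbit factors above, and dividing by $\#\{n\le X\ \text{squarefree}\}\sim\tfrac6{\pi^2}X$, yields $\textnormal{Avg}_\k(X)=a_\k(\log X)^{D}(1+o(1))$ with $D=\max_H\bigl(b(H)-1\bigr)$ and $a_\k=\tfrac{\pi^2}{6}\sum_{H:\,b(H)-1=D}(\text{weight}_H)\,c_H'>0$, positivity being clear once one exhibits at least one contributing $H$ (e.g.\ the abelian one coming from the "trivial part" of $\k$, which already handles the case $\k$ trivial via $\sum_n|\k|^{\omega(n)}$).

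It then remains to identify $D$ with $\deg P_\k$, which I would do by comparing the two bookkeepings: in \cite{WEBSITE:DavisSchlankHilbert2023} the integer-valued polynomial $P_\k(k)=\int_{\widehat B_k}\ccol_\k\,d\mu$ is obtained by summing, over colouring profiles of $k$ strands, terms whose degree in $k$ counts a number of independent colour choices governed by exactly the same subgroups $H\le\mathrm{As}(\k)$; the profile of maximal degree $\deg P_\k$ corresponds to an $H$ maximizing $b(H)-1$, with the number of strands $k$ in the braid picture playing the role of the number of ramified primes, i.e.\ of $\log X$ — this is the sharp form of the heuristic \eqref{Eqn: braid philosophy}. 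The main obstacle is the analytic step for \emph{non-abelian} $H$: the asymptotic asserted for $D_H$ is precisely Malle's conjecture with local conditions for the group $H$, open in general, so the full conjecture is at present as hard as deep open problems in arithmetic statistics. What can be proved unconditionally are the cases where every contributing $H$ is tractable — $H$ abelian, handled by classical $L$-function/Selberg--Delange input, or $H$ a small dihedral group, where $D_H$ is controlled by the Davenport--Heilbronn and Datskovsky--Wright counts of cubic fields and their Cohen--Lenstra refinements; the subtleties at the prime $2$ and at the archimedean place (the choice $n^*\equiv2\bmod 8$, real versus imaginary $\L_n$) affect the constant $a_\k$ and the Euler factors of $D_H$ but not the exponent $D$.
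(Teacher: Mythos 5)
The statement you are asked to prove is stated in the paper as a \emph{conjecture} (\cref{Conj: Main conjecture}), and the paper offers no proof of it in general: it only formulates it precisely (via the notion of generic summatory type in \cref{Def: gen sum type}, which also requires control of all the $s$-coprime restricted averages and the convergence of the constants $c_s$ as $s$ ranges over $\NNN$ by divisibility --- a refinement your proposal does not address) and then verifies it for $\triv_\powa$, $\joyce\sqcup\triv_\powa$ and $\RGB$. Your proposal is therefore not, and honestly does not claim to be, a proof: the analytic step you isolate --- the asymptotic $D_H(s)=c_H(s-1)^{-b(H)}(1+o(1))$ for non-abelian $H$ --- is indeed a Malle-type conjecture with local conditions, open in general. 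To your credit, the overall architecture matches what the paper actually does in its proved cases: colorings are translated into surjective homomorphisms $\G_n\twoheadrightarrow\Inn(\k')$ with prescribed inertia images (\cref{Lma: Kei dom Col to Group dom morphisms}, \cref{Prop: computation of RGB for n>1}), the sum is decomposed over image sub-keis and factorizations of $n$ (\cref{Prop: arithmetic coloring by disjoint union}, \cref{Prop: arithmetic computation: Joyce colorings}), and the ``tractable $H$'' you list (abelian, small dihedral) are exactly the ones the paper handles via divisor sums, P\'olya--Vinogradov, and Davenport--Heilbronn with local conditions.

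Two places where your sketch is looser than what even the special cases require. First, the correspondence with homomorphisms to $\mathrm{As}(\k)$ is too coarse as stated: for keis where $\varphi\colon\k\to\Inn(\k)$ is not injective (already for $\triv_\powa$) a coloring is \emph{not} determined by the induced Galois representation, and the ``combinatorial factor built from the $\Inn(\k)$-orbit structure'' is precisely where the entire exponent comes from; the paper handles this by first pushing forward to the trivial kei $\G_n\backslash\kei_n\simeq\triv_{\omega(n)}$ and decomposing over factorizations $n=\prod n_t$, which is what produces the $\log^{|\Inn(\k')\backslash\k'|-1}(X)$ growth. You would need to prove that your $\max_H(b(H)-1)$ equals the paper's explicit formula $\deg P_\k=-1+\max_{\k'\le\k}\left|\Inn(\k')\backslash\k'\right|$, not merely assert the two bookkeepings agree. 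Second, positivity of $a_\k$ is not ``clear'': since the leading exponent is a maximum over sub-keis $\k'$, one needs a matching \emph{lower} bound of the correct order for at least one $\k'$ realizing the maximum, and when that $\k'$ forces a non-abelian $\Inn(\k')$ this lower bound is itself open. So your argument is best read as a correct heuristic reduction, consistent with the paper's framework, rather than a proof.
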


The second goal of this paper is to verify the conjecture for several examples of finite keis. In particular we verify the conjecture for all keis $\k$ of size $|\k|=3$. One can easily show that every such kei is isomorphic to one of three keis, encoded in the following table by
$$\varphi\colon \{1,2,3\}\to S_3\;\;,\;\;\;\;\varphi_i(j)=\qq ij.$$

$$\begin{tabular}{|c|c|c|c|}
	\hline
	$\k$	&$\varphi_1$	&$\varphi_2$&	$\varphi_3$\\
	\hline
	\hline
	$\triv_3$&	$\Id$&		$\Id$&		$\Id$\\
	\hline
	$\joyce$	&	$\Id$&		$\Id$&		$(1,2)$\\
	\hline
	$\RGB$		&	$(2,3)$&	$(1,3)$&	$(1,2)$\\
	\hline
\end{tabular}$$

We note that one recovers Fox's $3$-colorings mentioned above as $\RGB$-colorings of links. We begin by identifying the function $\col_\k$ for each $\k\in\{\triv_3,\joyce,\RGB\}$:
\begin{proposition}
For $1<n\in\NN$ squarefree we have:
\begin{enumerate}
	\item \Cref{Prop: Trivial colorings factor through pi_0}: $$\col_{\triv_3}(n)=3^{\omega(n)},$$
	where $\omega(n)$ is the number of prime divisors of $n$.
	\item \Cref{Prop: arithmetic computation: Joyce colorings}:
	$$\col_{\joyce}(n)=\sum_{abc=n}\Jac ab.$$
	\item \Cref{Prop: computation of RGB for n>1}:
	$$\col_{\RGB}(n)=3\left|Cl\left({\L_n}\right) \otimes_\ZZ\FF_3\right|.$$
\end{enumerate}	
\end{proposition}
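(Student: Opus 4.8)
The plan is to prove the three identities separately, in each case reducing the count of continuous kei-colorings to a Galois-theoretic problem through the description of the fundamental kei as the set $\kei_n=\{\p:\p\mid n\}$ of primes of $\O_{\F_n}$ above $n$, with operation $\qq{\p}{\q}=\tau_\p(\q)$, where $\tau_\p$ generates the (order $2$) inertia group at $\p$ inside $G:=\Gal(\F_n/\QQ)$. The one input used throughout is that the subgroup of $G$ generated by all the $\tau_\p$ is all of $G$: it is normal, and its fixed field is an everywhere unramified extension of $\QQ$, hence $\QQ$ by Minkowski. Consequently the inner automorphism group of $\kei_n$ has the same orbits on $\kei_n$ as $G$, namely $\{\p:\p\mid p\}$ for each $p\mid n$, so the set $\pi_0(\kei_n)$ of connected components of the kei $\kei_n$ has exactly $\omega(n)$ elements. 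A morphism $f\colon\kei_n\to\triv_3$ satisfies $f(\qq{\p}{\q})=f(\q)$, so it is constant on these orbits, hence factors continuously through $\pi_0(\kei_n)$, where it may be prescribed arbitrarily; this gives part (1), $\col_{\triv_3}(n)=3^{\omega(n)}$.

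For part (2), composing with the kei morphism $\joyce\to\triv_2$ onto the two-element trivial kei (collapsing the colors $1,2$) shows that the set of $p\mid n$ whose primes receive color $3$ is a union of components, i.e.\ gives a factorization $n=n_{12}n_3$. Over the primes dividing $n_{12}$ the coloring is a function $g$ with values in $\{1,2\}=\FF_2$, and since $\varphi_1=\varphi_2=\Id$ and $\varphi_3=(1\,2)$, the only surviving condition is $g(\tau_\p\q)=g(\q)+\chi(\tau_\p)$, where $\chi\colon G\to\FF_2$ is the homomorphism with $\chi(\tau_\p)=1$ if and only if $p\mid n_3$; by genus theory $\chi$ is the character cutting out the subfield $\QQ(\sqrt{n_3^{*}})\subseteq\F_n$. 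Such a $g$ exists on the orbit over a prime $p\mid n_{12}$ exactly when the decomposition group there lies in $\ker\chi$, i.e.\ when $p$ splits in $\QQ(\sqrt{n_3^{*}})$, equivalently (by quadratic reciprocity) when $\Jac{p}{n_3}=1$, and then there are two choices of $g$ on that orbit. Hence $\col_\joyce(n)=\sum_{n_{12}n_3=n}\prod_{p\mid n_{12}}\bigl(1+\Jac{p}{n_3}\bigr)$, and expanding the product over the divisors $d$ of $n_{12}$ and putting $a=d$, $b=n_3$, $c=n_{12}/d$ rewrites this as $\sum_{abc=n}\Jac{a}{b}$.

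For part (3) identify $\RGB$ with the dihedral kei $(\FF_3,\ \qq{x}{y}=2x-y)$, equivalently the set of transpositions in $S_3$ under conjugation; then $V:=\Hom_\Kei^\cont(\kei_n,\RGB)$ is an $\FF_3$-vector space under pointwise operations, containing the line of constant colorings. Using the relationship between $\kei_n$, $G$ and the $\tau_\p$ recalled above, a coloring $f$ amounts to a continuous homomorphism $\rho\colon G\to S_3$ with $\rho(\tau_\p)=f(\p)$ for all $\p$. Since the $\tau_\p$ generate $G$ and each $\rho(\tau_\p)$ is a transposition, $\mathrm{sgn}\circ\rho$ is the unique quadratic character ramified at exactly the primes dividing $n$, namely the character $\varepsilon$ of $\L_n/\QQ$; thus $\rho$ restricts to a continuous homomorphism $H:=\Gal(\F_n/\L_n)\to A_3\cong\FF_3$, i.e.\ an element of $\Hom(H,\FF_3)\cong\Hom(Cl(\L_n),\FF_3)$, a space with $|Cl(\L_n)\otimes_\ZZ\FF_3|$ elements (by class field theory). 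Because $\mathfrak a\bar{\mathfrak a}=(N_{\L_n/\QQ}\mathfrak a)$ is principal, complex conjugation acts by $-1$ on $Cl(\L_n)$, which is precisely the condition under which every such restriction extends over $G=H\sqcup\tau_{\p_0}H$; the extension is then pinned down by the choice of $\rho(\tau_{\p_0})$ among the three transpositions, giving $\col_\RGB(n)=3\,|Cl(\L_n)\otimes_\ZZ\FF_3|$. (Equivalently, $V\cong Z^1_{\cont}(G,\FF_3(\varepsilon))$, and inflation--restriction together with the vanishing of $H^{>0}(\ZZ/2,-)$ on $\FF_3$-modules, the $-1$-action just noted, and $B^1(G,\FF_3(\varepsilon))\cong\FF_3$ (nonzero precisely because $\varepsilon\neq1$ when $n>1$) yield $|V|=3^{\,1+\dim_{\FF_3}Cl(\L_n)/3}$.)

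The step I expect to be the main obstacle is the identification in part (3) of $\RGB$-colorings of $\kei_n$ with continuous homomorphisms $G\to S_3$ carrying each $\tau_\p$ to a transposition: one must check that the defining relations and the topology of the fundamental kei $\kei_n$ amount to nothing more than ``continuous homomorphism out of $G$'', so that no further relations among the $\tau_\p$ obstruct the extension of $\rho|_H$; this is exactly where the structural description of $\kei_n$ (its associated group) is needed. Everything else is a short genus-theory or group-cohomology computation, and the hypothesis $n>1$ enters only in part (3), to guarantee $\varepsilon\neq1$: for $n=1$ one has $\col_\k(1)=1$ for every $\k$, in agreement with the first two formulas but not with the $\RGB$ one.
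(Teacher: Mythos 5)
Your proposal is correct. Parts (1) and (2) follow essentially the same route as the paper: trivial colorings factor through the $\G_n$-quotient $\{p\mid n\}$ (the paper's \cref{Prop: Trivial colorings factor through pi_0 II}), and $\joyce$-colorings are counted fiberwise over the induced factorization $n=n_{12}n_3$, with the component over each $p\mid n_{12}$ contributing a factor $1+\Jac{p}{n_3}$ according to whether $p$ splits in $\QQ(\sqrt{n_3^*})$ — exactly the paper's computation of $\col_\joyce(n)_{(n_1,n_2)}$. For part (3) you use the same two inputs as the paper — the identification of colorings with continuous homomorphisms $\G_n\to S_3$ lifting the quadratic character of $\L_n$ and sending every inertia involution to a transposition (this is the paper's \cref{Lma: Kei dom Col to Group dom morphisms}, resting on conciseness of $\A_n$ via Hermite--Minkowski, which you correctly flag as the crux), and the fact that the nontrivial automorphism of $\L_n$ acts by $-1$ on $Cl(\L_n)$ — but you organize the count differently: the paper splits off the three constant colorings and counts surjective lifts as $6$ times the number of $D_3$-conjugacy classes, namely $6\cdot\frac{|Cl(\L_n)\otimes_\ZZ\FF_3|-1}{2}$, whereas you count all lifts at once as $3$ choices for the image of one fixed inertia involution times the $|\Hom(Cl(\L_n),\FF_3)|$ possible restrictions to $\Gal(\F_n/\L_n)$, every one of which extends because of the $-1$-action. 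Both are valid and your tally is marginally more direct; the paper's phrasing through conjugacy classes is what it subsequently matches with cubic orders of discriminant $\Disc(\L_n)$ to feed the Davenport--Heilbronn/Bhargava input in the asymptotic argument, so the detour is not wasted there.
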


The polynomials
$$P_{\triv_3}(x)=\frac{x^2+3x+2}2\;\;,\;\;\;\;
P_{\joyce}(x)=2x+1\;\;,\;\;\;\;
P_{\RGB}(x)=6$$
are computed in \cite[\textsection 7]{WEBSITE:DavisSchlankHilbert2023}. Using methods of analytic number theory, we prove \cref{Conj: main conj in intro} for these three cases, and for additional related keis - see propositions
\ref{Prop: Main Conj for T_a},
\ref{Prop: Main Conj for J_+} and
\ref{Prop: Main Conj for RGB}.

\subsection{Structure of the Paper}
In \cref{Section: profinite keis} we lay the groundwork for defining $\kei_n$. Here you'll find definitions and statements for the algebraic theory of keis, some old, some new. Then in \cref{Section: fundamental arithmetic kei} we define $\kei_n$ itself, and $\k$-colorings of $n$ for finite kei $\k$.
In \cref{Section: main conjecture} we formulate the main conjecture of this paper. Subsequent sections are devoted to proving the conjecture in for several examples of keis: \cref{Section: preliminary computations} contains computations of some preliminary estimations we'll need for the actual results. Sections \ref{Section: trivial kei proof} through \ref{Section: dihedral 3 kei proof} contain concrete examples:
\begin{itemize}
	\item \cref{Section: trivial kei proof} - for trivial keis $\triv_\powa$
	\item \cref{Section: Joyce kei proof} - for a kei $\joyce$ introduced in \cite{ARTICLE:Joyce1982} and variants.
	\item \cref{Section: dihedral 3 kei proof} - for the dihedral kei $\RGB$.
\end{itemize}

\subsection{Relation to Other Works}
Our work is grounded in arithmetic statistics, an active field of research with a long, rich history history. For example, our proof of the main conjecture in the case of $\k=\RGB$ relies heavily on the Davenport-Heilbronn theorem \cite{ARTICLE:DavenportHeilbronn1971} and further refinements
\cite{WEBSITE:BhargavaShankTsim2012} on counting cubic fields.
The Cohen-Lenstra-Martinet heuristics
\cite{ARTICLE:CohenLenstra1983} on the distribution of class groups is very much an ongoing endeavor \cite{WEBSITE:SawinWood2023}.
In the case where one replaces $\Spec(\ZZ)$ with $\Spec(\FF_p[t])$, the relation with \'etale topology has been studied in works such as \cite{WEBSITE:ElVenWest2015} and \cite{ARTICLE:FarbWolfson2018}.
Others have considered keis and related structures in a number-theoretical setting, with different motivation (\cite{WEBSITE:Takahashi2017}). The notions obtained there are different, and unrelated to arithmetic statistics as far as we know.

\subsection{Acknowledgements}
We thank Ohad Feldheim, Ofir Gorodetski, Aaron Landesman, Lior Yanovski, David Yetter and Tamar Ziegler for useful discussion. TMS was supported by the US-Israel Binational Science Foundation under grant 2018389. TMS was supported by ISF1588/18 and the ERC under the European Union’s Horizon 2020 research and innovation program (grant agreement No. 101125896).

\section{Keis and Profinite Keis}
\label{Section: profinite keis}
\subsection{Keis and Augmented Keis}
\begin{definition}
	\label{Def: quandle, Qdl}
	A \emph{kei} is a set $\k$ with a binary operator
	$$\k\times \k\to \k \;\;,\;\;\;\;(x,y)\mapsto \qq xy$$
	satisfying the following three axioms:
	\begin{enumerate}
		\item[(K1)] $\forall x\in \k$, $\qq xx=x$.
		\item[(K2)] $\forall x,y\in \k$, $\qq{x}{(\qq xy)}=y$.
		\item[(K3)] $\forall x,y,z\in \k$, $\qq{x}{(\qq yz)}=\qq{(\qq xy)}{(\qq xz)}$.
	\end{enumerate}
	A \emph{morphism} $f\colon \k\to \kk$
	 of keis is a map satisfying
	$$\forall x,y\in \k\;\;,\;\;\;\;f(\qq xy)=\qq{f(x)}{f(y)}.$$
	Together with morphisms, keis form a category, denoted by $\Kei$.
\end{definition}

\begin{example}
Let $G\in\Grp$ be a group. Then the set $S=\{g\in G\;|\;g^2=1\}\subseteq G$ of involutions in $G$ is a kei with structure given by conjugation:
	$$\qq gh=ghg^{-1}.$$
\end{example}

\begin{definition}
\label{Dfn: Trivial T_a}
	A kei $\triv\in\Kei$ is \emph{trivial} if $\qq xy=y$ for all $x,y\in \triv$.
	For $\powa\in\NN$, we denote by $\triv_\powa$ the trivial kei with  $|\triv_\powa|=\powa$. This is well-defined up to isomorphism.
\end{definition}

\begin{definition}
	\label{Def: augmented quandle}
	An \emph{augmented kei} $(\k,G,\alpha)$ consists of
	a set $\k\in\Set$, a group $G\in\Grp$ acting on $\k$ from the left,
	and an \emph{augmentation map} - a function
	$$\alpha\colon \k\to G\;,\;\;x\mapsto\alpha_x$$
	satisfying:
	\begin{enumerate}
		\item For all $x\in \k$, $\alpha_x(x)=x$.
		\item For all $x\in\k$, $\alpha_x^2=1_G.$
		\item For all $g\in G$ and $x\in \k$, $\alpha_{g(x)}=g\alpha_xg^{-1}\in G$.
	\end{enumerate}
	A morphism of augmented keis $(\k,G,\alpha)\to (\k', G',\alpha')$ consists of a function $f\colon \k\to \k'$ and group homomorphism $\widetilde f\colon G\to G'$ satisfying
	\begin{enumerate}
		\item For every $x\in \k$, $\alpha'_{f(x)}=\widetilde f(\alpha_x)\in G'$.
		\item For every $x\in \k$ and $g\in G$, $f(g(x))=\widetilde f(g)(f(x))\in\k'$.
	\end{enumerate}
	In other words, the following diagram commutes:
	\begin{equation}\label{Eqn: AKei morphism}
	\begin{tikzcd}
	G\times \k\ar[r]\ar[d,"(\widetilde f{,}f)"'] &\k\ar[r,"\alpha"]\ar[d,"f"']&G\ar[d,"\widetilde f"]\\
	G'\times \k'\ar[r]&\k'\ar[r,"\alpha'"']&G'
	\end{tikzcd}\;\;. \end{equation}
	Together with morphisms, augmented keis form a category $\AKei$.
\end{definition}

\begin{example}
	Let $G\in\Grp$ group and let $X\subseteq G$ be a union of conjugacy classes in $G$ s.t. $x^2=1_G$ for all $x\in X$. Let $\iota\colon X\hookrightarrow G$ denote the inclusion. Then $(X,G,\iota)$ is an augmented kei, where for all $x\in X$ and $g\in G$,
	$$g(x)=gxg^{-1}.$$
\end{example}

As the name suggests, for $(\k,G,\alpha)\in\AKei$, the set $\k$ has a canonical kei structure. We omit the proof of the following easily verifiable proposition:

\begin{proposition}
\label{Prp: AKei to Kei functorial}
For every $(\k,G,\alpha)\in\AKei$, the operator
	$$\k\times \k\to \k\;\;\;,\;\;\;\;\;\;(x,y)\mapsto \alpha_x(y)$$
	defines a kei structure on $\k$. This extends to a functor
$$\KeiFunc\bullet\colon\AKei\to\Kei\;\;\;,\;\;\;\;\;\;\a=(\k,G,\alpha)\mapsto \k.$$

\end{proposition}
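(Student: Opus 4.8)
The plan is to unwind the definitions. There are two things to check: that the binary operation $\qq xy:=\alpha_x(y)$ satisfies the kei axioms (K1)--(K3), and that the assignment $(f,\widetilde f)\mapsto f$ on morphisms is well defined and preserves identities and composition. Both are direct computations from the axioms in \cref{Def: augmented quandle}, so I would simply carry them out in order.

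For the kei axioms: (K1) is the first augmented-kei axiom verbatim, $\qq xx=\alpha_x(x)=x$. For (K2) I would use that each $\alpha_x$ is an involution in $G$, so $\qq x{(\qq xy)}=\alpha_x(\alpha_x(y))=\alpha_x^2(y)=y$. The only step with genuine content is (K3): expanding the right-hand side, $\qq{(\qq xy)}{(\qq xz)}=\alpha_{\alpha_x(y)}(\alpha_x(z))$, and the third augmented-kei axiom, applied with the group element $\alpha_x$ acting on the point $y$, gives $\alpha_{\alpha_x(y)}=\alpha_x\alpha_y\alpha_x^{-1}$; hence the right-hand side equals $\alpha_x\alpha_y\alpha_x^{-1}\alpha_x(z)=\alpha_x(\alpha_y(z))=\qq x{(\qq yz)}$, which is the left-hand side. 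Thus the $G$-equivariance of the augmentation is precisely what encodes self-distributivity.

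For the functor: given a morphism $(f,\widetilde f)\colon(\k,G,\alpha)\to(\k',G',\alpha')$, I would verify that $f$ is a kei morphism by computing $f(\qq xy)=f(\alpha_x(y))=\widetilde f(\alpha_x)\bigl(f(y)\bigr)$ using the left-hand square of \eqref{Eqn: AKei morphism} ($G$-equivariance of $f$), and then $\widetilde f(\alpha_x)=\alpha'_{f(x)}$ by the right-hand square (compatibility with augmentations), so that $f(\qq xy)=\alpha'_{f(x)}\bigl(f(y)\bigr)=\qq{f(x)}{f(y)}$ in $\k'$. Since the identity morphism of $(\k,G,\alpha)$ in $\AKei$ is $(\Id_\k,\Id_G)$ and composition in $\AKei$ is componentwise, $\KeiFunc\bullet$ visibly sends identities to identities and respects composition. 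The only step in the whole argument with any content is the use of the conjugation axiom in (K3) --- and its morphism-level mirror, the augmentation-compatibility square --- so there is no real obstacle; this is exactly why the paper calls the statement ``easily verifiable''.
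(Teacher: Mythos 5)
Your proof is correct, and since the paper explicitly omits the proof of this proposition as ``easily verifiable,'' your verification --- (K1) and (K2) from the first two augmentation axioms, (K3) from the conjugation axiom $\alpha_{\alpha_x(y)}=\alpha_x\alpha_y\alpha_x^{-1}$, and functoriality from the two squares of \eqref{Eqn: AKei morphism} --- is exactly the standard argument the authors had in mind.
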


\begin{definition}
	\label{Def: Inn}
	Let $\k\in\Kei$ and let $x\in \k$. We denote by 
	$$\varphi_x=\varphi_{\k,x}\colon \k\to \k\;\;:\;\;\;\; \forall y\in\k\;,\;\varphi_x(y)=\qq xy $$
	By (K2), $\varphi_x$ is a bijection and by (K3), $\varphi_x\in\Aut_\Kei(\k)$. By $\Inn(\k)$ we denote the subgroup
	$$\Inn(\k)=\langle\{\varphi_x\;|\;x\in \k\}\rangle\le \Aut_\Kei(\k),$$
	and by $\varphi_\k\colon \k\to\Inn(\k)$ the map
	$$\varphi_{\k}\colon x\longmapsto \varphi_{\k,x}.$$
\end{definition}

The following notion of \emph{conciseness} is not new - see \cite[Thm 10.2]{ARTICLE:Joyce1982} for one of several examples of prior usage. We rely heavily on this property, and therefore see fit to give it a name:
\begin{definition}
\label{Dfn: concise AKei}
	An augmented kei $\a=(\k,G,\alpha)\in\AKei$ is \emph{concise} if the image of $\alpha$ generates $G$, that is,
	$$\langle \{\alpha_x\;|\;x\in \k\}\rangle=G.$$
\end{definition}

\begin{proposition}
	Let $\k\in\Kei$. Then $(\k,\Inn(\k),\varphi_\k)$ is a concise augmented kei.
\end{proposition}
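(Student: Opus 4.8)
The plan is to verify the three augmented-kei axioms for $(\k,\Inn(\k),\varphi_\k)$ and then observe that conciseness is immediate from the definition of $\Inn(\k)$. Recall that $\Inn(\k)=\langle\{\varphi_x\mid x\in\k\}\rangle$ and that the augmentation map is $\varphi_\k\colon x\mapsto\varphi_x$; since the image of $\varphi_\k$ is exactly the generating set $\{\varphi_x\mid x\in\k\}$ of $\Inn(\k)$, conciseness in the sense of \cref{Dfn: concise AKei} holds with no further work. The left action of $\Inn(\k)\le\Aut_\Kei(\k)$ on $\k$ is the tautological one (an automorphism of $\k$ is in particular a self-map of the underlying set), so it remains only to check the three compatibility conditions of \cref{Def: augmented quandle}.

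First I would verify axiom (1): for all $x\in\k$, $\varphi_x(x)=x$. This is literally axiom (K1), $\qq xx=x$. Next, axiom (2): $\varphi_x^2=1$ in $\Inn(\k)$, equivalently $\varphi_x(\varphi_x(y))=y$ for all $y$; this is exactly axiom (K2), $\qq x{(\qq xy)}=y$. Finally, axiom (3): for all $g\in\Inn(\k)$ and $x\in\k$, $\varphi_{g(x)}=g\varphi_x g^{-1}$. The cleanest route is to prove this first for a generator $g=\varphi_z$ and then extend to all of $\Inn(\k)$ by induction on word length. For $g=\varphi_z$, the identity $\varphi_{\qq zx}=\varphi_z\varphi_x\varphi_z^{-1}$ is equivalent, applied to an arbitrary $y$ and using (K2) to rewrite $\varphi_z^{-1}=\varphi_z$, to $\qq{(\qq zx)}{y}=\qq z{(\qq x{(\qq zy)})}$; substituting $y=\qq zw$ and simplifying with (K2) turns this into $\qq{(\qq zx)}{(\qq zw)}=\qq z{(\qq xw)}$, which is axiom (K3). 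For the inductive step, if $\varphi_{g(x)}=g\varphi_xg^{-1}$ and $\varphi_{h(x)}=h\varphi_xh^{-1}$ for all $x$, then for $gh$ one computes $\varphi_{(gh)(x)}=\varphi_{g(h(x))}=g\varphi_{h(x)}g^{-1}=g(h\varphi_xh^{-1})g^{-1}=(gh)\varphi_x(gh)^{-1}$; one also checks the case $g^{-1}$, using that $\varphi_{g^{-1}(x)}=g^{-1}\varphi_{g(g^{-1}(x))}g=g^{-1}\varphi_x g$.

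I do not expect any genuine obstacle here — the statement is essentially a repackaging of the kei axioms, and the authors themselves flagged conciseness (not this proposition) as the substantive notion. The one place to be slightly careful is the direction of the conjugation in axiom (3) of \cref{Def: augmented quandle} and the matching of conventions between $g(x)$ (the action) and $\varphi_{g(x)}$; since every $\varphi_x$ is an involution this bookkeeping is harmless, but it is worth writing the generator case out explicitly as above so the reader sees precisely where (K3) enters. Accordingly I would present the proof as: (i) note the action and the conciseness are immediate; (ii) dispatch conditions (1) and (2) as restatements of (K1) and (K2); (iii) prove condition (3) on generators via (K3) and then by induction on $\Inn(\k)$, giving a one-line treatment of products and inverses.
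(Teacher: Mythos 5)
Your proof is correct and follows essentially the same route as the paper's: conditions (1) and (2) are restatements of (K1) and (K2), and condition (3) is verified on the generators $\varphi_z$ via (K3) and then extended to all of $\Inn(\k)$ (the paper simply asserts the extension, whereas you spell out the induction on word length, but this is the same argument). No gaps.
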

\begin{proof}
	Let $x\in\k$. Then
	$$\varphi_x(x)=\qq xx=x,$$
	and for all $y\in \k$,
	$$\varphi_x^2(y)=\qq x{(\qq xy)}=y.$$
	Therefore $\varphi_{x}^2=\Id_\k$. For $g=\varphi_y\in\Inn(\k)$ we have for all $z\in\k$,
	$$(\varphi_{g(x)}\circ g)(z)=(\varphi_{\varphi_y(x)}\circ\varphi_y)(z)=\qq{(\qq yx)}{(\qq yz)}=\qq{y}{(\qq xz)}=(g\circ \varphi_{x})(z).$$
	That is,
	$$\varphi_{g(x)}=g\circ\varphi_{x}\circ g^{-1}.$$
	This equality holds for all $g\in\langle\{\varphi_y\;|\;y\in\k\}\rangle=\Inn(\k)$. Hence $(\k,\Inn(\k),\varphi_\k)$ is a concise augmented kei. 
\end{proof}

\begin{definition}
	Let $\k\in\Kei$. By $\a_\k\in\AKei$ we denote the augmented kei
	$$\a_\k:=(\k,\Inn(\k),\varphi_\k).$$
\end{definition}

\begin{proposition}
\label{Prop: concise augkei lift surj kei morphism}
	Let $\a=(\k,G,\alpha)\in\AKei$ be concise, and let $f\colon\k\twoheadrightarrow\k'$ be a surjective morphism in $\Kei$. Then there is a unique $\rho\colon G\to\Inn(\k')$ s.t. 
	$$(f,\rho)\colon(\k,G,\alpha)\to
	\a_{\k'}
	$$
	is a morphism in $\AKei$. Furthermore, $\rho$ is surjective.
\end{proposition}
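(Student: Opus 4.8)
The plan is to construct $\rho$ on generators of $G$ and then verify it is well-defined, a homomorphism, makes $(f,\rho)$ a morphism of augmented keis, and is surjective; uniqueness will be immediate from conciseness. First I would observe that since $\a$ is concise, $G$ is generated by $\{\alpha_x : x\in\k\}$, so any homomorphism $\rho\colon G\to\Inn(\k')$ compatible with the first axiom of an $\AKei$-morphism must satisfy $\rho(\alpha_x)=\alpha'_{f(x)}=\varphi_{\k',f(x)}$ for all $x\in\k$; this forces $\rho$ and hence gives uniqueness. The content is existence. The naive approach — define $\rho$ on the generating set by $\alpha_x\mapsto\varphi_{\k',f(x)}$ and extend — runs into the well-definedness problem: a priori there could be relations among the $\alpha_x$ in $G$ not respected by the assignment.

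The cleanest way around this is to avoid choosing $\rho$ on generators directly and instead use the action. Since $G$ acts on $\k$ and $f\colon\k\to\k'$ is surjective, I would first show that the $G$-action on $\k$ descends along $f$ to an action on $\k'$ through $\Inn(\k')$. Concretely: define a map $G\to \mathrm{Sym}(\k')$ by sending $g$ to the permutation $f(x)\mapsto f(g(x))$ — one must check this is well-defined, i.e. that $f(x)=f(y)\implies f(g(x))=f(g(y))$. For $g=\alpha_w$ this holds because $f(\alpha_w(x))=\alpha'_{f(w)}(f(x))=\varphi_{f(w)}(f(x))$ depends only on $f(x)$ (using that $f$ is a kei morphism, $f(\qq wx)=\qq{f(w)}{f(x)}$, together with axiom~1 of the $\AKei$ morphism applied informally — more precisely, using that the kei structure on $\k$ from $\a$ is $x,y\mapsto\alpha_x(y)$ and $f$ respects it); and a general $g\in G$ is a word in the $\alpha_w^{\pm1}=\alpha_w$, so the property propagates by composition. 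This gives a homomorphism $\rho\colon G\to\mathrm{Sym}(\k')$, and its image lands in $\Inn(\k')$ precisely because $\rho(\alpha_w)=\varphi_{\k',f(w)}$ by the computation above, and these generate $\Inn(\k')$; since the $\alpha_w$ generate $G$, $\rho$ is surjective onto $\Inn(\k')$.

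With $\rho$ in hand, the remaining checks are routine: $(f,\rho)$ satisfies the first $\AKei$-morphism axiom because $\rho(\alpha_x)=\varphi_{\k',f(x)}=\alpha'_{f(x)}$ by construction, and the second axiom $f(g(x))=\rho(g)(f(x))$ holds by the very definition of the permutation $\rho(g)$. Thus $(f,\rho)\colon\a\to\a_{\k'}$ is a morphism in $\AKei$, it is unique with this property by the conciseness argument above, and $\rho$ is surjective. I expect the main obstacle to be the well-definedness of the descended action — i.e. checking that $f(x)=f(y)$ implies $f(g(x))=f(g(y))$ for all $g\in G$, not merely for generators — which is the one place the hypothesis that $f$ is surjective and a kei morphism, and that $\a$ is concise, all get used together; everything else is bookkeeping.
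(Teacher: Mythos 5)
Your proposal is correct and follows essentially the same route as the paper: the paper passes through the free group $F_\k$ and shows $\ker\widetilde\alpha\subseteq\ker\widetilde{\varphi_f}$ via the identity $\widetilde{\varphi_f}(x_1\cdots x_n)(f(y))=f(\alpha_{x_1}\cdots\alpha_{x_n}(y))$, which is precisely your observation that $f(g(y))$ depends only on $f(y)$, propagated over words in the generators. Your packaging (defining the descended permutation directly and checking well-definedness) and the paper's (factoring a homomorphism through a quotient of the free group) are two presentations of the identical computation, and both use conciseness and surjectivity of $f$ in exactly the same places.
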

\begin{proof}
	The desired $\rho\colon G\to\Inn(\k')$ must satisfy, for all $x\in\k$ and $g\in G$:
	$$(i):\;\rho(\alpha_x)=\varphi_{f(x)}\;\;\;\;\textrm{and}\;\;\;\;(ii):\;\rho(g)(f(x))=f(g(x)).$$
	Let $F_\k\in\Grp$ denote the free group on the set $\k$. The maps $\alpha\colon\k\to G$ and
	$$\varphi_f\colon \k\to\Inn(\k')\;\;,\;\;\;\;x\mapsto\varphi_{f(x)}.$$
	extend to respective group homomorphisms
	$$\widetilde\alpha\colon F_\k\to G\;\;,\;\;\;\;\widetilde{\varphi_f}\colon F_\k\to \Inn(\k').$$
	Condition $(i)$ is equivalent to the commuting of the diagram
	$$\begin{tikzcd}[row sep=small]
	&&G\ar[dr,dashed,"\rho"]\\
	F_\k\ar[urr,"\widetilde\alpha"]\ar[rrr,"\widetilde{\varphi_f}"']&&&\Inn(\k')
	\end{tikzcd}.$$
	Since $\a$ is concise, $\widetilde \alpha$ is surjective, then $\rho$ - should it exist - is unique.
	Let $x_1,\dots,x_n\in\k$ and let $\varepsilon_1\dots,\varepsilon_n\in\{\pm 1\}$. Then for all $y\in\k$,
	$$\widetilde{\varphi_f} \left(x_1^{\varepsilon_1}\cdots x_n^{\varepsilon_n}\right)(f(y))=
	(\varphi_{f(x_1)}\circ\cdots\circ\varphi_{f(x_n)})(f(y))= $$
	$$=\qq{f(x_1)}{(\dots{\qq{f(x_n)}{f(y)}}\dots)}=f\left(\qq{x_1}{\dots(\qq{x_n}{y})\dots }\right)=f(\alpha_{x_1}^{\varepsilon_1}\cdots \alpha_{x_n}^{\varepsilon_n} (y)).$$
	If $x_1,\dots ,x_n\in\k$ are s.t. $\widetilde\alpha\left(x_1^{\varepsilon_1}\cdots x_n^{\varepsilon_n}\right)= \alpha_{x_1}^{\varepsilon_1}\cdots \alpha_{x_n}^{\varepsilon_n}=1$, then for all $y\in\k$,
	$$\widetilde{\varphi_f} \left(x_1^{\varepsilon_1}\cdots x_n^{\varepsilon_n}\right)(f(y))=f(\alpha_{x_1}^{\varepsilon_1}\cdots \alpha_{x_n}^{\varepsilon_n} (y))=f(y).$$
	Since $f$ is surjective, we have
	$$\widetilde\alpha\left(x_1^{\varepsilon_1}\cdots x_n^{\varepsilon_n}\right)=1\;\;\Longrightarrow\;\;\widetilde{\varphi_f} \left(x_1^{\varepsilon_1}\cdots x_n^{\varepsilon_n}\right) =\Id_{\k'}.$$
	Hence there exists $\rho\colon G\to \Inn(\k')$ satisfying condition $(i)$:
	$$\rho\circ\widetilde\alpha=\widetilde{\varphi_f}.$$ Furthermore, since $\a$ is concise, for all $g= \alpha_{x_1}^{\varepsilon_1}\cdots \alpha_{x_n}^{\varepsilon_n} \in G$  we have
	$$\rho(g)\circ f=
	\widetilde{\varphi_f} \left(x_1^{\varepsilon_1}\cdots x_n^{\varepsilon_n}\right) \circ f =f\circ \alpha_{x_1}^{\varepsilon_1}\cdots \alpha_{x_n}^{\varepsilon_n}=f\circ g.$$
Therefore $\rho$ satsifies condition $(ii)$ as well, hence
	$$(f,\rho)\colon (\k,G,\alpha)\to(\k',\Inn(\k'),\varphi_{\k'})$$
	is a morphism in $\AKei$. Also, $\rho$ is surjective:
	$$\rho(G)=\rho(\langle\{\alpha_x\;|\; x\in\k\}\rangle)=\langle\{\varphi_{x'}\;|\; x'\in\k'\}\rangle=\Inn(\k').$$
\end{proof}

\begin{definition}
	Let $\a=(\k,G,\alpha)\in\AKei$ be concise and let $f\colon\k\to\k'$ be a morphism in $\Kei$. For $\k'':=f(\k)\le \k'$ We denote by
	$$\rho_{\a,f}\colon G\twoheadrightarrow\Inn(\k'')$$
	the homomorphism $\rho$ from \cref{Prop: concise augkei lift surj kei morphism}. If $\a=\AKeiFunc\k$ or if $f=\Id_\k$, we respectively denote by:
	$$\rho_f:=\rho_{\AKeiFunc\k,f}\colon \Inn(\k)\twoheadrightarrow\Inn(\k'')\;\;,\;\;\;\;\rho_\a:=\rho_{\a,\Id_\k}\colon G \twoheadrightarrow\Inn(\k).$$
\end{definition}

The following proposition is a straightforward consequence of \cref{Prop: concise augkei lift surj kei morphism}. The proof, a routine check, is omitted.

\begin{proposition}
\label{Prp: kei Inn surj functorial to AugKei}
	The map
	$$\AKeiFunc\bullet\colon\k\mapsto\AKeiFunc\k		= (\k,\Inn(\k),\varphi_\k)\in\AKei$$
	is functorial in surjective morphisms in $\Kei$, mapping $f \colon \k\twoheadrightarrow\k'$ to
	$$ (f,\rho_f)\in\Hom_{\AKei}(\a_\k,\a_{\k'}).$$
	Accordingly, the map $\k\mapsto \Inn(\k)$ that sends $f\colon\k\twoheadrightarrow\k'$ to
	$$\rho_{f}\in\Hom_\Grp(\Inn(\k),\Inn(\k'))$$
	is functorial in surjective morphisms.
\end{proposition}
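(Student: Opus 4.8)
The plan is to deduce functoriality directly from the uniqueness clause in \cref{Prop: concise augkei lift surj kei morphism}, which does most of the work. First I would observe that for each surjective $f\colon\k\twoheadrightarrow\k'$ the pair $(f,\rho_f)$ is already a morphism $\a_\k\to\a_{\k'}$ in $\AKei$ by \cref{Prop: concise augkei lift surj kei morphism} applied to the concise augmented kei $\a_\k=(\k,\Inn(\k),\varphi_\k)$; so the assignment $f\mapsto(f,\rho_f)$ lands in the right place and the only thing to check is compatibility with identities and composition. For identities: $(\Id_\k,\rho_{\Id_\k})$ must be a morphism $\a_\k\to\a_\k$, and $(\Id_\k,\Id_{\Inn(\k)})$ clearly is one; by the uniqueness of $\rho$ in \cref{Prop: concise augkei lift surj kei morphism} (which uses that $\widetilde\alpha$ is surjective because $\a_\k$ is concise, i.e. $\varphi_\k(\k)$ generates $\Inn(\k)$ by definition of $\Inn(\k)$) we get $\rho_{\Id_\k}=\Id_{\Inn(\k)}$, as required.

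For composition, suppose $\k\xrightarrow{f}\k'\xrightarrow{g}\k''$ are surjective morphisms in $\Kei$, so that $g\circ f$ is also surjective. I would show that the pair $(g\circ f,\;\rho_g\circ\rho_f)$ is a morphism $\a_\k\to\a_{\k''}$ in $\AKei$; then, since $(g\circ f,\rho_{g\circ f})$ is the \emph{unique} such morphism over $g\circ f$ by \cref{Prop: concise augkei lift surj kei morphism}, we conclude $\rho_{g\circ f}=\rho_g\circ\rho_f$, which is exactly the claimed functoriality. Checking that $(g\circ f,\rho_g\circ\rho_f)$ is a morphism amounts to verifying the two conditions in \cref{Def: augmented quandle}: for every $x\in\k$,
$$\varphi_{(g\circ f)(x)} = \rho_g\bigl(\varphi_{f(x)}\bigr) = \rho_g\bigl(\rho_f(\varphi_x)\bigr),$$
where the first equality is condition $(i)$ for $(g,\rho_g)$ at the point $f(x)\in\k'$ and the second is condition $(i)$ for $(f,\rho_f)$ at $x$; and for every $x\in\k$, $h\in\Inn(\k)$,
$$(g\circ f)(h(x)) = g\bigl(\rho_f(h)(f(x))\bigr) = \rho_g(\rho_f(h))\bigl(g(f(x))\bigr),$$
using condition $(ii)$ for $(f,\rho_f)$ and then for $(g,\rho_g)$. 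This is precisely the statement that the outer rectangle of the diagram \eqref{Eqn: AKei morphism} commutes, obtained by pasting the two commuting rectangles for $(f,\rho_f)$ and $(g,\rho_g)$. Finally, the last sentence of the proposition — functoriality of $\k\mapsto\Inn(\k)$ in surjective morphisms, sending $f$ to $\rho_f$ — is then immediate by forgetting the set component of each morphism $(f,\rho_f)$, i.e. by post-composing the functor $\AKeiFunc\bullet$ with the forgetful functor $(\k,G,\alpha)\mapsto G$.

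I do not expect any serious obstacle: everything reduces to the uniqueness assertion already proven, plus the purely formal fact that pasting two commuting squares of the shape \eqref{Eqn: AKei morphism} yields a commuting square. The only point requiring a little care is making sure conciseness of $\a_\k$ — needed to invoke the uniqueness of $\rho$ — genuinely holds, but this is built into the definition of $\Inn(\k)$ as the group \emph{generated by} the $\varphi_x$, so $\varphi_\k(\k)$ generates $\Inn(\k)$ by construction. This is exactly why the proposition is stated only for \emph{surjective} morphisms: \cref{Prop: concise augkei lift surj kei morphism} requires surjectivity of the kei morphism, and without it $\rho_f$ need not be well defined on all of $\Inn(\k)$. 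Since the paper explicitly flags the proof as "a routine check", I would keep the written-out argument to the two diagram-pasting verifications above and the one-line uniqueness deduction.
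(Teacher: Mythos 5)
Your proposal is correct and matches the paper's intent exactly: the paper omits the proof as "a routine check" that is "a straightforward consequence of \cref{Prop: concise augkei lift surj kei morphism}", and your argument — deducing $\rho_{\Id_\k}=\Id_{\Inn(\k)}$ and $\rho_{g\circ f}=\rho_g\circ\rho_f$ from the uniqueness clause there, after verifying that the pasted pair $(g\circ f,\rho_g\circ\rho_f)$ satisfies both morphism conditions — is precisely that routine check, including the correct observation that conciseness of $\a_\k$ holds by the definition of $\Inn(\k)$.
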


\begin{proposition}
\label{Prop: AKei has images}
	Let $\a=(\k,G,\alpha),\a'=(\k',G',\alpha')\in\AKei$ and let $(f,\widetilde f)\colon \a\to\a'$ be a morphism. Then
	$$(f(\k),\widetilde f(G),\alpha'_{|f(\k)})\in\AKei$$
	is a sub-augmented kei of $\a'$.
\end{proposition}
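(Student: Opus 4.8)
The plan is to verify directly that the triple $(f(\k),\widetilde f(G),\alpha'_{|f(\k)})$ satisfies the three axioms of \cref{Def: augmented quandle} and embeds compatibly into $\a'$. Write $\k'':=f(\k)$ and $G'':=\widetilde f(G)$. Since $\widetilde f\colon G\to G'$ is a group homomorphism, $G''\le G'$ is a subgroup, so the only real content is to see that the structure maps of $\a'$ restrict to $\k''$ and $G''$.

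First I would check that the $G'$-action on $\k'$ restricts to an action of $G''$ on $\k''$. Given $g\in G$ and $x\in\k$, condition (2) for the morphism $(f,\widetilde f)$ gives $\widetilde f(g)(f(x))=f(g(x))\in f(\k)=\k''$. As $\widetilde f(g)$ ranges over all of $G''$ and $f(x)$ over all of $\k''$, this shows $G''$ preserves $\k''$, hence acts on it by restriction. Next I would check that $\alpha'$ restricts to a map $\k''\to G''$: for $x\in\k$, condition (1) for $(f,\widetilde f)$ gives $\alpha'_{f(x)}=\widetilde f(\alpha_x)\in\widetilde f(G)=G''$, so $\alpha'_{|f(\k)}$ indeed takes values in $G''$. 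Finally, the three augmented-kei axioms for $(\k'',G'',\alpha'_{|\k''})$ are inherited verbatim from those of $\a'$: for $x''=f(x)\in\k''$ one has $\alpha'_{x''}(x'')=x''$ and $(\alpha'_{x''})^2=1_{G'}$ (which lies in $G''$), and for $g''\in G''\le G'$ and $x''\in\k''$ one has $\alpha'_{g''(x'')}=g''\alpha'_{x''}(g'')^{-1}$ — identities already valid in $\a'$ between elements that happen to lie in the sub-objects. This gives $(\k'',G'',\alpha'_{|\k''})\in\AKei$, and the inclusions $\k''\hookrightarrow\k'$, $G''\hookrightarrow G'$ assemble into a morphism in $\AKei$ exhibiting it as a sub-augmented kei of $\a'$.

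There is essentially no obstacle: the two points requiring any attention are the closure statements — that $G''$ maps $\k''$ into itself and that $\alpha'$ sends $\k''$ into $G''$ — and both are immediate consequences of the defining conditions (1) and (2) of a morphism of augmented keis; everything else is just restriction of structure already present in $\a'$.
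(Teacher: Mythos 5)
Your argument is correct and matches the paper's own proof: both establish the two closure conditions — that $\widetilde f(G)$ preserves $f(\k)$ via $\widetilde f(g)(f(x))=f(g(x))$, and that $\alpha'$ maps $f(\k)$ into $\widetilde f(G)$ via $\alpha'_{f(x)}=\widetilde f(\alpha_x)$ — and then observe that the axioms are inherited by restriction. The paper's version is simply terser, leaving the axiom verification implicit.
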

\begin{proof}
	Let $y=f(x)\in f(\k)$ and let $h=\widetilde f(g)\in \widetilde f(G)$. Then
	$$\alpha'_y=\alpha'_{f(x)}=\widetilde f(\alpha_x)\in \widetilde f(G)$$
	and
	$$h(y)=\widetilde f(g)(f(x))=f(g(x))\in f(\k).$$
	Therefore $(f(\k),\widetilde f(G),\alpha'_{|f(\k)})$ is a sub-augmented kei of $\a'$.
\end{proof}

\begin{definition}
\label{Def: image and surjections in AKei}
Let $\a=(\k,G,\alpha),\a'=(\k',G',\alpha')\in\AKei$ and let $(f,\widetilde f)\colon \a\to\a'$ be a morphism. We define the \emph{image} of $(f,\widetilde f)$ to be the augmented kei
$$\image(f,\widetilde f):= (f(\k),\widetilde f(G),\alpha'_{|f(\k)}) \le \a'.$$
We say $(f,\widetilde f)$ is \emph{surjective} if
$$\image(f,\widetilde f)=\a',$$
that is, if $f\colon \k\to\k'$ and $\widetilde f\colon G\to G'$ are both surjective.
\end{definition}

\begin{lemma}
\label{Lma: conciseness carries by surjections}
	Let $(f,\widetilde f)\colon\a\twoheadrightarrow \a'$ be a surjective morphism in $\AKei$. Assume $\a$ is concise. Then $\a'$ is also concise.
\end{lemma}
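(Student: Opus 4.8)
The plan is a short diagram chase whose only substantive ingredient is the elementary fact that a group homomorphism carries a generating set of its domain onto a generating set of its image. Write $\a=(\k,G,\alpha)$ and $\a'=(\k',G',\alpha')$. By \cref{Def: image and surjections in AKei}, the surjectivity of $(f,\widetilde f)$ means precisely that both $f\colon\k\to\k'$ and $\widetilde f\colon G\to G'$ are surjective. By \cref{Dfn: concise AKei}, the conciseness of $\a$ says $\langle\{\alpha_x\mid x\in\k\}\rangle=G$, and the goal is to deduce $\langle\{\alpha'_{x'}\mid x'\in\k'\}\rangle=G'$.

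First I would apply $\widetilde f$ to the generation relation for $\a$. Since $\widetilde f$ is a surjective homomorphism,
$$G'=\widetilde f(G)=\widetilde f\bigl(\langle\{\alpha_x\mid x\in\k\}\rangle\bigr)=\langle\{\widetilde f(\alpha_x)\mid x\in\k\}\rangle.$$
Next, the augmentation-compatibility condition from \cref{Def: augmented quandle} (the left square of \eqref{Eqn: AKei morphism}), namely $\alpha'_{f(x)}=\widetilde f(\alpha_x)$ for all $x\in\k$, lets me rewrite the generating set, giving $G'=\langle\{\alpha'_{f(x)}\mid x\in\k\}\rangle$. Finally, since $f$ is surjective we have $\{f(x)\mid x\in\k\}=\k'$, so the displayed group equals $\langle\{\alpha'_{x'}\mid x'\in\k'\}\rangle$; hence $\a'$ is concise.

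I do not anticipate a genuine obstacle here: the argument is forced once one unwinds the definition of surjectivity in $\AKei$ and the defining square of a morphism of augmented keis. The one point worth stating explicitly in the write-up is the identity $\widetilde f(\langle S\rangle)=\langle\widetilde f(S)\rangle$ for a homomorphism $\widetilde f$ and a subset $S$ of its domain, which is exactly what transports the generation relation from $\a$ to $\a'$; everything else is substitution.
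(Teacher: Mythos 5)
Your argument is correct and is essentially identical to the paper's proof, which establishes the same chain of equalities $\langle\{\alpha'_y\}_{y\in\k'}\rangle=\langle\{\alpha'_{f(x)}\}_{x\in\k}\rangle=\langle\{\widetilde f(\alpha_x)\}_{x\in\k}\rangle=\widetilde f(\langle\{\alpha_x\}_{x\in\k}\rangle)=\widetilde f(G)=G'$, just read in the other direction.
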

\begin{proof}
Write $\a=(\k,G,\alpha)$ and $\a'=(\k',G',\alpha')$.
We have the following equality of subgroups of $G'$:
	$$\langle \{\alpha'_y\}_{y\in\k'} \rangle= \langle \{\alpha'_{f(x)}\}_{x\in\k} \rangle= \langle \{\widetilde f(\alpha_x)\}_{x\in\k} \rangle= \widetilde f\left(\langle \{\alpha_x\}_{x\in\k} \rangle\right)=\widetilde f(G) =G'.$$
\end{proof}

\begin{corollary}
\label{Cor: concise surj grp natural}
Let $\a=(\k,G,\alpha),\a'=(\k,G,\alpha)\in\AKei$ be concise and let $(f,\widetilde f)\colon \a\to\a'$ be a surjective morphism in $\AKei$. Then
	$$\rho_f\circ\rho_\a=\rho_{\a'}\circ \widetilde f\;\;\;\;:\;\;\;\;\;\;\;\;
	\begin{tikzcd}
	G\ar[r,"\rho_{\a}"]\ar[d,"\widetilde f"']\ar[dr,phantom,"//"]&\Inn(\k)\ar[d,"\rho_f"]\\
	G'\ar[r,"\rho_{\a'}"']&\Inn(\k')
	\end{tikzcd}$$
\end{corollary}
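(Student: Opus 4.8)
The plan is to check the asserted identity $\rho_f\circ\rho_\a=\rho_{\a'}\circ\widetilde f$ on a generating set of $G$ and then conclude using that all four maps are group homomorphisms. Since $\a=(\k,G,\alpha)$ is concise, $G$ is generated by $\{\alpha_x\mid x\in\k\}$, so it is enough to prove $\rho_f\bigl(\rho_\a(\alpha_x)\bigr)=\rho_{\a'}\bigl(\widetilde f(\alpha_x)\bigr)$ for every $x\in\k$ (here I read the second object as $\a'=(\k',G',\alpha')$).

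First I would unwind the defining relations furnished by \cref{Prop: concise augkei lift surj kei morphism}. By construction $\rho_\a=\rho_{\a,\Id_\k}$ is the unique map making $(\Id_\k,\rho_\a)\colon\a\to\a_\k$ an $\AKei$-morphism, so $\rho_\a(\alpha_x)=\varphi_x$ for all $x\in\k$; similarly $\rho_{\a'}(\alpha'_y)=\varphi_y$ for all $y\in\k'$ (this uses conciseness of $\a'$, which is what legitimizes $\rho_{\a'}$ in the first place), and, $f$ being surjective so that $f(\k)=\k'$, the map $\rho_f=\rho_{\a_\k,f}$ satisfies $\rho_f(\varphi_x)=\varphi_{f(x)}$ with codomain $\Inn(\k')$. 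Finally, since $(f,\widetilde f)$ is a morphism of augmented keis, the first compatibility condition of \cref{Def: augmented quandle} gives $\widetilde f(\alpha_x)=\alpha'_{f(x)}$.

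Chaining these equalities, the left-hand side becomes $\rho_f(\rho_\a(\alpha_x))=\rho_f(\varphi_x)=\varphi_{f(x)}$, while the right-hand side becomes $\rho_{\a'}(\widetilde f(\alpha_x))=\rho_{\a'}(\alpha'_{f(x)})=\varphi_{f(x)}$; the two coincide on every generator $\alpha_x$ of $G$, hence on all of $G$, which is precisely the commutativity of the square.

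I do not expect any genuine obstacle: the statement is pure bookkeeping, a direct consequence of the universal property in \cref{Prop: concise augkei lift surj kei morphism} together with the definition of an $\AKei$-morphism. The only thing meriting a second glance is that every arrow in the diagram is well-defined — $\rho_{\a'}$ requires $\a'$ concise (assumed) and $\rho_f$ has codomain $\Inn(\k')$ only because $f$ is surjective — after which the one-line computation above finishes the proof.
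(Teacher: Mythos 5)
Your proof is correct. It differs in presentation from the paper's: the paper packages \cref{Prop: concise augkei lift surj kei morphism} and \cref{Prp: kei Inn surj functorial to AugKei} into an adjunction $\AKeiFunc\bullet\dashv\KeiFunc\bullet$ between $\Kei^\surj$ and $\AKei^\surj_\concise$ and then reads off the square as the naturality square of the unit, whereas you verify the identity by hand on the generating set $\{\alpha_x\}_{x\in\k}$ of $G$. The two arguments have the same mathematical core --- conciseness of $\a$ is what makes a homomorphism out of $G$ determined by its values on the $\alpha_x$, which is exactly the uniqueness clause of \cref{Prop: concise augkei lift surj kei morphism} that the adjunction encodes --- but yours is the more elementary and self-contained route, while the paper's buys a conceptual packaging (functoriality and naturality for free in any further diagram chase) at the cost of asserting the adjunction without spelling out its verification. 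A marginally slicker variant of your argument: observe that both $(f,\rho_f\circ\rho_\a)$ and $(f,\rho_{\a'}\circ\widetilde f)$ are morphisms $\a\to\a_{\k'}$ in $\AKei$ lifting the surjection $f\colon\k\twoheadrightarrow\k'$, and invoke the uniqueness in \cref{Prop: concise augkei lift surj kei morphism} directly; your generator computation is precisely the content of checking condition $(i)$ for each composite. You are also right to flag (and silently correct) the typo in the statement, where the second object should read $\a'=(\k',G',\alpha')$.
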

\begin{proof}
	The statements of \cref{Prop: concise augkei lift surj kei morphism} and \cref{Prp: kei Inn surj functorial to AugKei} are interpreted in terms of an adjunction
	$$\AKeiFunc\bullet\dashv\KeiFunc\bullet\;\;,\;\;\;\;
	\begin{tikzcd}
	\AKeiFunc\bullet\colon\Kei^\surj\ar[r,shift left]&\AKei_\concise^\surj\colon\KeiFunc\bullet\ar[l,shift left]
	\end{tikzcd},$$	
	where $\Kei^\surj\subseteq\Kei$ is the subcategory of all keis and surjective morphisms, $\AKei_\concise^\surj\subseteq\AKei$ is the subcategory of concise augmented keis and surjective morphisms.
	The desired square is obtained from the unit of the adjunction:
	$$
	\begin{tikzcd}[column sep=huge]
	\a\ar[r,"(\Id_\k{,}\rho_{\a})"]\ar[d,"(f{,}\widetilde f)"']\ar[dr,phantom,"//"]&(\k,\Inn(\k),\varphi_\k)\ar[d,"(f{,}\rho_f)"]\\
	\a'\ar[r,"(\Id_{\k'}{,}\rho_{\a'})"']&(\k',\Inn(\k'),\varphi_{\k'})
	\end{tikzcd}$$
\end{proof}

\begin{proposition}
	\label{Prop: augmented quotient by normal N}
	Let $\a=(\k,G,\alpha)\in\AKei$ and let $H\unlhd G$. Let
	$$\alpha_H\colon H\backslash \k\to H\backslash G\;\;,\;\;\;\;\alpha_H\colon Hx \mapsto H\alpha_x,$$
	where $H\backslash \k$ is the quotient in $\Set$. 
	Then $(H\backslash \k, H\backslash G,\alpha_H)$ is an augmented kei and the quotient map $(\k,G,\alpha)\to (H\backslash \k, H\backslash G,\alpha_H)$ is a morphism in $\AKei$. In particular $H\backslash \k$ inherits a well-defined kei structure from $\k$, and the quotient map $\k\to H\backslash \k$ is a morphism in $\Kei$.
\end{proposition}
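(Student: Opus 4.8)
The plan is to unwind the definitions; the only substantive point is a well-definedness check that uses normality of $H$, and everything else is a mechanical transcription of the axioms for $\a$. Since $H\unlhd G$, the coset space $H\backslash G$ is a group, and I would first define an action of $H\backslash G$ on $H\backslash\k$ (the latter being the set of $H$-orbits, i.e.\ the quotient in $\Set$) by the rule $(Hg)(Hx):=H(g(x))$. The content is that this does not depend on the chosen representatives: if $x'=hx$ and $g'=h'g$ with $h,h'\in H$, then, using that the $G$-action is a group action, $g'(x')=(h'g)(hx)=h'\bigl((ghg^{-1})(g(x))\bigr)$, and $ghg^{-1}\in H$ because $H$ is normal, so $g'(x')$ and $g(x)$ lie in the same $H$-orbit, i.e.\ $H(g'(x'))=H(g(x))$. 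That $(H1)(Hx)=Hx$ and $(Hg)(Hg')(Hx)=(Hgg')(Hx)$ is then immediate, so this is a genuine action. The map $\alpha_H$ is well-defined by the same mechanism: if $x'=hx$, then axiom (3) for $\a$ gives $\alpha_{x'}=h\alpha_xh^{-1}$, hence $H\alpha_{x'}=Hh\alpha_xh^{-1}=H\alpha_xh^{-1}=H\alpha_x$, the last equality because $\alpha_xh^{-1}\alpha_x^{-1}\in H$ by normality.

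Next I would verify the three axioms of \cref{Def: augmented quandle} for $(H\backslash\k,H\backslash G,\alpha_H)$, each reducing at once to the corresponding axiom for $\a$: for (1), $(\alpha_H)_{Hx}(Hx)=H(\alpha_x(x))=Hx$; for (2), $(\alpha_H)_{Hx}^2=H\alpha_x^2=H=1_{H\backslash G}$; for (3), $(\alpha_H)_{(Hg)(Hx)}=H\alpha_{g(x)}=H(g\alpha_xg^{-1})=(Hg)(H\alpha_x)(Hg)^{-1}$. Then, for the quotient map, the underlying data are the canonical projections $q\colon\k\to H\backslash\k$ and $\widetilde q\colon G\to H\backslash G$; the two conditions defining a morphism in $\AKei$ hold on the nose, namely $(\alpha_H)_{q(x)}=H\alpha_x=\widetilde q(\alpha_x)$ and $q(g(x))=H(g(x))=(Hg)(Hx)=\widetilde q(g)(q(x))$, which is exactly commutativity of the square \eqref{Eqn: AKei morphism}.

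Finally, the ``in particular'' clause is obtained by applying the functor $\KeiFunc\bullet\colon\AKei\to\Kei$ of \cref{Prp: AKei to Kei functorial} to the morphism just constructed: it equips $H\backslash\k$ with the kei operation $(Hx,Hy)\mapsto(\alpha_H)_{Hx}(Hy)=H(\alpha_x(y))$, which is precisely the structure ``inherited from $\k$'', and it sends $q$ to a morphism $\k\to H\backslash\k$ in $\Kei$. The only real obstacle is the coset bookkeeping in the first paragraph, i.e.\ making sure one sees exactly where normality of $H$ is used; once that is in place, the remaining verifications are routine.
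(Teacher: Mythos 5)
Your proof is correct and follows essentially the same route as the paper's: the only substantive point in both is the well-definedness of the induced action and of $\alpha_H$, each hinging on normality of $H$ via $\alpha_{h(x)}=h\alpha_xh^{-1}$ and $g(h(x))=(ghg^{-1})(g(x))$, after which the axioms and the morphism conditions lift routinely. Your version is merely more explicit about the coset bookkeeping and about invoking the functor $\KeiFunc\bullet$ for the final clause.
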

\begin{proof}
	It suffices to show that $\alpha_H$ is a well-defined function and that the $H\backslash G$-action on $H\backslash \k$ is well-defined. Let $x\in\k$, let $g\in G$ and let $h\in H$. Then
	$$\alpha_{h(x)}=h\alpha_xh^{-1}=h(\alpha_xh\alpha_x^{-1})\alpha_x\in H\alpha_x,$$
	$$(hg)(x)=h(g(x))\in Hg(x)\;\;,\;\;\;\;g(h(x))=(ghg^{-1})(g(x))\in Hg(x).$$
	The aumgented kei axioms on $(H\backslash \k, H\backslash G,\alpha_H)$ are seen to hold by routine lifting to $\a$. 
Consequently $H\backslash \k$ is a kei with structure induced from $\k$:
$$\qq{\overline x}{\overline y}=\overline{\qq xy},$$
that is, the quotient map $\k\to H\backslash \k$ is a morphism in $\Kei$.
\end{proof}

\begin{definition}
	Let $\a=(\k,G,\alpha)\in\AKei$ and let $H\unlhd G$. We denote by $H\backslash\a$ we denote the augmented kei
	$$H\backslash\a:=(H\backslash \k,H\backslash G,\alpha_H)\in\AKei,$$
\end{definition}

\begin{proposition}
\label{Prop: factor augkei morphism through quotient}
	Let $\a=(\k,G,\alpha),\a'=(\k',G',\alpha')\in\AKei$ and let $(f,\widetilde f)\colon\a\to\a'$. Denote by $H_{\widetilde f}:=\ker(\widetilde f\colon G\to G')\unlhd G$. Then $(f,\widetilde f)$ factors through the quotient 
	$$H_{\widetilde f}\backslash \a\to\a'$$
\end{proposition}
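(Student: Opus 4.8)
The plan is to build the factoring morphism by hand and then check it is a morphism in $\AKei$. Write $H:=H_{\widetilde f}\unlhd G$, and let $\pi=(\pi_\k,\pi_G)\colon\a\to H\backslash\a$ be the quotient morphism of \cref{Prop: augmented quotient by normal N}, with components $\pi_\k\colon\k\to H\backslash\k$ and $\pi_G\colon G\to H\backslash G$. I want to produce a morphism $(\overline f,\overline{\widetilde f})\colon H\backslash\a\to\a'$ with $(\overline f,\overline{\widetilde f})\circ\pi=(f,\widetilde f)$.

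For the group component, since $H=\ker(\widetilde f)$, the universal property of the quotient group yields a unique homomorphism $\overline{\widetilde f}\colon H\backslash G\to G'$ with $\overline{\widetilde f}\circ\pi_G=\widetilde f$. For the set component, the key point is that $f$ is constant on $H$-orbits in $\k$: for $x\in\k$ and $h\in H$, using that $(f,\widetilde f)$ is a morphism and $\widetilde f(h)=1_{G'}$, we get
$$f(h(x))=\widetilde f(h)(f(x))=f(x).$$
Hence $f$ descends to a unique map $\overline f\colon H\backslash\k\to\k'$ with $\overline f\circ\pi_\k=f$.

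It then remains to check that $(\overline f,\overline{\widetilde f})$ satisfies the two conditions in \cref{Def: augmented quandle}, and both are verified by lifting along the surjections $\pi_\k,\pi_G$. Given $\overline x\in H\backslash\k$, choose $x\in\k$ with $\pi_\k(x)=\overline x$; then $\alpha_H(\overline x)=H\alpha_x=\pi_G(\alpha_x)$, so
$$\overline{\widetilde f}(\alpha_H(\overline x))=\overline{\widetilde f}(\pi_G(\alpha_x))=\widetilde f(\alpha_x)=\alpha'_{f(x)}=\alpha'_{\overline f(\overline x)}.$$
Given also $\overline g\in H\backslash G$ with lift $g\in G$, the induced action satisfies $\overline g(\overline x)=\pi_\k(g(x))$, so
$$\overline f(\overline g(\overline x))=\overline f(\pi_\k(g(x)))=f(g(x))=\widetilde f(g)(f(x))=\overline{\widetilde f}(\overline g)(\overline f(\overline x)).$$
Since $\pi_\k$ and $\pi_G$ are surjective, the identities $\overline f\circ\pi_\k=f$ and $\overline{\widetilde f}\circ\pi_G=\widetilde f$ pin down $(\overline f,\overline{\widetilde f})$ uniquely, which is exactly the asserted factorization.

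I do not expect a genuine obstacle here: the one substantive computation is $f(h(x))=f(x)$ for $h\in H$, which is precisely what makes $\overline f$ well-defined, and everything else is a routine transport of the augmented-kei axioms across the quotient maps. The only thing demanding a little care is keeping the left-action conventions of $H\backslash\a$ straight, so that $\alpha_H(Hx)=H\alpha_x$ and $\overline g(\overline x)=\overline{g(x)}$ are applied consistently throughout.
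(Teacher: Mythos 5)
Your proposal is correct and follows essentially the same route as the paper's proof: both invoke \cref{Prop: augmented quotient by normal N} for the quotient augmented kei and reduce the factorization to the two computations $\widetilde f(hg)=\widetilde f(g)$ and $f(h(x))=\widetilde f(h)(f(x))=f(x)$ for $h\in\ker\widetilde f$. You merely spell out the verification of the morphism axioms on the quotient, which the paper leaves as implicit.
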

\begin{proof}
	By \cref{Prop: augmented quotient by normal N}, $H_{\widetilde f}\backslash \a \in\AKei$ and $\a\to H_{\widetilde f}\backslash \a$ is a morphism in $\AKei$. 
	Let $x\in\k$, let $g\in G$ and let $h\in H_{\widetilde f}$.
	Then $\widetilde f(h)=1$, hence
	$$\widetilde f(hg)= \widetilde f(h) \widetilde f(g)= \widetilde f(g),$$
	$$f(h(x))=\widetilde f(h)(f(x))=f(x).$$
	This suffices to show that $(f,\widetilde f)$ factors through $H_{\widetilde f}\backslash \a $.
\end{proof}

\begin{proposition}
\label{Prop: factor from concise through quotient}
	Let $\a=(\k,G,\alpha)\in\AKei$ be concise, let $\k'\in\Kei$, and let $f\colon\k\to\k'$ be a morphism in $\Kei$. Let $\k''\le \k'$ denote the image $\k'':=f(\k)$ of $f$. Then $f$ factors through
	$$H_f\backslash \k\to \k',$$
	where
	$$H_f:=\ker(\rho_{\a,f}\colon G\twoheadrightarrow\Inn(\k''))\unlhd G.$$
\end{proposition}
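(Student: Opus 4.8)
The plan is to deduce this from three facts already in hand: the lifting property for concise augmented keis (\cref{Prop: concise augkei lift surj kei morphism}), the factorization of an $\AKei$-morphism through the quotient by the kernel of its group component (\cref{Prop: factor augkei morphism through quotient}), and the functoriality of the underlying-kei functor $\KeiFunc\bullet$ (\cref{Prp: AKei to Kei functorial}). In short, the statement to be proved is the image under $\KeiFunc\bullet$ of a factorization that takes place entirely in $\AKei$.

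First I would corestrict $f$ to its image, obtaining a surjection $f\colon\k\twoheadrightarrow\k''$, and apply \cref{Prop: concise augkei lift surj kei morphism}: since $\a$ is concise, there is a (unique, surjective) homomorphism $\rho_{\a,f}\colon G\twoheadrightarrow\Inn(\k'')$ making
$$(f,\rho_{\a,f})\colon\a\longrightarrow\a_{\k''}=(\k'',\Inn(\k''),\varphi_{\k''})$$
a morphism in $\AKei$; this is exactly the $\rho$ whose kernel defines $H_f$. Next I would feed this $\AKei$-morphism into \cref{Prop: factor augkei morphism through quotient}. The kernel of its group component is by definition $H_f$, so that proposition yields a factorization
$$\a\longrightarrow H_f\backslash\a\longrightarrow\a_{\k''}$$
in $\AKei$, where the first arrow is the canonical quotient morphism of \cref{Prop: augmented quotient by normal N}.

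Finally I would apply $\KeiFunc\bullet$ to this chain. Since $\KeiFunc\a=\k$, $\KeiFunc{\a_{\k''}}=\k''$, and $\KeiFunc{H_f\backslash\a}$ is precisely the quotient kei $H_f\backslash\k$ with the induced structure $\qq{\overline x}{\overline y}=\overline{\qq xy}$ from \cref{Prop: augmented quotient by normal N}, this produces a factorization $\k\to H_f\backslash\k\to\k''$ in $\Kei$; composing the second map with the inclusion $\k''\hookrightarrow\k'$ gives the claimed factorization of $f$ through $H_f\backslash\k\to\k'$.

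I do not expect a genuine obstacle here; the only care needed is bookkeeping — matching the homomorphism named in the definition of $H_f$ with the one produced by \cref{Prop: concise augkei lift surj kei morphism}, and identifying the kei underlying $H_f\backslash\a$ with the quotient kei of \cref{Prop: augmented quotient by normal N}. If one prefers to avoid the categorical packaging, the same argument unwinds into a direct check: given $\overline x=\overline y$ in $H_f\backslash\k$, write $y=h(x)$ with $h\in H_f=\ker\rho_{\a,f}$; then property $(ii)$ of the morphism $(f,\rho_{\a,f})$ gives $f(y)=f(h(x))=\rho_{\a,f}(h)(f(x))=f(x)$, so $f$ descends to a set map on $H_f\backslash\k$, and it descends as a kei morphism because the quotient map $\k\to H_f\backslash\k$ is a surjective kei morphism and $f$ respects the kei operation.
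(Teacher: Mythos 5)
Your proposal is correct and follows the paper's own proof essentially verbatim: lift $f\colon\k\twoheadrightarrow\k''$ to $(f,\rho_{\a,f})\colon\a\to\a_{\k''}$ via \cref{Prop: concise augkei lift surj kei morphism}, factor through $H_f\backslash\a$ via \cref{Prop: factor augkei morphism through quotient}, and forget to $\Kei$. The concluding elementwise check is a nice sanity verification but adds nothing the categorical argument doesn't already cover.
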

\begin{proof}
By \cref{Prop: concise augkei lift surj kei morphism}, $f\colon \k\twoheadrightarrow \k''$ lifts to a surjective morphism
	$$(f,\rho_{\a,f})\colon \a\twoheadrightarrow \a_{\k''}$$
	in $\AKei$. By \cref{Prop: factor augkei morphism through quotient}, $(f,\rho_{\a,f})$ factors through
	$$H_{f}\backslash \a\to \a_{\k''}.$$
	Forgetting to $\Kei$, we find that $f\colon\k\to \k'$ factors through
	$$H_f\backslash \k\twoheadrightarrow \k''\hookrightarrow\k'.$$
\end{proof}

\subsection{Profinite Keis and Augmented Keis}
In this section we discuss profinite keis, which are formal cofiltered limits of finite keis. 
Closely related notions were extensively developed in \cite{WEBSITE:ByardCaiETAL2024}. Stone duality is the statement that profinite sets are equivalent to Stone spaces - compact, Hausdorff totally-disconnected topological spaces. More precisely, the fully-faithful embedding
$$\delta\colon \Set^\fin\hookrightarrow \Top$$
of finite sets as discrete topological spaces lifts to a fully faithful functor
$$\widehat\delta\colon \Pro\Set^\fin\hookrightarrow \Top.$$
Similarly, the embedding of finite groups into topological groups extends to a fully-faithful functor
$$\Pro\Grp^\fin\hookrightarrow \Grp^\Top.$$
Hence for $\G,\G'\in\Pro\Grp^\fin$ we denote by
$$\Hom_\Grp^\cont(\G,\G'):= \Hom_{\Pro\Grp^\fin}(\G,\G').$$
Like groups, the algberaic theory of keis consists of finitely-many operations of finite arity. An analogous statement for keis  holds as well:  the induced functor
$$\Pro\Kei^\fin\hookrightarrow\Kei^\Top$$
is fully-faithful (\cite[\textsection VI.2]{BOOK:Johnstone1982}). In considering topological augmented keis, it is not difficult to see that a similar argument works: the enduced functor
$$\Pro\AKei^\fin\hookrightarrow\AKei^\Top$$
is fully-faithful. We therefore make no distinction between profinite (augmented) keis and their corresponding topological (augmented) keis.
For $\kei,\kei'\in\Pro\Kei^\fin$ and $\A,\A'\in\Pro\AKei^\fin$ we denote by
$$\Hom_\Kei^\cont(\kei,\kei'):= \Hom_{\Pro\Kei^\fin}(\kei,\kei'),$$
$$\Hom_\AKei^\cont(\A,\A'):= \Hom_{\Pro\AKei^\fin}(\A,\A').$$

It is perhaps worth mentioning that the forgetful functors
$$\begin{tikzcd}[row sep=tiny]
	\Pro\Grp^\fin &\Pro\AKei^\fin\ar[l]\ar[r]& \Pro\Kei^\fin\\
	\G& (\kei,\G,\alpha)\ar[l,mapsto]\ar[r,mapsto]& \kei
\end{tikzcd}$$
are defined once as naturally extending the finite counterparts
$$\begin{tikzcd}[row sep=tiny]
	\Grp^\fin &\AKei^\fin\ar[l]\ar[r]& \Kei^\fin
\end{tikzcd}\;\;,$$
and again as restricted from the topological counterparts
$$\begin{tikzcd}[row sep=tiny]
	\Grp^\Top &\AKei^\Top\ar[l]\ar[r]& \Kei^\Top
\end{tikzcd}\;\;.$$
These two definitions in fact coincide. Also worth mentioning is that because $\Kei$ and $\AKei$ have a well-behaved notion of images (see \cref{Prop: AKei has images} for $\AKei$), profinite keis and augmented keis can be presented as limits of cofiltered diagrams with \emph{surjective} maps. This can be done canonically: For $\kei\in\Pro\Kei^\fin$, Let $\D_\kei$ denote the category whose objects are
$$\{(\k,f)\;\;|\;\;\;\; \k\in\Kei^\fin\;,\;\; f\colon\kei\twoheadrightarrow \k \},$$
and
$$\Hom((\k,f),(\k',f'))=\{h\colon \k\twoheadrightarrow \k'\;|\;h\circ f=f'\}.$$
Then $\D_\kei$ is cofiltered, and 
$$\kei\simeq \lim_{\substack{\longleftarrow\\ \D_\kei}}\k.$$
We set a convention: When we write
\begin{equation}
\label{Eqn: profinite kei convention}
\displaystyle{\kei=\lim_{\substack{\longleftarrow\\\D}}\k_d\in\Pro\Kei^\fin},
\end{equation}
it is to be understood that $\D$ is a cofiltered diagram in $\Kei^\fin$, and that the morphisms $\k_d\to\k_{d'}$ in the diagram are all surjections. Likewise for
$$\displaystyle{\A=\lim_{\substack{\longleftarrow\\\D}}\a_d\in\Pro\Kei^\fin} .$$

\begin{definition}
	Let $\A=(\kei,\G,\alpha)\in\Pro\AKei^\fin$ be a profinite augmented kei. We say $\A$ is \emph{concise} if $\{\alpha_x\}_{x\in\kei}$ generates $\G$ topologically:
	$$\overline{\langle\{\alpha_x\;|\;x\in\kei\}\rangle}=\G.$$
	For finite $\A$, this notion of concisesness coincides with \cref{Dfn: concise AKei}.
\end{definition}

\begin{lemma}
\label{Lma: conciseness carries by surjections profinite}
	Let $\A\twoheadrightarrow \A'$ be a surjective morphism in $\Pro\AKei^\fin$. Assume $\A$ is concise. Then so is $\A'$.
\end{lemma}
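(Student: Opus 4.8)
The plan is to reduce the profinite statement to the finite one (Lemma~\ref{Lma: conciseness carries by surjections}) by working at the level of the defining cofiltered diagrams and then passing to the limit. Write $\A=(\kei,\G,\alpha)$ and $\A'=(\kei',\G',\alpha')$, and let $(f,\widetilde f)\colon\A\twoheadrightarrow\A'$ be the given surjection. By the convention around~\eqref{Eqn: profinite kei convention} and the discussion of canonical presentations by surjective cofiltered diagrams, we may write $\A'=\lim_{\D'}\a'_d$ with each $\a'_d=(\k'_d,G'_d,\alpha'_d)$ a \emph{finite} augmented kei and all transition maps surjective; the structure maps $\A'\twoheadrightarrow\a'_d$ are surjective as well. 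The goal then becomes to show each $\a'_d$ is concise, since conciseness of a cofiltered limit of concise finite augmented keis with surjective transition maps is immediate: the topological closure of the subgroup generated by the $\alpha_x$ in $\G'=\lim G'_d$ surjects onto each $G'_d$ (as that subgroup already generates $G'_d$ after projection), and a closed subgroup of a profinite group surjecting onto every term of a cofiltered presentation is the whole group.

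So fix an object $d\in\D'$ and consider the composite surjection $\A\twoheadrightarrow\A'\twoheadrightarrow\a'_d$. The first step is to factor this through a \emph{finite} quotient of $\A$. Because $\A=\lim_{\D}\a_e$ is a cofiltered limit of finite augmented keis, and $\a'_d$ is finite, the morphism $\A\to\a'_d$ factors through some finite stage $\a_e=(\k_e,G_e,\alpha_e)$, giving $\A\twoheadrightarrow\a_e\to\a'_d$; replacing $\a_e$ by the image of $\A$ in $\a_e$ if necessary (using that $\AKei$ has images, \cref{Prop: AKei has images}), we may take $\A\twoheadrightarrow\a_e$ surjective, and then the second map $\a_e\to\a'_d$ surjective as well since the composite $\A\to\a'_d$ is. The second step is to check that $\a_e$ is concise: its group $G_e$ is generated by $\{\alpha_{e,x}\}$ because $\a_e$ is a surjective (finite) quotient of the concise profinite $\A$ — indeed the image of the topologically-generating set $\{\alpha_x\}_{x\in\kei}$ under the surjection $\G\twoheadrightarrow G_e$ generates $G_e$, as $G_e$ is finite (discrete) and the continuous image of a dense subgroup is dense. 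The third step is then simply to apply \cref{Lma: conciseness carries by surjections} to the surjection of finite augmented keis $\a_e\twoheadrightarrow\a'_d$, concluding that $\a'_d$ is concise.

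Assembling these, every $\a'_d$ in the canonical presentation of $\A'$ is concise, and by the limit argument of the first paragraph $\A'$ itself is concise. I do not expect any single step to be a genuine obstacle — the only point requiring care is the bookkeeping that ``generated topologically'' for a profinite group interacts correctly with finite quotients and cofiltered limits, i.e.\ that the closure of $\langle\{\alpha_x\}\rangle$ in $\G'$ equals $\G'$ iff its image generates each $G'_d$. This is a standard fact about profinite groups (a closed subgroup meeting every congruence class, equivalently surjecting onto every finite quotient in a cofinal system, is everything), and one could alternatively phrase the whole proof in that language directly: push the dense subgroup $\langle\{\alpha_x\}\rangle\le\G$ forward along $\widetilde f$, note its image is dense in $\G'$ since $\widetilde f$ is a continuous surjection of profinite groups, and observe that this image is exactly $\langle\{\alpha'_{f(x)}\}\rangle=\langle\{\alpha'_y\}_{y\in\kei'}\rangle$ by the compatibility $\alpha'_{f(x)}=\widetilde f(\alpha_x)$ in a morphism of augmented keis. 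That streamlined argument avoids the explicit finite-stage reduction entirely, and is the version I would ultimately write down.
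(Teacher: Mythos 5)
Your proposal is correct, and the streamlined argument in your closing paragraph — pushing the dense subgroup $\langle\{\alpha_x\}_{x\in\kei}\rangle$ forward along the continuous surjection $\widetilde f$ of profinite groups and identifying its image with $\langle\{\alpha'_y\}_{y\in\kei'}\rangle$ via the compatibility $\alpha'_{f(x)}=\widetilde f(\alpha_x)$ together with the surjectivity of $f$ — is exactly the paper's proof. The longer reduction to finite stages of a surjective cofiltered presentation is also valid but is unnecessary overhead here.
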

\begin{proof}
	Denote $\A= (\kei,\G,\alpha)$ and $\A'=(\kei',\G',\alpha')$.
	Because $\A$ is concise, $\overline{\langle\{\alpha_x\}_{x\in\kei}\rangle}=\G$. Therefore
	$$\overline{\langle\{\alpha'_y\}_{y\in\kei'}\rangle}= \overline{\langle\{\alpha'_{f(x)}\}_{x\in\kei}\rangle}= \overline{\langle\{\widetilde{f}(\alpha_x)\}_{x\in\kei}\rangle} =$$
	$$=\widetilde{f}\left(\overline{\langle\{(\alpha_x)\}_{x\in\kei}\rangle}\right)=\widetilde f(\G)=\G'.$$
	Hence $\A'$ is concise.
\end{proof}
Suppose $\displaystyle{\A=\lim_{\substack{\longleftarrow\\\D}}\a_d\in\Pro\Kei^\fin} $ is concise. Then the projections $\A\to \a_d$ are all surjective. By \cref{Lma: conciseness carries by surjections profinite}, each $\a_d\in\AKei^\fin$ is also concise.

\begin{proposition}
\label{Prop: pro-fin concise augkei lift surj kei morphism}
	Let $\A=(\kei,\G,\alpha)\in\Pro\Kei^\fin$ be concise. Let $\k\in\Kei$ be finite and let $f\colon \kei\twoheadrightarrow \k$ be a surjective morphism in $\Pro\Kei^\fin$.
	Then there is a unique morphism $\rho\colon\G\to\Inn(\k)$ s.t.
	$$(f,\rho)\colon \A\to \a_\k=(\k,\Inn(\k),\varphi_\k)$$
	is a morphism in $\Pro\AKei^\fin$.
\end{proposition}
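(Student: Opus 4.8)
The plan is to bootstrap from the finite case, \cref{Prop: concise augkei lift surj kei morphism}, by descending everything to a single finite stage. Write $\A=\lim_{\D}\a_d$ with $\a_d=(\k_d,G_d,\alpha_d)$ finite and all transition maps surjective, as in convention (\ref{Eqn: profinite kei convention}), and denote by $\pi_d\colon\kei\to\k_d$ and $\widetilde\pi_d\colon\G\to G_d$ the two components of the projection $\A\to\a_d$. Since $\A$ is concise, these projections are surjective, so by \cref{Lma: conciseness carries by surjections profinite} each $\a_d$ is a finite \emph{concise} augmented kei. Because $\k$ is finite, the morphism $f\colon\kei\to\k$ factors through some stage: there are $d_0\in\D$ and a kei morphism $f_{d_0}\colon\k_{d_0}\to\k$ with $f=f_{d_0}\circ\pi_{d_0}$. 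This is the standard fact that a finite object is co-compact in a pro-category, i.e. $\Hom_\Kei^\cont(\kei,\k)=\varinjlim_{\D}\Hom_{\Kei^\fin}(\k_d,\k)$; concretely, the finite clopen partition $\{f^{-1}(c)\}_{c\in\k}$ of $\kei$ is pulled back from a finite quotient, and surjectivity of $\pi_{d_0}$ forces the induced set map $\k_{d_0}\to\k$ to be a kei morphism. As $f$ is surjective and factors through $f_{d_0}$, the map $f_{d_0}\colon\k_{d_0}\twoheadrightarrow\k$ is itself surjective.

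Next I would apply \cref{Prop: concise augkei lift surj kei morphism} to the finite concise augmented kei $\a_{d_0}$ and the surjection $f_{d_0}$, obtaining the unique homomorphism $\rho_0\colon G_{d_0}\to\Inn(\k)$ with $(f_{d_0},\rho_0)\colon\a_{d_0}\to\a_\k$ a morphism in $\AKei$, and set $\rho:=\rho_0\circ\widetilde\pi_{d_0}\colon\G\to\Inn(\k)$. This $\rho$ is continuous, being a composite of continuous maps into the finite discrete group $\Inn(\k)$. That $(f,\rho)$ satisfies the two conditions defining a morphism of augmented keis (\cref{Def: augmented quandle}) is then a short diagram chase using that both $\pi_{d_0}$ and $(f_{d_0},\rho_0)$ are morphisms of augmented keis: for $x\in\kei$ one gets $\rho(\alpha_x)=\rho_0(\widetilde\pi_{d_0}(\alpha_x))=\rho_0(\alpha_{\pi_{d_0}(x)})=\varphi_{f_{d_0}(\pi_{d_0}(x))}=\varphi_{f(x)}$, and for $g\in\G$, $x\in\kei$ one gets $f(g(x))=f_{d_0}(\widetilde\pi_{d_0}(g)(\pi_{d_0}(x)))=\rho_0(\widetilde\pi_{d_0}(g))(f(x))=\rho(g)(f(x))$. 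Hence $(f,\rho)$ is a morphism in $\Pro\AKei^\fin$.

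For uniqueness, suppose $\rho'\colon\G\to\Inn(\k)$ is any continuous homomorphism making $(f,\rho')$ a morphism in $\Pro\AKei^\fin$. The first morphism condition forces $\rho'(\alpha_x)=\varphi_{f(x)}=\rho(\alpha_x)$ for every $x\in\kei$, so $\rho$ and $\rho'$ agree on $\langle\{\alpha_x\}_{x\in\kei}\rangle$; since $\Inn(\k)$ is Hausdorff, the set on which they agree is a closed subgroup of $\G$, hence contains $\overline{\langle\{\alpha_x\}_{x\in\kei}\rangle}=\G$ by conciseness, so $\rho'=\rho$. The only step that is not pure formalism is the reduction in the first paragraph — arranging that $f$, and the conciseness we need, both live at a common finite stage $d_0$; once that is in place the finite proposition does the real work and the rest are diagram chases. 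I expect the point most worth spelling out carefully is the co-compactness of finite keis in $\Pro\Kei^\fin$ (equivalently, that a finite clopen partition of $\kei$ is pulled back from a finite quotient), together with the observation that conciseness and surjectivity are inherited by $\a_{d_0}$ and $f_{d_0}$.
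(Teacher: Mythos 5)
Your proposal is correct and follows essentially the same route as the paper: factor $f$ through a finite stage $\a_{d_0}$ of a surjective presentation of $\A$, note that conciseness descends to $\a_{d_0}$ via \cref{Lma: conciseness carries by surjections profinite}, apply the finite case \cref{Prop: concise augkei lift surj kei morphism}, and compose with the projection. Your uniqueness argument (any competing $\rho'$ agrees with $\rho$ on the $\alpha_x$, hence on the closed subgroup they topologically generate, which is all of $\G$ by conciseness) is in fact slightly more direct than the paper's appeal to cofilteredness of the diagram, and is a perfectly valid substitute.
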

\begin{proof}
	Write $\A$ as the limit
	$$\A =\lim\limits_{\substack{\longleftarrow\\d\in\D}}\a_d\;\;,\;\;\;\;\a_d=(\k_d,G_d,\alpha_d)\in\AKei^ \fin$$
of a cofiltered diagram in $\AKei^\fin$ with surjections, so that $$\kei= \lim\limits_{\substack{\longleftarrow\\d\in\D}}\k_d.$$
Therefore $f\colon\kei\twoheadrightarrow \k$ factors through $f_d\colon\k_d\twoheadrightarrow \k$ for some $d\in \D$.
The morphism $\A\to\a_d$ is surjective, therefore $\a_d$ is concise by \cref{Lma: conciseness carries by surjections profinite}. \Cref{Prop: concise augkei lift surj kei morphism} 
implies the existence of a canonical morphism
	$$(f_d,\rho_d)\colon \a_d\to \a_\k$$
	in $\AKei^\fin$. The composition
	$$(f,\rho)\colon\A\to\a_d\xrightarrow{(f_d,\rho_d)}\a_\k$$
	is a morphism in $\Pro\AKei^\fin$ lifting $f$.
	For all $d\in \D$ there is at most one such map $(f_d,\rho_d)$. Moreover the diagram $\D$ is cofiltered. Therefore $(f,\rho)$, independent of $d$, is uniquely defined by $f$.
\end{proof}

The following proposition lists versions of propositions from the previous section for profinite augmented keis. The proofs are fundamentally the same, and are therefore omitted.

\begin{proposition}
\label{Prop: profinite augkei statements}
${}$
\begin{enumerate}
	\item[1. (see \ref{Prop: augmented quotient by normal N}):] Let $\A=(\kei,\G,\alpha)\in\Pro\AKei^\fin$ and let $N\unlhd \G$ be closed. Then
	$$N\backslash\A:=(N\backslash\kei,N\backslash \G,\alpha_N)\in \Pro\AKei^ \fin,$$
	such that the quotient map $\A\to N\backslash \A$ is a morphism in $\Pro\AKei^\fin$.
	\item[2. (see \ref{Prop: factor augkei morphism through quotient}):] Let $(f,\widetilde f)\colon \A\to\A'$ be a morphism in $\Pro\AKei^\fin$. Denote by $N_{\widetilde f}:=\ker(\widetilde f)$. Then $(f,\widetilde f)$ factors through the quotient
	$$N_{\widetilde f}\backslash \A\to \A'.$$
	\item[3. (see \ref{Prop: factor from concise through quotient}):] Let $\A=(\kei,\G,\alpha)\in\Pro\AKei^\fin$ be concise, let $\k\in \Kei^\fin$ be finite, and let $f\colon\kei\to\k$ be a morphism in $\Pro\Kei^\fin$. Then $f$ factors through $N_{f}\backslash\kei$, where
	$$N_f:=\ker\left(\rho_{\A,f}\colon \G\to\Inn(f(\kei))\right)\unlhd \G.$$
\end{enumerate}
\end{proposition}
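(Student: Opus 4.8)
The plan is to reduce each of the three parts to its already-proved finite counterpart (\cref{Prop: augmented quotient by normal N}, \cref{Prop: factor augkei morphism through quotient}, \cref{Prop: factor from concise through quotient}) by a cofiltered-limit argument, using the convention (\ref{Eqn: profinite kei convention}) that presents every object of $\Pro\AKei^\fin$ as a cofiltered limit of finite augmented keis along surjections, together with the standard fact that a continuous morphism out of such a limit into a finite object factors through one of the stages.

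For part 1, write $\A=\lim_\D\a_d$ with $\a_d=(\k_d,G_d,\alpha_d)\in\AKei^\fin$ and surjective transition maps, so $\kei=\lim_\D\k_d$ and $\G=\lim_\D G_d$ with surjective projections. Given closed $N\unlhd\G$, let $N_d\unlhd G_d$ be the image of $N$ under the projection $\G\twoheadrightarrow G_d$; it is a finite, hence closed, normal subgroup, and the transition maps carry $N_d$ onto $N_{d'}$. By \cref{Prop: augmented quotient by normal N} each $N_d\backslash\a_d$ is a finite augmented kei and the transition maps descend to surjections $N_d\backslash\a_d\twoheadrightarrow N_{d'}\backslash\a_{d'}$, so (\ref{Eqn: profinite kei convention}) applies to the resulting cofiltered diagram. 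It remains to identify $\lim_\D N_d\backslash\a_d$ with the naive quotient $N\backslash\A$. On group components this is the standard $N\backslash\G=\lim_\D N_d\backslash G_d$. On set components the canonical map $N\backslash\kei\to\lim_\D N_d\backslash\k_d$ is surjective since each $\kei\twoheadrightarrow N_d\backslash\k_d$ is and $N\backslash\kei$ is compact, and it is injective because if $x,y\in\kei$ have the same image in every $N_d\backslash\k_d$, then the sets $S_d\subseteq N$ of those $n$ whose action carries the image of $x$ in $\k_d$ to the image of $y$ are clopen, nonempty (lift an element of $N_d$ along the surjection $N\twoheadrightarrow N_d$), and downward-directed, so $\bigcap_d S_d\neq\emptyset$ by compactness of $N$, and any $n$ in the intersection satisfies $n\cdot x=y$ in $\kei=\lim_\D\k_d$. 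Hence $N\backslash\A=\lim_\D N_d\backslash\a_d\in\Pro\AKei^\fin$, and the quotient map, being the limit of the maps $\a_d\to N_d\backslash\a_d$, is a morphism in $\Pro\AKei^\fin$.

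For part 2, $N_{\widetilde f}=\ker\widetilde f$ is closed, so part 1 supplies $N_{\widetilde f}\backslash\A\in\Pro\AKei^\fin$ with quotient morphism $q\colon\A\to N_{\widetilde f}\backslash\A$. For $n\in N_{\widetilde f}$ and $x\in\kei$ one has $f(n\cdot x)=\widetilde f(n)(f(x))=f(x)$, so $f$ is constant on $N_{\widetilde f}$-orbits and $\widetilde f$ on cosets; the induced maps on $N_{\widetilde f}\backslash\kei$ and $N_{\widetilde f}\backslash\G$ are continuous (precomposing them with the surjective quotient maps, which are quotient maps between compact Hausdorff spaces, returns $f$ and $\widetilde f$), and the augmented-kei axioms and the compatibility square (\ref{Eqn: AKei morphism}) for the induced pair follow by lifting along $q$ exactly as in \cref{Prop: factor augkei morphism through quotient}. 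For part 3, put $\k'':=f(\kei)\le\k$, a finite sub-kei, so $f\colon\kei\twoheadrightarrow\k''$ is surjective; \cref{Prop: pro-fin concise augkei lift surj kei morphism} gives the unique $\rho_{\A,f}\colon\G\to\Inn(\k'')$ with $(f,\rho_{\A,f})\colon\A\to\a_{\k''}$ a morphism in $\Pro\AKei^\fin$. Applying part 2 to this morphism with $N_f=\ker\rho_{\A,f}$ produces a factorization $\A\to N_f\backslash\A\to\a_{\k''}$, and forgetting the group data to $\Pro\Kei^\fin$ exhibits $f$ as $\kei\twoheadrightarrow N_f\backslash\kei\twoheadrightarrow\k''\hookrightarrow\k$.

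The genuine obstacle is confined to part 1. Parts 2 and 3 are the finite arguments applied verbatim to pro-finite data, but part 1 cannot be settled simply by forming the quotient in $\AKei^\Top$, since a compact, Hausdorff, totally disconnected kei is not obviously pro-finite — in contrast with the situation for topological groups, and the paper only claims full faithfulness, not essential surjectivity, of $\Pro\Kei^\fin\hookrightarrow\Kei^\Top$. One therefore has to exhibit $N\backslash\A$ concretely as a cofiltered limit of finite augmented keis, which is exactly what passing to the image subgroups $N_d$ accomplishes, and then the two compactness arguments (surjectivity onto, and injectivity out of, the limit) are the load-bearing steps. I would also take care to verify that the $N_d$-quotients assemble into a diagram with surjective transition maps, so that the convention (\ref{Eqn: profinite kei convention}) legitimately applies to it.
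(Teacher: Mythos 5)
Your proof is correct, and it realizes exactly what the paper intends: the paper omits the argument with the remark that ``the proofs are fundamentally the same'' as the finite versions, and your reduction to the finite stages via the convention (\ref{Eqn: profinite kei convention}) is the natural way to make that precise. Your part~1 is in fact slightly more careful than the paper, since you explicitly verify (via the image subgroups $N_d$ and the two compactness arguments) that $N\backslash\A$ is again an object of $\Pro\AKei^\fin$ rather than merely a topological augmented kei, which is the one point the finite argument does not hand you for free.
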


\begin{definition}
	A profinite group $\G\in\Pro\Grp^\fin$ is \emph{small} if for all $d\in\NN$, $\G$ has finitely-many open subgroups of index $d$.
\end{definition}

\begin{proposition}
	Let $\G\in\Pro\Grp^\fin$. Then $\G$ is small iff for all  $G\in\Grp^\fin$,
	$$\left|\Hom_{\Grp}^\cont(\G,G)\right|<\infty.$$
\end{proposition}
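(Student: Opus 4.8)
The plan is to prove the two implications separately, using the standard dictionary between open subgroups of index $d$ and continuous homomorphisms into the symmetric group $S_d$ on $d$ letters.

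For the forward direction, I would start from a finite group $G$ with $n:=|G|$ and observe that any $\phi\in\Hom_\Grp^\cont(\G,G)$ has open kernel (the singleton $\{1_G\}$ is open in the discrete group $G$) of index $[\G:\ker\phi]=|\image\phi|$ dividing $n$. Since $\G$ is small, for each divisor $d$ of $n$ there are only finitely many open subgroups of index $d$, hence only finitely many open normal subgroups of $\G$ that can occur as $\ker\phi$. For a fixed such $N\unlhd\G$ of index $d$, a continuous homomorphism $\G\to G$ with kernel $N$ is determined by an injective homomorphism $\G/N\hookrightarrow G$, and there are at most $|\Hom_\Grp(\G/N,G)|\le n^{d}<\infty$ of those. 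Summing over the finitely many admissible $N$ yields $|\Hom_\Grp^\cont(\G,G)|<\infty$.

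For the converse, I would fix $d\in\NN$ and apply the hypothesis to the finite group $S_d$. Given an open subgroup $U\le\G$ of index exactly $d$, the left translation action of $\G$ on the finite discrete coset space $\G/U$ is continuous — each point stabilizer $gUg^{-1}$ is open, equivalently the normal core $\bigcap_{g}gUg^{-1}$ is a finite intersection of open subgroups and hence open — so choosing a bijection $\G/U\xrightarrow{\sim}\{1,\dots,d\}$ sending the trivial coset to $1$ produces a continuous homomorphism $\rho_U\colon\G\to S_d$ with $\rho_U^{-1}(\Stab_{S_d}(1))=U$. Then $U$ is recovered from $\rho_U$, so $U\mapsto\rho_U$ embeds the set of open subgroups of index $d$ into $\Hom_\Grp^\cont(\G,S_d)$, which is finite by assumption; thus $\G$ has finitely many open subgroups of each index, i.e.\ $\G$ is small.

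I do not anticipate a genuine obstacle: the whole content is the two continuity observations — that a continuous homomorphism into a finite discrete group has open kernel, and that the permutation representation on $\G/U$ is continuous for $U$ open — together with the crude bound $|\Hom_\Grp(Q,G)|\le|G|^{|Q|}$ for a fixed finite quotient $Q=\G/N$. The mildest points to watch are that in the forward direction one should count \emph{normal} open subgroups of bounded index (automatic once all open subgroups of each index $d$ are known to be finite in number), and that in the converse one only needs $\rho_U$ for subgroups of index \emph{exactly} $d$, so that the orbit of $1$ has size $d$ and the recovered stabilizer has the correct index.
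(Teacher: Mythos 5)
Your proof is correct and follows essentially the same route as the paper's: both arguments hinge on the normal core $\bigcap_{g}gUg^{-1}$ of an open subgroup $U$ of index $d$ being open of bounded index, which is exactly what makes your permutation representation $\rho_U\colon\G\to S_d$ continuous and what the paper uses to reduce smallness to counting normal open subgroups. Your write-up merely makes explicit the two counting steps (kernels plus maps out of a fixed finite quotient in one direction, the embedding $U\mapsto\rho_U$ in the other) that the paper leaves implicit.
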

\begin{proof}
	Let $H\le \G$ be an open subgroup of index $d$. Then 
	$$N_H:=\bigcap_{gH\in \G/H}gHg^{-1}\le \G$$
	is open and normal, satisfying
	$$\G/N_H\hookrightarrow \prod_{\G/H}\G/gHg^{-1}.$$
	Therefore every open subgroup $H$ of index $d$ contains an open normal subgroup $N_H$ of index $[\G:N_H]\le d^d$. It follows that $\G$ is small if and only if for all $d\in\NN$ there are finitely-many \emph{normal} open subgroups $N\unlhd\G$ of index $[\G:N]\le d$. This in turn occurs
	if and only if for all $G\in\Grp^\fin$,
	$$\left|\Hom_{\Grp}^\cont(\G,G)\right|<\infty.$$
\end{proof}

We therefore give the following analogous definition:

\begin{definition}
	Let $\kei\in\Pro\Kei^\fin$. We say $\kei$ is \emph{small} if for every $\k\in\Kei^\fin$,
	$$\left|\Hom_{\Kei}^\cont(\kei,\k) \right|<\infty.$$
\end{definition}

\begin{proposition}
\label{Prop: concise kei small condition}
	Let $\A=(\kei,\G,\alpha)\in\Pro\AKei^\fin$ be concise. If $\G$ is a small profinite group and if $\left|\G\backslash\kei\right|<\infty$, then $\kei\in\Pro\Kei^\fin$ is small.
\end{proposition}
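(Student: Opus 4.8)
Fix an arbitrary finite kei $\k\in\Kei^\fin$; the plan is to show $\Hom_\Kei^\cont(\kei,\k)$ is finite by factoring every continuous morphism through a finite quotient of $\kei$ drawn from a finite list, and then bounding the number of maps out of each such quotient. So let $f\colon\kei\to\k$ be a continuous morphism and set $\k_f:=f(\kei)\le\k$, a subkei of $\k$. By part~(3) of \cref{Prop: profinite augkei statements} (the profinite analogue of \cref{Prop: factor from concise through quotient}), $f$ factors as $\kei\twoheadrightarrow N_f\backslash\kei\xrightarrow{\bar f}\k$ with $N_f:=\ker\!\left(\rho_{\A,f}\colon\G\to\Inn(\k_f)\right)\unlhd\G$ a closed normal subgroup. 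Since $\k_f$ is a finite set, $\Inn(\k_f)\le\Aut_\Kei(\k_f)$ is a finite group, so $\G/N_f\cong\image(\rho_{\A,f})$ is finite and $N_f$ is open.

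Next I would check that only finitely many subgroups arise as $N_f$. As $f$ varies, $\k_f$ ranges over the finite set of subkeis of the finite kei $\k$, so $\Inn(\k_f)$ ranges over a finite collection of finite groups. For each finite group $H$, smallness of $\G$ gives $\bigl|\Hom_\Grp^\cont(\G,H)\bigr|<\infty$; hence the set of homomorphisms $\rho_{\A,f}$ that can occur is finite, and therefore so is the set $\mathcal{N}$ of kernels $N_f$.

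The crux is that $N\backslash\kei$ is \emph{finite} for each $N\in\mathcal{N}$. By part~(1) of \cref{Prop: profinite augkei statements}, $N\backslash\A=(N\backslash\kei,N\backslash\G,\alpha_N)$ is a profinite augmented kei, so the finite group $N\backslash\G=\G/N$ acts on $N\backslash\kei$, and its orbit space is canonically $(N\backslash\G)\backslash(N\backslash\kei)\cong\G\backslash\kei$, which is finite by hypothesis. Hence $N\backslash\kei$ is a union of $|\G\backslash\kei|$ orbits, each of size at most $|\G/N|$, so $|N\backslash\kei|\le|\G\backslash\kei|\cdot|\G/N|<\infty$. This is the step that genuinely uses both hypotheses on $\A$ — that $\G$ is small and that $\G\backslash\kei$ is finite — and I expect it to carry the weight of the argument; the rest is bookkeeping.

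Finally I would assemble the estimate. The assignment $f\mapsto(N_f,\bar f)$ is injective: $N_f$ determines the surjection $\kei\twoheadrightarrow N_f\backslash\kei$, and $\bar f$ then determines $f$. For each fixed $N\in\mathcal{N}$, the induced morphisms $\bar f\colon N\backslash\kei\to\k$ (automatically continuous, as $N\backslash\kei$ is finite) number at most $|\k|^{|N\backslash\kei|}<\infty$ since both keis are finite. Therefore
$$\bigl|\Hom_\Kei^\cont(\kei,\k)\bigr|\ \le\ \sum_{N\in\mathcal{N}}\bigl|\Hom_\Kei(N\backslash\kei,\k)\bigr|\ \le\ \sum_{N\in\mathcal{N}}|\k|^{|N\backslash\kei|}\ <\ \infty,$$
and since $\k\in\Kei^\fin$ was arbitrary, $\kei$ is small.
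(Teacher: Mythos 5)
Your proof is correct and follows essentially the same route as the paper's: use conciseness to lift each coloring to a morphism of augmented keis, use smallness of $\G$ to show only finitely many kernels $N_f$ occur, and use finiteness of $\G\backslash\kei$ to see that each quotient $N\backslash\kei$ is finite. The only cosmetic difference is that the paper first restricts to surjective morphisms and intersects all the kernels into a single open normal subgroup $H_\k$, factoring everything through one finite quotient, whereas you keep the finite family $\mathcal N$ and sum over it; both are fine.
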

\begin{proof}
	A finite kei $\k$ has finitely-many sub-keis, therefore $\kei$ is small iff for all $\k\in\Kei^\fin$ there are finitely-many \emph{surjective} morphisms $\kei\twoheadrightarrow\k$. Since $\A$ is concise, by \cref{Prop: pro-fin concise augkei lift surj kei morphism} any surjective morphism $f\colon \kei \twoheadrightarrow\k$ extends uniquely to a morphism
	$$(f,\rho_{\A,f})\colon\A\twoheadrightarrow\a_\k$$
	in $\Pro\AKei^\fin$. Denote by
	$$H_f:=\ker(\rho_{\A,f}\colon\G\twoheadrightarrow\Inn(\k))\unlhd \G,$$
	$$H_\k:=\bigcap_{f\colon\kei\twoheadrightarrow\k}H_f\unlhd\G.$$
	Since $\Inn(\k)$ is finite, and $\rho_{\A,f}$ continuous, then $H_f\unlhd\G$ is open. We also have
	$$\left|\Hom_{\Grp}^\cont(\G,\Inn(\k))\right|<\infty$$
	because $\G\in\Pro\Grp^\fin$ is small. Therefore $H_\k\unlhd \G$ is open. By \cref{Prop: profinite augkei statements}, every  $f\colon \kei\twoheadrightarrow\k$ factors through $H_{f}\backslash \kei$, and therefore through
	$$\overline f\colon H_\k\backslash \kei\to\k.$$
	The quotient  $H_\k\backslash\kei$ is finite because $\G\backslash\kei$ is finite, and the fibers of the map
	$$H_\k\backslash\kei\twoheadrightarrow\G\backslash\kei$$
	are transitive $H_\k\backslash \G$-sets - therefore finite.
	We conclude that for all $\k\in\Kei^\fin$,
	$$\Hom_{\Kei}^\cont(\kei,\k)^\surj\xleftarrow{\sim}\Hom_{\Kei^\fin}(H_\k\backslash \kei,\k)^\surj\in\Fin.$$
	Hence $\kei$ is small.
\end{proof}

\subsection{Disjoint Unions in $\Kei$}

In this section we shall discuss disjoint unions of keis. Much like the tensor product of non-commutative rings, this is not the coproduct in $\Kei$. It is however a symmetric monoidal structure on $\Kei$, with the empty kei playing the role of the unit. This discussion can be extended to profinite keis as well. The statements and their proofs are similar enough that we omit them in the profinite case.

\begin{proposition}
	Let $\k_1,\k_2\in\Kei$. Then the disjoint union $\k_1\sqcup\k_2$ is a kei with structure
	$$\qq xy=\;\;\;\;\;\;\begin{tabular}{|c||c|c|}
\hline
&$x\in\k_1$&$x\in\k_2$\\
\hline
\hline
$y\in\k_1$&$\varphi_{\k_1,x}(y)$&$y$\\
\hline
$y\in\k_2$&$y$&$\varphi_{\k_2,x}(y)$\\
\hline
\end{tabular}\;\;.$$
	This puts a symmetric monoidal structure $(\Kei,\sqcup,\emptyset)$ on $\Kei$.
\end{proposition}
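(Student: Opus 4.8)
The plan is to first check directly that the displayed table defines a kei operation on the set $\k_1\sqcup\k_2$, and then to transport the standard symmetric monoidal structure $(\Set,\sqcup,\emptyset)$ along the faithful forgetful functor $\Kei\to\Set$.

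The one structural fact that makes the verification painless is that, for any $x,y\in\k_1\sqcup\k_2$, the element $\qq xy$ lies in the same summand as $y$: if $x,y$ are in the same $\k_i$ then $\qq xy=\varphi_{\k_i,x}(y)\in\k_i$, and otherwise $\qq xy=y$. Granting this, (K1) is immediate ($\qq xx=\varphi_{\k_i,x}(x)=x$ when $x\in\k_i$), and (K2) splits into the two sub-cases ``$x,y$ in the same summand'', where $\qq x{(\qq xy)}=\varphi_{\k_i,x}^{2}(y)=y$ by (K2) in $\k_i$, and ``$x,y$ in different summands'', where $\qq x{(\qq xy)}=\qq xy=y$. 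For (K3), $\qq x{(\qq yz)}=\qq{(\qq xy)}{(\qq xz)}$, I would organize the argument by the position of $y,z$: if $y,z$ lie in different summands then $\qq yz=z$ while $\qq xy,\qq xz$ lie in the summands of $y,z$ respectively (hence in different summands), so both sides collapse to $\qq xz$; if $y,z$ lie in the same $\k_i$ and $x\in\k_i$ as well, both sides are computed entirely inside $\k_i$ and the identity is (K3) for $\k_i$; and if $y,z\in\k_i$ but $x$ is in the other summand, the left side is $\qq x{(\qq yz)}=\qq yz$ while the right side is $\qq{(\qq xy)}{(\qq xz)}=\qq yz$ as well. In every case the two sides agree. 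I expect this to be the only step with any content, and even there it is bookkeeping.

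For the monoidal structure, I would note that the canonical bijections furnishing associativity, the unit laws with $\emptyset$, and the symmetry for $\sqcup$ of sets all preserve summand membership and restrict to the identity on each summand; since the table depends only on summand membership and on the internal operations $\varphi_{\k_i,-}$, each of these bijections intertwines the two kei operations and is therefore an isomorphism in $\Kei$. (In fact $\k_1\sqcup\k_2$ and $\k_2\sqcup\k_1$ are literally equal as keis under the tautological identification of underlying sets.) Coherence is then automatic: the pentagon, triangle, and hexagon diagrams are diagrams in $\Kei$ whose images under the forgetful functor to $\Set$ are the corresponding coherence diagrams there, which commute; faithfulness of $\Kei\to\Set$ forces them to commute in $\Kei$ as well. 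This yields the symmetric monoidal structure $(\Kei,\sqcup,\emptyset)$, and the only genuine obstacle — keeping the cases of (K3) straight — is organizational rather than mathematical.
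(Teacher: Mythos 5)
Your proposal is correct and follows essentially the same route as the paper: a direct case check of the kei axioms organized around the observation that $\qq xy$ always lies in the summand of $y$ (the paper phrases (K3) via the equivalent operator identity $\varphi_x\circ\varphi_y=\varphi_{\qq xy}\circ\varphi_x$ and splits on the positions of $x,y$, but this is the same bookkeeping). Your treatment of the monoidal coherence via faithfulness of the forgetful functor to $\Set$ is a sound way to handle the part the paper leaves implicit.
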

\begin{proof}
	The first two kei axioms are easily verified. As for the third, let $x,y\in \k_1\sqcup \k_2$. If $x,y\in\k_i$ for some $i$, w.l.o.g. $i=1$, then
	$$\varphi_{x}\circ\varphi_y= (\varphi_{\k_1,x}\circ \varphi_{\k_1,y})\sqcup \Id_{\k_2}=$$
	$$=(\varphi_{\k_1,\qq xy}\circ \varphi_{\k_1,x})\sqcup \Id_{\k_2} =\varphi_{\qq xy}\circ \varphi_x.$$
	otherwise, w.l.o.g. $x\in\k_1$ and $y\in\k_2$, then
	$$\varphi_x\circ\varphi_y= \varphi_{\k_1,x}\sqcup\varphi_{\k_2,y} = \varphi_y\circ\varphi_x = \varphi_{\qq xy}\circ \varphi_x.$$
\end{proof}

Let $T\in\Set^\fin$. The disjoint union of $T$ copies of the terminal object $*\in\Kei$ recovers the trivial kei $\triv\in\Kei$ with underlying set $T$.
Hence there is a functor
$$\iota_T\colon \Kei^T\to\Kei_{/\triv}\;\;\;\;,\;\;\;\;\;\;\;\;(\k_t)_{t\in T}\longmapsto (\;\; \bigsqcup_{t\in T}\k_t\to \triv\;\;).$$
The functor $\iota_T$ preserves small limits and filtered colimits, and therefore has a left adjoint
$$\lambda_T\colon \Kei_{/\triv}\to \Kei^T.$$
The functor $\iota_T$ is also fully faithful, therefore $\lambda_T$ can be computed in terms of the unit
$$\kl\to(\iota_T\circ\lambda_T)(\kl).$$
We explicitly construct $\lambda_T(\kl)$ for $\kl\in\Kei$, making use of an augmentation on $\kl$:

\begin{proposition}
\label{Lma: kei factoring technical}
\label{Prp: Map concise to disjoint union practical}
	Let $\triv\in\Kei^\fin$ be trivial, let $(\kl,G,\alpha)\in\AKei$ be concise, and let $\eta\colon\kl \to \triv $ be a morphism in $\Kei$. For $t\in \triv$, we denote by $\kl_t:=\eta^{-1}(t)\subseteq \kl$ and by $H_t\le G$ the subgroup
	$H_t:=\langle\{\alpha_x\;|\;x\in\kl\setminus\kl_t\}\rangle\le G$.  Then
	\begin{enumerate}
		\item The set $\bigsqcup_T H_t\backslash \kl_t$ is well-defined, and a quotient of $\kl$ in $\Kei$.
		\item For all $(\k_t)_T\in\Kei^T$, let $\k_\triv\in\Kei$ denote the disjoint union $$\k_\triv:=\bigsqcup_{T}\k_t.$$
		Then there is a natural bijection
		$$\Hom_{\Kei_{/\triv}}(\kl,\k_\triv)\simeq \prod_{t\in T} \Hom_{\Kei}(H_t\backslash\kl_t,\k_t).$$
	\end{enumerate}
\end{proposition}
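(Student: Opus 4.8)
The plan is to construct the map $\kl \to \bigsqcup_T H_t\backslash\kl_t$ explicitly, verify it is a well-defined kei morphism over $\triv$, and then check that it is the universal such map, which by Yoneda establishes the adjunction formula in part (2).

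First I would set up the candidate quotient map. For $x\in\kl$ write $t=\eta(x)$, so that $x\in\kl_t$; send $x$ to the class $H_tx \in H_t\backslash\kl_t \subseteq \bigsqcup_T H_t\backslash\kl_t$. Call this map $q$. I must check $q$ is a kei morphism. The key computation: for $x,y\in\kl$, either $\eta(x)=\eta(y)=t$, in which case $\qq xy\in\kl_t$ too (since $\eta$ is a kei morphism into a trivial kei, $\eta(\qq xy)=\qq{\eta(x)}{\eta(y)}=\eta(y)$), and one checks $q(\qq xy)=\qq{q(x)}{q(y)}$ inside $H_t\backslash\kl_t$ using that the kei structure on the disjoint union restricted to one summand is the original structure (after passing to the $H_t$-quotient, which is a kei quotient by \cref{Prop: augmented quotient by normal N} — here one needs $H_t\unlhd G$, which follows from axiom (3) of augmented keis and conciseness, since $H_t$ is generated by a union of conjugacy classes... actually one needs to be slightly careful, I'd verify $H_t$ is normal by noting $\alpha_{g(x)}=g\alpha_x g^{-1}$ and $G$ acts on $\kl$ preserving the fibers of $\eta$ up to the $G$-action — the cleanest route is: $H_t$ is the kernel-type subgroup making $\kl_t\to H_t\backslash\kl_t$ the universal quotient on which the complementary generators act trivially, and normality can be extracted from conciseness plus the relation $\alpha_{g(x)} = g\alpha_xg^{-1}$). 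When $\eta(x)=t\neq s=\eta(y)$, the disjoint-union structure gives $\qq xy=y$ on the nose, and on the target side $\qq{q(x)}{q(y)}=q(y)$ as well by the same disjoint-union rule, so the identity holds trivially. This proves part (1): the image lands in a genuine kei, it is surjective, hence a quotient, and by construction $\eta$ factors as $\kl\xrightarrow{q}\bigsqcup_T H_t\backslash\kl_t\to\triv$ where the second map sends the $t$-summand to $t$.

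For part (2), given $(\k_t)_T$ with $\k_\triv=\bigsqcup_T\k_t$, I claim postcomposition with $q$ gives the bijection. A morphism $g\colon\kl\to\k_\triv$ over $\triv$ is the same as, for each $t$, a map $g_t\colon\kl_t\to\k_t$ (the restriction; the over-$\triv$ condition forces $g(\kl_t)\subseteq\k_t$), and the kei-morphism condition for $g$ decomposes: crossings within $\kl_t$ require each $g_t$ to be a kei morphism, and mixed crossings $\qq xy$ with $x\in\kl_t$, $y\in\kl_s$ ($t\neq s$) impose no condition since both sides are identities. The remaining content is that each such kei morphism $g_t\colon\kl_t\to\k_t$ must kill $H_t$ — i.e. factor through $H_t\backslash\kl_t$. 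This is where I would invoke \cref{Prop: concise augkei lift surj kei morphism} / \cref{Prop: factor from concise through quotient}: the full map $g\colon\kl\to\k_\triv$ is a morphism out of the concise $(\kl,G,\alpha)$, so it lifts to an augmented-kei morphism $(g,\rho)\colon(\kl,G,\alpha)\to\a_{g(\kl)}$; the generators $\alpha_x$ for $x\in\kl\setminus\kl_t$ map under $\rho$ to $\varphi_{g(x)}$, and since $g(x)\in\k_s$ for $s\neq t$ while $g_t(\kl_t)\subseteq\k_t$, the inner automorphism $\varphi_{g(x)}$ fixes $\k_t$ pointwise (disjoint-union structure again). Hence all of $\rho(H_t)$ acts trivially on the $\k_t$-summand, which forces $g_t$ to be constant on $H_t$-orbits, i.e. to factor through $H_t\backslash\kl_t$. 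Conversely any tuple $(\bar g_t\colon H_t\backslash\kl_t\to\k_t)_T$ assembles to a map $\kl\to\k_\triv$ over $\triv$ via $q$, and these two constructions are mutually inverse. Naturality in $(\k_t)_T$ is immediate from the formula.

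The main obstacle I anticipate is the normality of $H_t$ in $G$ and the precise bookkeeping showing $H_t\backslash\kl_t$ inherits a well-defined kei structure compatible with the disjoint-union operation — in other words, making rigorous that "quotient the ambient concise augmented kei by the subgroup generated by the off-fiber augmentations, then restrict to the fiber" produces exactly the right object. Everything else (the crossing-by-crossing verification, the Yoneda-style conclusion) is routine once that structural point is nailed down; I would state it as the one lemma deserving a careful argument and dispatch the rest by the disjoint-union rule and \cref{Prop: factor from concise through quotient}.
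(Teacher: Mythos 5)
Your overall architecture matches the paper's: construct the quotient map $q\colon\kl\to\bigsqcup_T H_t\backslash \kl_t$ explicitly, check it is a kei morphism, and show every morphism over $\triv$ into a disjoint union factors through it. The structural lemma you defer (normality of $H_t$ and the induced kei structure on each $H_t\backslash\kl_t$) is handled in the paper exactly along the lines you sketch: the fibers $\kl_t$ are $G$-invariant because $\eta(\alpha_x(y))=\qq{\eta(x)}{\eta(y)}=\eta(y)$ together with conciseness, and then $g\alpha_xg^{-1}=\alpha_{g(x)}$ with $g(x)\in\kl_{\eta(x)}$ gives normality of each $H_t$; the paper additionally checks $\alpha_{h(x)}\alpha_x^{-1}\in H_t$ for all $t$ via a commutator argument, which is what makes the operation descend in the \emph{first} variable as well.

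There is, however, a genuine error in your treatment of mixed crossings. You assert that for $x\in\kl_t$, $y\in\kl_s$ with $t\neq s$, ``the disjoint-union structure gives $\qq xy=y$ on the nose'' in $\kl$. This is false: $\kl$ is an arbitrary concise augmented kei equipped with a map to $\triv$, not a disjoint union, and $\qq xy=\alpha_x(y)$ need not equal $y$. The correct observation --- and the crux of the whole proposition --- is that $\alpha_x\in H_s$ whenever $x\notin\kl_s$, so $\qq xy=\alpha_x(y)$ lies in the $H_s$-orbit of $y$, and $q(\qq xy)=q(y)=\qq{q(x)}{q(y)}$ holds only after passing to the quotient. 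The same error leads you to claim in part (2) that mixed crossings ``impose no condition''; in fact they impose exactly the condition $f(\alpha_x(y))=\qq{f(x)}{f(y)}=f(y)$ for all $y\in\kl_t$ and $x\notin\kl_t$, which is the paper's one-line proof that $f_{|\kl_t}$ descends to $H_t\backslash\kl_t$. Your alternative derivation of that factorization via the augmentation lift of \cref{Prop: concise augkei lift surj kei morphism} is valid, so part (2) can be salvaged, but the verification in part (1) that $q$ is a morphism rests on the false ``on the nose'' claim and must be replaced by the $\alpha_x\in H_s$ argument. Note that if your claim were true, every $H_t$ would act trivially on $\kl_t$ and the proposition would be vacuous.
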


\begin{proof}
	Let $t \in\triv$. Then for all $y\in\kl_t$ and all $x\in\kl$,
	\begin{equation}
	\label{Eqn: eta eqn}
	\eta(\alpha_x(y))=\eta(\qq{x}{y})=\qq{\eta(x)}{\eta(y)}=\eta(y)=t.
	\end{equation}
	Since $(\kl,G,\alpha)$ is concise, it follows that the $\kl_t$ are $G$-invariant. For all $g\in G$ and $x\in \kl$,
	$$g\alpha_xg^{-1}=\alpha_{g(x)},$$
	where $g(x)\in \kl_{\eta(x)}$. Hence the subgroups
	$$H_t=\langle\{\alpha_x\;|\;x\in\kl\setminus\kl_{t}\}\rangle\;\;,\;\;\;\; H'_t:=\langle\{\alpha_x\;|\;x\in\kl_{t}\}\rangle\le G$$
	are normal. Next, let $x\in \kl$ and let $h\in H_{\eta(x)}$. Then
	$$\alpha_{h(x)}\alpha_x^{-1}=h\alpha_xh^{-1}\alpha_x^{-1}\in[H_{\eta(x)},H'_{\eta(x)}]\le H_{\eta(x)}\cap H'_{\eta(x)}.$$
	Let $t\in \triv$. If $t\neq \eta(x)$, then $H'_{\eta(x)}\le H_t$. Hence for all $t\in\triv$,
	$$\alpha_{h(x)}\alpha_x^{-1}\in H_t.$$
	Let $x,y\in\kl$. Then for all $h\in H_{\eta(y)}$,
	$$\qq{x}{h(y)}=\alpha_xh(y)=\alpha_xh\alpha_x^{-1}(\alpha_x(y))\in H_{\eta(y)}(\qq xy)= H_{\eta(\qq xy)}(\qq xy),$$
	and for all $h\in H_{\eta(x)}$,
	$$\qq{h(x)}y=\alpha_{h(x)}(y)=\alpha_{h(x)}\alpha_x^{-1}(\alpha_x(y))\in H_{\eta(\qq xy)}(\qq xy).$$
	Therefore the set $\lambda_\eta:=
	\hspace{2pt}\cdot\hspace{-7pt}\bigcup_T H_t\backslash \kl_t$ is a kei with structure inherited from $\kl$. For all $\overline x,\overline y\in\lambda_\eta$ with $\eta(x)\neq\eta(y)$, we have $\alpha_x\in H_{\eta(y)}$, therefore
	$$\qq{\overline x}{\overline y}=\overline{\qq xy}=\overline {\alpha_xy}=\overline y.$$
	Hence $\lambda_\eta$ is the disjoint union
	$$\lambda_\eta= \bigsqcup_T H_t\backslash \kl_t \in\Kei.$$
	Finally, let $(\k_t)_{t\in T}\in \Kei^T$ and let $f\colon \kl\to \k_\triv$ be a morphism in $\Kei_{/\triv}$. Then for all $t\in \triv$, for all $y\in \kl_t$ and all $x\in\kl\setminus\kl_t$,
	$$f(\alpha_x(y))=f(\qq{x}{y})=\qq{f(x)}{f(y)}=f(y).$$
	Therefore
	$f_{|\kl_t}$ is well-defined on the quotient $H_t\backslash \kl_t$, hence $f$ factors through
	$$\lambda_\eta= \bigsqcup_T H_t\backslash \kl_t\xrightarrow{\overline f}\k_\triv.$$
We conclude that
	$$\Hom_{\Kei_{/\triv}}(\kl,\k_\triv)\simeq \Hom_{\Kei_{/\triv}}\big(\bigsqcup_T N_t\backslash\kl_t,\k_\triv\big)\simeq \prod_{T} \Hom_{\Kei}(N_t\backslash\kl_t,\k_t).$$
\end{proof}

\section{Fundamental Arithemtic Keis}
\label{Section: fundamental arithmetic kei}



In this section we define the fundamental kei $\kei_n$ of square-free integers $n\in\NN$ by emulating the construction of $\kei_L$, the fundamental kei of a link $L$ in $S^3$. In the case of traditional links, $\kei_L$ is intimately tied to the automorphism group of some infinite branched cover of the sphere. The construction is similar here, and since infinite Galois groups are profinite, $\kei_n$ will in fact be a \emph{profinite} kei. We then analogously define $\k$-colorings of $n$ in terms of said $\kei_n$. 

\subsection{The Fundamental Kei of a link}
\label{Subsection: Fund kei of link}
Our definition of the fundamental arithmetic kei $\kei_n$ is motivated by a presentation of the fundamental kei $\kei_L$ found in the introduction.
The purpose of this section is to justify this presentation. 
Neither object $\kei_L,\SQ_L$ discussed here - nor indeed the general notion of a \emph{quandle} - are needed anywhere else in the paper. If you are comfortable with $\kei_L$ as presented in the introduction, feel free to skip this part.

\begin{proposition}
\label{Prop: fund kei justification}
	Let $L\subseteq S^3$ be a link. Let $M_L$ denote the branched double cover of $S^3$ with ramification locus $L$ and let $\widetilde M_L$ denote the universal cover of $M_L$. Then there is an augmented kei
	$$\left(\pi_0(\widetilde M_L\times_{S^3}L)\;,\;\Aut_\Top(\widetilde M_L/S^3)\;,\;m\right)\in\AKei$$
	that recovers the fundamental kei $\kei_L$ that corepresents colorings of $L$:
	$$\kei_L\simeq \pi_0(\widetilde M_L\times_{S^3}L).$$
\end{proposition}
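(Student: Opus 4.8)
The plan is to build the augmented kei structure directly from the deck-transformation action and a geometric "meridian" augmentation, then identify the underlying kei with Joyce's fundamental quandle/kei $\kei_L$ via its corepresentability of colorings. First I would recall the relevant covering-space setup: since $M_L \to S^3$ is the branched double cover ramified along $L$, the preimage of $L$ in $M_L$ is a copy of $L$ (the ramification locus upstairs), and away from $L$ the map $M_L \setminus L \to S^3 \setminus L$ is the connected double cover classified by the mod-$2$ linking-number homomorphism $\pi_1(S^3 \setminus L) \to \ZZ/2$. Then $\widetilde M_L \to M_L$ is the universal cover; I would set $G := \Aut_\Top(\widetilde M_L / S^3)$, the group of homeomorphisms of $\widetilde M_L$ covering the identity of $S^3$, which sits in an extension $1 \to \pi_1(M_L) \to G \to \ZZ/2 \to 1$ (the $\ZZ/2$ recording which sheet of $M_L$ one lands on). The set in question is $\kei_L^{geo} := \pi_0(\widetilde M_L \times_{S^3} L)$, the set of connected components of the preimage of $L$; because each point of $L$ has a neighborhood in $M_L$ meeting $L$ in an arc, and the local monodromy around $L$ in $\widetilde M_L$ is an honest (unbranched) cover, each such component is a line/circle, and $G$ permutes these components.

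Next I would define the augmentation $m \colon \kei_L^{geo} \to G$. For a component $c$ of the preimage of $L$, consider a small disk transverse to $L$ at a point of $L$ downstairs; its preimage in $M_L$ near $c$ is a single disk (the branch locus), and the preimage in $\widetilde M_L$ near $c$ is a union of disks. The local branched double cover $z \mapsto z^2$ on the transverse disk has a deck involution, and lifting this to $\widetilde M_L$ canonically (using that $\widetilde M_L \to M_L$ restricted near $c$ is trivial on that transverse disk since the disk is simply connected) produces a well-defined element $m_c \in G$ — the "meridian involution" at $c$. I would then verify the three augmented-kei axioms of \cref{Def: augmented quandle}: (1) $m_c(c) = c$ because the local branch involution fixes the ramification point; (2) $m_c^2 = 1_G$ because $z \mapsto z^2$ has order-two deck group; (3) $m_{g(c)} = g\, m_c\, g^{-1}$ because the meridian at $g(c)$ is the $g$-conjugate of the one at $c$, by naturality of the local construction under the homeomorphism $g$. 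This gives $\A_L := (\kei_L^{geo}, G, m) \in \AKei$, and by \cref{Prp: AKei to Kei functorial} the underlying set $\kei_L^{geo}$ carries a kei structure $x \mapsto m_x(y)$.

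Finally I would identify $\kei_L^{geo}$ with $\kei_L$. The cleanest route is to show both corepresent the coloring functor $\k \mapsto \Col_\k(L)$ on $\Kei$ (or on finite $\k$, then pass to the profinite statement): on one hand $\kei_L$ does by its defining property / by Joyce's theorem; on the other hand I would produce a natural bijection $\Hom_\Kei(\kei_L^{geo}, \k) \simeq \Col_\k(L)$. Given a kei homomorphism out of $\kei_L^{geo}$, restricting to the components lying over arcs of a fixed diagram $D$ of $L$ and checking the Wirtinger-type crossing relation — which holds because in $\widetilde M_L$ the component over the under-arc at a crossing is obtained from the component over the incoming under-arc by applying the meridian of the over-arc, exactly the picture in \cref{Fig: crossing coloring} — yields a $\k$-coloring of $D$; conversely a coloring assigns compatible elements of $\k$ to components and these glue. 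Naturality in $\k$ and Yoneda then force $\kei_L^{geo} \simeq \kei_L$ as keis. Alternatively, and perhaps more in the spirit of Winker's presentation, I would instead give a direct generators-and-relations matching: pick a basepoint, note $\pi_0(\widetilde M_L \times_{S^3} L)$ is a transitive-ish $G$-set whose point-stabilizers are the meridian-normalizer subgroups, and compare with the Winker graph construction of $\kei_L$.

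The main obstacle I expect is pinning down the augmentation $m$ rigorously — i.e. making the "local meridian involution lifted to $\widetilde M_L$" construction canonical and independent of the chosen transverse disk and of the lift, and then checking it is continuous/compatible when one later takes the profinite (pro-finite-quotient) refinement needed for $\kei_n$. The covering-space bookkeeping around the branch locus (distinguishing the branched cover $M_L$ from the unbranched cover $M_L \setminus L$, and tracking how $\pi_1(M_L)$ versus $\pi_1(S^3 \setminus L)$ act) is where the argument is most delicate; once the augmented kei $\A_L$ is correctly in hand, the identification with $\kei_L$ is essentially Joyce's corepresentability theorem plus a Wirtinger-relation check, which is routine.
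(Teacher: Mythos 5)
Your overall strategy is viable but diverges from the paper's, and its crucial step is left as a gap. The paper does not construct the meridian involutions $m_c$ directly on $\widetilde M_L$; instead it starts from the known path model of the fundamental quandle, $\SQ_L\simeq\pi_0(\widetilde S_L\times_{S_L}\partial U_L)$ where $\widetilde S_L$ is the universal cover of the link complement, invokes Joyce's Theorem 10.2 to identify $\kei_L$ with the quotient $H\backslash\SQ_L$ for $H=\langle m_x^2\mid x\in\SQ_L\rangle\unlhd\SG_L$, then uses Winker's exact sequence to identify $H\backslash\SG_L$ with $\Aut_\Top(\widetilde M_L/S^3)$, and finally a tubular-neighborhood retraction to convert $\pi_0$ of the pullback of $\partial U_L$ into $\pi_0(\widetilde M_L\times_{S^3}L)$. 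The augmentation is thus transported from the standard one on $\SQ_L$ rather than built from local branch involutions. Your bottom-up construction of $m_c$ can be made to work, but extending the local deck involution of $z\mapsto z^2$ to a global element of $\Aut_\Top(\widetilde M_L/S^3)$ requires knowing that the unbranched part of $\widetilde M_L$ is a \emph{regular} cover of the link complement (equivalently, that $H$ is normal) together with unique lifting; you flag this but do not resolve it.

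The genuine gap is the identification with $\kei_L$. Your proposed ``Wirtinger-relation check'' presumes a correspondence between arcs of a diagram $D$ and components of $\widetilde M_L\times_{S^3}L$, but no such correspondence exists a priori: the components are the \emph{elements} of $\kei_L$ (typically far more numerous than the arcs), and an arc only singles out a component after one chooses a path from the basepoint to $\partial U_L$ --- which is exactly the path model of $\SQ_L$ that the paper sets up and that Joyce's theorem relates to the diagrammatic presentation. Without routing through that theorem (or through Winker's presentation, which you mention only as an unelaborated alternative), the corepresentability claim $\Hom_\Kei(\pi_0(\widetilde M_L\times_{S^3}L),\k)\simeq\Col_\k(L)$ is not a routine check but the main content of the proposition. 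So the plan is salvageable, but only by importing the same external inputs the paper cites; as written, the final step is asserted rather than proved.
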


An \emph{augmented quandle} $(Q,G,\alpha)$, much like an augmented kei, consists of a set $Q$, a group $G$ acting on $Q$, and an augmentation map $\alpha\colon Q\to G$, satisfying two of the three augmented kei axioms:
$$\forall x\in Q\;,\;\;g\in G\;:\;\;\alpha_x(x)=x\;\;\textnormal{and}\;\;\alpha_{g(x)}=g\alpha_xg^{-1}.\; \footnote{See \cite{ARTICLE:Joyce1982} for the definition of quandles and augmented quandles in general, and quandles one associates to an oriented link $L$. We caution the reader that what we call the fundamental quandle is \emph{not} the fundamental quandle from \cite{ARTICLE:Joyce1982}. Rather this is the \emph{knot quandle} defined in \cite{ARTICLE:Joyce1982}.}$$
The set $Q$, together with the binary operator $(x,y)\mapsto \alpha_x(y)$, is an algebraic structure known as a \emph{quandle}.
To be more specific, quandles are a more general notion than keis, obtained by relaxing the second axiom, requiring that each element act merely as a permutation (rather than an involution). Quandles to oriented links are what keis are to unoriented links: much like keis, one can use quandles to color an oriented link $L$, and these colorings are similarly controled by a \emph{fundamental quandle} $\SQ_L$ attached to $L$. A topological definition of $\SQ_L$ is found in \cite[\textsection 14]{ARTICLE:Joyce1982}, where structure is given by an augmentation with $\pi_1(S_L)$. This boils down to the following presentation\footnote{See \cite[p.17]{BOOK:Nosaka2017}}:

We take $U_L\subseteq S^3$ to denote a tubular neighborhood of $L$, $S_L$ to denote the complement $S_L=S^3\setminus U$, and $\partial U_L\subseteq S_L$ the boundary of $U_L$.  
We fix a base point $*\in S_L$. Then $\SQ_L$ is defined as the set of paths $*\rightsquigarrow \partial U_L$ in $S_L$ - up to homotopy.
The structure on $\SQ_L$ is defined by an augmentation with $\SG_L=\pi_1(S_L)$. Here $\SG_L$ acts on $\SQ_L$ via concatenation, and the augmentation map $m\colon \SQ_L\to\SG_L$ maps a path $x$ to a \emph{meridian} $m_x\in \SG_L$, looping once around a component of $L$ in a manner consistent with the orientation of $L$.

\begin{proof}(\cref{Prop: fund kei justification})
	Let $\widetilde S_L$ denote the universal covering of $S_L$, and let
	$$\SG_L:=\pi_1(S_L)\simeq\Aut(\widetilde S_L/S_L).$$
	The pullback $\widetilde S_L\times_{S_L} \partial U_L$ in $\Top$ consists of 
paths
$*\rightsquigarrow \partial U$ in $S_L$, up to homotopy that fixes both endpoints. Passing to connected components we have an isomorphism$$
\pi_0(\widetilde S_L\times_{S_L} \partial U )\iso \SQ_L$$
of $\SG_L$-sets. We denote by $H\unlhd \SG_L$ the subgroup
$$H=\langle m_x^2\mid x\in \SQ_L\rangle\unlhd \SG_L,$$
and by
$$Y=H\backslash \widetilde S_L$$
the covering of $S_L$ corresponding to $H$.
By \cite[Thm. 10.2]{ARTICLE:Joyce1982}, 
$$\kei_L \simeq H\backslash \SQ_L\simeq \pi_0(Y\times_{S_L}\partial U),$$
with structure on $H\backslash \SQ_L$ given by augmentation with the group
$$H\backslash \SG_L\simeq \Aut(Y/{S_L}).$$
For every branched cover $Z\to S^3$ that is unramified away from $L$, we denote by $Z^\circ=Z\times_{S^3}S_L$. In \cite[\textsection 5]{WEBSITE:Winker1984}, the group $H\backslash \SG_L $ sits in an exact sequence
$$\begin{tikzcd}[row sep=tiny]
	0\ar[r]& \pi_1(M_L) \ar[r]&  H\backslash \SG_L \ar[r]& \Aut_{\Top}(M_L/S^3) \ar[r]& 0\\
	&\Aut((\widetilde M_L)^\circ/M_L^\circ) \ar[u,"\scriptstyle{||}",phantom]&&\Aut(M_L^\circ/S_L) \ar[u,"\scriptstyle{||}",phantom]
\end{tikzcd}\;\;.
$$
From the proof in \cite{WEBSITE:Winker1984}, it is not hard to see that
$$H\backslash \SG_L\simeq \Aut_\Top((\widetilde M_L)^\circ/S_L)\simeq \Aut_\Top(\widetilde M_L/S^3).$$

The pullback $M_L\times_{S^3}U_L\subseteq M_L$ is a disjoint union of solid tori that homotopy retracts onto $ M_L\times_{S^3}L$, and has boundary
$$\partial(M_L\times_{S^3}U_L) =
M_L^\circ\times_{S_L}\partial U_L\subseteq M_L.$$
The same holds for any covering space of $M_L$. Therefore
$$\kei_L=\pi_0((\widetilde M_L)^\circ\times_{S_L}\partial U_L)\simeq \pi_0(\widetilde M_L\times_{S^3} U_L)\simeq \pi_0(\widetilde M_L\times_{S^3} L).$$

\end{proof}

\subsection{The Fundamental Kei of an Arithmetic Link}

\begin{definition}
	We denote by $\NNN$ the set of square-free positive integers.
\end{definition}

\begin{definition}
	For $n=2^km\in\NN$ with $m$ odd, we define
	$$n^*=(-1)^{\frac{m-1}2}n.$$
	It is easily verified that $(n_1n_2)^* =n_1^*n_2^*$. Thus for $n\in\NNN$ we have
	$$n^*=\prod_{p|n}p^*.$$
\end{definition}

\begin{definition}
\label{Def: Ln Quad Field}
	Let $1\neq n\in\NNN$. By $\L_n$ we denote the quadratic number field
	$$\L_n=\QQ(\sqrt{n^*}).$$
	The field $\L_n$ is ramified at precisely $n$.
	We define the field $\F_n$ to be the maximal unramified extension of $\L_n$:
	$$\F_n=\L_n^{un}.$$
	The extension $\F_n/\QQ$ is Galois. By $\G_n$ we denote the Galois group
	$$\G_n=\Gal(\F_n/\QQ)\in\Pro\Grp^\fin.$$
\end{definition}

\begin{proposition}
\label{Prop: Fm subseteq Fn and ramifications}
Let $1\neq m,n\in\NNN$ s.t. $m|n$. Then
$$\F_m\subseteq\F_n,$$
and $\F_m$ is the maximal subfield of $\F_n$ that is unramified over $\QQ$ away from $m$.
\end{proposition}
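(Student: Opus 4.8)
The plan is to reduce both claims to local computations at the primes dividing $m$, the crux being one local identity. Throughout I take ``unramified'' to mean unramified at all finite primes (equivalently, $\Spec\O_\bullet\to\Spec\O_\bullet$ \'etale) — the sense in which $\F_n=\L_n^{un}$ — and I work inside a fixed algebraic closure of $\QQ$. I will use freely the standard facts that unramifiedness at a prime passes to subextensions, is stable under compositum and arbitrary unions, and is stable under base change (if $E/F$ is unramified at all finite primes and $F'/F$ is algebraic, then $EF'/F'$ is unramified at all finite primes), all of which reduce to finite extensions of local fields, together with the fact that an unramified — possibly infinite — extension of a local field $L$ is contained in the maximal unramified extension $L^{\textnormal{ur}}$.

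The key input is a \emph{local identity}: for a prime $p\mid m$ — equivalently $p\mid n$ with $p\nmid n/m$, since $n$ is squarefree — one has $n^*/m^*=(n/m)^*$, a $p$-adic unit, and $\Qp\bigl(\sqrt{(n/m)^*}\bigr)/\Qp$ is unramified (automatic for odd $p$; for $p=2$ one uses that $2\mid m$ forces $n/m$ odd, so $(n/m)^*\equiv 1\bmod 4$). Thus $\sqrt{(n/m)^*}\in\Qp^{\textnormal{ur}}$, whence
$$\Qp\bigl(\sqrt{n^*}\bigr)\cdot\Qp^{\textnormal{ur}}=\Qp\bigl(\sqrt{m^*}\bigr)\cdot\Qp^{\textnormal{ur}},$$
so $\Qp(\sqrt{m^*})$ and $\Qp(\sqrt{n^*})$ have the same maximal unramified extension at every $p\mid m$. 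Granting this, $\F_m\subseteq\F_n$ follows in two steps. First, $\L_m\L_n/\L_n$ is unramified: passing to completions, at $p\nmid m$ the field $\L_m$ is unramified over $\QQ$ at $p$, so $(\L_m\L_n)_{\mathfrak P}=(\L_m)_{\mathfrak p_m}\cdot(\L_n)_{\mathfrak p_n}$ is unramified over $(\L_n)_{\mathfrak p_n}$, while at $p\mid m$ one writes $\L_m\L_n=\L_n\bigl(\sqrt{(n/m)^*}\bigr)$ and applies the local identity. Hence $\L_m\L_n\subseteq\F_n$, and in particular $\L_m\subseteq\F_n$. Second, base-changing the unramified extension $\F_m/\L_m$ along $\L_m\L_n/\L_m$ shows $\F_m\L_n=\F_m\cdot(\L_m\L_n)$ is unramified over $\L_m\L_n$; composing with $\L_m\L_n/\L_n$ unramified yields $\F_m\L_n/\L_n$ unramified, hence $\F_m\L_n\subseteq\F_n$, a fortiori $\F_m\subseteq\F_n$.

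For the second assertion, one direction is immediate: by \cref{Def: Ln Quad Field} the field $\L_m$ is ramified over $\QQ$ only at primes dividing $m$, and $\F_m/\L_m$ is unramified everywhere, so $\F_m/\QQ$ is unramified at every $p\nmid m$; with $\F_m\subseteq\F_n$ this exhibits $\F_m$ as a subfield of $\F_n$ unramified over $\QQ$ away from $m$. For maximality, suppose $E\subseteq\F_n$ with $E/\QQ$ unramified at all finite $p\nmid m$; I would show $E\L_m/\L_m$ is unramified at all finite primes, which gives $E\subseteq E\L_m\subseteq\F_m$. At a prime $p\nmid m$ this holds because $\L_m$ and $E$ are both unramified over $\QQ$ at $p$. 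At a prime $p\mid m$: since $E\subseteq\F_n$ and $\F_n/\L_n$ is unramified, $E\L_n/\L_n$ is unramified, so a completion $E_{\mathfrak P}\cdot(\L_n)_{\mathfrak p_n}$ of $E\L_n$ is contained in $(\L_n)_{\mathfrak p_n}^{\textnormal{ur}}=\Qp(\sqrt{n^*})\cdot\Qp^{\textnormal{ur}}$, which by the local identity equals $\Qp(\sqrt{m^*})\cdot\Qp^{\textnormal{ur}}$; hence $E_{\mathfrak P}\subseteq\Qp(\sqrt{m^*})\cdot\Qp^{\textnormal{ur}}$, so $E_{\mathfrak P}\cdot\Qp(\sqrt{m^*})$ is unramified over $\Qp(\sqrt{m^*})=(\L_m)_{\mathfrak q}$, as desired.

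I expect the main obstacle to be the local identity at $p\mid m$: the remaining ingredients are standard stability properties of unramifiedness and local bookkeeping, but one must genuinely verify that adjoining $\sqrt{n^*}$ and adjoining $\sqrt{m^*}$ create the same ramification at each such $p$, i.e. that their ``difference'' $\sqrt{(n/m)^*}$ lies in $\Qp^{\textnormal{ur}}$ — which uses crucially that $n$ is squarefree (so $p$ exactly divides both $m$ and $n$) and the sign convention defining $n^*$ (so that $(n/m)^*\equiv 1\bmod 4$ whenever $2\mid m$, ensuring unramifiedness at $2$). A secondary point needing care is that $E$ (and $\F_m$) may be infinite over $\QQ$, so ``the completion lies in a maximal unramified extension'' should be read as the statement that unramifiedness of $E\L_n/\L_n$ forces the relevant completion into $\Qp(\sqrt{n^*})^{\textnormal{ur}}$ by maximality of the latter.
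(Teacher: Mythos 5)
Your proof is correct and follows essentially the same route as the paper: the containment $\F_m\subseteq\F_n$ is obtained exactly as there (show $\L_m\L_n/\L_n$ is unramified, then run the tower $\F_m\L_n/\L_m\L_n/\L_n$), with your ``local identity'' at $p\mid m$ simply making explicit the paper's ``short computation of ramification indices,'' including the needed check that $(n/m)^*\equiv 1\bmod 4$ when $p=2\mid m$. For maximality the paper argues globally --- every ramification index in $\F_n/\QQ$ is at most $2$ and $\L_m$ already exhausts it at $p\mid m$ --- whereas you argue locally via $\Qp(\sqrt{n^*})\cdot\Qp^{\textnormal{ur}}=\Qp(\sqrt{m^*})\cdot\Qp^{\textnormal{ur}}$; these are the same fact in different clothing, and your explicit reduction of a general $E$ to $E\L_m$ fills in a step the paper leaves implicit.
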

\begin{proof}
	All primes $p|n$ are doubly-ramified in $\L_n$ and in  $\L_m\L_n=\L_m\L_{n/m}$. A short computation of ramification indices shows that the extension
	$$\L_m\L_n/\L_n$$
	is unramified. Since $\F_m/\L_m$ is unramified, so is the tower $\F_m\L_n/\L_m\L_n/\L_n$, whereby
	$$\F_m\subseteq \F_m\L_n\subseteq \L_n^{un}= \F_n.$$
	Finally, the ramification index of any prime $\p$ in $\F_n$ is at most $2$. Therefore any field $F\subseteq \F_n$ that contains $\L_m$ and is unramified over $\QQ$ away from $m$ is necessarily unramified over $\L_m$. Therefore $\F_m$ is the maximal subfield of $\F_n$ that is unramified over $\QQ$ away from $m$.
\end{proof}

\begin{definition}
	For $p$ prime, we denote by
	$$\tau_p\in \Gal(\L_p/\QQ)$$
	the only non-trivial element in $\Gal(\L_p/\QQ)\simeq\ZZ/2\ZZ$.
\end{definition}

\begin{definition}
\label{Def: kei_n as set}
	Let $n\in\NNN$. We define $\kei_n\in\Top$ to be
	$$\kei_n:=\left|\Spec\left(\O_{\F_n}\otimes_\ZZ\ZZ/n\ZZ\right)\right|\in\Top.$$
As a set, $\kei_{n}$ comprises all primes $\p$ in $\F_n$ dividing $n$. For every $F\subseteq \F_n$ finite, Galois over $\QQ$, $|\Spec\left( \O_F\otimes_\ZZ \ZZ/n\ZZ \right)|$ is finite discrete. The forgetful functor 
	$$\textrm{Schemes}^\op\to \Top$$
	preserves cofiltered limits, therefore
	$$\kei_{n}= |\Spec \left(\O_{\F_n}\otimes_\ZZ\ZZ/n\ZZ\right)| \simeq\lim_{\substack{\longleftarrow\\F}} |\Spec\left( \O_F\otimes_\ZZ \ZZ/n\ZZ \right)|\in \Pro\Set^\fin.$$
	The profinite group $\G_n$ acts continuously on $\kei_{n}$ via $\p\mapsto g(\p)$.
\end{definition}

The goal now is to describe a topological kei structure on $\kei_n$.

\begin{definition}
	Let $n\in\NNN$ and let $m|n$ be prime. By $\kei_{n,m}$ we denote
	$$\kei_{n,m}=|\Spec \left(\O_{\F_n}\otimes_\ZZ \ZZ/m\ZZ\right)|\subseteq \kei_n.$$
For $p|n$ prime, the $\G_n$-action on $\kei_n$ restricts to a transitive action on $\kei_{n,p}$. For $\p\in \kei_{n,p}$, the stabilizer of $\p$ in $\G_n$ is the decomposition group of $\p$, denoted by
	$$D_\p:=\Stab_{\G_n}(\p).$$
\end{definition}

\begin{definition}
\label{Def: arithmetic meridian}
	Let $n\in\NNN$ and let $\p\in \kei_{n}$. By $\I_{\p}\le \G_n$ we denote the inertia group
	$$\I_\p:=\{g\in\G_n\;|\;\forall \alpha\in\O_{\F_n}\;,\;g(\alpha)-\alpha\in\p\}\le \G_n$$
	over $p$, the rational prime under $\p$. 
	The extension $\F_n/\L_p$ is unramified at $p$ and $\L_p/\QQ$ is totally ramified at $p$, 
	therefore 
	$$\I_\p\simeq\Gal(\L_p/\QQ)\simeq\ZZ/2\ZZ.$$
We denote the unique nontrivial element in $\I_{\p}$ by $$\m_{n,\p}\in\I_{\p}\le \G_n.$$
\end{definition}

\begin{proposition}
\label{Prop: fun. kei is aug. kei per p}
	Let $n\in\NNN$, let $\p\in \kei_{n}$ and let $g\in\G_n$. Then
	\begin{enumerate}
		\item $\m_{n,\p}(\p)=\p$.
		\item $\m_{n,\p}^2=1_{\G_n}$.
		\item $g\m_{n,\p}g^{-1}=\m_{n,g(\p)}$.
	\end{enumerate}
\end{proposition}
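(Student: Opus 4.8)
The plan is to recognize all three claims as standard facts about inertia and decomposition groups in a (profinite) Galois extension of number fields, here applied to $\F_n/\QQ$ and to the distinguished element $\m_{n,\p}$ of \cref{Def: arithmetic meridian}. Everything reduces to the defining condition on $\I_\p$, namely that $g(\alpha)-\alpha\in\p$ for all $\alpha\in\O_{\F_n}$.

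For claim (1) I would first note that the inertia group sits inside the decomposition group, $\I_\p\le D_\p=\Stab_{\G_n}(\p)$: if $g\in\I_\p$ and $\alpha\in\p$, then $g(\alpha)=\alpha+(g(\alpha)-\alpha)\in\p$, so $g$ carries $\p$ into itself; since $\I_\p$ is a subgroup of $\G_n$, the same applies to $g^{-1}$, and $g(\p)=\p$ follows. As $\m_{n,\p}\in\I_\p$, this gives $\m_{n,\p}(\p)=\p$. Claim (2) is immediate from the identification $\I_\p\simeq\Gal(\L_p/\QQ)\simeq\ZZ/2\ZZ$ already recorded in \cref{Def: arithmetic meridian} --- itself a consequence of $\L_p/\QQ$ being totally ramified at $p$ while $\F_n/\L_p$ is unramified at $p$ --- since the nontrivial element of a group of order two squares to the identity, so $\m_{n,\p}^2=1_{\G_n}$.

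For claim (3) the key point is the conjugacy relation $g\,\I_\p\,g^{-1}=\I_{g(\p)}$. Given $h\in\I_\p$ and $\alpha\in\O_{\F_n}$, I would write $ghg^{-1}(\alpha)-\alpha=g\bigl(h(g^{-1}\alpha)-g^{-1}\alpha\bigr)$; since $h(g^{-1}\alpha)-g^{-1}\alpha\in\p$, its image under $g$ lies in $g(\p)$, whence $ghg^{-1}\in\I_{g(\p)}$. Here one uses that $g$ fixes $\QQ$, hence preserves $\O_{\F_n}$ and sends $\p$ to a prime $g(\p)$ over the same rational prime $p$, so that $\I_{g(\p)}$ is defined by the same recipe (and is again of order two). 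Applying this inclusion with $(g^{-1},g(\p))$ in place of $(g,\p)$ gives the reverse inclusion, so $g\,\I_\p\,g^{-1}=\I_{g(\p)}$. Since conjugation by $g$ is an automorphism of $\G_n$, it sends the unique nontrivial element $\m_{n,\p}$ of $\I_\p$ to the unique nontrivial element $\m_{n,g(\p)}$ of $\I_{g(\p)}$; that is, $g\m_{n,\p}g^{-1}=\m_{n,g(\p)}$.

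I do not expect a genuine obstacle: the proposition is essentially the statement that $\p\mapsto\m_{n,\p}$ is an augmentation for the $\G_n$-action on $\kei_n$, and each part is formal once \cref{Def: arithmetic meridian} is unwound. The only points that call for mild care are the symmetry steps in (1) and (3), where one uses that $\I_\p$ and $\G_n$ are closed under inversion, and the verification that $g(\p)$ still lies over $p$ so that the target inertia group $\I_{g(\p)}$ is meaningful.
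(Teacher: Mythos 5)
Your proposal is correct and follows the same route as the paper's proof: part (1) via $\I_\p\le D_\p$, part (2) via $\I_\p\simeq\ZZ/2\ZZ$, and part (3) via $g\I_\p g^{-1}=\I_{g(\p)}$ together with uniqueness of the nontrivial element. You merely spell out in more detail the standard facts the paper cites without proof.
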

\begin{proof}
	Claim $(1)$ holds because the inertia group $\I_{\p}$ is a subgroup of the decomposition group $D_{\p}$. In particular $\m_{n,\p}\in\I_{\p}$ stabilizes $\p$:
	$$\m_{n,\p}(\p)=\p.$$
	Claim $(2)$ holds because $\I_{\p}\simeq\ZZ/2\ZZ$ - therefore $\m_{n,\p}\in \I_{\p}$ is an involution.
	Claim $(3)$ holds because conjugation by $g$ maps $\I_{\p}$ isomorphically onto $\I_{g(\p)}$. As the sole non-trivial elements in their respective groups, $\m_{n,\p}$ is mapped to $\m_{n,g(\p)}$:
	$$g\m_{n,\p}g^{-1}=\m_{n,g(\p)}.$$
\end{proof}

\begin{proposition}
\label{Prop: fun. kei aug. map is continuous per p}
	Let $n\in\NNN$. Then the function
	$$\m_n\colon \kei_{n}\to \G_n\;\;,\;\;\;\;\p\mapsto \m_{n,\p}$$
	is continuous.
\end{proposition}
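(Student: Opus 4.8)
The plan is to reduce the continuity statement to a statement about the finite quotients $F \subseteq \F_n$, exploiting the fact that $\kei_n$ and $\G_n$ are both cofiltered limits of finite discrete objects. A continuous map into a profinite group $\G_n = \lim_F \Gal(F/\QQ)$ is the same as a compatible family of continuous (equivalently, locally constant) maps $\kei_n \to \Gal(F/\QQ)$ for $F \subseteq \F_n$ finite and Galois over $\QQ$. Since the target $\Gal(F/\QQ)$ is a finite discrete set, it suffices to show that for each such $F$, the composite
$$\m_{n,F}\colon \kei_n \xrightarrow{\m_n} \G_n \twoheadrightarrow \Gal(F/\QQ)$$
is locally constant, i.e.\ that $\kei_n$ is covered by finitely many clopen sets on each of which $\m_{n,F}$ is constant.

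First I would describe $\m_{n,F}$ concretely. For $\p \in \kei_{n,p}$ (the prime $p$ being the rational prime below $\p$), the image of the meridian $\m_{n,\p}$ in $\Gal(F/\QQ)$ is the generator of the image of the inertia group $\I_\p$; since $\F_n/\L_p$ is unramified at $p$ while $\L_p/\QQ$ is totally ramified at $p$, this image is the nontrivial element of $\Gal(F\cap\L_p/\QQ)$ transported into $\Gal(F/\QQ)$ via the decomposition group — concretely it depends only on which prime of $F$ lies under $\p$ together with the local behavior at $p$, and in particular only on $\p$ through its image $\bar\p$ in $|\Spec(\O_F \otimes_\ZZ \ZZ/n\ZZ)|$. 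So $\m_{n,F}$ factors through the projection $\pi_F\colon \kei_n \twoheadrightarrow |\Spec(\O_F \otimes_\ZZ \ZZ/n\ZZ)|$.

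Now the key point: by \cref{Def: kei_n as set}, $|\Spec(\O_F \otimes_\ZZ \ZZ/n\ZZ)|$ is \emph{finite discrete} for every finite Galois $F \subseteq \F_n$, and $\pi_F$ is continuous (it is one of the structure maps of the cofiltered limit presenting $\kei_n$). Hence the fibers of $\pi_F$ are clopen and finite in number, and $\m_{n,F} = (\text{a function on the finite set } |\Spec(\O_F \otimes \ZZ/n\ZZ)|) \circ \pi_F$ is automatically locally constant. Therefore each $\m_{n,F}$ is continuous, and passing to the limit over $F$, the map $\m_n\colon \kei_n \to \G_n$ is continuous.

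The only genuinely substantive step — the main obstacle — is verifying that $\m_{n,F}$ really does factor through $\pi_F$, i.e.\ that the image of $\m_{n,\p}$ in $\Gal(F/\QQ)$ depends only on the image $\bar\p$ of $\p$ in the finite set $|\Spec(\O_F \otimes_\ZZ \ZZ/n\ZZ)|$. This is a matter of unwinding the compatibility of inertia groups under the quotient $\G_n \twoheadrightarrow \Gal(F/\QQ)$: the inertia group of $\p$ in $\G_n$ surjects onto the inertia group of $\bar\p$ in $\Gal(F/\QQ)$, and since the former is $\ZZ/2\ZZ$ with distinguished nontrivial element $\m_{n,\p}$ (by \cref{Def: arithmetic meridian}) and $\Gal(F/\QQ)$ is a quotient, the image is forced — it is either trivial or the unique nontrivial element of the image of $\I_{\bar\p}$, determined entirely by $\bar\p$. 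Once this is in hand, the rest is the formal nonsense about profinite objects described above.
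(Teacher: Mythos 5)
Your proof is correct, but it takes a genuinely different route from the paper's. The paper stays entirely at the profinite level: for each $p\mid n$ it uses the continuous transitive $\G_n$-action to identify $\kei_{n,p}$ with the homogeneous space $\G_n/D_{\p_0}$, notes that the composite $g\mapsto g(\p_0)\mapsto \m_{n,g(\p_0)}=g\m_{n,\p_0}g^{-1}$ is continuous on $\G_n$ (continuity of conjugation, plus the equivariance from \cref{Prop: fun. kei is aug. kei per p}), and then descends continuity along the open quotient map $\G_n\to\G_n/D_{\p_0}$; the finite disjoint union $\kei_n\simeq\coprod_{p\mid n}\kei_{n,p}$ finishes the argument. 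You instead test continuity against the finite quotients $\Gal(F/\QQ)$ of $\G_n$ and show that each composite $\m_{n,F}$ factors through the finite discrete set $|\Spec(\O_F\otimes_\ZZ\ZZ/n\ZZ)|$, the substantive input being that the inertia group of $\p$ in $\G_n$ surjects onto the inertia group of $\p\cap\O_F$ in $\Gal(F/\QQ)$, so that the restriction of $\m_{n,\p}$ to $F$ is forced by $\p\cap\O_F$ alone. Both arguments are complete. The paper's is shorter and purely formal once the augmented-kei axioms are in hand and makes no use of the finite-level description of $\kei_n$; yours is closer to the arithmetic and has the mild bonus of exhibiting explicitly, for each finite Galois $F\subseteq\F_n$, the clopen partition of $\kei_n$ (the fibers of the projection to $|\Spec(\O_F\otimes_\ZZ\ZZ/n\ZZ)|$) on which the $F$-component of $\m_n$ is constant --- in effect showing that $\m_n$ is ``defined on $F$'' for every $F$, in a sense parallel to the paper's later notion for colorings.
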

\begin{proof}
	Let $p|n$ be prime. The $\G_n$-action on $\kei_{n,p}$ is continuous and transitive. Fix some $\p_0\in \kei_{n,p}$. The stabilizer $D_{\p_0}\le \G_n$ is closed, the quotient map $\G_n\to\G_n/D_{\p_0}$ is open, and the map
	$$\G_n/D_{\p_0}\xrightarrow{gD_{\p_0}\longmapsto g(\p_0)}\kei_{n,p}$$ is a homeomorphism.
	 The composition
	$$\G_n\xrightarrow{g\mapsto g(\p_0)}\kei_{n,p}\xrightarrow{\m_{n}}\G_n\;\;,\;\;\;\;g\mapsto g \m_{ n,\p_0}g^{-1}$$
	is also continuous. It follows that $(\m_{n})_{|\kei_{n,p}}\colon \kei_{n,p}\to \G_n$ is continuous.
	Since $\kei_n\simeq\coprod_{p|n}\kei_{n,p}\in\Top$, we conclude that $\m_n\colon\kei_n\to\G_n$ is continuous.
\end{proof}

\begin{proposition}
\label{Prop: An is Top AugQdl}
	Let $n\in\NNN$. Then $(\kei_n,\G_n,\m_n)$ is a topological augmented kei.
\end{proposition}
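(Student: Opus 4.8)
The plan is to assemble the claim from results already established in this section. By definition a \emph{topological augmented kei} is an object of $\AKei^\Top$: a triple $(\k,G,\alpha)$ satisfying the three axioms of \cref{Def: augmented quandle}, in which $\k$ is a topological space, $G$ a topological group, the action $G\times\k\to\k$ is continuous, and the augmentation $\alpha\colon\k\to G$ is continuous. For the triple $(\kei_n,\G_n,\m_n)$ each of these ingredients has already been checked in isolation, so the proof amounts to collecting them.

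First I would recall from \cref{Def: kei_n as set} that $\kei_n\in\Pro\Set^\fin$ is a (Stone) topological space, that $\G_n\in\Pro\Grp^\fin$ is a profinite group, and that the $\G_n$-action $\p\mapsto g(\p)$ on $\kei_n$ is continuous. Then I would invoke \cref{Prop: fun. kei is aug. kei per p}: its three clauses $\m_{n,\p}(\p)=\p$, $\m_{n,\p}^2=1_{\G_n}$ and $g\,\m_{n,\p}\,g^{-1}=\m_{n,g(\p)}$ are exactly axioms (1)--(3) of an augmented kei for the data $(\kei_n,\G_n,\m_n)$. Finally, continuity of $\m_n\colon\kei_n\to\G_n$ is the content of \cref{Prop: fun. kei aug. map is continuous per p}. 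Together these give that $(\kei_n,\G_n,\m_n)$ is a topological augmented kei.

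It is worth strengthening this to the assertion that the object in fact lies in $\Pro\AKei^\fin$, which is what later sections use. For this I would write $\F_n$ as the filtered union of its subfields $F$ that are finite and Galois over $\QQ$, set $G_F=\Gal(F/\QQ)$ and $\kei_{n,F}=|\Spec(\O_F\otimes_\ZZ\ZZ/n\ZZ)|$ with its finite discrete $G_F$-action, and define $\m_{n,F}\colon\kei_{n,F}\to G_F$ to send a prime to the unique nontrivial element of its inertia subgroup in $G_F$ when that subgroup is nontrivial, and to $1_{G_F}$ otherwise. Repeating the one-line computations of \cref{Prop: fun. kei is aug. kei per p} inside $G_F$ shows $(\kei_{n,F},G_F,\m_{n,F})\in\AKei^\fin$, and for $F\subseteq F'$ the restriction maps carry inertia onto inertia, so they form morphisms in $\AKei^\fin$; since the forgetful functor $\textrm{Schemes}^\op\to\Top$ preserves cofiltered limits and limits of profinite groups are computed on underlying sets, the limit over $F$ recovers $(\kei_n,\G_n,\m_n)\in\Pro\AKei^\fin$. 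There is no genuine obstacle here; the only point needing care is the bookkeeping at finite levels — one must allow the inertia subgroup in $G_F$ to be trivial (when $F$ does not yet see the ramification at some $p\mid n$), in which case $\m_{n,F}$ is constant equal to $1_{G_F}$ on that fibre, still a valid augmentation since axioms (1)--(3) hold trivially for the identity. All the substantive content is already contained in \cref{Prop: fun. kei is aug. kei per p} and \cref{Prop: fun. kei aug. map is continuous per p}.
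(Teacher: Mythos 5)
Your proof is correct and follows exactly the paper's argument: the three augmented-kei axioms are \cref{Prop: fun. kei is aug. kei per p}, continuity of the action comes from \cref{Def: kei_n as set}, and continuity of $\m_n$ is \cref{Prop: fun. kei aug. map is continuous per p}. The additional paragraph exhibiting $(\kei_n,\G_n,\m_n)$ as an object of $\Pro\AKei^\fin$ is sound but not needed here, since the paper identifies profinite augmented keis with their topological counterparts via the fully faithful embedding $\Pro\AKei^\fin\hookrightarrow\AKei^\Top$.
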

\begin{proof}
Let $\p\in\kei_n$ and let $g\in\G_n$. From \cref{Prop: fun. kei is aug. kei per p},
$$\m_{n,\p}(\p)=\p\;\;\;\;\textrm{,}\;\;\;\;\;\;\;\; \m_{n,\p}^2=1_{\G_n} \;\;\;\;\;\;\;\;\textrm{and}\;\;\;\;\;\;\;\; g\m_{n,\p}g^{-1}=\m_{n,g(\p)}.$$
It follows that $(\kei_n,\G_n,\m_n)$ is an augmented kei. 
The $\G_n$-action on $\kei_n$ is continuous, and by \cref{Prop: fun. kei aug. map is continuous per p} the augmentation map $\m_n\colon \kei_n\to\G_n$ is continuous. We conclude that $(\kei_n,\G_n,\m_n)$ is a topological augmented kei.
\end{proof}


\begin{definition}
\label{Def: AKei_n and Kei_n}
	Let $n\in\NNN$. The \emph{augmented arithmetic kei} of $n$ is defined as
	$$\A_n=(\kei_n,\G_n,\m_n)\in\Pro\AKei^\fin.$$
	The \emph{fundamental arithmetic kei} of $n$ is defined to be the kei $\kei_n\in\Pro\Kei^\fin$, with structure induced from the augmentation with $\G_n$:
	$$\qq{\p}{\q}:=\m_{n,\p}(\q).$$
\end{definition}

\begin{proposition}
\label{Prop: arithmetic kei is concise}
	Let $n\in\NNN$. Then $\A_n\in\Pro\AKei^\fin$ is concise.
\end{proposition}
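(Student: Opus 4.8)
The plan is to show that the closed subgroup $H:=\overline{\langle\,\m_{n,\p}\mid \p\in\kei_n\,\rangle}\le\G_n$ is all of $\G_n$ by identifying its fixed field with $\QQ$ via Minkowski's theorem. First I would record that $H$ is normal: by claim (3) of \cref{Prop: fun. kei is aug. kei per p}, conjugation by $g\in\G_n$ sends $\m_{n,\p}$ to $\m_{n,g(\p)}$, and since $\G_n$ permutes the (finitely many) primes of $\F_n$ lying above each rational prime $p\mid n$, conjugation by any $g$ merely permutes the topological generators of $H$. Hence $E:=\F_n^{H}$ is Galois over $\QQ$ with $\Gal(E/\QQ)\simeq\G_n/H$. (For $n=1$ there is nothing to prove, since $\kei_1=\emptyset$ and $\G_1$ is trivial, so assume $n\ge 2$.)

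Next I would verify that $E/\QQ$ is unramified at every finite prime. For $p\nmid n$ this holds already for $\F_n/\QQ$: the extension $\L_n/\QQ$ is unramified away from $n$, and $\F_n=\L_n^{un}$ is unramified over $\L_n$, so by multiplicativity of ramification indices any prime of $\F_n$ above such a $p$ has ramification index $1$; a fortiori $E/\QQ$ is unramified at $p$. For $p\mid n$, fix a prime $\p$ of $\F_n$ above $p$; by \cref{Def: arithmetic meridian} the inertia group $\I_\p\le\G_n$ has order two with nontrivial element $\m_{n,\p}$, so $\I_\p=\{1,\m_{n,\p}\}\subseteq H$. The inertia group of the prime of $E$ below $\p$ is the image of $\I_\p$ in $\G_n/H$, hence trivial. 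Thus $E/\QQ$ is everywhere unramified at finite primes.

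Finally I would invoke Minkowski's theorem, in the form that any number field strictly containing $\QQ$ has discriminant of absolute value $>1$ and therefore ramifies at some finite prime. Applying this to each finite subextension of $E/\QQ$ — each of which is everywhere unramified at finite primes — forces every such subextension to equal $\QQ$, and since $E$ is the union of its finite subextensions, $E=\QQ$. Therefore $H=\G_n$, i.e.\ $\{\m_{n,\p}\}_{\p\in\kei_n}$ topologically generates $\G_n$, so $\A_n$ is concise.

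The argument is essentially ramification bookkeeping; the only points needing slight care are (i) checking $H\unlhd\G_n$, so that $E/\QQ$ is Galois and inertia groups behave as images of inertia in $\G_n$, and (ii) remembering that $\F_n/\QQ$ may be an infinite extension, so Minkowski must be applied to the finite subextensions of $E/\QQ$ rather than to $E$ directly. I do not anticipate a genuine obstacle beyond these routine checks.
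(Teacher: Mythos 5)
Your proof is correct and follows the same route as the paper: form the closed normal subgroup generated by the inertia elements $\m_{n,\p}$, observe that its fixed field is an everywhere-unramified Galois extension of $\QQ$, and conclude by Minkowski/Hermite--Minkowski that it is $\QQ$ itself. The extra care you take with normality, the reduction to finite subextensions, and the $n=1$ case are fine but routine.
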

\begin{proof}
	Let $N\le \G_n$ denote the closed subgroup
	$$N=\overline{\langle\{\m_{n,\p}\;|\;\p\in\kei_n\}\rangle}\le \G_n.$$
	From \cref{Prop: fun. kei is aug. kei per p}, $g\m_{n,\p}g^{-1}=\m_{n,g(\p)}$ for all $\p\in\kei_n$ and $g\in\G_n$, hence $N\unlhd\G_n$ is normal. Since $\F_n/\QQ$ is unramified away from $n$ and $\I_{\p}\le N$ for all $\p\in\kei_n$, the quotient $\G_n/N$ corresponds to a nowhere-ramified Galois extension of $\QQ$. By the Hermite-Minkowski theorem this is $\QQ$ itself, hence
	$$\overline{\langle\{\m_{n,\p}\;|\;\p\in\kei_n\}\rangle} =N=\G_n.$$
	Hence $\A_n\in\Pro\AKei^\fin$ is concise.
\end{proof}

\begin{proposition}
\label{Prop: arith fun qdl small}
	Let $n\in\NNN$. Then $\kei_n\in\Pro\Kei^\fin$ is small.
\end{proposition}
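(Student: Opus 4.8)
The plan is to apply \cref{Prop: concise kei small condition} to the augmented arithmetic kei $\A_n=(\kei_n,\G_n,\m_n)$. That proposition reduces the smallness of $\kei_n$ to three inputs: that $\A_n$ is concise, that $\G_n$ is a small profinite group, and that the orbit set $\G_n\backslash\kei_n$ is finite. Conciseness of $\A_n$ is precisely \cref{Prop: arithmetic kei is concise}, so that hypothesis is already in hand.

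Finiteness of $\G_n\backslash\kei_n$ is immediate from the structure of $\kei_n$: by construction $\kei_n\simeq\coprod_{p\mid n}\kei_{n,p}$, and the $\G_n$-action restricts to a transitive action on each $\kei_{n,p}$. Hence the $\G_n$-orbits on $\kei_n$ are exactly the subsets $\kei_{n,p}$, one for each prime $p\mid n$, so $\left|\G_n\backslash\kei_n\right|=\omega(n)<\infty$.

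The one step that calls on number theory is the smallness of $\G_n=\Gal(\F_n/\QQ)$, i.e.\ that for each $d$ there are only finitely many open subgroups of index $d$. An open subgroup $H\le\G_n$ of index $d$ corresponds by Galois theory to a subfield $F\subseteq\F_n$ with $[F:\QQ]=d$. Since $\F_n/\QQ$ is ramified (at finite places) only at primes dividing $n$, and since every prime of $\F_n$ has ramification index at most $2$ over $\QQ$, such an $F$ is a number field of degree $d$ unramified outside the fixed finite set $\{p:p\mid n\}$, with $\Disc(F/\QQ)$ supported on $n$ and of bounded exponent in terms of $d$ and $n$. By the Hermite--Minkowski theorem there are only finitely many number fields of bounded degree and bounded discriminant, hence only finitely many such $F$, and therefore only finitely many such $H$. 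This shows $\G_n$ is small.

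With conciseness of $\A_n$, smallness of $\G_n$, and finiteness of $\G_n\backslash\kei_n$ all established, \cref{Prop: concise kei small condition} yields that $\kei_n\in\Pro\Kei^\fin$ is small. The Hermite--Minkowski input is the only non-formal ingredient, and it is the same finiteness principle already used — in the degenerate case $d=1$ — in the proof of \cref{Prop: arithmetic kei is concise}, where it gives that $\QQ$ admits no nontrivial everywhere-unramified extension; the remaining bookkeeping (identifying the orbits and feeding the data into the reduction proposition) is routine.
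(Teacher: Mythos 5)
Your proposal is correct and follows essentially the same route as the paper's proof: reduce via \cref{Prop: concise kei small condition}, cite \cref{Prop: arithmetic kei is concise} for conciseness, identify the orbits $\G_n\backslash\kei_n$ with the primes dividing $n$, and deduce smallness of $\G_n$ from Hermite--Minkowski applied to subfields of $\F_n$ of bounded degree and discriminant. No gaps.
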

\begin{proof}
	Let $d\in\NN$. The open subgroups of $\G_n$ of index $d$ correspond to number fields $L\subseteq \F_n$ of degree $[L:\QQ]=d$. Such $L$ are unramified away from $n$, therefore satisfy
$$\Disc(L)\le n^d.$$
	By the Hermite-Minkowsk theorem, there are finitely many such fields $L$. Hence $\G_n\in\Pro\Grp^\fin$ is small.
	By \cref{Prop: arithmetic kei is concise}, $\A_n\in\Pro\AKei^\fin$ is concise. For each $p|n$, the group $\G_n$ acts transitively on $\kei_{n,p}$. Therefore
	$$\G_n\backslash \kei_n\simeq\{p|n\;\textrm{prime}\}\in\Set^\fin.$$
	By \cref{Prop: concise kei small condition}, $\kei_n$ is small.
\end{proof}

\begin{example}
	Following \cite{ARTICLE:Yamamura1997}, for $n=3,7,11,19,43,67,143$, we have $\F_n=\L_n$. In terms of $\kei_n$, these $n$ are indistinguishable from the unknot, since
	$$\kei_n\simeq *.$$
\end{example}

\subsection{$\k$-Coloring Arithmetic Links}

\begin{definition}
\label{Def: k-col arith}
	Let $n\in\NNN$ and let $\k\in\Kei^\fin$. We define sets
	$$\Col_\k(n):=\Hom_{\Kei}^\cont(\kei_n, \k)$$
and
	$$\Col_\k^\dom(n):= \{f\colon\kei_n\twoheadrightarrow \k\}
	\subseteq\Col_\k(n).$$
	By \cref{Prop: arith fun qdl small}, $\Col_\k(n)$ is finite. 	We define $\col_\k(n) , \col_\k^\dom(n)\in\NN$ via
	$$\col_\k(n):=\left|\Col_\k(n)\right|\;\;,\;\;\;\; 
	\col^\dom_\k(n):=\left|\Col^\dom_\k(n)\right|.$$
\end{definition}

\begin{definition}
	Let $n\in\NNN$, let $\k\in\Kei^\fin$, and let $F\subseteq \F_n$ be finite and Galois over $\QQ$. A $\k$-coloring $f\in\Col_\k(n)$ of $n$ is said to be \emph{defined on $F$} if $f$ is well-defined on the quotient $\Gal(\F_n/F)\backslash\kei_n$.
	We say that $\Col_\k(n)$ is \emph{defined on $F$} if
	every $f\in\Col_\k(n)$ is defined on $F$.
\end{definition}

\begin{lemma}
\label{lma: arithmetic factor through N quotient}
	Let $n\in\NNN$, let $\k\in\Kei^\fin$ and let $f\colon\kei_n\to \k$ be a kei morphism. Denote by $\k'=f(\kei_n)\le \k$, and by
	$$H_f:=\ker\left(\rho_{\A_n,f}\colon\G_n\twoheadrightarrow\Inn(\k')\right)\unlhd\G_n.$$
	Let $F_f=(\F_n)^{H_f}\subseteq\F_n$. Then $f$ factors through $$H_f\backslash \kei_n\simeq\Spec \left(\O_{F_f}\otimes_\ZZ\ZZ/n\ZZ\right) \to \k,$$
	which is to say that $f$ is defined on $F_f$.
\end{lemma}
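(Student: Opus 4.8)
The plan is to deduce the statement from two facts already in place: the conciseness of $\A_n$, and the general factorization principle for morphisms out of a concise profinite augmented kei. By \cref{Prop: arithmetic kei is concise}, $\A_n=(\kei_n,\G_n,\m_n)\in\Pro\AKei^\fin$ is concise, so \cref{Prop: profinite augkei statements}, part (3) — the profinite version of \cref{Prop: factor from concise through quotient} — applies with $\A=\A_n$. Writing $\k'=f(\kei_n)\le\k$, it produces the continuous surjection $\rho_{\A_n,f}\colon\G_n\twoheadrightarrow\Inn(\k')$ and shows that $f$ factors through the quotient map $\kei_n\twoheadrightarrow H_f\backslash\kei_n\to\k$, with $H_f=\ker(\rho_{\A_n,f})\unlhd\G_n$. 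This gives the factorization part of the claim directly; the remaining task is purely to recognize $H_f\backslash\kei_n$ as $\Spec(\O_{F_f}\otimes_\ZZ\ZZ/n\ZZ)$.

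For that, note first that $\Inn(\k')$ is finite since $\k'$ is, so the continuous homomorphism $\rho_{\A_n,f}$ has open kernel; thus $H_f\unlhd\G_n=\Gal(\F_n/\QQ)$ is open and normal, the fixed field $F_f=(\F_n)^{H_f}$ is a finite Galois extension of $\QQ$, and $\Gal(\F_n/F_f)=H_f$. Recall from \cref{Def: kei_n as set} that, as a $\G_n$-set, $\kei_n$ is the profinite set of primes of $\O_{\F_n}$ dividing $n$, with $\G_n$ acting by $\p\mapsto g(\p)$. Restriction of primes, $\p\mapsto\p\cap\O_{F_f}$, then defines a $\G_n$-equivariant continuous surjection $\kei_n\twoheadrightarrow|\Spec(\O_{F_f}\otimes_\ZZ\ZZ/n\ZZ)|$ whose fibers are exactly the orbits of $H_f=\Gal(\F_n/F_f)$, since the Galois group acts transitively on the primes of $\F_n$ above a fixed prime of $F_f$. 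Hence $H_f\backslash\kei_n\simeq|\Spec(\O_{F_f}\otimes_\ZZ\ZZ/n\ZZ)|$ canonically, and feeding this back into the factorization of the previous paragraph exhibits $f$ as a composite $\kei_n\twoheadrightarrow\Spec(\O_{F_f}\otimes_\ZZ\ZZ/n\ZZ)\to\k$; that is, $f$ is defined on $F_f$.

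The only step that is argument rather than citation is this last identification, and even there the content is standard: the transitivity of the (possibly infinite) Galois action on the primes above a fixed prime — which may be checked levelwise on the presentation $\kei_n\simeq\lim_F|\Spec(\O_F\otimes_\ZZ\ZZ/n\ZZ)|$ over finite Galois $F\subseteq\F_n$, using the classical finite statement — together with the fact that an open normal subgroup of $\G_n$ is precisely $\Gal(\F_n/F)$ for the corresponding finite Galois subfield $F$. I therefore do not expect a genuine obstacle; the substance of the lemma is exactly the translation of the abstract ``factor through the quotient by $H_f$'' of \cref{Prop: profinite augkei statements} into the language of subfields of $\F_n$.
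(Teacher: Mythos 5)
Your proof is correct and follows the same route as the paper: the paper's proof simply cites \cref{Prop: arithmetic kei is concise} and then \cref{Prop: profinite augkei statements} (part 3). Your second paragraph additionally spells out the identification $H_f\backslash\kei_n\simeq\Spec(\O_{F_f}\otimes_\ZZ\ZZ/n\ZZ)$ via transitivity of the Galois action on primes above a fixed prime, a detail the paper leaves implicit but which is verified correctly here.
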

\begin{proof}
	By \cref{Prop: arithmetic kei is concise}, $\A_n\in\Pro\AKei^\fin$ is concise. The claim therefore follows from \cref{Prop: profinite augkei statements} (\cref{Prop: factor from concise through quotient})
\end{proof}

\begin{lemma}
\label{Lem: coloring preserves limits}
	Let $n\in\NNN$. Then the construction $\Col_\k(n)$ assembles to a functor
	$$\Col_\bullet(n)\colon\Kei^\fin\to\Set^\fin$$
	that preserves finite limits.
\end{lemma}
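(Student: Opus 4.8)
The functoriality part is immediate and I would dispatch it first: a morphism $g\colon\k\to\k'$ in $\Kei^\fin$ induces $\Col_g(n)\colon\Col_\k(n)\to\Col_{\k'}(n)$ by post-composition $f\mapsto g\circ f$ (a composite of continuous kei morphisms is again one), and this assignment respects identities and composition. By \cref{Prop: arith fun qdl small} the kei $\kei_n$ is small, so $\Col_\k(n)$ is finite for every $\k\in\Kei^\fin$; hence $\Col_\bullet(n)$ does land in $\Set^\fin$. Since $\Set^\fin$ is closed under finite limits in $\Set$, it then suffices to show that $\Col_\bullet(n)\colon\Kei^\fin\to\Set$ preserves finite limits.

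The plan for this is to exploit the description of $\kei_n$ as a cofiltered limit of finite keis. Writing $\kei_n\simeq\lim_{d\in\D}\k_d$ with $\k_d\in\Kei^\fin$ and $\D$ cofiltered (for instance the canonical presentation over the category denoted $\D_\kei$ in the discussion preceding \eqref{Eqn: profinite kei convention}), the defining property of the pro-category gives, for every $\k\in\Kei^\fin$, an isomorphism
$$\Col_\k(n)=\Hom_\Kei^\cont(\kei_n,\k)=\Hom_{\Pro\Kei^\fin}\big(\textstyle\lim_{d}\k_d,\k\big)\simeq\operatorname*{colim}_{d\in\D^\op}\Hom_{\Kei^\fin}(\k_d,\k),$$
natural in $\k$. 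Thus $\Col_\bullet(n)$ is exhibited as the filtered colimit, over the filtered category $\D^\op$, of the representable functors $\Hom_{\Kei^\fin}(\k_d,-)\colon\Kei^\fin\to\Set$.

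Two standard facts then finish the argument. First, $\Kei^\fin$ has all finite limits and the inclusion $\Kei^\fin\hookrightarrow\Kei$ preserves them: a finite limit of finite keis is a sub-kei of a finite product of finite keis, hence finite. Consequently each $\Hom_{\Kei^\fin}(\k_d,-)$ preserves finite limits, being the restriction to $\Kei^\fin$ of $\Hom_\Kei(\k_d,-)\colon\Kei\to\Set$, which preserves all limits. Second, filtered colimits commute with finite limits in $\Set$. Combining the two, $\Col_\bullet(n)=\operatorname*{colim}_{d}\Hom_{\Kei^\fin}(\k_d,-)$ preserves finite limits, and composing with the finite-limit-creating inclusion $\Set^\fin\hookrightarrow\Set$ gives the claim for $\Col_\bullet(n)\colon\Kei^\fin\to\Set^\fin$.

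There is no serious obstacle here; the content is purely formal, and the one point needing a moment's care is the middle isomorphism — one must invoke that $\kei_n$ is a genuine pro-object, that the target $\k$ lies in $\Kei^\fin$ so that no residual limit survives on the right, and that $\D^\op$ is filtered. If one prefers to avoid pro-categorical generalities altogether, the same conclusion can be obtained by hand by checking the generating finite limits: $\Col_\bullet(n)$ sends the one-point kei to a singleton; a continuous map $\kei_n\to\k\times\k'$ is precisely a pair of continuous maps into $\k$ and $\k'$, since $\k\times\k'$ is the product in $\Pro\Kei^\fin$ as well; and for the equalizer $E\hookrightarrow\k$ of $g,h\colon\k\to\k'$, which is the sub-kei $\{x\in\k:g(x)=h(x)\}$, a continuous $f\colon\kei_n\to\k$ with $g\circ f=h\circ f$ has image in $E$ and corestricts continuously to a map $\kei_n\to E$ because $E$ carries the (discrete) subspace topology. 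So $\Col_\bullet(n)$ preserves terminal objects, binary products and equalizers, hence all finite limits.
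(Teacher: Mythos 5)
Your proof is correct. The paper's own argument is shorter and more direct: it simply observes that $\Hom_{\Pro\Kei^\fin}(\kei_n,-)\colon\Pro\Kei^\fin\to\Set$ is corepresentable and hence preserves \emph{all} limits, that the embedding $\Kei^\fin\hookrightarrow\Pro\Kei^\fin$ preserves finite limits, and that smallness of $\kei_n$ (\cref{Prop: arith fun qdl small}) forces the resulting limits to be computed in $\Set^\fin$. You instead unwind the pro-object via the defining isomorphism $\Hom_{\Pro\Kei^\fin}(\lim_d\k_d,\k)\simeq\operatorname*{colim}_{d}\Hom_{\Kei^\fin}(\k_d,\k)$ and then invoke commutation of filtered colimits with finite limits in $\Set$. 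The two routes are formally equivalent, but the paper's version sidesteps the need to verify that $\D^\op$ is filtered and to invoke the colimit--limit interchange; your version has the compensating virtue of making the mechanism explicit, and your concluding hands-on check of the terminal object, binary products and equalizers is the most self-contained argument of the three and would serve a reader who does not want to touch pro-categories at all. Your handling of finiteness via smallness and of functoriality by post-composition matches the paper's intent exactly, so there is nothing to correct.
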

\begin{proof}
As is the case for all corepresentable functors, the functor 
$$\Hom_{\Pro\Kei^\fin}(\kei_n,-) \colon\Pro\Kei^\fin\to\Set $$
preserves all limits. The canonical embedding
$$\Kei^\fin\hookrightarrow\Pro\Kei^\fin$$
preserves finite limits. The composition
$$\Col_\bullet(n)\colon\Kei^\fin\hookrightarrow\Pro\Kei^\fin\to\Set$$
therefore preserves finite limits. By \cref{Prop: arith fun qdl small}, $\kei_n$ is small, so these finite limits in $\Set$ are in fact computed in $\Set^\fin$.
\end{proof}

\begin{proposition}
\label{Prp: Arithmetic kei quotients}
	Let $m,n\in\NNN$ be s.t. $m|n$. Let $H\le\G_n$ denote the closed subgroup
	$$H:=\overline{\langle\m_{n,\p}\;|\;\p\in\kei_n\setminus\kei_{n,m}\rangle}\le \G_n.$$
	Then
	$$\F_n^{H}=\F_m$$
	and
	$$H\backslash \kei_{n,m}\simeq\kei_m\in\Pro\Kei^\fin.$$
\end{proposition}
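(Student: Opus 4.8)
The plan is to realize the restriction–of–primes map $\p\mapsto\p\cap\O_{\F_m}$ as the asserted isomorphism of profinite keis, once $H$ has been identified with $\Gal(\F_n/\F_m)$; the equality $\F_n^{H}=\F_m$ falls out of this identification. Throughout I use that $m$ and $n/m$ are coprime squarefree integers, so $\kei_n\setminus\kei_{n,m}$ is precisely the set of primes of $\F_n$ lying over rational primes $p\mid n/m$.

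\emph{Step 1: $\F_n^{H}=\F_m$ and $H=\Gal(\F_n/\F_m)$.} The set $\kei_n\setminus\kei_{n,m}$ is $\G_n$-stable, so combining this with $g\m_{n,\p}g^{-1}=\m_{n,g(\p)}$ from \cref{Prop: fun. kei is aug. kei per p} shows $H\unlhd\G_n$, hence $\F_n^{H}/\QQ$ is Galois. For a Galois subextension $\QQ\subseteq F\subseteq\F_n$ one has $H\subseteq\Gal(\F_n/F)$ iff every $\m_{n,\p}$ with $\p$ over some $p\mid n/m$ restricts to the identity on $F$; since $\m_{n,\p}$ generates the inertia group $\I_{\p}$ and restriction carries inertia onto inertia, this holds iff $F/\QQ$ is unramified at every $p\mid n/m$, i.e.\ (as $F\subseteq\F_n$ is already unramified away from $n$) iff $F$ is unramified over $\QQ$ away from $m$. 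By \cref{Prop: Fm subseteq Fn and ramifications} the largest such $F$ is $\F_m$, so $\F_n^{H}=\F_m$ and $H=\Gal(\F_n/\F_m)$.

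\emph{Step 2: the comparison map and the kei structure on the quotient.} Restriction of primes gives a map $\pi\colon\kei_{n,m}\to\kei_m$, $\p\mapsto\p\cap\O_{\F_m}$, which is surjective and $\G_n$-equivariant through $r\colon\G_n\twoheadrightarrow\G_m$; it is continuous, being obtained from $\Spec\O_{\F_n}\to\Spec\O_{\F_m}$ by base change along $\ZZ\to\ZZ/m\ZZ$, where the forgetful functor $\mathrm{Schemes}^{\op}\to\Top$ preserves cofiltered limits (\cref{Def: kei_n as set}). Since $\F_n/\F_m$ is Galois with group $H$, the fibres of $\pi$ are exactly the $H$-orbits, so $\pi$ descends to a continuous $\G_m$-equivariant bijection $\overline\pi\colon H\backslash\kei_{n,m}\iso\kei_m$, a homeomorphism as a continuous bijection of profinite sets. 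Applying \cref{Prop: profinite augkei statements} to $\A_n$ and the closed normal subgroup $H$, the quotient $H\backslash\kei_n$ carries the induced kei structure $\qq{\overline\p}{\overline\q}=\overline{\m_{n,\p}(\q)}$; and because $H$ preserves the property of lying over a prime $p\mid m$, its sub-kei $H\backslash\kei_{n,m}$ inherits the same formula.

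\emph{Step 3: compatibility.} Writing $\p_0=\p\cap\O_{\F_m}$, we have $\overline\pi\big(\overline{\m_{n,\p}(\q)}\big)=r(\m_{n,\p})(\q_0)$, so it remains to prove $r(\m_{n,\p})=\m_{m,\p_0}$ in $\G_m$. Now $r(\m_{n,\p})$ lies in the inertia group of $\p_0$ in $\G_m$, which for $p\mid m$ is cyclic of order two with generator $\m_{m,\p_0}$ (\cref{Def: arithmetic meridian}, using $\L_p\subseteq\F_m$ from \cref{Prop: Fm subseteq Fn and ramifications}); and $r(\m_{n,\p})\neq 1$, for otherwise $\I_{\p}=\langle\m_{n,\p}\rangle$ would die under $r$ and $\F_m/\QQ$ would be unramified at $p$, contradicting that $\F_m\supseteq\L_m$ ramifies at every $p\mid m$ (\cref{Def: Ln Quad Field}). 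Hence $r(\m_{n,\p})=\m_{m,\p_0}$, so $\overline\pi$ intertwines the operations and is a kei isomorphism, giving $H\backslash\kei_{n,m}\simeq\kei_m$ in $\Pro\Kei^\fin$. The main obstacle is precisely this last compatibility; everything else is standard Galois theory together with the formal machinery of \cref{Section: profinite keis}, and the crucial leverage is the ramification of $\F_m/\QQ$ at every prime dividing $m$.
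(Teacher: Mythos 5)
Your proposal is correct and follows essentially the same route as the paper: identify $H$ via inertia so that $\F_n^{H}=\F_m$ by \cref{Prop: Fm subseteq Fn and ramifications}, then realize the quotient map as restriction of primes and check it respects the induced kei structure. The only difference is cosmetic: you verify the compatibility $r(\m_{n,\p})=\m_{m,\p\cap\O_{\F_m}}$ by hand, whereas the paper absorbs this into the assertion that $(f,\widetilde f)\colon\A_{n,m}\to\A_m$ is a (canonical) morphism of augmented keis and then invokes \cref{Prop: factor augkei morphism through quotient}.
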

\begin{proof}
	The elements $\{\m_\p\}_{\p\in\kei_n\setminus\kei_{n,m}}$ are precisely the nontrivial inertia elements for primes in $\F_n$ over all $p|\frac nm$. 
	Therefore $\F_n^{H}\subseteq\F_n$ is the maximal subfield ramified over $\QQ$ at most at $m$.
	By \cref{Prop: Fm subseteq Fn and ramifications}, 
	$$\F_n^{H}= \F_m$$
	- and $H=\ker(\G_n\twoheadrightarrow \G_m)\unlhd\G_n$ accordingly. Let
	$$\A_{n,m}=(\kei_{n,m},\G_n,(\m_n)_{|\kei_{n,m}})\in\Pro\AKei^\fin.$$
	By \cref{Prop: profinite augkei statements}
 (\cref{Prop: factor augkei morphism through quotient}),
	the canonical morphism
	$$(f,\widetilde f)\colon\A_{n,m}\to\A_m\;\;:\;\;\;\; f( \p)= \p\cap\O_{\F_m} \;\;,\;\;\;\;\widetilde f(g)=g_{|\F_m} $$
	factors through the quotient  $H\backslash \A_{n,m}$. The map $H\backslash \A_{n,m}\to \A_m$ is an isomorphism because it induces isomorphisms $H\backslash \G_n\iso \G_m$ and 
	$$H\backslash\kei_{n,m}= H\backslash\Spec\left(\O_{\F_n}\otimes_\ZZ\ZZ/m\ZZ\right)
	\iso \Spec\left(\O_{\F_n^{H}}\otimes_\ZZ\ZZ/m\ZZ\right)=\kei_m.$$
	By \cref{Prop: augmented quotient by normal N}, the kei structure on $H\backslash\kei_{n,m}$ is induced by that of $\kei_{n,m}\in\Kei$, therefore we obtain an isomorphism
	$$H\backslash\kei_{n,m}\iso\kei_m\in\Pro\Kei^\fin.$$
\end{proof}

\begin{lemma}
\label{Lma: quot of arithmetic kei is trivial II}
	Let $n\in\NNN$. Then $\G_n\backslash\kei_n$ is the trivial kei $\{p|n\;\textrm{prime}\}$:
	$$\G_n\backslash\kei_n\simeq \triv_{\omega(n)}\in\Kei.$$
\end{lemma}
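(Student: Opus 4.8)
The plan is to recognise $\G_n\backslash\kei_n$ as the kei underlying a quotient of the augmented arithmetic kei $\A_n$, and then to observe that the group in that quotient is trivial, which forces the induced operation to be trivial as well.

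First I would record the set-theoretic picture, which is already contained in the proof of \cref{Prop: arith fun qdl small}: for each rational prime $p\mid n$ the $\G_n$-action on $\kei_{n,p}$ is transitive, and $\kei_n=\coprod_{p\mid n}\kei_{n,p}$ in $\Top$, so the orbit set $\G_n\backslash\kei_n$ is in natural bijection with $\{p\mid n\ \text{prime}\}$, a finite set of cardinality $\omega(n)$.

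Next, for the kei structure, I would invoke the profinite quotient construction. By \cref{Prop: arithmetic kei is concise} the profinite augmented kei $\A_n=(\kei_n,\G_n,\m_n)$ lies in $\Pro\AKei^\fin$; applying \cref{Prop: profinite augkei statements}(1) with the closed normal subgroup $N:=\G_n\unlhd\G_n$ produces $\G_n\backslash\A_n=(\G_n\backslash\kei_n,\ \G_n\backslash\G_n,\ (\m_n)_{\G_n})\in\Pro\AKei^\fin$, whose underlying kei (the evident profinite analogue of \cref{Prp: AKei to Kei functorial}) is exactly $\G_n\backslash\kei_n$ with the structure induced from $\kei_n$ as in \cref{Def: AKei_n and Kei_n}, namely $\qq{\overline\p}{\overline\q}=\overline{\m_{n,\p}(\q)}$. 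Since $\G_n\backslash\G_n$ is the trivial group, the augmentation $(\m_n)_{\G_n}$ is constant at $1$, and so the induced action is trivial: $\qq{\overline\p}{\overline\q}=\overline\q$ for all $\overline\p,\overline\q$. Equivalently, $\m_{n,\p}\in\G_n$ sends $\q$ to a prime in the same $\G_n$-orbit, so $\overline{\m_{n,\p}(\q)}=\overline\q$. Combining with the set-level count, $\G_n\backslash\kei_n$ is a trivial kei on a set of size $\omega(n)$, hence $\G_n\backslash\kei_n\simeq\triv_{\omega(n)}$ by \cref{Dfn: Trivial T_a}.

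I do not expect any genuine obstacle here; the statement is a direct unwinding of the definitions. The only points meriting care are to use the profinite quotient statement \cref{Prop: profinite augkei statements}(1) rather than its finite counterpart \cref{Prop: augmented quotient by normal N}, and to make explicit that choosing $N$ to be the whole group collapses the augmenting group and thereby trivialises the induced binary operation.
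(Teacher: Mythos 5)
Your proof is correct and follows essentially the same route as the paper's: identify the orbit set with $\{p\mid n\ \text{prime}\}$ via transitivity of $\G_n$ on each $\kei_{n,p}$, and observe that the induced operation $\qq{\overline\p}{\overline\q}=\overline{\m_{n,\p}(\q)}$ is trivial because $\m_{n,\p}\in\G_n$ preserves orbits. The extra formality of passing through the quotient augmented kei with $N=\G_n$ is a fine (and equivalent) way to package the same observation.
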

\begin{proof}
	As a set,
	$$\G_n\backslash\kei_n=\Gal(\F_n/\QQ)\backslash\left|\Spec(\O_{\F_n}\otimes_\ZZ\ZZ/n\ZZ)\right|\simeq\left|\Spec(\ZZ/ n\ZZ)\right|=\{p|n\;\textrm{prime}\}.$$
	The kei structure on $\G_n\backslash\kei_n$ is induced by Galois action on $\kei_n$. These act trivially on sets of rational primes. Therefore $\G_n\backslash\kei_n\in\Kei$ is trivial:
	$$\G_n\backslash\kei_n\simeq \triv_{\omega(n)}.$$
\end{proof}

\begin{proposition}
\label{Prop: Trivial colorings factor through pi_0 II}
Let $\triv\in\Kei$ be trivial. There is a natural isomorphism
$$\Hom_{\Kei}^\cont(\kei_n, \triv)\simeq\triv ^{\{p|n\;\textnormal{prime}\}}.$$
\end{proposition}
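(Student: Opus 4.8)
The plan is to show that a continuous kei morphism $f\colon\kei_n\to\triv$ factors through the quotient $\G_n\backslash\kei_n$, and then invoke \cref{Lma: quot of arithmetic kei is trivial II}. First I would observe that since $\triv$ is trivial, $\Inn(\triv)$ is the trivial group: for every $x\in\triv$ the inner automorphism $\varphi_x=\Id_\triv$, so $\Inn(\triv)=1$. Consequently, if $\k'=f(\kei_n)\le\triv$ is the image of $f$, then $\Inn(\k')=1$ as well (a sub-kei of a trivial kei is trivial), and the surjection $\rho_{\A_n,f}\colon\G_n\twoheadrightarrow\Inn(\k')$ from \cref{Prop: concise augkei lift surj kei morphism} — available because $\A_n$ is concise by \cref{Prop: arithmetic kei is concise} — has kernel $H_f=\G_n$. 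By \cref{lma: arithmetic factor through N quotient}, $f$ is then defined on $F_f=\F_n^{\G_n}=\QQ$, i.e.\ $f$ factors through $\G_n\backslash\kei_n$.

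Next I would combine this with \cref{Lma: quot of arithmetic kei is trivial II}, which identifies $\G_n\backslash\kei_n$ with the trivial kei $\triv_{\omega(n)}$ on the set $\{p\mid n\ \text{prime}\}$. Thus
$$\Hom_\Kei^\cont(\kei_n,\triv)\simeq\Hom_{\Kei^\fin}(\triv_{\omega(n)},\triv).$$
Finally, morphisms out of a trivial kei into any kei are completely unconstrained: the kei-morphism condition $f(\qq xy)=\qq{f(x)}{f(y)}$ reads $f(y)=\qq{f(x)}{f(y)}$, which holds for \emph{every} set map $f$ when the target is trivial (since $\qq ab=b$ there), and in any case even without triviality of the target it reduces to nothing because the source operation is projection onto the second coordinate. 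Hence $\Hom_{\Kei^\fin}(\triv_{\omega(n)},\triv)=\Hom_{\Set}(\{p\mid n\},\triv)=\triv^{\{p\mid n\ \text{prime}\}}$, and chasing the identifications shows the bijection is natural in $\triv$ (the only input is post-composition with set maps of trivial keis, which is manifestly functorial). This gives the claimed natural isomorphism.

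The steps are all essentially formal; the one place to be slightly careful is the naturality claim and the identification of the correct quotient. In particular I would make sure the factorization $f\colon\kei_n\twoheadrightarrow\G_n\backslash\kei_n\to\triv$ is the one supplied by the concise-augmented-kei machinery (so that it is functorial in the target and compatible with the description of $\Col_\bullet(n)$ as a limit-preserving functor via \cref{Lem: coloring preserves limits}), rather than an ad hoc choice. I do not anticipate a genuine obstacle: the substance was already extracted into \cref{Lma: quot of arithmetic kei is trivial II} and \cref{lma: arithmetic factor through N quotient}, and what remains is bookkeeping plus the trivial observation $\Inn(\triv)=1$.
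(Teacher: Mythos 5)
Your proof is correct and follows essentially the same route as the paper: use conciseness of $\A_n$ and the triviality of $\Inn(\triv')$ to get $H_f=\G_n$, factor through $\G_n\backslash\kei_n$ via \cref{lma: arithmetic factor through N quotient}, and identify that quotient as $\triv_{\omega(n)}$ via \cref{Lma: quot of arithmetic kei is trivial II}. One small caution: your parenthetical claim that maps out of a trivial kei into an \emph{arbitrary} kei are unconstrained is false (the condition $f(y)=\qq{f(x)}{f(y)}$ is a genuine restriction on the target side), but this does not affect your argument since the target here is trivial.
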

\begin{proof}
Let $f\colon \kei_n\to \triv$ be a kei morphism. Denote by $\triv':=f(\kei_n)\le \triv $, also trivial. Therefore $\Inn(\triv')=\{\Id\}$, and
$$H_f:=\ker \Big(\rho_{\A_n,f}\colon \G_n\to\Inn(\triv')\Big)=\G_n.$$
By \cref{lma: arithmetic factor through N quotient}, $f$ factors through $\G_n\backslash\kei_n$. By \cref{Lma: quot of arithmetic kei is trivial II}, $\G_n\backslash \kei_n$ is a trivial kei with underlying set $\{p|n\;\textnormal{prime}\}$. Therefore
$$\Hom_{\Kei}^\cont(\kei_n, \triv)\simeq
\Hom_\Kei(\G_n\backslash\kei_n, \triv)\simeq \triv ^{\{p|n\;\textnormal{prime}\}}.$$
\end{proof}

\begin{proposition}
\label{Prop: arithmetic coloring by disjoint union}
	Let $n\in\NNN$ be square-free and let $\k_1,\k_2\in\Kei^\fin$. Then
	$$\col_{\k_1\sqcup \k_2}(n)=\sum_{n=n_1n_2}\col_{\k_1}(n_1)\cdot\col_{\k_2}(n_2).$$
\end{proposition}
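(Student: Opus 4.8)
The plan is to stratify the colorings of $n$ by their composition with the canonical projection $\pi\colon\k_1\sqcup\k_2\twoheadrightarrow\triv_2$ that collapses each factor $\k_i$ to a point, and then to decompose each stratum using the disjoint-union technology of \cref{Prp: Map concise to disjoint union practical} in its profinite form. First I would observe that postcomposition with $\pi$ exhibits $\Col_{\k_1\sqcup\k_2}(n)$ as the disjoint union, over $\eta\in\Col_{\triv_2}(n)$, of the sets of continuous $f\colon\kei_n\to\k_1\sqcup\k_2$ with $\pi\circ f=\eta$ --- that is, of the $\Hom$-sets in $\Kei_{/\triv_2}$ from $(\kei_n,\eta)$ to $(\k_1\sqcup\k_2,\pi)$. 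By \cref{Prop: Trivial colorings factor through pi_0 II} we have $\Col_{\triv_2}(n)\simeq\triv_2^{\{p\mid n\}}$; since a factorization of a squarefree $n$ is the same as a partition of its set of prime divisors, this index set is canonically the set of ordered factorizations $n=n_1n_2$, with $\eta$ sending the primes dividing $n_1$ to one point of $\triv_2$ and those dividing $n_2$ to the other.

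Next, for a fixed such $\eta$ I would apply the profinite version of \cref{Prp: Map concise to disjoint union practical} to the kei $\kei_n$ equipped with the concise augmentation $\A_n=(\kei_n,\G_n,\m_n)$ (concise by \cref{Prop: arithmetic kei is concise}), taking the trivial target to be $\triv_2$ and the disjoint-union target to be $\k_1\sqcup\k_2$. Identifying $\triv_2=\{1,2\}$ and writing $\kei_{n,n_i}:=\eta^{-1}(i)=|\Spec(\O_{\F_n}\otimes_\ZZ\ZZ/n_i\ZZ)|$ and $H_i:=\overline{\langle\m_{n,\p}\mid\p\in\kei_n\setminus\kei_{n,n_i}\rangle}\le\G_n$, this yields a natural bijection
\[
\Hom^\cont_{\Kei_{/\triv_2}}\!\big((\kei_n,\eta),\,\k_1\sqcup\k_2\big)\;\simeq\;\Hom^\cont_\Kei(H_1\backslash\kei_{n,n_1},\k_1)\times\Hom^\cont_\Kei(H_2\backslash\kei_{n,n_2},\k_2).
\]

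The decisive point is that, since $n$ is squarefree, $\kei_n\setminus\kei_{n,n_1}=\kei_{n,n_2}$, so each $H_i$ is precisely the subgroup occurring in \cref{Prp: Arithmetic kei quotients} for the divisor $n_i\mid n$; hence $H_i\backslash\kei_{n,n_i}\simeq\kei_{n_i}$ as profinite keis, and the right-hand side above has cardinality $\col_{\k_1}(n_1)\cdot\col_{\k_2}(n_2)$. (When $n_i=1$ one has $\kei_{n,n_i}=\emptyset$ while, by conciseness, $H_i=\G_n$ and $H_{3-i}=\{1\}$, which correctly reproduces $\kei_1=\emptyset$ and $\col_{\k_i}(1)=1$.) Taking cardinalities and summing over $\eta$ --- equivalently, over all factorizations $n=n_1n_2$ --- then gives the asserted identity.

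I expect the main obstacle to be the bookkeeping in the middle step: confirming that \cref{Prp: Map concise to disjoint union practical} and \cref{Prp: Arithmetic kei quotients} really do apply for the possibly composite divisors $n_i$. The former is stated for an arbitrary finite trivial target, hence applies directly; the latter is written for prime $m$ but goes through verbatim for any $m\mid n$ once one reads $\kei_{n,m}$ as $|\Spec(\O_{\F_n}\otimes_\ZZ\ZZ/m\ZZ)|=\coprod_{p\mid m}\kei_{n,p}$ and invokes \cref{Prop: Fm subseteq Fn and ramifications} in place of its prime-divisor special case. Matching the subgroups $H_i$ across these two results, and confirming that the profinite analogue of \cref{Prp: Map concise to disjoint union practical} (asserted but not spelled out in the excerpt) carries the continuity bookkeeping, is the only genuine content.
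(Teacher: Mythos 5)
Your proposal is correct and follows essentially the same route as the paper's proof: stratify $\Col_{\k_1\sqcup\k_2}(n)$ over $\Col_{\triv_2}(n)\simeq\{(n_1,n_2)\mid n=n_1n_2\}$ via \cref{Prop: Trivial colorings factor through pi_0 II}, then identify each stratum with $\Col_{\k_1}(n_1)\times\Col_{\k_2}(n_2)$ by combining \cref{Prp: Map concise to disjoint union practical} with \cref{Prp: Arithmetic kei quotients}. One small correction to your bookkeeping remark: \cref{Prp: Arithmetic kei quotients} is already stated for arbitrary $m\mid n$ (only the notation $\kei_{n,m}$ is introduced for prime $m$), so your extension to composite divisors is exactly what the paper intends, and your treatment of the $n_i=1$ edge case is a welcome addition the paper leaves implicit.
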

\begin{proof}
Let $\triv_2=\{*_1,*_2\}\in\Kei$. By \cref{Prop: Trivial colorings factor through pi_0 II}, there is a natural bijection
	$$\Hom_\Kei(\kei_n,\triv_2)\simeq \triv_2 ^{\{p|n\;\textnormal{prime}\}}\simeq \{(n_1,n_2)\;|\;n=n_1n_2\}.$$
	for $\eta\colon \kei_n\to \triv_2 $ corresponding to $(n_1,n_2)$, we denote by
	$$\Col_{k_1\sqcup \k_2}(n)_{(n_1,n_2)}:= \Hom_{/\eta}(\kei_n, \k_1\sqcup\k_2).$$
	By \cref{Prp: Map concise to disjoint union practical} and \cref{Prp: Arithmetic kei quotients}, there is a bijection 
	$$\Col_{\k_1\sqcup \k_2}(n)_{(n_1,n_2)}\simeq$$
	$$\simeq\Hom_{\Kei}^\cont(H_{n,n_1}\backslash \kei_{n,n_1},\k_1)\times
\Hom_{\Kei}^\cont(H_{n,n_2}\backslash \kei_{n,n_2},\k_2)\simeq$$
$$\simeq\Hom_{\Kei}^\cont(\kei_{n_1},\k_1)\times
\Hom_{\Kei}^\cont(\kei_{n_2},\k_2).$$
where $H_{n,m}:=\Gal(\F_n/\F_m)$ for all $m|n$ as in \cref{Prp: Arithmetic kei quotients}.
	Therefore 
	$$\Col_{\k_1\sqcup \k_2}(n)_{(n_1,n_2)}
	\simeq\Col_{\k_1}(n_1)\times \Col_{\k_2}(n_2),$$
	and
	$$\col_{\k_1\sqcup \k_2}(n)=\sum_{n=n_1n_2}\left| \Col_{k_1\sqcup \k_2}(n)_{(n_1,n_2)}\right|=\sum_{n=n_1n_2} \col_{\k_1}(n_1)\cdot\col_{\k_2}(n_2).$$
\end{proof}

\section{Main Conjecture}
\label{Section: main conjecture}
While the analogy of Barry Mazur likens positive square-free integers to links, the fibration of the affine line $\mathbb{A}^1_{\FF_p}$ over $\FF_p$, paints a sharper picture, likening a degree-$k$ separable polynomial $f(t)\in\FF_p[t]$ to the closure of a braid on $k=\log_p\left|\FF_p[t]/f(t)\right|$ strands. By coarse analogy, $n\in\NNN$ should be a braid on $\log n$ strands. The scope of this claim is limited by the lack of a Frobenius map, more so by the fact that $\log n\notin \NN$. Nevertheless, the claim is bolstered by statistical phenomena about random numbers: A random number of magnitude $X$ is prime with probability $\approx \left(\log X\right)^{-1}$, mirroring the fact that the closure of a random braids $\sigma\in B_k$ is connected with probability $1/k$. The aim of this paper is to use asymptotic statistical phenomena of kei-coloring invariants to lend further credence to this notion.

For a finite kei $\k\in\Kei^\fin$, basic statistical invariants about $\k$-coloring of random links admit particularly straightforward description when viewed as closures of braids: Let $k\in\NN$ and let $B_k$ be the Artin braid group on $k$ strands. The braid closure $\overline \sigma$ of a braid $\sigma\in B_k$ is a link in $S^3$. The function mapping $\sigma\in B_k$ to $\col_\k(\overline{\sigma})\in\NN$ extends to a locally constant function on the profinite completion $\widehat{B}_k$. By \cite{WEBSITE:DavisSchlankHilbert2023} there exists for every $\k\in\Kei^\fin$ an integer-valued polynomial $P_\k(t)\in\QQ[t]$ s.t. for all $k\gg 0$,
$$P_\k(k)=\int\limits_{\widehat{B}_k}\col_\k(\overline \sigma)d\mu,$$
where $\mu$ is the Haar measure on $\widehat{B}_k$.
We use this to further bolster the notion that a \emph{random} $n\in\NNN$ should be thought of as the closure of a \emph{random} braid on $\approx\log X$ strands.

\subsection{The Hilbert Polynomial of a Finite Kei}
Let $\k\in\Kei^\fin$. Recall in \cite{WEBSITE:DavisSchlankHilbert2023} that there exists an integer-valued polynomial $P_\k(x)\in\QQ[x]$ s.t. for all $k\gg 0$,
$$P_\k(k)=\int\limits_{\widehat{B}_k}\ccol_\k d\mu,$$
where $\mu$ is the Haar measure on the profinite braid group $\widehat{B}_k$.
The degree of $P_\k$ is explicitly calculated in terms of $\k$:
$$\deg P_\k=-1+\max_{\k'\le\k}\left|\Inn(\k')\backslash\k'\right|.$$

\subsection{Statistics of $\k$-Coloring Random Arithmetic Links}

\begin{definition}
	Let $s\in\NNN$ be square-free. By $\NNs$ we denote the set of square-free elements coprime to $s$:
	$$\NNs:=\{n\in\NNN\;|\;(n,s)=1\}.$$
\end{definition}

\begin{definition}
Let $s\in\NNN$, let $f\colon\NNs\to\CC$, and let $L\in\CC$. We say $f$ converges to $L$ if for all $\varepsilon>0$ there exists $u_\varepsilon\in\NNs$ s.t. for all $u\in\NNs$,
	$$u_\varepsilon|u\;\;\Longrightarrow\;\;\left|f(u)-L\right|<\varepsilon.$$
		We shall use either of the following notations:
		$$\lim_{u\in\NNs}f(u)=L\;\;\;\;\;\;\;\;,\textnormal{}\;\;\;\;\;\;\;\; f(u)\xrightarrow{u\in\NNN}L.$$
\end{definition}

\begin{definition}
	Let $s\in\NNN$ be squarefree. By $\gamma_s\in\RR$ we denote
	$$\gamma_s:=\prod_{p\nmid s}(1-p^{-2})\cdot\prod_{p|s}(1-p^{-1})=\zeta(2)^{-1}\cdot \prod_{p|s}(1+p^{-1})^{-1}.$$
\end{definition}

\begin{remark}
\label{Lma: basic s asymptotics}
For square-free $s$, the number of $s$-coprime square-free integers up to $X$ is
$$\left|\NNs\cap[1,X]\right| =\gamma_s X(1+o(1)).$$
Therefore the probability of $n\in\NNs\cap[1,X]$ being prime is
$$\frac{\pi(X)+O_s(1)}{\left|\NNs\cap[1,X]\right|}=\frac{\pi(X)+O_s(1)}{X}\frac{X}{\left|\NNs\cap[1,X]\right|}
=\frac{1+o(1)}{\gamma_s\log X}.$$
	In likening a squarefree integer to the closure of a braid, the correct number of strands is purported to be inverse to the probability of primality. In light of this computation, for fixed $s\in\NNN$ we should think of $n\in\NNs\cap[1,X]$ as the closure of a braid with 
	$\approx\gamma_s \log X$
	strands.
\end{remark}

\begin{definition}
\label{Def: curly N and E}
	Let $f\colon\NNN\to \CC$ and let $s\in\NNN$. For $1\le X\in\RR$ we denote by
	$$\counter_{f,s}(X):=\sum_{n\in\NNs\cap[1,X]}f(n)$$
	and
	$$\avg_{f,s}(X)=\frac{\counter_{f,s}(X)}{|\NNs\cap[1,X]|}.$$
	For $\k\in\Kei^\fin$, we denote by
	$$\counter_{\k,s}(X):=\counter_{\col_\k,s}(X)\;\;\;\;\textrm{and}\;\;\;\;\;\;\;\; \avg_{\k,s}(X):=\avg_{\col_\k,s}(X).$$
\end{definition}

\begin{definition}
\label{Def: gen sum type}
Let $f\colon \NNN\to\CC$, let $0\le \beta\in\NN$ and let $0\neq c\in\CC$. We say $f$ has \emph{generic summatory type} $(\beta,c)$ if for every $s\in\NNN$ there exists a limit
		$$c_s(f):=\lim_{X\to\infty} \frac{\avg_{f,s}(X)}{(\gamma_s\log X)^{\beta}}\in\CC\setminus\{0\},$$
	 and if
	$$c(f):=\lim_{s\in\NNN} c_s(f)=c.$$
	We denote by $\GenType\beta{c}$ the set of all such functions:
	$$\GenType\beta c:=\{f\colon\NNN\to\CC\;|\;f\;\textrm{has generic summatory type}\;(\beta,c)\}.$$
	In the case of $f=\col_\k$ for some $\k\in\Kei^\fin$, we denote by
	$$c_s(\k):=c_s(\col_\k)\;\;\;\;\textnormal{and}\;\;\;\;\;\;\;\;c(\k):=c(\col_\k).$$
\end{definition}
\begin{remark}
	In light of \cref{Lma: basic s asymptotics}, for $f\in\GenType \beta c$ and $s\in\NNN$,
	$$\gamma_s^{1+\beta}c_s(f)=\lim_{X\to\infty}\frac{\counter_{f,s}(X)}{X\log^\beta (X)}.$$
	In practice, it is the RHS here that we will be computing for every $s\in\NNN$.
\end{remark}

We are now ready to state the main conjecture of the paper:
\begin{conjecture}
\label{Conj: Main conjecture}
	Let $\k\in\Kei^\fin$. Let $P_\k(t)\in\QQ[t]$ be the Hilbert polynomial of $\k$, as defined in \cite{WEBSITE:DavisSchlankHilbert2023}.
	Then there exists $0<c\in\QQ$ s.t.
	$$\col_{\k}\in\GenType{\deg P_\k} c.$$
\end{conjecture}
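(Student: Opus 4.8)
\emph{Sketch of a proof strategy.} The plan is to use the concise--augmented--kei machinery of \cref{Section: profinite keis} to rewrite the average of $\col_\k$ as a finite sum of weighted counts of Galois number fields with prescribed ramification, and then to feed these into analytic number theory. \textbf{Step 1 (stratification by image).} For a coloring $f\colon\kei_n\to\k$ let $\k'=f(\kei_n)\le\k$ be its image, so that
\[
\col_\k(n)=\sum_{\k'\le\k}\col^\dom_{\k'}(n),
\]
a finite sum over sub-keis. Writing $r(\k'):=\left|\Inn(\k')\backslash\k'\right|$, the formula $\deg P_\k=-1+\max_{\k'\le\k}r(\k')$ suggests that the main term of $\avg_{\k,s}(X)$ comes only from the $\k'$ with $r(\k')=\deg P_\k+1$, the rest contributing $O\!\left(X\log^{\deg P_\k-1}X\right)$ to $\counter_{\k,s}(X)$. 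It therefore suffices to produce, for every $\k'\le\k$ and every $s\in\NNN$, an asymptotic $\sum_{n\in\NNs\cap[1,X]}\col^\dom_{\k'}(n)=d_{\k',s}\,X\log^{r(\k')-1}(X)\,(1+o(1))$ with $0<d_{\k',s}\in\RR$ and an error term uniform enough to be resummed.

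\textbf{Step 2 (each stratum is an extension count).} Fix $\k'$. Since $\A_n=(\kei_n,\G_n,\m_n)$ is concise (\cref{Prop: arithmetic kei is concise}), every dominant $f\colon\kei_n\twoheadrightarrow\k'$ lifts uniquely to a morphism $\A_n\twoheadrightarrow\a_{\k'}$ by \cref{Prop: pro-fin concise augkei lift surj kei morphism}, i.e.\ to a continuous surjection $\rho_f\colon\G_n\twoheadrightarrow\Inn(\k')$; by \cref{lma: arithmetic factor through N quotient} this is a Galois extension $F_f/\QQ$ with group $\Inn(\k')$, unramified away from $n$, on which $f$ is already defined, and $\rho_f$ must send each inertia generator $\m_{n,\p}$ to the involution $\varphi_{f(\p)}\in\Inn(\k')$. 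Conversely, using \cref{Prp: Map concise to disjoint union practical} and the quotient description of \cref{Prp: Arithmetic kei quotients}, one reconstructs $f$ from: the extension $F_f$; the ordered factorization $n=n_1\cdots n_{r(\k')}$ obtained by factoring the composite $\kei_n\xrightarrow{f}\k'\twoheadrightarrow\Inn(\k')\backslash\k'\simeq\triv_{r(\k')}$ through $\G_n\backslash\kei_n\simeq\triv_{\omega(n)}$ (cf.\ \cref{Prop: Trivial colorings factor through pi_0 II}); and, for the primes dividing each $n_i$, an $\Inn(\k')$-equivariant identification of the primes above them with a prescribed orbit $O_i\subseteq\k'$. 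Hence $\sum_{n\le X}\col^\dom_{\k'}(n)$ is a weighted count of $\Inn(\k')$-extensions of $\QQ$ of discriminant $O(X^{|\Inn(\k')|})$ with ramification and inertia-conjugacy type prescribed at each ramified prime; when $\k$ is itself a disjoint union this count is consistent with the convolution identity of \cref{Prop: arithmetic coloring by disjoint union}.

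\textbf{Step 3 (analytic input and assembly).} It remains to evaluate these weighted field counts. When $\Inn(\k')$ is abelian, class field theory expresses the associated Dirichlet series as a product of Hurwitz-type $L$-functions whose only pole contribution has $\zeta$-factor of order $r(\k')$, giving the asymptotic $\asymp X\log^{r(\k')-1}X$ of Step 1 with an explicit Euler-product constant; this already handles $\triv_\powa$, $\joyce$ and its variants, and also $\RGB$, where the $r(\RGB)=1$ count of $3$-torsion in the class groups of the $\L_n$ is furnished by Davenport--Heilbronn and its refinements. Summing over $\k'$ and dividing by $|\NNs\cap[1,X]|$ via \cref{Lma: basic s asymptotics} gives $\avg_{\k,s}(X)\sim c_s(\k)(\gamma_s\log X)^{\deg P_\k}$; one then checks $c_s(\k)>0$ and that $c(\k):=\lim_{s\in\NNN}c_s(\k)$ exists and lies in $\QQ_{>0}$ (positivity and rationality following from the shape of the local densities, with the value matching the braid-closure prediction of \cite{WEBSITE:DavisSchlankHilbert2023}), which is precisely $\col_\k\in\GenType{\deg P_\k}{c(\k)}$ in the sense of \cref{Def: gen sum type}.

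\textbf{Main obstacle.} Everything but Step 3 is formal, given the machinery already developed. Step 3 is the crux and is genuinely hard in general: counting $G$-extensions of $\QQ$ of bounded discriminant with prescribed local behaviour, with sufficient uniformity in the conductor and in $s$, is accessible for $G$ abelian (character sums) and for $G\simeq S_3$ (Davenport--Heilbronn), but for a general finite group $G=\Inn(\k')$ it subsumes open problems in arithmetic statistics --- counting $G$-number-fields and Cohen--Lenstra--Martinet-type heuristics. This is exactly why the statement is offered as a conjecture and is only verified in the sequel for $|\k|\le 3$ and a few related keis, where the required inputs are available.
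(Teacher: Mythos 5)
You should be clear at the outset that the statement you are ``proving'' is the paper's \emph{main conjecture}: the paper itself offers no proof of \cref{Conj: Main conjecture}, and only verifies it for $\triv_\powa$, $\joyce\sqcup\triv_\powa$ and $\RGB$ in Sections \ref{Section: trivial kei proof}--\ref{Section: dihedral 3 kei proof}. Your outline is an honest strategy sketch, not a proof, and you say so yourself; so the decisive gap is exactly the one you flag in Step 3. Counting $\Inn(\k')$-extensions of $\QQ$ of bounded ramification with prescribed inertia types, uniformly in $s$, is open for general finite groups, and no amount of the formal kei machinery of \cref{Section: profinite keis} substitutes for that input. Your Steps 1--2 do faithfully reproduce the reduction the paper uses in its verified cases (lifting dominant colorings to surjections $\G_n\twoheadrightarrow\Inn(\k')$ with inertia constrained to $\varphi(\k')$, then translating into a field count), so as a description of the intended mechanism behind the conjecture this is accurate.

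Two concrete caveats within the part you treat as ``formal.'' First, the bijection in Step 2 between $\Col^\dom_{\k'}(n)$ and surjections $\G_n\twoheadrightarrow\Inn(\k')$ with prescribed inertia images holds only under the hypotheses of \cref{Lma: Kei dom Col to Group dom morphisms} (injectivity of $\varphi\colon\k'\to\Inn(\k')$ and the condition on proper sub-keis); it fails already for $\k'=\joyce$, where $\varphi$ collapses $x_+$ and $x_-$, and the paper must instead run the direct computation of \cref{Prop: arithmetic computation: Joyce colorings}. In general the fibers of $f\mapsto\rho_{\A_n,f}$ carry extra combinatorial data (choices of equivariant identifications of primes with elements of $\k'$) whose cardinality varies with the splitting behaviour, and controlling the resulting weights is part of the analytic problem, not separate from it. Second, the claim in Step 1 that the strata with $r(\k')<\deg P_\k+1$ contribute only $O(X\log^{\deg P_\k-1}X)$ is itself an instance of the conjecture for those sub-keis (applied to $\col^\dom_{\k'}$ rather than $\col_{\k'}$), so the stratification is an induction scheme rather than a reduction to a single hard case. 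None of this invalidates your sketch as a roadmap --- it is essentially the paper's own --- but it cannot be accepted as a proof of the statement.
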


\section{Preliminary Numerical Computations}
\label{Section: preliminary computations}
A few preliminary definitions will be useful. In the following discussion of arithmetic functions, we follow standard notation:
For arithmetic functions $f,g\colon\NN\to\CC$, the convolution
$f*g\colon\NN\to\CC$ is defined via
$$(f*g)(n):=\sum_{n_1n_2=n}f(n_1)g(n_2).$$
By $\delta_1\colon\NN\to\CC$ we denote the unit with respect to convolution:
$$\delta_{1}(n)=\delta_{1,n}=\begin{cases}
	1&,\;n=1\\
	0&,\;n\neq 1
\end{cases}\;\;.$$
By $\uno,\mu\colon\NN\to\CC$ we respectively denote the constant function $\uno(n)=1$
and the M\"obius function $\mu$, its inverse with respect to convolution. This is the multiplicative arithmetic function satisfying for all prime powers $p^\nu\neq 1$,
$$\mu(p^\nu)=\begin{cases}
		-1&,\;\nu=1\\
		0&,\;\nu>1
	\end{cases}.$$
	
The function $\omega\colon \NN\to \CC$ counts distinct prime divisors:
$$\omega(n):=|\{p|n\;\textnormal{prime}\}|.$$

\begin{definition}
	Let $s\in\NNN$. We denote by $\ley,\gy\;\colon\NN\to\CC$ the functions
	$$\gy(n)=\begin{cases}
		1&,\;(n,s)=1\\
		0&,\;(n,s)>1
	\end{cases}\;\;\;\;,\;\;\;\;\;\;\;\;
	\ley(n)=\begin{cases}
		1&,\;n|s^k\;\textnormal{ for some }k\gg 0\\
		0&,\;\textnormal{otherwise}
	\end{cases}\;\;.$$
	We denote by $\mu_s,\muy\colon\NN\to\CC$ the functions
	$$\mu_s(n)=\gy(n) \mu(n)\;\;,\;\;\;\;\muy(n)=\ley(n)\mu(n).$$
	These are all multiplicative arithmetic functions, and they satisfy
	$$\gy*\mu_s=\ley*\muy=\delta_1\;\;,\;\;\;\;\gy*\ley=\uno.$$
\end{definition}

\subsection{Divisor Counting}
Let $\powa\in\NN$. Then for all $0<n\in\NN$
$$\uno^{* \powa}(n)=\underbrace{\uno*\cdots*\uno}_{\powa}(n)=d_\powa(n),$$
where $d_\powa$ is the $\powa $-fold divisor counting function. This is the multiplicative function satisfying
$$\uno^{*\powa}(p^\nu)={\powa +\nu-1\choose \powa-1}$$
For all prime powers $p^\nu$.
We begin by stating some standard results regarding the counting of divisors:

\begin{lemma}
	\label{Lma: divisor func sub mul}
	Let $1\le \powa\in\NN$. Then for all $m,n\in\NN$,
	$$\uno^{* \powa}(mn)\le \uno^{* \powa}(m)\uno^{* \powa}(n).$$
\end{lemma}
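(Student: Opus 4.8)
The plan is to reduce the inequality to prime powers and then verify it combinatorially. Since $\uno^{*\powa}=d_\powa$ is multiplicative, writing $m=\prod_p p^{\alpha_p}$ and $n=\prod_p p^{\beta_p}$ gives $\uno^{*\powa}(mn)=\prod_p \uno^{*\powa}(p^{\alpha_p+\beta_p})$ and $\uno^{*\powa}(m)\,\uno^{*\powa}(n)=\prod_p \uno^{*\powa}(p^{\alpha_p})\,\uno^{*\powa}(p^{\beta_p})$, so it suffices to prove, for every $\alpha,\beta\ge 0$ and every prime $p$,
$$\uno^{*\powa}(p^{\alpha+\beta})\le \uno^{*\powa}(p^{\alpha})\,\uno^{*\powa}(p^{\beta}),$$
i.e. $\binom{\powa+\alpha+\beta-1}{\powa-1}\le \binom{\powa+\alpha-1}{\powa-1}\binom{\powa+\beta-1}{\powa-1}$, using the formula $\uno^{*\powa}(p^\nu)=\binom{\powa+\nu-1}{\powa-1}$ recalled above. (For primes dividing only one of $m,n$ the factors on the two sides agree.)

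For the binomial inequality I would use the interpretation of $\binom{\powa+\nu-1}{\powa-1}$ as the number of weak compositions $(e_1,\dots,e_\powa)$ with $e_i\ge 0$ and $\sum_i e_i=\nu$ (equivalently, ordered factorizations $p^\nu=p^{e_1}\cdots p^{e_\powa}$), and exhibit an injection from the weak compositions of $\alpha+\beta$ into the product of those of $\alpha$ and of $\beta$. Given $(e_1,\dots,e_\powa)$ with $\sum_i e_i=\alpha+\beta$, let $j$ be the least index with $e_1+\cdots+e_j\ge\alpha$, and set $f_i=e_i$ for $i<j$, $f_j=\alpha-(e_1+\cdots+e_{j-1})$, $f_i=0$ for $i>j$, and $g_i=e_i-f_i$. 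Monotonicity of the partial sums makes this well defined with all $f_i,g_i\ge 0$, and one checks $\sum_i f_i=\alpha$ and $\sum_i g_i=\beta$. Since $e_i=f_i+g_i$, the composition $(e_i)$ is recovered from the pair $\big((f_i),(g_i)\big)$, so the assignment is injective and the inequality follows. As an alternative that bypasses the reduction to prime powers, one can read $d_\powa(N)$ off the convolution as the number of ordered $\powa$-tuples of positive integers with product $N$, and for a factorization $mn=x_1\cdots x_\powa$ set $x_i'=\gcd(x_1\cdots x_i,m)/\gcd(x_1\cdots x_{i-1},m)$ and $x_i''=x_i/x_i'$; a short valuation check shows $x_i'\mid x_i$, $\prod_i x_i'=m$, $\prod_i x_i''=n$, and $x_i=x_i'x_i''$, giving an injection from factorizations of $mn$ into (factorizations of $m$)$\times$(factorizations of $n$).

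There is no real obstacle here: the statement is classical, and the only thing requiring (routine) care is checking that the prefix-splitting map above is well defined and injective — existence of the index $j$ and nonnegativity of the $f_i,g_i$ being immediate from monotonicity of the partial sums.
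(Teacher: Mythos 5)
Your proof is correct. Both you and the paper reduce to prime powers via multiplicativity of $\uno^{*\powa}$ and then establish $\binom{\powa+\alpha+\beta-1}{\powa-1}\le\binom{\powa+\alpha-1}{\powa-1}\binom{\powa+\beta-1}{\powa-1}$, but you prove that inequality by a different mechanism: the paper writes $\binom{\powa+\nu-1}{\powa-1}=\prod_{k=1}^{\powa-1}\bigl(1+\tfrac{\nu}{k}\bigr)$ and compares the two sides factor by factor using $\bigl(1+\tfrac{\nu}{k}\bigr)\bigl(1+\tfrac{\nu'}{k}\bigr)\ge 1+\tfrac{\nu+\nu'}{k}$, whereas you exhibit an explicit injection from weak compositions of $\alpha+\beta$ into pairs of weak compositions of $\alpha$ and of $\beta$ via the prefix-splitting map. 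Your injection is correctly set up (the index $j$ exists because the total is at least $\alpha$, minimality of $j$ gives $f_j\ge 0$, and $e_1+\cdots+e_j\ge\alpha$ gives $f_j\le e_j$, so all $f_i,g_i\ge 0$; recoverability of $e_i=f_i+g_i$ gives injectivity). The paper's computation is shorter; your argument is more conceptual and, via your second variant with $x_i'=\gcd(x_1\cdots x_i,m)/\gcd(x_1\cdots x_{i-1},m)$, dispenses with the reduction to prime powers altogether and exhibits the submultiplicativity of $d_\powa$ directly at the level of ordered factorizations. Either route is a complete proof.
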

\begin{proof}
There is an equality
$${\powa +\nu-1\choose \powa-1}=\frac{(\powa +\nu-1)\cdots (\nu+1)}{(\powa-1)!}=\prod_{k=1}^{\powa-1}(1+\frac \nu k).$$
The desired inequality immediately follows from the fact that
$$(1+\frac \nu k) (1+\frac {\nu'}k)\ge 1+\frac {\nu+\nu'} k.$$
\end{proof}

\begin{lemma}
\label{Lma: divisor arithmetic that doesnt really need wrapping}
	Let $\powb\in\NN$ and let $n\in\NNN$. Then
	$$\sum_{m|n} \powb^{\omega(m)}=(\powb+1)^{\omega(n)}.$$
\end{lemma}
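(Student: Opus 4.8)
The plan is to prove the identity by multiplicativity, reducing to a single prime, and then to record a direct combinatorial derivation which is arguably cleaner and also handles all edge cases uniformly.

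First I would observe that $n\mapsto\sum_{m\mid n}\powb^{\omega(m)}$ is precisely the convolution $\uno*\powb^{\omega}$, where $\powb^{\omega}$ denotes the arithmetic function $m\mapsto\powb^{\omega(m)}$. Since $\omega$ is additive on coprime arguments, $\powb^{\omega}$ is multiplicative; a convolution of multiplicative functions is multiplicative, so the left-hand side is a multiplicative function of $n$. The right-hand side $n\mapsto(\powb+1)^{\omega(n)}$ is multiplicative for the same reason. Hence it suffices to verify the equality on prime powers, and because $n\in\NNN$ is squarefree, only at primes $p$: there the divisors of $p$ are $1$ and $p$, so the left-hand side equals $\powb^{\omega(1)}+\powb^{\omega(p)}=1+\powb=(\powb+1)^{\omega(p)}$, as required.

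Alternatively, and more explicitly, I would write $n=p_1\cdots p_k$ with $k=\omega(n)$ and the $p_i$ distinct, note that the divisors $m\mid n$ are in bijection with subsets $S\subseteq\{p_1,\dots,p_k\}$ via $m=\prod_{p_i\in S}p_i$, and that under this bijection $\omega(m)=|S|$. Summing over subsets and grouping by cardinality gives
$$\sum_{m\mid n}\powb^{\omega(m)}=\sum_{S\subseteq\{p_1,\dots,p_k\}}\powb^{|S|}=\sum_{j=0}^{k}\binom{k}{j}\powb^{j}=(1+\powb)^{k}$$
by the binomial theorem, and this is exactly $(\powb+1)^{\omega(n)}$.

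There is no real obstacle here: the statement is a routine exercise in multiplicative number theory, and I expect the proof to be two or three lines. The only minor point of hygiene is the boundary value $\powb=0$ (should $0\in\NN$ in the paper's convention) and the case $n=1$; both are handled correctly by the combinatorial argument with the usual convention $0^{0}=1$, since then the only surviving term is $m=1$ and both sides equal $1$.
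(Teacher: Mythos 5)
Your proof is correct. It differs mildly in packaging from the paper's: the paper identifies $\powb^{\omega(m)}$ with the Piltz divisor function $\uno^{*\powb}(m)$ at squarefree arguments and then reads the identity off the convolution $\uno^{*(\powb+1)}=\uno^{*\powb}*\uno$, which fits the surrounding section where the functions $\uno^{*\powa}$ are the running objects of study. You instead reduce by multiplicativity to a single prime (which is the same underlying mechanism, since convolutions of multiplicative functions are multiplicative) and also give the direct subset/binomial-theorem computation. The binomial argument is the most self-contained of the three and, as you note, transparently handles the boundary cases $n=1$ and $\powb=0$ (with $0^0=1$), which the paper's convolution phrasing also covers via $\uno^{*0}=\delta_1$ but less visibly. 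Nothing is missing; any of these routes is adequate.
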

\begin{proof}
Since $n$ is square-free, we have
$$\uno^{* \powb}(n)=\prod_{p|n}\uno^{* \powb}(p)= \powb ^{\omega(n)}.$$
Hence
$$(\powb +1)^{\omega(n)}=\uno^{*(\powb +1)}(n)=\sum_{mm'=n}\uno^{* \powb}(m)\uno(m')=\sum_{m|n} \powb ^{\omega(m)}.$$	
\end{proof}

\begin{lemma}
\label{Lma: Piltz comprehensive with error term}
	Let $1\le \powa\in\NN$. Then there exists $C=C_\powa>0$ s.t. the  term
	$$\theta_\powa(X):=\sum_{n\le X}\uno^{* \powa}(n)-\frac{X\log^{\powa-1} (X)}{(\powa-1)!}$$
	is bounded for all $X\ge 1$ by
	$$|\theta_ \powa(X)|\le \begin{cases}
	CX(\log^{\powa-2} (X)+1) &, \; \powa\ge 2\\
	C&, \; \powa =1
	\end{cases}\;\;.$$
	In particular, as $X\to\infty$ we have
	$$\sum_{n\le X}\uno^{* \powa}(n)= \frac{X\log^{\powa-1}(X)}{(\powa-1)!}(1+o(1)).$$
\end{lemma}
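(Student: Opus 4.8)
The plan is to prove this by induction on $\powa$, applying the Dirichlet hyperbola method to the convolution identity $\uno^{*\powa}=\uno*\uno^{*(\powa-1)}$. The base case $\powa=1$ is immediate: $\uno^{*1}(n)=1$, so $\sum_{n\le X}\uno^{*1}(n)=\floor{X}$ and $\theta_1(X)=\floor{X}-X\in(-1,0]$ is bounded by $1$.

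For the inductive step, assume the estimate for $\powa-1$ with $\powa\ge 2$. Grouping each $n\le X$ as $n=dm$ and summing over $d$ first, one gets
$$\sum_{n\le X}\uno^{*\powa}(n)=\sum_{d\le X}\sum_{m\le X/d}\uno^{*(\powa-1)}(m)=\frac{X}{(\powa-2)!}\sum_{d\le X}\frac{\log^{\powa-2}(X/d)}{d}+\sum_{d\le X}\theta_{\powa-1}(X/d),$$
where for $\powa=2$ one reads $0!=1$ and $\log^0\equiv 1$. I would then estimate the two sums separately. For the first, since $t\mapsto t^{-1}\log^{\powa-2}(X/t)$ is nonnegative and decreasing on $[1,X]$, comparing the sum with $\int_1^X t^{-1}\log^{\powa-2}(X/t)\,dt=\tfrac{1}{\powa-1}\log^{\powa-1}X$ (substitute $u=\log(X/t)$) costs an error of at most $\log^{\powa-2}X$; multiplying by $X/(\powa-2)!$ yields exactly the main term $\frac{X\log^{\powa-1}X}{(\powa-1)!}$ plus $O_\powa(X\log^{\powa-2}X)$. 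For the second, the inductive hypothesis gives $|\theta_{\powa-1}(X/d)|\le C_{\powa-1}(X/d)(\log^{\powa-3}(X/d)+1)$ when $\powa\ge 3$ and $|\theta_1(X/d)|\le C_1$ when $\powa=2$; bounding $\log(X/d)\le\log X$ and using $\sum_{d\le X}d^{-1}=O(\log X)$ gives $\sum_{d\le X}|\theta_{\powa-1}(X/d)|=O_\powa\!\big(X(\log^{\powa-2}X+1)\big)$. Adding the two contributions and subtracting $\frac{X\log^{\powa-1}X}{(\powa-1)!}$ produces the claimed bound on $\theta_\powa(X)$ with a constant $C_\powa$ depending only on $\powa$, and the displayed asymptotic follows since $\log^{\powa-2}X=o(\log^{\powa-1}X)$.

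This is essentially the classical Piltz divisor estimate, so there is no deep obstacle. The only points requiring care are the degenerate exponents in the base cases $\powa\in\{1,2\}$, where the shape of the bound changes; the verification that the function in the sum–integral comparison is genuinely monotone on all of $[1,X]$; and tracking the constants $C_\powa$ through the induction so that they depend only on $\powa$ and not on $X$. An alternative would be simply to cite this from a standard reference (e.g.\ the treatment of the Piltz divisor problem in Titchmarsh or Montgomery–Vaughan), but the self-contained induction above is short enough to include.
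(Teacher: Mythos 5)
Your proof is correct, and it takes a genuinely different route from the paper. The paper handles $\powa\ge 2$ by citing the classical Piltz divisor asymptotic $\sum_{n\le X}\uno^{*\powa}(n)=XD_\powa(\log X)+O\big(X^{1-\frac{1}{\powa-1}}\log^{\powa-2}X\big)$ from a standard reference, observing that the lower-order terms of the polynomial $D_\powa$ and the power-saving error are both $O(X\log^{\powa-2}X)$, and then upgrading the resulting asymptotic bound to a bound valid on all of $[1,\infty)$ by noting that $\theta_\powa(X)/\big(X(\log^{\powa-2}X+1)\big)$ is a bounded function there. You instead run a self-contained induction on $\powa$ via $\uno^{*\powa}=\uno*\uno^{*(\powa-1)}$ (note this is plain iterated summation over $d\le X$, not the hyperbola method, which would split at $\sqrt X$ to gain a power saving you do not need). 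Your sum--integral comparison for the decreasing function $t\mapsto t^{-1}\log^{\powa-2}(X/t)$ on $[1,X]$ and your handling of the error sum $\sum_{d\le X}|\theta_{\powa-1}(X/d)|$ are both sound, including the degenerate exponents at $\powa\in\{1,2\}$, and the constants visibly depend only on $\powa$. What your approach buys is a fully elementary, citation-free proof that yields the uniform bound for all $X\ge1$ directly rather than by an a posteriori boundedness remark; what it gives up is the power saving $X^{1-1/(\powa-1)}$ in the error, which is irrelevant to the lemma as stated and to its use in the paper.
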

\begin{proof}
	If $\powa =1$, then for all $X\ge 1$,
	$$|\theta_1(X)|=|\floor X-X|\le 1.$$
	If $\powa\ge 2$, then there exists $D_\powa(t)\in\RR[t]$ with leading term $\frac{t^\powa}{(\powa-1)!}$ s.t. 
	$$\sum_{n\le X}\uno^{* \powa}(n)=XD_\powa(\log X)+O(X^{1-{\frac1{\powa-1}}}(\log^{\powa-2}(X)).\; \footnote{See for example \cite[\textsection XII]{BOOK:Titch1987} or \cite[\textsection 2.1.1]{BOOK:MontVaughan2006}.} $$
	Hence
	$$\frac{\theta_\powa(X)}{X}=\frac1X\sum_{n\le X}\uno^{* \powa}(n)-\frac{\log^{\powa-1}(X)}{(\powa-1)!}=O(\log^{\powa-2}(X)).$$
The function $$\frac{\theta_\powa(X)}{X(\log^{\powa-2}(X)+1)}\colon[1,\infty)\to\RR$$
is then bounded not only for $X\gg0$, but for all $X\in[1,\infty)$ because
$$X\ge 1\;\Longrightarrow\;\log^{\powa-2}(X)+1\ge 1.$$
Hence there is some $C_\powa>0$ s.t. for all $X\ge 1$,
$$|\theta_\powa(X)|\le C_\powa\cdot X(\log^{\powa-2}(X)+1).$$
\end{proof}

We have the following immediate corollary in the same spirit of stating a bound for all $X\ge 1$:
\begin{lemma}
\label{Lma: Piltz lazy}
	Let $1\le \powa\in\NN$. Then there is a constant $C=C_\powa>0$ s.t. for all $X\ge 1$,
	$$\sum_{n\le X}\uno^{* \powa}(n)\le CX(\log^{\powa-1}(X)+1).$$
\end{lemma}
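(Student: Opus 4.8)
\textbf{Goal.} Given \Cref{Lma: Piltz comprehensive with error term}, establish that there is $C = C_\powa > 0$ with $\sum_{n \le X} \uno^{*\powa}(n) \le C X(\log^{\powa-1}(X)+1)$ for all $X \ge 1$.

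The plan is to simply read off the bound from \Cref{Lma: Piltz comprehensive with error term}, treating the cases $\powa = 1$ and $\powa \ge 2$ separately but uniformly. First, for $\powa = 1$ the previous lemma gives $|\sum_{n\le X}\uno(n) - X| \le C_1$, so $\sum_{n \le X}\uno(n) \le X + C_1 \le (1+C_1)X \le (1+C_1)X(\log^0(X)+1)$ since $\log^0(X) + 1 = 2 \ge 1$; take $C = 1 + C_1$ (or $2(1+C_1)$ to be safe with the factor of $2$). For $\powa \ge 2$, \Cref{Lma: Piltz comprehensive with error term} gives
$$\sum_{n\le X}\uno^{*\powa}(n) = \frac{X\log^{\powa-1}(X)}{(\powa-1)!} + \theta_\powa(X), \qquad |\theta_\powa(X)| \le C'_\powa X(\log^{\powa-2}(X)+1).$$
Now bound each piece by $X(\log^{\powa-1}(X)+1)$ up to a constant: the main term is $\le \frac{1}{(\powa-1)!} X(\log^{\powa-1}(X)+1)$, and for the error term it suffices to note $\log^{\powa-2}(X) + 1 \le \log^{\powa-1}(X) + 1$ whenever $\log X \ge 1$ (i.e. $X \ge e$), while for $1 \le X < e$ the logarithmic factors are bounded by constants and $\sum_{n \le X}\uno^{*\powa}(n)$ is a bounded quantity (it equals $\uno^{*\powa}(1) = 1$ for $1 \le X < 2$ and is bounded on the compact range $[1,e]$). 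Hence on $X \ge e$ one gets $\sum_{n \le X}\uno^{*\powa}(n) \le \left(\frac{1}{(\powa-1)!} + C'_\powa\right) X(\log^{\powa-1}(X)+1)$, and on $[1,e]$ the left side is $O_\powa(1) \le O_\powa(1) \cdot X(\log^{\powa-1}(X)+1)$ since the right side is $\ge X \ge 1$ there. Combining, take $C_\powa$ to be the maximum of the two constants.

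There is essentially no obstacle here: this is a bookkeeping corollary of the preceding lemma, and the only thing to be mildly careful about is that the inequality $\log^{\powa-2}(X) \le \log^{\powa-1}(X)$ fails for $\log X < 1$, which is precisely why the $+1$ is present and why one splits off the compact range $X \in [1, e)$ (or $[1,3)$) where everything is trivially bounded. The cleanest writeup is to say: by \Cref{Lma: Piltz comprehensive with error term}, $\sum_{n \le X}\uno^{*\powa}(n) \le \frac{X\log^{\powa-1}(X)}{(\powa-1)!} + C_\powa X(\log^{\max(\powa-2,0)}(X)+1)$, then observe $\log^{\max(\powa-2,0)}(X) + 1 \le 2(\log^{\powa-1}(X)+1)$ for all $X \ge 1$ (checking $X < e$ and $X \ge e$), yielding the claim with a suitable $C$.
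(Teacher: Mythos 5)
Your proof is correct and matches the paper's intent: the paper states this lemma as an "immediate corollary" of \cref{Lma: Piltz comprehensive with error term} and gives no proof, and your bookkeeping — bounding the main term by $\frac{1}{(\powa-1)!}X(\log^{\powa-1}(X)+1)$ and absorbing the error term via $\log^{\powa-2}(X)+1\le 2(\log^{\powa-1}(X)+1)$ for all $X\ge 1$ — is exactly the intended derivation.
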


\begin{lemma}
	Let $1\le\powa\in\NN$ and let $\epsilon>0$. Then for every prime $p$,
	$$\sum_{\nu=0}^\infty\frac{\uno^{*\powa}(p^\nu)}{p^{\epsilon\nu}}=(1-p^{-\epsilon})^{-\powa}.$$
	More generally, for every $s\in\NNN$,
	$$\sum_{n=1}^\infty\frac{\ley^{* \powa}(n)}{n^ \epsilon}=\prod_{p|s}(1-p^{-\epsilon})^{-\powa}.$$
\end{lemma}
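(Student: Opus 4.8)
The plan is to recognise both identities as instances of the generalised (negative) binomial series $\sum_{\nu\ge 0}\binom{\powa+\nu-1}{\powa-1}x^\nu=(1-x)^{-\powa}$, valid for $|x|<1$. First I would record that $\uno^{*\powa}$ is the multiplicative function with $\uno^{*\powa}(p^\nu)=\binom{\powa+\nu-1}{\powa-1}$ — this has already been noted above. Specialising $x=p^{-\epsilon}$, which lies in $(0,1)$ because $\epsilon>0$, the series $\sum_{\nu\ge 0}\uno^{*\powa}(p^\nu)p^{-\epsilon\nu}$ converges absolutely (its coefficients grow only polynomially in $\nu$) and sums to $(1-p^{-\epsilon})^{-\powa}$. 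That settles the first assertion.

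For the general statement I would next observe that $\ley$ is multiplicative with $\ley(p^\nu)=1$ for every $\nu\ge 0$ when $p\mid s$, while $\ley(p^\nu)=0$ for $\nu\ge 1$ when $p\nmid s$. Consequently $\ley^{*\powa}$ is again multiplicative, with local values $\ley^{*\powa}(p^\nu)=\uno^{*\powa}(p^\nu)$ at primes $p\mid s$ and $\ley^{*\powa}(p^\nu)=\delta_1(p^\nu)$ at primes $p\nmid s$; equivalently, $\ley^{*\powa}$ is supported on those integers all of whose prime factors divide $s$. Since $s$ has only finitely many prime divisors, I would then expand $\sum_{n\ge 1}\ley^{*\powa}(n)n^{-\epsilon}$ as a \emph{finite} product of the local series computed in the first step, obtaining $\prod_{p\mid s}(1-p^{-\epsilon})^{-\powa}$.

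I do not expect a genuine obstacle. The only point that deserves a line of justification is the passage from the Dirichlet series to its Euler product: here it is trivially legitimate, since the product runs over the finitely many primes dividing $s$ and each factor is an absolutely convergent series for \emph{every} $\epsilon>0$ — no restriction to a half-plane of convergence is needed (in contrast with the full series $\sum_{n}\uno^{*\powa}(n)n^{-\epsilon}$, which would require $\epsilon>1$). Everything else is the routine combinatorial identity for $\binom{\powa+\nu-1}{\powa-1}$ already exploited in \cref{Lma: divisor func sub mul}.
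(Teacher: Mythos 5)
Your proposal is correct and matches the paper's own argument: both rest on the negative binomial expansion $(1-t)^{-\powa}=\sum_{\nu\ge 0}\binom{\powa+\nu-1}{\powa-1}t^{\nu}$ evaluated at $t=p^{-\epsilon}$, together with the observation that $\ley^{*\powa}$ is multiplicative with local factors $\uno^{*\powa}(p^\nu)$ at $p\mid s$ and vanishing otherwise, so the Dirichlet series factors as a finite Euler product over $p\mid s$. No gaps.
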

\begin{proof}
From the formal equality of power series
$$(1-t)^{-\powa}=\sum_{\nu=0}^\infty {\powa +\nu-1\choose \powa-1}t^{\nu},$$
we have
	$$\sum_{\nu=0}^\infty\frac{\uno^{* \powa}(p^\nu)}{p^{\epsilon\nu}}=\sum_{\nu}{\powa +\nu-1\choose \powa-1}p^{-\epsilon\nu}=(1-p^{-\epsilon})^{-\powa}.$$
	The function $\ley^{*a}$ is multiplicative, satisfying
	$$\ley^{* \powa}(p^\nu)=\begin{cases}
	\uno^{* \powa}(p^\nu)&,\;p|s\\
	0&,\; p\nmid s,\;\nu\ge 1
	\end{cases}\;\;.$$
	Hence
	$$\sum_{n=1}^\infty\frac{\ley^{* \powa}(n)}{n^\epsilon}=\prod_{p|s} \sum_{\nu}\frac{\uno^{* \powa}(p^\nu)}{p^{\epsilon\nu}}=\prod_{p|s}(1-p^{-\epsilon})^{-\powa}.$$
\end{proof}

\begin{definition}
	Let $1\le \powa\in\NN$ and let $f\colon \NN\to \RR$ be  multiplicative, s.t. $$0\le f(n)\le \uno^{*\powa}(n)$$
	for all $n\in\NN$. For every prime $p$ we denote by
	$$M_p^{(\powa)}(f):=(1-p^{-1})^\powa\sum_{\nu=0}^\infty\frac{f(p^\nu)}{p^\nu}\in (0,1],$$
and we denote by $M^{(\powa)}(f)$ the infinite product
$$M^{(\powa)}(f):=\prod_pM_p^{(\powa)}(f)\in[0,1].$$
\end{definition}

\begin{lemma}
\label{Lma: technical prime sum prod convergence}
Let $\powa\in\NN$ and let $f\colon \NN\to \RR$ be a function s.t. $0\le f\le \uno^{* \powa}$. 
If the product $M^{(\powa)}(f)$ converges (to nonzero limit), then so does the infinite sum
$$\sum_p\frac{\powa-f(p)}p.$$
\end{lemma}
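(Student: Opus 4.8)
The plan is to reduce the statement to the elementary fact that a product $\prod_p M_p$ with each $M_p\in(0,1]$ converges to a nonzero limit if and only if $\sum_p(1-M_p)<\infty$, by Taylor-expanding each Euler factor $M_p^{(\powa)}(f)$ in powers of $1/p$.

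First I would record a local expansion. Fix a prime $p$ and set $g_p:=\sum_{\nu\ge 0}f(p^\nu)p^{-\nu}$, so that $M_p^{(\powa)}(f)=(1-p^{-1})^\powa g_p$. From the identity $(1-t)^{-\powa}=\sum_{\nu\ge0}\binom{\powa+\nu-1}{\powa-1}t^\nu$ and the hypothesis $0\le f(p^\nu)\le\uno^{*\powa}(p^\nu)=\binom{\powa+\nu-1}{\powa-1}$, the tail is controlled uniformly in $p$:
$$0\le\sum_{\nu\ge2}f(p^\nu)p^{-\nu}\le(1-p^{-1})^{-\powa}-1-\powa p^{-1}=O_\powa(p^{-2}),$$
so $g_p=f(1)+f(p)p^{-1}+O_\powa(p^{-2})$. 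Since also $(1-p^{-1})^\powa=1-\powa p^{-1}+O_\powa(p^{-2})$ and $f(p)\le\uno^{*\powa}(p)=\powa$ is bounded, multiplying out these expansions yields, uniformly in $p$,
$$1-M_p^{(\powa)}(f)=\bigl(1-f(1)\bigr)+\frac{\powa f(1)-f(p)}{p}+O_\powa(p^{-2}).$$
In particular $M_p^{(\powa)}(f)\to f(1)$ as $p\to\infty$, and since the factors of a convergent nonzero infinite product must tend to $1$, the hypothesis forces $f(1)=1$; hence $1-M_p^{(\powa)}(f)=\frac{\powa-f(p)}{p}+O_\powa(p^{-2})$.

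Next I would apply the product criterion. As each $M_p^{(\powa)}(f)\in(0,1]$ by definition and $\prod_pM_p^{(\powa)}(f)$ converges to a nonzero limit, the series $\sum_p\bigl(-\log M_p^{(\powa)}(f)\bigr)$ converges; using $\log t\le t-1$ we get $0\le 1-M_p^{(\powa)}(f)\le-\log M_p^{(\powa)}(f)$, whence $\sum_p\bigl(1-M_p^{(\powa)}(f)\bigr)<\infty$. Combining this with the local expansion, with $\sum_pp^{-2}<\infty$, and with the fact that every term $\frac{\powa-f(p)}{p}$ is nonnegative (because $f(p)\le\powa$), a uniform bound on partial sums gives
$$\sum_p\frac{\powa-f(p)}{p}\le\sum_p\bigl(1-M_p^{(\powa)}(f)\bigr)+C_\powa\sum_pp^{-2}<\infty,$$
which is the assertion. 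The only point requiring care is that the error terms in the $1/p$-expansion are $O(p^{-2})$ with a constant depending on $\powa$ alone; this is immediate from the crude uniform bounds $f(p)\le\powa$ and $\sum_{\nu\ge2}f(p^\nu)p^{-\nu}\le(1-p^{-1})^{-\powa}-1-\powa p^{-1}$ above, so there is no genuine obstacle — the argument is entirely routine once the expansion is in hand.
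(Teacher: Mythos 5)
Your proof is correct. The underlying idea is the same as the paper's --- relate $1-M_p^{(\powa)}(f)$ to $\frac{\powa-f(p)}{p}$ and invoke the standard equivalence between convergence of an infinite product with factors in $(0,1]$ and convergence of the associated sum --- but your execution is heavier than necessary. The paper observes that $M_p^{(\powa)}(\uno^{*\powa})=1$ exactly, so $1-M_p^{(\powa)}(f)=(1-p^{-1})^{\powa}\sum_{\nu}\frac{\uno^{*\powa}(p^\nu)-f(p^\nu)}{p^\nu}$ is a sum of nonnegative terms; keeping only the $\nu=1$ term and using $(1-p^{-1})^{\powa}\ge 2^{-\powa}$ yields the exact one-sided bound $1-M_p^{(\powa)}(f)\ge 2^{-\powa}\frac{\powa-f(p)}{p}$, from which the claim follows by comparing products, with no error terms at all. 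Your full $1/p$-expansion with $O_\powa(p^{-2})$ remainders, the detour through establishing $f(1)=1$ (which a one-sided lower bound on $1-M_p^{(\powa)}(f)$ never requires), and the extra $\sum_p p^{-2}$ contribution are all avoidable. That said, each of your steps is sound: the uniform tail estimate, the deduction of $f(1)=1$ from the factors of a convergent nonzero product tending to $1$, the passage from convergence of the product to $\sum_p\bigl(1-M_p^{(\powa)}(f)\bigr)<\infty$ via $\log t\le t-1$, and the final appeal to nonnegativity of the terms $\frac{\powa-f(p)}{p}$ are all valid, so the argument goes through.
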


\begin{proof}
	For all prime $p$,
	$$1-M_p^{(\powa)}(f)=M_p^{(\powa)}(\uno^{* \powa})-M_p^{(\powa)}(f) \ge (1-p^{-1})^ \powa\cdot \frac{\uno^{* \powa}(p)-f(p)}{p}\ge 2^{-\powa}\frac{\powa-f(p)}{p}.$$
	Hence
$$0\le M_p^{(\powa)}(f)\le 1-2^{-\powa}\frac{\powa-f(p)}{p}\le 1.$$
The convergence of $M^{(\powa)}(f)$ implies the convergence of
$$\prod_p\left(1-2^{-\powa}\frac{\powa-f(p)}{p}\right).$$
The convergence of the desired sum then follows, as
$$\sum_p\frac{\powa-f(p)}{p}=2^ \powa\cdot \sum_p 2^{-\powa}\frac{\powa-f(p)}{p}.$$
\end{proof}

\begin{lemma}
\label{Lma: technical loggish convergence}

Let $f\colon \NN\to \RR$ be a function s.t.
$$0\le f\le \ley^{*\powb}$$ 
for some $b\in\NN$ and $s\in\NNN$. Then for all $i\in\NN$, the series
	$$\sum_{n=1}^\infty\frac{f(n)}{n}\log^i (n)$$
	converges.
\end{lemma}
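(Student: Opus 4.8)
The plan is a one-line comparison, once the right auxiliary fact is in place. First I would reduce to the case $f = \ley^{*\powb}$: all series in sight have non-negative terms and $0 \le f \le \ley^{*\powb}$, so convergence of $\sum_n \frac{\ley^{*\powb}(n)}{n}\log^i(n)$ implies convergence of $\sum_n \frac{f(n)}{n}\log^i(n)$. I would also observe that $\ley^{*0} = \delta_1 \le \ley = \ley^{*1}$ pointwise, so there is no loss in assuming $\powb \ge 1$, which is what lets us invoke the preceding lemma.

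The key input is that $\ley^{*\powb}$ is supported on integers whose prime divisors all lie among those of $s$, so its Dirichlet series converges absolutely at \emph{every} positive exponent. Concretely, the preceding lemma with $\powa = \powb$ and $\epsilon = \tfrac12$ gives
$$\sum_{n=1}^\infty \frac{\ley^{*\powb}(n)}{n^{1/2}} = \prod_{p \mid s}\bigl(1 - p^{-1/2}\bigr)^{-\powb} < \infty,$$
a finite product of finite factors. This leaves an entire power of $n^{1/2}$ to spend on the logarithm: for each fixed $i$, the function $x \mapsto \log^i(x)/x^{1/2}$ is continuous on $[1,\infty)$ and tends to $0$ as $x \to \infty$, hence is bounded by some $C_i > 0$ uniformly for all $x \ge 1$. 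Thus $\frac{\log^i(n)}{n} \le \frac{C_i}{n^{1/2}}$ for every $n \ge 1$, whence
$$\sum_{n=1}^\infty \frac{\ley^{*\powb}(n)}{n}\log^i(n) \le C_i \sum_{n=1}^\infty \frac{\ley^{*\powb}(n)}{n^{1/2}} < \infty,$$
and the comparison from the first step finishes the proof.

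There is essentially no obstacle here; the only point requiring slight care is that the estimate $\log^i(n) \le C_i n^{1/2}$ must hold for \emph{all} $n \ge 1$, not merely asymptotically, so that the term-by-term comparison with $\sum_n \ley^{*\powb}(n)\, n^{-1/2}$ is legitimate — this is why one phrases the bound via the continuous variable on $[1,\infty)$. Any fixed exponent $\epsilon \in (0,1)$ would serve equally well in place of $\tfrac12$.
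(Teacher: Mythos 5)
Your proof is correct and follows essentially the same route as the paper: both arguments compare $\log^i(n)$ against $\sqrt{n}$ and reduce convergence to the Euler-product evaluation $\sum_n \ley^{*\powb}(n)\,n^{-1/2}=\prod_{p\mid s}(1-p^{-1/2})^{-\powb}$ from the preceding lemma. Your version is marginally more careful (a uniform constant $C_i$ valid for all $n\ge 1$ rather than an eventual inequality, plus the trivial $\powb=0$ case), but the substance is identical.
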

\begin{proof}
	Let $i\in\NN$. Then $\log^i(n)<\sqrt n$ for $n\gg 0$. Convergence is therefore dominated by
	$$\sum_{n=1}^\infty\frac{f(n)}
	{\sqrt n}\le \sum_{n=1}^\infty\frac{\ley^{* \powb}(n)}
	{\sqrt n}=\prod_{p|s}(1-p^{-\frac 12})^{-\powb}.$$
\end{proof}

\begin{lemma}
\label{Lma: technical s-bounded lemma}
	Let $1\le \powa\in\NN$ and let $f\colon\NN\to\RR$ be multiplicative s.t.
	$$0\le f\le \ley^{* \powb}$$
	for some $\powb\in\NN$ and $s\in\NNN$. For $g\colon \NN\to\CC$ and $X\ge 1$ we denote by
	$$\ccounter_g(X):=\sum_{1\le n\le X}g(X).$$
	Then as $X\to\infty$,
	$$\lim_{X\to\infty}\frac {\ccounter_{f*\mathds 1^{* \powa}}(X)}{X (\log X)^{\powa-1}}=\frac{1}{(\powa-1)!}\cdot \sum_{m=1}^\infty\frac{f(m)}m.$$
\end{lemma}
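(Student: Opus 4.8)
The plan is to write $f * \uno^{*\powa}$ as a convolution and split the summation range, exploiting the boundedness $0 \le f \le \ley^{*\powb}$ — which by \cref{Lma: technical loggish convergence} makes the Dirichlet-type series $\sum_m f(m)/m$ (and the same series against powers of $\log$) convergent — together with the Piltz-type asymptotics of \cref{Lma: Piltz comprehensive with error term} and the crude global bound of \cref{Lma: Piltz lazy}. Concretely, I would start from
$$\ccounter_{f*\uno^{*\powa}}(X)=\sum_{1\le n\le X}(f*\uno^{*\powa})(n)=\sum_{m\le X}f(m)\sum_{k\le X/m}\uno^{*\powa}(k)=\sum_{m\le X}f(m)\left(\frac{(X/m)\log^{\powa-1}(X/m)}{(\powa-1)!}+\theta_\powa(X/m)\right).$$

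First I would handle the main term $\sum_{m\le X} f(m)\,\frac{(X/m)\log^{\powa-1}(X/m)}{(\powa-1)!}$. Dividing by $X\log^{\powa-1}X$, each summand is $\frac{f(m)}{m}\cdot\frac{\log^{\powa-1}(X/m)}{(\powa-1)!\log^{\powa-1}X}$. Expanding $\log^{\powa-1}(X/m)=(\log X-\log m)^{\powa-1}$ by the binomial theorem, the ratio $\log^{\powa-1}(X/m)/\log^{\powa-1}X$ tends to $1$ pointwise in $m$ as $X\to\infty$ and is bounded by $1$ for $m\le X$; moreover the lower-order binomial terms carry factors $\log^i m/\log^i X$, and $\sum_m f(m)\log^i(m)/m$ converges by \cref{Lma: technical loggish convergence}. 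So by dominated convergence (the dominating function being $\sum_m f(m)\log^i(m)/m$ for the relevant $i$, finite by that lemma), the main term contributes exactly $\frac{1}{(\powa-1)!}\sum_{m=1}^\infty \frac{f(m)}{m}$ in the limit. The tail $\sum_{m>X}$ does not arise here since $m\le X$ throughout.

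Next I would show the error contribution $\frac{1}{X\log^{\powa-1}X}\sum_{m\le X}f(m)\,\theta_\powa(X/m)$ vanishes. Using the bound $|\theta_\powa(Y)|\le C\,Y(\log^{\powa-2}Y+1)$ from \cref{Lma: Piltz comprehensive with error term} (and $|\theta_1|\le C$ in the $\powa=1$ case, which is even easier), this is at most $\frac{C}{X\log^{\powa-1}X}\sum_{m\le X}f(m)\frac{X}{m}(\log^{\powa-2}(X/m)+1)\le \frac{C}{\log^{\powa-1}X}\sum_{m\le X}\frac{f(m)}{m}(\log^{\powa-2}X+1)$, which is $O(1/\log X)\to 0$ since $\sum_m f(m)/m<\infty$. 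To be careful about the regime where $X/m$ is small (say $m$ close to $X$), I would split at $m\le \sqrt X$ versus $m>\sqrt X$: on the first range use the above, on the second use that $\sum_{m>\sqrt X} f(m)/m \to 0$ together with the crude bound from \cref{Lma: Piltz lazy} on $\sum_{k\le X/m}\uno^{*\powa}(k)\le C(X/m)(\log^{\powa-1}X+1)$, giving a contribution $\le C(\log^{\powa-1}X+1)\sum_{m>\sqrt X}f(m)/m$ which after division by $X\log^{\powa-1}X$... — actually the cleaner route is to bound that second piece directly as $\sum_{m>\sqrt X}f(m)\sum_{k\le X/m}\uno^{*\powa}(k) \le X\sum_{m>\sqrt X}\frac{f(m)}{m}\cdot C(\log^{\powa-1}X+1)$, so dividing by $X\log^{\powa-1}X$ yields $O(1)\cdot \sum_{m>\sqrt X}f(m)/m\to 0$.

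The main obstacle is the uniform control in the main-term step: justifying the interchange of limit and infinite sum when $\log^{\powa-1}(X/m)$ can be negative-ish in sign patterns (for $m$ slightly less than $X$, $\log(X/m)$ is small and positive, so actually nonnegative throughout $m\le X$ — good) and ensuring the dominating series is genuinely summable. That is precisely what \cref{Lma: technical loggish convergence} delivers, so the argument goes through; the bookkeeping of the binomial expansion of $(\log X-\log m)^{\powa-1}$ against the finitely many convergent series $\sum_m f(m)\log^i(m)/m$, $0\le i\le \powa-1$, is the only place requiring care, and it is routine once dominated convergence is set up. I would conclude by assembling: main term $\to \frac{1}{(\powa-1)!}\sum_m f(m)/m$, error term $\to 0$, hence the claimed limit.
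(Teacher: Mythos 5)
Your proposal is correct and follows essentially the same route as the paper: the same decomposition $\ccounter_{f*\uno^{*\powa}}(X)=\sum_{m\le X}f(m)\bigl(\tfrac Xm\tfrac{\log^{\powa-1}(X/m)}{(\powa-1)!}+\theta_\powa(X/m)\bigr)$, with \cref{Lma: technical loggish convergence} controlling the binomial cross-terms in the main sum and the uniform bound of \cref{Lma: Piltz comprehensive with error term} killing the error term. The split at $m\le\sqrt X$ you contemplate is unnecessary, since that bound on $\theta_\powa$ is stated for \emph{all} arguments $\ge 1$ (this is exactly why the paper records it in that uniform form), and the $\powa=1$ error term is indeed $o(1)$ because $\sum_m f(m)/m<\infty$ forces $\sum_{m\le X}f(m)=o(X)$.
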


\begin{proof}
Let $X\ge 0$. Then
$$\ccounter_{f*\mathds 1^{* \powa}}(X) = \sum_{m\le X}f(m)\sum_{a\le X/m}\mathds 1^{* \powa}(a)=\sum_{m\le X}f(m)\cdot\left(\frac Xm\frac{\log^{\powa-1}(X/m)}{(\powa-1)!}+\theta_ \powa(X/m)\right).$$
Therefore
$$\frac {\ccounter_{f*\mathds 1^{* \powa}}(X)}{X \log^{\powa-1}(X)} = \frac1{(\powa-1)!}\sum_{m\le X}\frac{f(m)}{m}\left(1-\frac{\log m}{\log X}\right)^{\powa-1} + \frac1{\log^{\powa-1}(X)}\sum_{m\le X}\frac{f(m)}m\cdot \frac mX\theta_ \powa(X/m).$$
Considering the first summand, it follows from \cref{Lma: technical loggish convergence} that for all $i>0$,
$$0\le \sum_{m\le X}\frac{f(m)}{m}\left(\frac{\log m}{\log X}\right)^i\le \frac1{\log^i(X)}\sum_{m=1}^\infty\frac{f(m)}m\log ^i(m)\xrightarrow{X\to\infty}0.$$
Therefore
$$\sum_{m\le X}\frac{f(m)}{m}\left(1-\frac{\log m}{\log X}\right)^{\powa-1} = \sum_{m\le X}\frac{f(m)}{m} + o(1)= \sum_{m=1}^\infty\frac{f(m)}{m} + o(1). $$
Turning to the remaining summand, we first address the case $\powa\ge 2$. By \cref{Lma: Piltz comprehensive with error term} there is a constant $C>0$ s.t.
$$\left|\frac mX\theta_ \powa(X/m)\right|\le C\left(\log^{\powa-2}(X/m)+1\right)\le C\left(\log^{\powa-2}(X)+1\right)$$
for all $1\le m\le X$.
Therefore
$$\left| \sum_{m\le X}\frac{f(m)}m\cdot \frac mX\theta_ \powa(X/m)\right|\le \sum_{m\le X}\frac{f(m)}m\cdot \left|\frac mX\theta_\powa(X/m)\right|\le $$
$$\le C\left(\log^{\powa-2}(X)+1\right)\sum_{m=1}^{\infty}\frac{f(m)}m=o(\log^{\powa-1}(X)).$$
In the case $\powa =1$, this summand is bounded, for some constant $C>0$, by
$$\left|\sum_{m\le X}\frac{f(m)}X\theta_1(X/m)\right|\le \frac CX\sum_{m\le X}f(m) \le \frac CX\sum_{m\le X} \ley^{* \powa}(m) =O\left(X^{-1}\log^{\omega(s)}(X)\right)=o(1).$$
Thus for any $\powa\ge 1$,
$$\frac {\ccounter_{f*\mathds 1^{* \powa}}(X)}{X\log^{\powa-1}(X)} = \frac1{(\powa-1)!}\left(\sum_{m=1}^\infty\frac{f(m)}{m}+o(1)\right) + \frac{o(\log^{\powa-1}(X))}{\log^{\powa-1}(X)}=$$
$$=\frac1{(\powa-1)!}\sum_{m=1}^\infty\frac{f(m)}m+o(1).$$
\end{proof}

\begin{lemma}
\label{Lma: ugly numerical inequality}
	Let $k\in\NN$ and let $0\le b_i\le a_i$ for $1\le i\le k$. Then
	$$\prod_{i}a_i-\prod_ib_i\le \sum_{i}(a_i-b_i)\prod_{j\neq i}a_j.$$
\end{lemma}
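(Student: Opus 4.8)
The plan is to prove this by a telescoping decomposition of the difference of products. First I would set, for $0\le i\le k$, the hybrid product $Q_i:=\left(\prod_{j\le i}a_j\right)\left(\prod_{j>i}b_j\right)$, so that $Q_0=\prod_j b_j$ and $Q_k=\prod_j a_j$. Then
$$\prod_j a_j-\prod_j b_j=Q_k-Q_0=\sum_{i=1}^k\bigl(Q_i-Q_{i-1}\bigr)=\sum_{i=1}^k(a_i-b_i)\Bigl(\prod_{j<i}a_j\Bigr)\Bigl(\prod_{j>i}b_j\Bigr),$$
since $Q_i$ and $Q_{i-1}$ differ only in the $i$-th factor.

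Next I would bound each summand. Since $0\le b_j\le a_j$ for every $j$, all the partial products in sight are nonnegative, and in particular $\prod_{j>i}b_j\le\prod_{j>i}a_j$; as $a_i-b_i\ge0$, multiplying through preserves the inequality, giving
$$(a_i-b_i)\Bigl(\prod_{j<i}a_j\Bigr)\Bigl(\prod_{j>i}b_j\Bigr)\le(a_i-b_i)\Bigl(\prod_{j<i}a_j\Bigr)\Bigl(\prod_{j>i}a_j\Bigr)=(a_i-b_i)\prod_{j\neq i}a_j.$$
Summing over $i$ yields the claim. (Equivalently, one can run an induction on $k$: the case $k=1$ is an equality, and for the step one writes $\prod_{i=1}^{k+1}a_i-\prod_{i=1}^{k+1}b_i=a_{k+1}\bigl(\prod_{i=1}^{k}a_i-\prod_{i=1}^{k}b_i\bigr)+(a_{k+1}-b_{k+1})\prod_{i=1}^{k}b_i$, applies the inductive bound to the first term, and uses $\prod_{i=1}^k b_i\le\prod_{i=1}^k a_i$ for the second.)

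I do not expect a genuine obstacle here; the only thing requiring care is bookkeeping the indices in the telescoping sum and making sure every factor stays nonnegative so that each inequality points in the intended direction — which is exactly where the hypothesis $0\le b_i\le a_i$ is used.
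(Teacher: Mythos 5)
Your proof is correct. It takes a slightly different route from the paper: you establish the exact telescoping identity $\prod_j a_j-\prod_j b_j=\sum_{i=1}^k(a_i-b_i)\bigl(\prod_{j<i}a_j\bigr)\bigl(\prod_{j>i}b_j\bigr)$ and then enlarge each factor $b_j$ to $a_j$, whereas the paper first disposes of the case where some $a_i=0$ (both sides vanish) and then divides through by $\prod_j a_j$ to reduce to the standard inequality $1-\prod_i x_i\le\sum_i(1-x_i)$ for $x_i\in[0,1]$, which it invokes without proof. The underlying mechanism is the same — one factor is swapped at a time — but your version buys two things: it needs no case split on vanishing $a_i$ (no division occurs), and it is fully self-contained, since the telescoping identity is proved rather than an auxiliary inequality being quoted. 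The paper's normalization is marginally shorter on the page at the cost of the extra case and the unproved reduction. Either argument is acceptable; your index bookkeeping and the nonnegativity checks are all in order.
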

\begin{proof}
If $a_i=0$ for some $i$ then both sides of the inequality equal $0$. We assume therefore that $a_i>0$ for all $i$. Then $0\le \frac{b_i}{a_i}\le 1$ for all $i$, therefore
$$1-\prod_i \frac{b_i}{a_i}\le \sum_i\left(1-\frac{b_i}{a_i}\right).$$
Thus
$$\prod_{i}a_i-\prod_ib_i =\prod_{j}a_j\left(1-\prod_i \frac{b_i}{a_i}\right)\le \prod_{j}a_j \sum_i\left(1-\frac{b_i}{a_i}\right) = \sum_{i}(a_i-b_i)\prod_{j\neq i}a_j.$$
\end{proof}

\begin{proposition}
\label{prop: Tenenbaum 3.12 II}
    Let $1\le \powa\in\NN$, and let $f\colon\NN\to\RR$ be multiplicative  s.t. $$0\le f(n)\le \uno^{* \powa}(n)$$
    for all $n\in\NN$. For $X\ge 1$ we denote by
	$$\ccounter_f(X):=\sum_{1\le n\le X}f(X).$$
    Then
	$$\lim_{X \to \infty}\frac{\ccounter_f(X)}{X\log^{\powa-1}(X)}=
	\frac{1}{(\powa-1)!}M^{(\powa)}(f).$$
\end{proposition}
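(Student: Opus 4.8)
The plan is to reduce the statement to the special case in which one may legitimately ``divide out'' a full factor of $\uno^{*\powa}$, and then to handle the degenerate case $M^{(\powa)}(f)=0$ by a domination argument. (Throughout, $\ccounter_g(X)$ abbreviates $\sum_{n\le X}g(n)$.)

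\textbf{Step 1 (the non-degenerate core).} I would first prove: if $g\colon\NN\to\RR$ is multiplicative with $0\le g\le\uno^{*\powa}$ \emph{and} $\sum_p\frac{\powa-g(p)}{p}<\infty$, then $\ccounter_g(X)\sim\frac{M^{(\powa)}(g)}{(\powa-1)!}\,X\log^{\powa-1}(X)$. Put $w:=g*\mu^{*\powa}$, the convolution of $g$ with the $\powa$-fold convolution inverse of $\uno^{*\powa}$, so that $w$ is multiplicative and $g=w*\uno^{*\powa}$; on prime powers, the generating identity $\sum_\nu w(p^\nu)t^\nu=\bigl(\sum_j g(p^j)t^j\bigr)(1-t)^\powa$ gives $w(p)=g(p)-\powa$ and $\sum_{\nu\ge 2}|w(p^\nu)|p^{-\nu}=O_\powa(p^{-2})$ (the $|w(p^\nu)|$ grow only polynomially in $\nu$). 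Hence $\sum_n\frac{|w(n)|}{n}=\prod_p\bigl(1+\tfrac{\powa-g(p)}{p}+O_\powa(p^{-2})\bigr)$ converges by hypothesis, so $\sum_n\frac{w(n)}{n}$ equals its absolutely convergent Euler product, and evaluating each local factor at $t=1/p$ identifies this with $\prod_p(1-p^{-1})^\powa\sum_\nu g(p^\nu)p^{-\nu}=M^{(\powa)}(g)$. Now expand $\ccounter_g(X)=\sum_{d\le X}w(d)\sum_{e\le X/d}\uno^{*\powa}(e)$ and insert \cref{Lma: Piltz comprehensive with error term}: the $\theta_\powa$-terms contribute $O\bigl((\log^{\powa-2}(X)+1)X\sum_d|w(d)|/d\bigr)=o(X\log^{\powa-1}X)$ (with $\powa=1$ handled via $\sum_{d\le X}|w(d)|=o(X)$, obtained by splitting at $\sqrt X$), while the main term $\frac{X}{(\powa-1)!}\sum_{d\le X}\frac{w(d)}{d}\log^{\powa-1}(X/d)$ is to be compared first with $\frac{X\log^{\powa-1}(X)}{(\powa-1)!}\sum_{d\le X}\frac{w(d)}{d}$ and then, since $\sum_{d\le X}\frac{w(d)}{d}\to M^{(\powa)}(g)$, with $\frac{M^{(\powa)}(g)}{(\powa-1)!}X\log^{\powa-1}(X)$.

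\textbf{Step 2 (conclusion).} If $M^{(\powa)}(f)>0$, then by \cref{Lma: technical prime sum prod convergence} the function $f$ itself satisfies the extra hypothesis of Step 1, and we are done. If $M^{(\powa)}(f)=0$, then for each $y$ I would set $f_y$ to be the multiplicative function with $f_y(p^\nu)=f(p^\nu)$ for $p\le y$ and $f_y(p^\nu)=\uno^{*\powa}(p^\nu)$ for $p>y$; then $0\le f\le f_y\le\uno^{*\powa}$, the prime sum $\sum_p\frac{\powa-f_y(p)}{p}=\sum_{p\le y}\frac{\powa-f(p)}{p}$ is finite, and $M^{(\powa)}(f_y)=\prod_{p\le y}M_p^{(\powa)}(f)$, so Step 1 applies to $f_y$. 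Termwise $\ccounter_f(X)\le\ccounter_{f_y}(X)$, whence $0\le\limsup_{X\to\infty}\frac{\ccounter_f(X)}{X\log^{\powa-1}(X)}\le\frac{1}{(\powa-1)!}\prod_{p\le y}M_p^{(\powa)}(f)$ for every $y$; letting $y\to\infty$ the right-hand side decreases to $\frac{M^{(\powa)}(f)}{(\powa-1)!}=0$, so the limit is $0$, as claimed.

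\textbf{Main obstacle.} The only genuinely delicate step is the comparison of $\log^{\powa-1}(X/d)$ with $\log^{\powa-1}(X)$ in Step 1. From $\log^{\powa-1}(X)-\log^{\powa-1}(X/d)\le(\powa-1)\log^{\powa-2}(X)\log d$ for $d\le X$, the error is governed by $\sum_{d\le X}\frac{|w(d)|\log d}{d}$, which in general is only $O(\log X)$, not $o(\log X)$; since $w$ is \emph{not} dominated by a friable-supported divisor function, \cref{Lma: technical loggish convergence}---and hence the mechanism of \cref{Lma: technical s-bounded lemma}---does not apply here. I expect to circumvent this by splitting at a free parameter $D$: $\sum_{d\le X}\frac{|w(d)|\log d}{d}\le(\log D)\sum_{d\le D}\frac{|w(d)|}{d}+(\log X)\sum_{d>D}\frac{|w(d)|}{d}$, which is $o(\log X)$ upon sending $X\to\infty$ and then $D\to\infty$, using only the convergence of $\sum_d|w(d)|/d$ established in Step 1; the same trick, split instead at $\sqrt X$, yields $\sum_{d\le X}|w(d)|=o(X)$ for the $\powa=1$ bookkeeping.
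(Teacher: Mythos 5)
Your proof is correct, but the core of your argument is genuinely different from the paper's. You and the paper share the same majorant family (your $f_y$ is the paper's $\fya$ with $s=\prod_{p\le y}p$) and the same treatment of the degenerate case $M^{(\powa)}(f)=0$ via $\limsup$ over shrinking majorants. Where you diverge is the non-degenerate case: you run a Levin--Fainleib/Wirsing-style argument directly on $f$, writing $f=w*\uno^{*\powa}$ with $w=f*\mu^{*\powa}$, proving $\sum_n|w(n)|/n<\infty$ from the convergence of $\sum_p(\powa-f(p))/p$ (via \cref{Lma: technical prime sum prod convergence}), and extracting the asymptotic from \cref{Lma: Piltz comprehensive with error term}. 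The paper instead never touches $f$ directly: it computes the asymptotics only for the majorants $\fya$ (whose correction factor $\muy^{*\powa}$ has \emph{finite support}, so \cref{Lma: technical s-bounded lemma} and \cref{Lma: technical loggish convergence} apply painlessly), then bounds the deficit $\ccounter_{\fya}(X)-\ccounter_f(X)$ uniformly in $X$ by $C\,X(\log^{\powa-1}X+1)\bigl(\sum_{p\nmid s}p^{-2}+\sum_{p\nmid s}\frac{\powa-f(p)}{p}\bigr)$ using \cref{Lma: ugly numerical inequality}, and swaps the limits in $s$ and $X$ by uniform convergence. You correctly identified the one delicate point your route creates --- the comparison of $\log^{\powa-1}(X/d)$ with $\log^{\powa-1}(X)$ cannot be handled by \cref{Lma: technical loggish convergence} since $w$ is not friable-supported --- and your two-parameter splitting at $D$ (and at $\sqrt X$ for the $\powa=1$ bookkeeping) is exactly the standard and correct fix. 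What each approach buys: yours is more self-contained and closer to the classical mean-value theorems for multiplicative functions, at the cost of the extra estimates on $w$; the paper's avoids ever estimating the convolution inverse of $\uno^{*\powa}$ against an arbitrary $f$, at the cost of the uniform-deficit computation and the limit interchange.
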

\begin{proof}
For all $s\in\NNN$ we define the function $\fya\colon\NN\to\RR$ via
$$\fya=(f\cdot\ley)*\gy^{* \powa}.$$
Since $f,\ley,\gy$ are multiplicative, then so is $\fya$, satisfying for all prime powers $p^\nu\neq1$,
$$\fya(p^\nu)=\sum_{i=0}^\nu f(p^i)\ley(p^i)\gy^{* \powa}(p^{\nu-i})=\begin{cases}
	f(p^\nu)&,\;p|s\\
	\uno^{* \powa}(p^\nu)&,\;p\nmid s
\end{cases}\;\;.$$
Therefore for all $s\in\NNN$,
$$0\le f\le \fya\le\uno^{*\powa}.$$
The function $\fya$ can be written
$$\fya=(f\cdot\ley)*\gy^{* \powa} =\left((f\cdot \ley )*\uno^{* \powa}\right)*\muy^{* \powa}.$$
Note that $\muy^{* \powa}(a)=0$ for $a>s^{\powa}$.
Since
$0\le f\ley\le \uno^{* \powa}\ley = \ley^{*\powa}$,
by \cref{Lma: technical s-bounded lemma} for all $X\ge 1$, we have
$$\frac{\ccounter_{\fya}(X)}X=
\frac 1X\sum_{ab\le X}
\muy^{*\powa}(a)\left((f\cdot\ley)*\uno^{*\powa}\right)(b)=$$
$$=\sum_{a\le s^ \powa}\frac {\muy^{* \powa}(a)}{a}\cdot
\frac {a}X\sum_{b\le \frac Xa}\left((f\cdot \ley)* \uno^{* \powa}\right)(b)=$$
$$=\sum_{a\le s^ \powa}\frac {\muy^{* \powa}(a)}a\cdot
\frac{\log^{\powa-1}(X/a)}{(\powa-1)!}\cdot \sum_{m=1}^\infty\frac{(f\cdot \ley) (m)}m\cdot (1+o(1))=$$
$$= \prod_{p|s}(1-p^{-1})^\powa\cdot\frac{\log^{\powa-1}(X)}{(\powa-1)!}\cdot
\prod_{p|s}\sum_{\nu}\frac{f(p^\nu)}{p^\nu}\cdot (1+o(1))= $$
$$= \frac{\log^{\powa-1}(X)}{(\powa-1)!}\cdot\prod_{p|s}M_p^{(\powa)}(f)\cdot (1+o(1)).$$
For all $s\in\NNN$ and $X\ge 1$, $\ccounter_{f}(X)\le \ccounter_{\fya}(X)$. Therefore
$$0\le \limsup_{X\to\infty}\frac{\ccounter_f(X)}{X\log^{\powa-1}(X)}\le \inf_{s\in\NNN}\limsup_{X\to\infty} \frac{\ccounter_{\fya}(X)}{X\log^{\powa-1}(X)}=$$
$$=\frac1{(\powa-1)!}\prod_{p}M_p^{(\powa)}(f)= \frac1{(\powa-1)!}M^{(\powa)}(f).$$
Should the product $M^{(\powa)}(f)$ diverge, it necessarily diverges to $0$, whereby
$$\lim_{X\to\infty}\frac{\ccounter_f(X)}{X(\log X)^{\powa-1}}=0=\frac1{(\powa-1)!}M^{(\powa)}(f),$$
as desired.
We assume therefore that $M^{(\powa)}(f)$ converges. By \cref{Lma: ugly numerical inequality}, for all $s\in\NNN$ and $n\in\NN$,
$$0\le \fya(n)-f(n)\le \sum_{p^\nu||n} \fya(n/p^\nu)(\fya(p^\nu)-f(p^\nu))=$$
$$= \sum_{\substack{p^\nu||n\\p\nmid s}} \fya(n/p^\nu)(\uno^{* \powa}(p^\nu)-f(p^\nu))\le \sum_{\substack{p^\nu||n\\p\nmid s}} \uno^{* \powa}(n/p^\nu)(\uno^{* \powa}(p^\nu)-f(p^\nu)).$$
Summing over $1\le n\le X$ we have
$$0\le \ccounter_{\fya}(X)-\ccounter_f(X)\le \sum_{n\le X}\sum_{\substack{p^\nu\|n\\p\nmid s}} \uno^{* \powa}(n/p
^\nu)\left(\uno^{* \powa}(p^\nu)-f(p^\nu)\right)\le$$
$$\le \sum_{\substack{p\nmid s}}\left[\sum_{p^2|n}^{n\le X} \uno^{* \powa}(n)+\sum_{p||n}^{n\le X} \uno^{* \powa}(n/p
)\left(\uno^{* \powa}(p)-f(p)\right)\right].
$$
By \cref{Lma: divisor func sub mul}, the first summand is bounded as follows:
$$0\le \sum_{p\nmid s}\sum_{p^2|n}^{n\le X}\uno^{* \powa}(n)=\sum_{\substack{p\nmid s}}\sum_{m\le \frac X{p^2}}\uno^{* \powa}(mp^2)\le  \sum_{\substack{\\p\nmid s}}\sum_{m\le \frac X{p^2}}\uno^{* \powa}(m) \uno^{* \powa}(p^2)= $$
$$={\powa +1\choose 2} \sum_{p\nmid s}^{p\le \sqrt{X}}\sum_{m\le X/p^2}\uno^{* \powa}(m).$$
Therefore there exists $C=C_\powa>0$ s.t. for all $X\ge 1$,
$$\sum_{p\nmid s}\sum_{\substack{p^2|n}}^{n\le X}\uno^{* \powa}(n)\le \sum_{\substack{p\nmid s}}^{p\le \sqrt{X}} C\frac{X}{p^2}\left(\log^{\powa-1}(X/p^2)+1\right)\le CX\left(\log^{\powa-1}(X)+1\right)\sum_{p\nmid s}\frac1{p^2}.$$
The second summand is likewise bounded for all $X\ge 1$ by
$$\sum_{p\nmid s}\sum_{\substack{p||n}}^{n\le X}\uno^{* \powa}(n/p)(\uno^{* \powa}(p)-f(p))\le \sum_{\substack{p\nmid s}} (\powa-f(p))\sum_{m\le \frac Xp}\uno^{* \powa}(m)\le $$
$$\le \sum_{\substack{p\nmid s}}^{p\le X} (\powa-f(p))\cdot C\frac Xp\left(\log^{\powa-1}(X/p)+1\right)\le $$
$$\le CX\left(\log^{\powa-1}(X)+1\right)\sum_{p\nmid s}\frac{\powa-f(p)}{p}.$$
Hence there is a constant $C>0$ such that for all $s\in\NNN$ and
$X\ge 1$,
$$0\le \frac{\ccounter_{f_s^{(\powa)}}(X)-\ccounter_f(X)}{X(\log ^{\powa-1}(X)+1)}
\le C \sum_{p\nmid s}\frac1{p^2}+C\sum_{p\nmid s}\frac{\powa-f(p)}p.$$
The series $\sum_p\frac{\powa-f(p)}p$ converges following \cref{Lma: technical prime sum prod convergence} and the assumed convergence of $M^{(\powa)}(f)$. The series $\sum_pp^{-2}$ converges as well, therefore
$$\sum_{p\nmid s}\frac1{p^2}+\sum_{p\nmid s}\frac{\powa-f(p)}p \xrightarrow{s\in\NNN}0.$$
It follows that there is uniform convergence for $X\in[2,\infty[$:
$$\frac{\ccounter_{f_s^{(\powa)}}(X)}{X(\log X)^{\powa-1}}\xrightarrow{s\in\NNN} \frac{\ccounter_f(X)}{X(\log X)^{\powa-1}}.$$
Thus we obtain the desired limit
$$\lim_{X\to\infty} \frac{\ccounter_f(X)}{X\log^{\powa-1}(X)} =\lim_{s\in\NNN}\lim_{X\to\infty} \frac{\ccounter_{\fya}(X)}{X\log^{\powa-1}(X)} =$$
$$= \lim_{s\in\NNN} \frac1{(\powa-1)!}\prod_{p|s}M_p^{(\powa)}(f)=\frac1{(\powa-1)!}M^{(\powa)}(f).$$
\end{proof}

\subsection{Sums of Kronecker Symbols}
Recall that the Kronecker symbol $\Jac ab\in\{0,\pm1\}$, defined for $a,b\in\ZZ$ with $b>0$, satisfies for primes $p\neq 2$:
$$\Jac p2=\Jac{2^*}p=\Jac 2p=(-1)^{\frac{p^2-1}8}.$$

\begin{proposition}
\label{Prop: Schlank-Polya-Vinogradov}
    Let $\chi\colon \ZZ\to\CC$ be a non-principal character of modulus $d$ and let $s\in\NNN$. Then there exists $C>0$, independent of $\chi$ and $s$, s.t.
    $$\left|\sum_{A\le n\le B}(\chi\cdot\mathds1_{s})(n)\right|\le C\cdot 2^{\omega(s)}\sqrt d\log d$$
    for all $A,B\in\RR$.
\end{proposition}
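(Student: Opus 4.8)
The plan is to reduce the statement to the classical P\'olya--Vinogradov inequality by stripping off the coprimality constraint $(n,s)=1$ through M\"obius inversion over the divisors of $s$. First, since $s$ is square-free we may write, for every $n\in\NN$,
$$\mathds1_{s}(n)=\sum_{e\mid (n,s)}\mu(e)=\sum_{e\mid s}\mu(e)\,[\,e\mid n\,],$$
and hence, using that Dirichlet characters are completely multiplicative,
$$\sum_{A\le n\le B}(\chi\cdot\mathds1_{s})(n)=\sum_{e\mid s}\mu(e)\sum_{\substack{A\le n\le B\\ e\mid n}}\chi(n)=\sum_{e\mid s}\mu(e)\chi(e)\sum_{A/e\le m\le B/e}\chi(m).$$
(Divisors $e$ with $(e,d)>1$ contribute nothing, since then $\chi(e)=0$.)

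Next, for each $e\mid s$ the inner sum $\sum_{A/e\le m\le B/e}\chi(m)$ is a partial sum of a fixed \emph{non-principal} character modulo $d$ over an interval, so the P\'olya--Vinogradov inequality provides an \emph{absolute} constant $C_0>0$---independent of $\chi$, of the modulus $d$, and of the interval---with $\bigl|\sum_{A/e\le m\le B/e}\chi(m)\bigr|\le C_0\sqrt d\log d$; see \cite{BOOK:MontVaughan2006}. (When $d\le 2$ there is no non-principal character modulo $d$, so the statement is vacuous and we may assume $\log d>0$.) Bounding $|\mu(e)\chi(e)|\le 1$ and summing over the divisors of $s$---of which there are exactly $2^{\omega(s)}$ because $s$ is square-free---yields
$$\Bigl|\sum_{A\le n\le B}(\chi\cdot\mathds1_{s})(n)\Bigr|\le \sum_{e\mid s}\bigl|\mu(e)\chi(e)\bigr|\,C_0\sqrt d\log d\le C_0\cdot 2^{\omega(s)}\sqrt d\log d,$$
which is the asserted bound with $C=C_0$.

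The one point requiring care---and the only place where anything beyond bookkeeping happens---is the invocation of P\'olya--Vinogradov in a form uniform over \emph{all} non-principal characters modulo $d$, including imprimitive ones, and over all intervals $[A,B]$. This is standard: one passes from $\chi$ to its primitive inductor $\chi^{\ast}$ of modulus $d^{\ast}\mid d$ and clears the residual coprimality condition $(m,d)=1$ by the same M\"obius device, picking up only an absolute multiplicative constant. Granting that classical input, the computation above is complete and no further estimates are needed.
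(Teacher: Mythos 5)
Your proof is correct and follows essentially the same route as the paper: Möbius inversion over the divisors of $s$ to remove the coprimality condition, complete multiplicativity to pull out $\chi(e)$, Pólya--Vinogradov applied to each of the $2^{\omega(s)}$ inner sums, and the trivial bound $|\mu(e)\chi(e)|\le 1$. Your closing remark about uniformity over imprimitive non-principal characters is a reasonable point of care that the paper glosses over, but it does not change the argument.
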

\begin{proof}
Let $A,B\in\RR$. A run-through of $s$-coprime integers is obtained by M\"obius inversion:
$$\sum_{A\le n\le B}\chi(n)\gy(n)=\sum_{u|s}\mu(u)\sum_{\substack{A\le n\le B}}^{u|n}\chi(n)=\sum_{u|s}\mu(u)\chi(u)\sum_{\frac Au\le n\le \frac Bu}\chi(n).$$
	By the theorem of P\'olya-Vinogradov\footnote{see \cite{ARTICLE:Polya1918} or \cite{ARTICLE:Hildebrand1988}}, there is a constanct $C>0$ s.t.
	$$\left|\sum_{A\le n\le B}\chi(n)\gy(n)\right|\le \sum_{u|s}\left|\sum_{\frac Au\le n\le \frac Bu}\chi(n)\right|\le C\cdot 2^{\omega(s)} \sqrt d\log d.$$
\end{proof}

\begin{proposition}
\label{Prop: Bound S chi}
	For $\chi\colon\ZZ\to\CC$ a non-principal Dirichlet character with modulus $d$, for $s\in\NNN$ and $1\le Y\le  X$, 
	denote by
	$$S_{\chi,s}(X)=\sum\limits_{1\le n\le X} \mu_s^2(n)\chi(n)\;\;,\;\;\;\; S_{\chi,s}(Y,X)=\sum\limits_{Y <  n\le X} \mu_s^2(n)\chi(n).$$
	Then there is a constant $0<C\in\RR$ s.t. for all such $\chi$, $s\in\NNN$ and $X\ge Y\ge 1$,
	$$|S_{\chi,s}(X)|\;, \;\;|S_{\chi,s}(Y,X)| \le
C\cdot X^{\frac12}\cdot 2^{\frac12\omega(s)}d^{\frac14}\log^{\frac12} (d).$$
\end{proposition}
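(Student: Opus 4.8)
The plan is to strip the square-free condition using the classical identity $\mu^2(n)=\sum_{a^2\mid n}\mu(a)$, reducing $S_{\chi,s}$ to a sum of short character sums controlled by \cref{Prop: Schlank-Polya-Vinogradov}, and then to balance two ranges of the sieving parameter $a$ against one another.

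Since $\mu_s^2=\gy\cdot\mu^2$ and both $\gy$ and the Dirichlet character $\chi$ are completely multiplicative, writing $n=a^2m$ gives
$$S_{\chi,s}(X)=\sum_{a\le\sqrt X}\mu(a)\,\gy(a)\,\chi(a)^2\;T_a(X),\qquad T_a(X):=\sum_{m\le X/a^2}(\gy\chi)(m),$$
and likewise $S_{\chi,s}(Y,X)=\sum_{a\le\sqrt X}\mu(a)\,\gy(a)\,\chi(a)^2\,T_a(Y,X)$ with $T_a(Y,X):=\sum_{Y/a^2<m\le X/a^2}(\gy\chi)(m)$. As $|\mu(a)\gy(a)\chi(a)^2|\le 1$, it remains to estimate the interval sums $T_a$, for which I have two bounds: trivially $|T_a(X)|,\,|T_a(Y,X)|\le X/a^2$ (there are at most that many summands), and, since $\chi$ is non-principal, $|T_a(X)|,\,|T_a(Y,X)|\le B:=C_0\,2^{\omega(s)}\sqrt d\log d$ by \cref{Prop: Schlank-Polya-Vinogradov} applied to the character $\chi\gy$ over an interval, where $C_0\ge 1$ denotes the absolute constant appearing there.

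For any cutoff $1\le T\le\sqrt X$, using the P\'olya--Vinogradov bound for $a\le T$ and the trivial bound for $a>T$,
$$|S_{\chi,s}(X)|\le \sum_{a\le T}B+\sum_{T<a\le\sqrt X}\frac{X}{a^2}\le TB+\frac{2X}{T},$$
and the same holds for $|S_{\chi,s}(Y,X)|$. If $B\le X$, taking $T=\sqrt{X/B}\in[1,\sqrt X]$ (here $B\ge 1$ is used) makes both terms at most $2\sqrt{XB}$, so $|S_{\chi,s}(X)|\le 3\sqrt{XB}=3\sqrt{C_0}\,X^{1/2}2^{\omega(s)/2}d^{1/4}(\log d)^{1/2}$. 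If instead $B>X$, then already $|S_{\chi,s}(X)|\le\sum_{n\le X}\mu_s^2(n)\le X\le X^{1/2}B^{1/2}$, the same bound. Since $S_{\chi,s}(Y,X)$ is handled identically, the constant $C=3\sqrt{C_0}$ works for both.

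No step is genuinely hard; the care points are that $\gy\chi$ really is completely multiplicative, so that the factorisation $n=a^2m$ decouples cleanly; that the constant $C_0$ of \cref{Prop: Schlank-Polya-Vinogradov} is uniform in $\chi$ and $s$ (which is part of that statement); and the boundary cases where the optimized $T$ would leave $[1,\sqrt X]$, which are covered by the trivial bound or by the inequality $B\ge 1$. A non-principal $\chi$ forces $d\ge 3$, so the factors $\log d$ and $d^{1/4}$ never degenerate.
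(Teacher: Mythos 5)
Your proof is correct and follows essentially the same route as the paper's: sieve out the square-free condition via $\mu^2(n)=\sum_{a^2\mid n}\mu(a)$, bound the inner sums by \cref{Prop: Schlank-Polya-Vinogradov} for small $a$ and trivially for large $a$, and optimize the cutoff, with the degenerate case $B>X$ handled by the trivial bound. The only cosmetic difference is that you estimate $S_{\chi,s}(Y,X)$ directly through the interval sums $T_a(Y,X)$, whereas the paper deduces it from $|S_{\chi,s}(Y,X)|\le |S_{\chi,s}(X)|+|S_{\chi,s}(Y)|$; both are fine.
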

\begin{proof}
    $$S_{\chi,s}(X)=\sum_{1\le n\le X} \mu^2(n) \gy(n)\chi(n)=\sum_{a=1}^\infty\mu(a)\sum_{\substack{a^2|n}}^{1\le n\le X} \gy(n)\chi(n)=$$
    $$=\sum_{a}\mu(a)\sum_{1\le m\le \frac X{a^2}} \gy(a^2m)\chi(a^2m)=\sum_{1\le a\le \sqrt{X}}\mu_s(a)\chi(a)^2\sum_{1\le m\le \frac X{a^2}} \gy(m)\chi(m).$$
    Let $1\le Z\le \sqrt X$ be some parameter to be determined later. Then
    \begin{equation}
    \label{Eqn: S bound split}
    S_{\chi,s}(X)=\left(\sum_{1\le a\le Z}+\sum_{Z<a\le\sqrt X}\right)\mu_s(a)\chi(a)^2\sum_{1\le m\le \frac X{a^2}}\chi(m)\gy(m).
    \end{equation}
    By \cref{Prop: Schlank-Polya-Vinogradov} there is $C'>0$ s.t. the first summand in (\ref{Eqn: S bound split}) is bounded by
    $$\left| \sum_{1\le a\le Z}\mu_s(a)\chi(a)^2\sum_{1\le m\le \frac X{a^2}}\chi(m)\gy(m)\right|\le \sum_{a\le Z}\left|\sum_{m\le \frac X{a^2}}\chi(m)\cdot \gy(m)\right|\le$$
    $$ \le Z\cdot C'2^{\omega(s)}\sqrt d\log d.$$
	The second summand in (\ref{Eqn: S bound split}) is bounded, for some constant $C''>0$, by
	$$\left| \sum_{Z< a\le \sqrt X}\mu_s(a)\chi(a)^2\sum_{1\le m\le \frac X{a^2}}\chi(m)\gy(m)\right| \le\sum_{a>Z}\sum_{m\le \frac X{a^2}}1\le \sum_{a>Z}\frac{X}{a^2}\le C''\cdot\frac{X}{Z}.$$
Therefore taking $C=\max\{C',C'',1\}$,
    \begin{equation}
    \label{Eqn: technical Joyce lemma - composite bound}
    \left|S_{\chi,s}(X)\right|\le
    Z\cdot C\cdot 2^{\omega(s)}\sqrt d\log d+ X\cdot \frac {C}Z.
    \end{equation}
    If $X<2^{\omega(s)}d^{\frac12}\log d$, then
    $$\left|S_{\chi,s}(X)\right|\le \sum_{1\le n\le X}| \mu_s^2(n)\chi(n)|\le X<2C\cdot X^{\frac12}\left(2^{\omega(s)}d^{\frac12}\log d\right)^{\frac12}=$$
    $$=2C\cdot X^{\frac12}\cdot 2^{\frac12\omega(s)}d^{\frac14}\log^{\frac12} (d).$$
    Otherwise, take $Z= X^{\frac12}\big(2^{\omega(s)}d^{\frac12}\log d\big)^{-\frac12}$. The modulus $d$ of a non-principal Dirichlet character $\chi$ is at least $3$, whereby $\sqrt d\log (d)>1$. Hence
    $$1\le Z\le X^{\frac12}2^{-\frac12\omega(s)}\le X^{\frac12},$$
    and (\ref{Eqn: technical Joyce lemma - composite bound}) then reads:
    $$\left|S_{\chi,s}(X)\right|\le 2C\cdot X^{\frac12}\cdot 2^{\frac12\omega(s)}d^{\frac14}\log^{\frac12} (d).$$
    For all $1\le Y\le X$, we then have
    $$\left|S_{\chi,s}(Y,X)\right|\le \left|S_{\chi,s}(X)\right| + \left|S_{\chi,s}(Y)\right|\le 4C\cdot X^{\frac12}\cdot 2^{\frac12\omega(s)}d^{\frac14}\log^{\frac12}(d).$$
\end{proof}

\begin{definition}
\label{Dfn: quasi bi-char}
	A function $B\colon\NN^2\to\CC$ is a \emph{bi-character} with \emph{slopes} $1\le c_1,c_2\in\NN$ if
	\begin{itemize}
		\item for all $a\in\NN$, there exist Dirichlet character $\chi_a$ of modulus $c_1a$ and $M_a\in\CC$ with $|M_a|\le 1$ s.t.
		$$B(a,-)=M_a\cdot(\chi_a)_{|\NN}\colon \NN\to\CC,$$
		\item for all $b\in\NN$, there exist Dirichlet character $\psi_b$ of modulus $c_2b$ and $N_b\in\CC$ with $|N_b|\le 1$ s.t.
		$$B(-,b)=N_b\cdot(\psi_b)_{|\NN}\colon \NN\to\CC.$$
We say $B$ is \emph{non-principal} at $a=a_0$ (resp. at $b=b_0$) if either $M_{a_0}=0$ (resp. $N_{b_0}=0$) or $\chi_{a_0}$ (resp. $\psi_{b_0}$) is non-principal.
	\end{itemize}
\end{definition}

\begin{proposition}
\label{Prop: bi-char summation}
	Let $s\in\NNN$ and let $B\colon\NN^2\to\CC$ be a bi-character with slopes $c_1,c_2$. Assume $a_0,b_0\in\NN$ are such that $B$ is non-principal at all $a\ge a_0$ and at all $b\ge b_0$.
	Then there exists a constant $C>0$, independent of $s,a_0,b_0,B$ s.t. for all $X\ge 1$,
	$$\left|\sum_{\substack{a\ge a_0\\b\ge b_0}}^{ab\le X}\mu_s^2(ab)B(a,b)\right|\le C\cdot 2^{\frac12\omega(s)} c_1^{\frac18 } c_2^{\frac18} \left(1+ \log \max\{c_1,c_2\}\right) X^{\frac78} \left(1+\log X\right)^{\frac12}.$$
\end{proposition}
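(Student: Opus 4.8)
The plan is to estimate the double sum by splitting the range of one variable at a threshold and exploiting the two directions of non-principality in $B$. Write the sum as $\Sigma(X) = \sum_{a \ge a_0, b \ge b_0}^{ab \le X} \mu_s^2(ab) B(a,b)$. For square-free $n = ab$ coprime to $s$ we have $\mu_s^2(ab) = \mu_s^2(a)\mu_s^2(b)\mathds 1[(a,b)=1]$; I would first absorb the coprimality condition by a standard M\"obius inversion $\mathds 1[(a,b)=1] = \sum_{e \mid (a,b)} \mu(e)$, reducing to sums where $a,b$ range freely. Introduce a parameter $1 \le T \le X$ (to be optimized at the end) and split $\Sigma(X) = \Sigma_{\le}(X) + \Sigma_{>}(X)$ according to whether $a \le T$ or $a > T$.

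For the range $a \le T$: here I fix $a$ and sum over $b$. Since $B$ is non-principal at every $a \ge a_0$, the inner sum $\sum_{b_0 \le b \le X/(ea)} \mu_s^2(b) B(a,b) = M_a' \sum \mu_s^2(b)\chi(b)$ for a non-principal character $\chi$ of modulus dividing $c_1 a \le c_1 T$ (after incorporating the inversion modulus $e \mid s$, adjusting the modulus by a bounded factor). Applying \cref{Prop: Bound S chi} to the inner sum with $d \ll c_1 a$ and then summing the resulting bound $\ll (X/a)^{1/2} 2^{\omega(s)/2} (c_1 a)^{1/4} \log^{1/2}(c_1 a)$ over $a \le T$ gives, after pulling out $\log^{1/2}(c_1 T)$ and using $\sum_{a \le T} a^{1/4 - 1/2} = \sum a^{-1/4} \ll T^{3/4}$, a bound of the shape $\ll 2^{\omega(s)/2} c_1^{1/4} X^{1/2} T^{1/4} \log^{1/2}(c_1 X)$. (The $\mu$-inversion contributes an extra factor $\le \tau(s) \ll 2^{\omega(s)}$ or can be folded into the $2^{\omega(s)/2}$ with a harmless constant adjustment; one has to be slightly careful but it is routine.) For the range $a > T$: I symmetrically fix $b$ and sum over $a > T$ with $a \le X/b$, using non-principality of $B$ at $b$ and modulus $\ll c_2 b$, together with the \emph{difference} form $S_{\chi,s}(Y,X)$ of \cref{Prop: Bound S chi} with $Y \asymp T$ to exploit that $a$ starts at $T$. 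This yields $\ll 2^{\omega(s)/2} c_2^{1/4} X^{1/2} (X/T)^{?} \cdots$ — more precisely, summing $(X/b)^{1/2}(c_2 b)^{1/4}$ over $b \le X/T$ gives $\ll 2^{\omega(s)/2} c_2^{1/4} X^{1/2} (X/T)^{3/4} \log^{1/2}(c_2 X)$.

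Balancing the two contributions $X^{1/2} T^{1/4}$ and $X^{1/2}(X/T)^{3/4}$ forces $T \asymp X^{3/4}$, giving each term $\ll X^{1/2} \cdot X^{3/16} = X^{11/16}$; that is better than the claimed $X^{7/8}$, so the exponents have slack — which is comforting, as it means the crude handling of the coprimality M\"obius sum and the $\log$ factors will not spoil the final estimate. Collecting the $c_1, c_2$ powers symmetrically (each side contributes one of them to the $1/4$ power, so after balancing and taking geometric-mean-type bounds one lands at $c_1^{1/8} c_2^{1/8}$ as in the statement) and bounding all logarithms of $c_i a$ by $1 + \log\max\{c_1,c_2\} + \log X$, one arrives at the asserted inequality with the $X^{7/8}(1+\log X)^{1/2}$ factor, with room to spare.

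The main obstacle I anticipate is bookkeeping rather than conceptual: one must track how the character modulus transforms under the coprimality M\"obius inversion (passing from modulus $c_1 a$ to modulus $\mathrm{lcm}(c_1 a, e)$ with $e \mid s$, which can enlarge $d$ by up to a factor $s$ — this must be controlled by the $2^{\omega(s)/2}$ factor, possibly requiring the inversion variable $e$ to be summed \emph{outside} and its contribution estimated by $\sum_{e \mid s}$, which is where the $2^{\omega(s)/2}$ must genuinely come from). A cleaner alternative is to note $\mu_s^2(ab) = \mu_s^2(a)\mu_s^2(b) - (\text{terms supported on } (a,b)>1)$ and absorb the correction, which has $a$ or $b$ forced to be large, into the error directly. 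Either way the analytic input is entirely \cref{Prop: Bound S chi} applied in both variables with one split point, and the verification that the stated exponents are achieved (with slack) is the content.
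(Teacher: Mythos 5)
Your overall strategy is the paper's: a Dirichlet-hyperbola splitting of the region $ab\le X$, the bound of \cref{Prop: Bound S chi} applied to the inner character sum in whichever variable runs long, and an optimization of the split point in terms of $c_1,c_2$. Two points need repair, and the second is a genuine gap. First, your balancing arithmetic is off: from $\sum_{a\le T}a^{-1/4}\ll T^{3/4}$ the short-$a$ range contributes $\ll 2^{\frac12\omega(s)}c_1^{1/4}X^{1/2}T^{3/4}(\cdots)$, not $X^{1/2}T^{1/4}$, so balancing against $X^{1/2}(X/T)^{3/4}$ forces $T\asymp X^{1/2}$ and each term is of order $X^{7/8}$ --- exactly the claimed exponent, with \emph{no} slack. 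The conclusion ``$X^{11/16}$, room to spare'' is therefore false, and you cannot invoke it to absorb losses elsewhere.

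Second, the step you yourself flag as delicate --- the M\"obius inversion of the coprimality condition $\mathds 1[(a,b)=1]=\sum_{e\mid(a,b)}\mu(e)$ --- is where the argument as written would fail. The variable $e$ runs over divisors of $(a,b)$ (not of $s$), so summing it on the outside costs an unbounded factor, and even restricting attention to the $s$-aspect, the extra divisor sum and the enlarged moduli $\lcm(c_1a,e)$ overrun the stated $2^{\frac12\omega(s)}c_1^{1/8}c_2^{1/8}$ dependence; since there is no slack in the exponent of $X$ either, this cannot be waved away. The inversion is in fact unnecessary: $B(a,-)$ is a scalar multiple of a Dirichlet character of modulus $c_1a$, hence vanishes at every $b$ with $(a,b)>1$, so $\mu_s^2(ab)B(a,b)=\mu_s^2(a)\mu_s^2(b)B(a,b)$ identically and the double sum factors with no correction terms --- this observation is the first line of the paper's proof. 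With it, and with the split point chosen as $T\asymp c_1^{-1/6}c_2^{1/6}X^{1/2}$ to symmetrize the $c_i$-powers (plus a separate trivial bound for $X$ small compared with $\max\{c_1,c_2\}^{1/3}/\min\{c_1,c_2\}^{1/3}$, which the paper handles explicitly), your outline does deliver the stated inequality.
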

\begin{proof}
	The function $B$ satisfies $B(a,b)=0$ whenever $(a,b)\neq 1$, therefore
	$$\mu_s^2(ab)B(a,b)=\mu_s^2(a)\mu_s^2(b)B(a,b).$$
	Let $X_1,X_2>0$, to be determined later, s.t. $X_1X_2=X$. Then
	$$\sum_{\substack{a\ge a_0\\b\ge b_0}}^{ab\le X} \mu_s^2(a)\mu_s^2(b)B(a,b) = \Bigg(\underbrace{\sum_{a=a_0}^{\floor{X_1}}\sum_{b=b_0}^{\floor{\frac{X}{a}}}}_{(I)}+
	\underbrace{\sum_{b=b_0}^{\floor{X_2}}\sum_{a=a_0}^{\floor{\frac{X}{b}}}}_{(II)} -
	\underbrace{\sum_{a=a_0}^{\floor{X_1}} \sum_{b=b_0}^{\floor{X_2}}}_{(III)}\Bigg) \mu_s^2(a)\mu_s^2(b)B(a,b).$$
The first summand $(I)$ is bounded by
	$$|(I)|=\left| \sum_{a=a_0}^{\floor{X_1}} \mu_s^2(a)\sum_{b=b_0}^{\floor{\frac{X}{a}}} \mu_s^2(b)B(a,b)\right|\le \sum_{a=a_0}^{\floor{X_1}}\left|  \sum_{b=b_0}^{\floor{\frac{X}{a}}} \mu_s^2(b)B(a,b)\right|=$$
	$$= \sum_{a=a_0}^{\floor{X_1}}|M_a|\left|  \sum_{b=b_0}^{\floor{\frac{X}{a}}} \mu_s^2(b)\chi_a(b)\right|\le \sum_{a=a_0}^{\floor{X_1}}
\left|  \sum_{b=b_0}^{\floor{\frac{X}{a}}} \mu_s^2(b)\chi_a(b)\right|.$$
	For all $a\ge a_0$, $\chi_a$ is non-principal of modulus $c_1a$. By \cref{Prop: Bound S chi} there exists $C'>0$, independent of $s,c_1,a$, s.t.
	$$\left|\sum_{b=b_0}^{\floor{\frac{X}{a}}} \mu_s^2(b)\chi_a(b)\right| =\left|S_{\chi_a,s}(b_0-1,X/a)\right|\le C' \cdot (X/a)^{\frac12} 2^{\frac12\omega(s)}(c_1a)^{\frac14}\log^{\frac12}(c_1a)$$
	for all $X\ge 1$. Since $a,c_1\ge 1$, we have $\log c_1,\log a\ge 0$, therefore
	$$\log(c_1a)=\log c_1+\log a\le (1+\log c_1)(1+\log a).$$
	Therefore
	$$|(I)|\le \sum_{a=a_0}^{\floor{X_1}}\left|  \sum_{b=b_0}^{\floor{\frac{X}{a}}} \mu_s^2(b)\chi_a(b)\right| \le
	C' \cdot 2^{\frac12\omega(s)}X^{\frac12}c_1^{\frac14} \left(1+ \log c_1\right)^{\frac12}
	\sum_{a=a_0}^{\floor{X_1}}a^{-\frac14}(1+\log a)^{\frac12}.$$
	There is a constant $C''>0$ s.t. for all $Y\ge 1$
	$$\sum_{1\le a\le Y}a^{-\frac14}(1+\log a)^{\frac12}\le C''
	Y^{\frac34}(1+\log Y)^{\frac12}.$$
	Hence
	$$|(I)|\le 
	C'C''\cdot 2^{\frac12\omega(s)}X^{\frac12}c_1^{\frac14} \left(1+ \log_2 c_1\right)^{\frac12}
	X_1^{\frac34}(1+\log X_1)^{\frac12}.$$
	Likewise,
	$$|(II)|\le 
	C'C''\cdot 2^{\frac12\omega(s)}X^{\frac12}c_2^{\frac14} \left(1+ \log_2 c_2\right)^{\frac12}
	X_2^{\frac34}(1+\log X_2)^{\frac12}.$$
	For the summand $(III)$, again by \cref{Prop: Bound S chi},
$$\left|(III)\right|=\left| \sum_{a=a_0}^{\floor{X_1}} \sum_{b=b_0}^{\floor{X_2}} \mu_s^2(a)\mu_s^2(b)B(a,b)\right|\le \sum_{a=a_0}^{\floor{X_1}}\left| \sum_{b=b_0}^{\floor{X_2}} \mu_s^2(b)\chi_a(b)\right| \le$$
$$\le \sum_{a=a_0}^{\floor{X_1}} C' 2^{\frac12\omega(s)}X_2^{\frac12}(c_1a)^{\frac14}\log(c_1a)^{\frac12}\le$$
$$\le 
C' 2^{\frac12\omega(s)}X_2^{\frac12} c_1^{\frac14} \left(1+ \log_2 c_2\right)^{\frac12}\sum_{a=a_0}^{\floor{X_1}} a^{\frac14}(1+\log a)^{\frac12} \le$$
$$\le C'C''' 2^{\frac12\omega(s)}X_2^{\frac12} c_1^{\frac14} \left(1+ \log_2 c_2\right)^{\frac12} X_1^{\frac54}(1+\log X_1)^{\frac12} .$$
We now take $X_1=c_1^{-\frac16}c_2^{\frac16 }X^{\frac12}$ and $X_2=c_1^{\frac16}c_2^{-\frac16 }X^{\frac12}$, so that
$$X^{\frac12} c_1^{\frac14} X_1^{\frac34}=
X^{\frac12} c_2^{\frac14} X_2^{\frac34} =
X_2^{\frac12} c_1^{\frac14} X_1^{\frac54}=
 c_1^{\frac18 } c_2^{\frac18} X^{\frac78}.$$
For $X\ge c_1^{-\frac13} c_2^{\frac13}$ we have $X_1\le X$, and for $X\ge c_1^{\frac13} c_2^{-\frac13}$ we have $X_2\le X$. Then for such $X$ we find, taking the above into account, that 
$$\left|\sum_{\substack{a\ge a_0\\b\ge b_0}} ^{ab\le X}\mu_s^2(ab)B(a,b)\right|\le |(I)|+|(II)|+|(III)|\le $$
	$$\le C'(2C''+C''')\cdot 2^{\frac12\omega(s)} c_1^{\frac18 } c_2^{\frac18} X^{\frac78}\left(1+ \log \max\{c_1,c_2\}\right)^{\frac12} \left(1+\log X\right)^{\frac12}.$$
	However if w.l.o.g. $c_2\ge c_1$ and $1\le X\le c_1^{-\frac13} c_2^{\frac13}$, then the bounded is obtained in simpler fashion:
	$$X=X^{\frac18}X^{\frac78}\le c_1^{-\frac1{24}} c_2^{\frac1{24}}X^{\frac78}\le c_1^{\frac1{8}} c_2^{\frac1{8}}X^{\frac78} $$
	and
	$$\log X\le \frac{\log c_2}3\le \log\max\{c_1,c_2\}.$$
	By \cref{Lma: Piltz lazy} there exists $\widetilde C>0$ s.t.
	$$\left| \sum_{\substack{a\ge a_0\\b\ge b_0}} ^{ab\le X}\mu_s^2(ab)B(a,b)\right| \le \sum_{\substack{a\ge a_0\\b\ge b_0}} ^{ab\le X}1 \le \sum_{\substack{n\le X}}(\uno*\uno)(n) \le \widetilde C X(1+\log X)\le $$
$$\le \widetilde C\cdot 2^{\frac12\omega(s)} c_1^{\frac18 } c_2^{\frac18} X^{\frac78}\left(1+ \log \max\{c_1,c_2\}\right)^{\frac12} \left(1+\log X\right)^{\frac12}.$$
The claim therefore holds for all $X\ge 1$ with $C:=\max\{C'(2C''+C'''),\widetilde C\}$.
\end{proof}

\begin{proposition}
\label{Prop: odd Kronecker is bi-char}
	The functions $B,B',B''\colon\NN^2\to\CC$,
	$$B(a,b)= \uno_2(ab)\Jac ba \;,\;\; 
	B'(a,b)= \uno_2(ab)\Jac {2b}a=\Jac 2a B(a,b)$$
	$$B''(a,b)= \uno_2(ab)\Jac b{2a} =\Jac b2 B(a,b) $$
	are bi-characters with slopes $c_1,c_2\le 8$. Moreover, $B$ (resp. $B'$; $B''$) is non-principal at $a\ge 2$ and at $b\ge 2$ ($a\ge 2$ and $b\ge 1$ ; $a\ge 1$ and $b\ge 2$).
\end{proposition}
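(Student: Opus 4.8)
The plan is to verify the two conditions of \cref{Dfn: quasi bi-char} for each of $B,B',B''$ by writing down the relevant Dirichlet characters explicitly, and then to settle non-principality case by case. Everything reduces to three elementary facts about the Kronecker symbol: (i) it is completely multiplicative in each of its two arguments separately; (ii) for fixed odd $a>0$ the map $b\mapsto\Jac ba$ depends only on $b$ modulo $a$, since it equals $\prod_{p\mid a}\Jac bp^{v_p(a)}$ and each Legendre symbol $\Jac bp$ depends only on $b\bmod p$, so it is a Dirichlet character modulo $a$, of conductor the radical of $a$; (iii) for fixed odd $b>0$, quadratic reciprocity gives $\Jac ba=(-1)^{\frac{a-1}2\frac{b-1}2}\Jac ab$ for odd $a$, where $a\mapsto\Jac ab$ is a Dirichlet character modulo $b$ and $a\mapsto(-1)^{\frac{a-1}2\frac{b-1}2}$ is trivial when $b\equiv 1\bmod 4$ and is the non-principal character modulo $4$ when $b\equiv 3\bmod 4$; hence $a\mapsto\uno_2(a)\Jac ba$ is a Dirichlet character of modulus dividing $4b$. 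We also use that $\uno_2$ is the principal character modulo $2$, and that on odd arguments both $n\mapsto\Jac 2n$ and $n\mapsto\Jac n2$ equal the real primitive character $\chi_8$ modulo $8$.

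With these in hand the bi-character property is a short check. For $B(a,b)=\uno_2(a)\uno_2(b)\Jac ba$: fixing $a$, if $a$ is even then $b\mapsto B(a,b)\equiv 0$ and we take $M_a=0$; if $a$ is odd then by (ii) $b\mapsto B(a,b)$ is the product of the principal character modulo $2$ with the character $b\mapsto\Jac ba$ modulo $a$, hence a character of modulus $\lcm(2,a)=2a$, which divides $8a$, so $M_a=1$. Fixing $b$ is symmetric: $b$ even gives $N_b=0$, and $b$ odd gives by (iii) that $a\mapsto B(a,b)$ is a character of modulus dividing $4b$, hence dividing $8b$, so $N_b=1$. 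Thus $B$ is a bi-character. Since $B'(a,b)=\Jac 2a\,B(a,b)$ with $a\mapsto\Jac 2a$ constant in $b$ and $\pm1$-valued for odd $a$, fixing $a$ we absorb this factor into $M_a$ and keep the character of $B$; fixing $b$, the extra factor $a\mapsto\uno_2(a)\Jac 2a=\chi_8(a)$ has modulus $8$, so $a\mapsto B'(a,b)$ has modulus dividing $\lcm(8,4b)=8b$. Symmetrically $B''(a,b)=\Jac b2\,B(a,b)$ gives modulus dividing $\lcm(8,2a)=8a$ in $a$ and dividing $4b$ in $b$. In every case all the characters can be lifted to modulus $8a$ in the first variable and $8b$ in the second, so the slopes $c_1=c_2=8$ work uniformly.

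For non-principality we exploit both alternatives allowed by \cref{Dfn: quasi bi-char}: either the relevant coefficient $M_a$ or $N_b$ vanishes, or the relevant character is non-principal. In the applications of \cref{Prop: bi-char summation} only squarefree $a,b$ contribute, as $\mu_s^2(ab)\ne 0$ forces $ab$ squarefree; so it suffices to treat odd squarefree arguments $\ge 3$, for which the characters in (ii) and (iii) have conductor equal to the full odd part of their modulus. For $B$: at $a=1$ the function $b\mapsto B(1,b)=\uno_2(b)$ is the principal character modulo $2$, so $a=1$ must be excluded, matching the claim; for $a\ge 2$, either $a$ is even and $M_a=0$, or $a\ge 3$ is odd squarefree and $b\mapsto B(a,b)$ has conductor divisible by $a>1$ because $b\mapsto\Jac ba$ does, hence is non-principal. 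The case $b\ge 2$ is symmetric via (iii). For $B'$: at $b=1$, $a\mapsto B'(a,1)=\uno_2(a)\Jac 2a=\chi_8(a)$ is non-principal modulo $8$, which is why the range is $b\ge 1$; for $b\ge 2$ one argues as for $B$, noting that multiplying the $B$-character by $\chi_8$ cannot make it principal, since $\chi_8$ is trivial on the odd part of the modulus, and if that odd part is trivial then $\chi_8$ itself is non-principal; the range $a\ge 2$ is as for $B$, since $\Jac 2a$ only enters $M_a\in\{\pm1\}$. For $B''$ the roles of $a$ and $b$ are interchanged: at $a=1$, $b\mapsto B''(1,b)=\uno_2(b)\Jac b2=\chi_8(b)$ is non-principal modulo $8$, giving $a\ge 1$, while $b\ge 2$ is as for $B$.

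The one genuinely delicate point is the modulus bookkeeping in (iii): one must check that the reciprocity sign contributes the mod-$4$ character only when $b\equiv 3\bmod 4$, so that together with the parity factor $\uno_2$ the modulus of $a\mapsto\uno_2(a)\Jac ba$ always divides $4b$ and never acquires a stray extra factor of $2$, and then dually that inserting the mod-$8$ factor in $B'$ and $B''$ raises the slope to at most $8$ and never destroys non-principality. Everything else is immediate from complete multiplicativity and periodicity of the Kronecker symbol.
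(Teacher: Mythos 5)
Your proposal follows essentially the same route as the paper's proof: the same explicit characters $\chi_a(n)=\uno_2(n)\Jac na$ and $\psi_b(n)=\uno_2(n)(-1)^{\frac{b-1}{2}\frac{n-1}{2}}\Jac nb$ extracted via quadratic reciprocity, the same modulus bookkeeping, and the same mechanism for non-principality (a $2$-power-conductor factor such as $\uno_2$ or $\chi_8$ cannot cancel a character whose conductor has nontrivial odd part). The paper records the sharper slopes $(2,4)$, $(2,8)$, $(8,4)$ for $B,B',B''$ where you lift everything to $(8,8)$; since $8a$ and $2a$ have the same prime support, the lift does not change the function, so both choices satisfy the stated bound $c_1,c_2\le 8$.

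The one substantive divergence is your restriction to squarefree arguments in the non-principality step, and there you are in fact more careful than the paper. The paper asserts that $n\mapsto\Jac nm$ is "clearly non-principal of modulus $m$" for every odd $m\ge 2$; this fails when $m$ is an odd perfect square, e.g.\ $\Jac n9$ equals $1$ whenever $(n,3)=1$, so $b\mapsto B(9,b)$ is forced to be $M_9=1$ times the principal character mod $18$. Thus the proposition as literally stated is false at odd square values of $a$ (for $B,B'$) and of $b$ (for $B,B''$), and your own parenthetical "conductor equal to the radical of $a$" is likewise only true for squarefree $a$. Your observation that only squarefree $a,b$ matter --- because $\mu_s^2(ab)$ annihilates all other terms in \cref{Prop: bi-char summation} and \cref{Prop: bound for T_s(X)} --- is the correct repair; to make it airtight one must also retain the factor $\mu_s^2(a)$ (resp.\ $\mu_s^2(b)$) in the proof of \cref{Prop: bi-char summation} rather than bounding it by $1$ before invoking \cref{Prop: Bound S chi}, so that the character-sum estimate is only ever applied at squarefree indices, where non-principality genuinely holds. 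With that caveat your argument is complete, and it proves exactly the (slightly weakened, squarefree) statement that the downstream applications actually need.
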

\begin{proof}
Let $h\colon\ZZ\to\CC$ denote the function
$$h(n)=\Jac 2n=\Jac n2= \uno_2(n)(-1)^{\frac{n^2-1}8},$$
a Dirichlet character of modulus $8$. For odd $a,b\ge 1$, let $\chi_a,\psi_b\colon\ZZ\to\CC$ be
$$\chi_a(n)=\uno_2(n)\Jac na\;,\;\;\psi_b(n)=\uno_2(n) (-1)^{\frac{b-1}2\frac{n-1}2}\Jac nb.$$
Then $\chi_a$ (resp. $\psi_b$) is a
Dirichlet character of modulus $2a$ (resp. $4b$).
For $n\ge 1$,
$$\psi_b(n)=\uno_2(n)\Jac bn$$
- trivially for $n$ even, by quadratic reciprocity for $n$ odd.
For all $a\ge 1$ we have
$$B(a,-)=\uno_2(a)\cdot \chi_a\;,\;\;B'(a,-)=h(a)\cdot \chi_a\;,\;\;B''(a,-)=\uno_2(a)\cdot h\chi_a,$$
to be interpreted as zero for $a$ even. Likewise, for all $b\ge 1$ we have
$$B(-,b)=\uno_2(b)\cdot \psi_b\;,\;\;B'(-,b)=\uno_2(b)\cdot h\psi_b\;,\;\;B''(-,b)=h(b)\cdot \psi_b.$$
The function $h\chi_a$ (resp. $h\psi_b$) is a
Dirichlet character of modulus $8a$ (resp. $8b$). Moreover, $|\uno_2(n)|,|h(n)|\le 1$ for all $n$. Therefore $B$ (resp. $B'$ ; $B''$) is a bi-character with slopes $(c_1,c_2)=(2,4)$ (resp. $(2,8)$ ; $(8,4)$). For odd $m\ge 2$, the function $n\mapsto \Jac nm$ is clearly non-principal of modulus $m$. Since
$$\uno_2(n)\;\;\;\;(\;\textnormal{resp. } \uno_2(n)(-1)^{\frac{m-1}2\frac{n-1}2} \;\;;\;\;h(n)\;\;;\;\; \uno_2(n)(-1)^{\frac{m-1}2}h(n)\;) $$
has modulus $2$ (resp. $4 \;;\; 8 \;;\; 8$), coprime to $m$, it follows for all odd $m\ge 2$ that $\chi_m$ (resp. $\psi_m$ ; $h\chi_m$ ; $h\psi_m$) is non-principal. Moreover, for $m=1$ we have $h\chi_1=h\psi_1=h$, non-principal.
\end{proof}

\begin{lemma}
\label{Lma: basic counting lemma 2}
	Let $\powa>-1$ and let $1\le \powb\in\NN$. Then for all $X\ge 1$,
	$$\sum_{1\le n\le X}\mu^2(n)\cdot n^{\powa}\cdot \powb ^{\omega(n)}\le \frac1{1+\powa} X^{1+\powa}(1+\log X)^{\powb-1}.$$
\end{lemma}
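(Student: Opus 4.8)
The plan is to induct on $\powb$, using the elementary identity that expresses $\powb^{\omega}$ through $(\powb-1)^{\omega}$. Since $\mu^2$ is supported on the squarefree integers, I may work throughout with squarefree $n$, and there \cref{Lma: divisor arithmetic that doesnt really need wrapping} (applied with $\powb$ replaced by $\powb-1$, legitimate because $\powb\ge1$) gives $\powb^{\omega(n)}=\sum_{m\mid n}(\powb-1)^{\omega(m)}$. For the base case $\powb=1$ the claim reduces to $\sum_{n\le X}\mu^2(n)n^{\powa}\le\sum_{n\le X}n^{\powa}\le\int_0^{X}t^{\powa}\,dt=\frac{X^{1+\powa}}{1+\powa}$, where the integral converges by $\powa>-1$ and the term-by-term comparison uses the monotonicity of $t\mapsto t^{\powa}$.

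For the inductive step, with $\powb\ge2$, I substitute the identity and write each squarefree $n$ as $n=md$ with $d=n/m$ coprime to $m$; relaxing the coprimality and squarefreeness restrictions on the inner variable only enlarges the sum, which yields
$$\sum_{n\le X}\mu^2(n)n^{\powa}\powb^{\omega(n)}\le\sum_{d\le X}\mu^2(d)d^{\powa}\sum_{m\le X/d}\mu^2(m)m^{\powa}(\powb-1)^{\omega(m)}.$$
Applying the inductive hypothesis to the inner sum at height $X/d$ bounds it by $\frac{1}{1+\powa}(X/d)^{1+\powa}\bigl(1+\log(X/d)\bigr)^{\powb-2}$; the cancellation $d^{\powa}\cdot d^{-(1+\powa)}=d^{-1}$ then extracts the factor $\frac{X^{1+\powa}}{1+\powa}$ and leaves $\sum_{d\le X}\frac{\mu^2(d)}{d}\bigl(1+\log(X/d)\bigr)^{\powb-2}$. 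Bounding $\log(X/d)\le\log X$ and using the crude estimate $\sum_{d\le X}\frac1d\le1+\log X$ collapses this to $(1+\log X)^{\powb-1}$, which is exactly the required shape.

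I expect the only delicate point to be bookkeeping: keeping the constant on the nose, so that $\frac1{1+\powa}$ and the exponent $\powb-1$ emerge exactly rather than up to an absolute constant. This forces the two crude inequalities to be used in their sharp forms ($\sum_{d\le X}1/d\le1+\log X$ rather than $\log X+O(1)$, and the integral comparison rather than any bound that loses a constant), and it requires checking that each relaxation of the summation conditions pushes the inequality in the correct direction. None of the individual steps is hard; carrying the exact form of the estimate through the double sum is the thing to watch.
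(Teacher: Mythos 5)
Your proof is correct and essentially identical to the paper's: the same induction on $\powb$ with the same integral-comparison base case, the same use of \cref{Lma: divisor arithmetic that doesnt really need wrapping} to open $\powb^{\omega(n)}$ as a convolution over $n=md$, and the same closing bounds $1+\log(X/d)\le 1+\log X$ and $\sum_{d\le X}d^{-1}\le 1+\log X$. The only cosmetic differences are that you index the induction as $\powb-1\to\powb$ rather than $\powb\to\powb+1$ and keep $\mu^2(d)$ on the outer variable where the paper simply drops it.
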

\begin{proof}
The proof is by induction on $\powb $. The claim holds for $\powb =1$: since $\powa >-1$, for all $X\ge 1$ we have
$$\sum_{n\le X}\mu(n)^{2}n^{\powa}\le \int\limits_0^Xt^{\powa}dt=\frac1{1+ \powa}X^{1+ \powa}.$$
Assume next that the claim holds for some $\powb\in\NN$. By \cref{Lma: divisor arithmetic that doesnt really need wrapping},
	$$\sum_{n\le X}\mu^2(n) n^{\powa} (\powb +1)^{\omega(n)} =\sum_{\substack{m,d\ge 1\\md\le X}}\mu^2(md) (md)^{\powa} \powb ^{\omega(m)} \le$$
	$$\le\sum_{1\le d\le X}d^{\powa}\sum_{1\le m\le \frac Xd}\mu^2(m) m^{\powa} \powb ^{\omega(m)}.$$
	By the induction assumption, this is bounded by
	$$\frac1{1+ \powa} \sum_{d\le X}d^{\powa} (X/d)^{1+ \powa}(1+\log (X/d))^{\powb-1} \le \frac1{1+ \powa} X^{1+ \powa} \sum_{d\le X}d^{-1} (1+\log X)^{\powb-1} \le$$
	$$\le \frac1{1+ \powa} X^{1+ \powa} (1+\log X)^{\powb}.$$
Hence the statement holds for all $1\le \powb\in\NN$, $\powa >-1$ and $X\ge 1$.
	\end{proof}

\begin{lemma}
\label{Lma: basic counting lemma sqrt(2)}
	Let $\powa>-1$. Then there exists $0<C\in\RR$ s.t. for all $X\ge 1$,
	$$\left|\sum_{1\le n\le X}\mu^2(n)\cdot n^{\powa}\cdot 2^{\frac12\omega(n)}\right|\le C\cdot X^{1+ \powa}(1+\log X)^{\frac12}.$$
\end{lemma}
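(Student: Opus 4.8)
The plan is to bootstrap from \cref{Lma: basic counting lemma 2} using the Cauchy--Schwarz inequality, taking advantage of the fact that while $\sqrt 2$ is not an integer, $\tfrac12$ is a convenient exponent. First I would observe that every summand $\mu^2(n)\cdot n^{\powa}\cdot 2^{\frac12\omega(n)}$ is nonnegative, so the absolute value on the left may simply be dropped.

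Next, since $\mu^2(n)\in\{0,1\}$ we have $\mu^2(n)=\mu^2(n)^{1/2}\cdot\mu^2(n)^{1/2}$, and hence for every $n\ge 1$ one can factor $\mu^2(n)\, n^{\powa}\, 2^{\frac12\omega(n)}=\big(\mu^2(n)\, n^{\powa}\big)^{1/2}\cdot\big(\mu^2(n)\, n^{\powa}\, 2^{\omega(n)}\big)^{1/2}$. Applying Cauchy--Schwarz to the sum over $1\le n\le X$ then gives
$$\sum_{n\le X}\mu^2(n)\, n^{\powa}\, 2^{\frac12\omega(n)}\;\le\;\Big(\sum_{n\le X}\mu^2(n)\, n^{\powa}\Big)^{1/2}\Big(\sum_{n\le X}\mu^2(n)\, n^{\powa}\, 2^{\omega(n)}\Big)^{1/2}.$$
Now I would invoke \cref{Lma: basic counting lemma 2} twice, both times under our standing hypothesis $\powa>-1$: with $\powb=1$ it bounds the first factor by $\big(\tfrac1{1+\powa}\big)^{1/2}X^{(1+\powa)/2}$, and with $\powb=2$ it bounds the second factor by $\big(\tfrac1{1+\powa}\big)^{1/2}X^{(1+\powa)/2}(1+\log X)^{1/2}$. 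Multiplying these two estimates yields exactly the asserted bound, with $C=\tfrac1{1+\powa}>0$.

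There is essentially no obstacle here; the one thing to notice is the Cauchy--Schwarz splitting, and the reason it suffices is that the exponent $\tfrac12$ demanded of $1+\log X$ is comfortably larger than the $\sqrt2-1$ one would obtain from a direct induction imitating the proof of \cref{Lma: basic counting lemma 2}. (An induction on a real parameter $\powb=\sqrt 2$ in the style of that lemma would also work, but it is messier, so I prefer the square-root trick above.)
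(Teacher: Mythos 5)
Your proof is correct and is essentially identical to the paper's: the same Cauchy--Schwarz factorization $2^{\frac12\omega(n)}=\big(1\big)^{1/2}\cdot\big(2^{\omega(n)}\big)^{1/2}$ followed by two applications of \cref{Lma: basic counting lemma 2} (with $\powb=1$ and $\powb=2$).
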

\begin{proof}
	For all $n$ we have 
	$$\mu^2(n)\cdot n^{\powa}\cdot 2^{\frac12\omega(n)}=
	\left(n^{\powa}\right)^{\frac12} \left(\mu^2(n)\cdot n^{\powa}\cdot 2^{\omega(n)}\right)^{\frac12}.$$
	By the Cauchy-Schwartz inequality,
	$$\sum_{n=1}^{\floor X}\mu^2(n)\cdot n^{\powa}\cdot2^{\frac12\omega(n)}\le
	\sqrt{\sum_{n=1}^{\floor X}n^{\powa}}\cdot\sqrt{\sum_{n=1}^{\floor X}\mu^2(n)\cdot n^{\powa}\cdot 2^{\omega(n)}}.$$
	Since $\powa >-1$, and following \cref{Lma: basic counting lemma 2}, this is bounded by
	$$C\cdot \sqrt{X^{1+ \powa}}\cdot\sqrt{X^{1+ \powa}(1+\log X)}=C\cdot X^{1 +\powa}\cdot (1+\log X)^{\frac12}.$$
\end{proof}

\begin{proposition}
\label{Prop: bound for T_s(X)}
	Let $s\in\NNN$. For all $1\le X\in\RR$, denote by
	$$T_s(X)=\sum_{\substack{a,b\ge 2\\ab\le X}}\mu_s^2(ab)\Jac ba.$$
	Then there exists a constant $C>0$ s.t. for all $1\le X\in\RR$,
	$$\left|T_s(X)\right|\le C\cdot 2^{\frac12\omega(s)}\cdot X^{\frac78}(1+\log X)^{\frac12}.$$
\end{proposition}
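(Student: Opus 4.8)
The plan is to split $T_s(X)$ according to the parities of $a$ and $b$ and reduce each piece to \cref{Prop: bi-char summation} applied to one of the three bi-characters $B,B',B''$ of \cref{Prop: odd Kronecker is bi-char}. Since $\mu_s^2(ab)\ne 0$ forces $ab$ square-free, at most one of $a,b$ is even in any surviving term, so I would write
$$T_s(X)=\Sigma_{\mathrm{odd}}(X)+\Sigma_a(X)+\Sigma_b(X),$$
where $\Sigma_{\mathrm{odd}}$ collects the terms with $ab$ odd, $\Sigma_a$ those with $a$ even, and $\Sigma_b$ those with $b$ even; note $\Sigma_a=\Sigma_b=0$ as soon as $2\mid s$.

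First I would handle $\Sigma_{\mathrm{odd}}$: for odd $a,b$ one has $\uno_2(ab)=1$, hence $\mu_s^2(ab)\Jac ba=\mu_s^2(ab)B(a,b)$, and since $B(a,b)=0$ whenever $a$ or $b$ is even this gives the exact identity $\Sigma_{\mathrm{odd}}(X)=\sum_{a,b\ge 2}^{ab\le X}\mu_s^2(ab)B(a,b)$. By \cref{Prop: odd Kronecker is bi-char}, $B$ is a bi-character with slopes $\le 8$, non-principal at every $a\ge 2$ and every $b\ge 2$, so \cref{Prop: bi-char summation} with $a_0=b_0=2$ bounds $|\Sigma_{\mathrm{odd}}(X)|$ by $C\,2^{\frac12\omega(s)}X^{7/8}(1+\log X)^{1/2}$ (the slope factors $c_i^{1/8}$ and $1+\log\max\{c_1,c_2\}$ are absorbed into $C$ since $c_i\le 8$). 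For $\Sigma_a$ I would substitute $a=2\alpha$, $\alpha\ge 1$: a term survives only if $\alpha b$ is odd, square-free and coprime to $s$ (in particular $2\nmid s$), in which case $\mu_s^2(2\alpha b)=\mu_s^2(\alpha b)$ and, by complete multiplicativity of the Kronecker symbol in the lower entry, $\Jac b{2\alpha}=\Jac b2\Jac b\alpha=B''(\alpha,b)$ on this support; therefore $\Sigma_a(X)=\sum_{\alpha\ge 1,\,b\ge 2}^{\alpha b\le X/2}\mu_s^2(\alpha b)B''(\alpha,b)$. Since $B''$ has slopes $\le 8$ and is non-principal at every $\alpha\ge 1$ and every $b\ge 2$, \cref{Prop: bi-char summation} with $a_0=1$, $b_0=2$ and $X$ replaced by $X/2$ gives $|\Sigma_a(X)|\le C\,2^{\frac12\omega(s)}(X/2)^{7/8}(1+\log(X/2))^{1/2}\le C\,2^{\frac12\omega(s)}X^{7/8}(1+\log X)^{1/2}$. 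Symmetrically, substituting $b=2\beta$ and using that $B'$ is non-principal at every $a\ge 2$ and every $\beta\ge 1$ (so $a_0=2$, $b_0=1$) yields the same bound for $\Sigma_b(X)$. Summing the three estimates gives the claim with a suitable absolute constant.

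The hard part will be the parity bookkeeping in the substitutions: one must verify that after replacing $a=2\alpha$ (resp. $b=2\beta$) the surviving terms are exactly those with $\alpha,b$ (resp. $a,\beta$) odd, square-free and coprime, that $\mu_s^2(2\alpha b)$ collapses to $\mu_s^2(\alpha b)$ there, and that $\Jac b{2\alpha}$ agrees with $B''(\alpha,b)$ — all of which follow mechanically from the definitions recorded in \cref{Prop: odd Kronecker is bi-char} together with the multiplicativity of the Kronecker symbol in both arguments. Once this is in place, the three invocations of \cref{Prop: bi-char summation} are immediate, and in fact the choices $(a_0,b_0)=(2,2),(1,2),(2,1)$ match precisely the non-principality ranges that \cref{Prop: odd Kronecker is bi-char} was set up to record.
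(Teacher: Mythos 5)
Your proposal is correct and follows essentially the same route as the paper: the same parity decomposition into the three pieces (both odd; $a$ even; $b$ even), the same substitutions $a=2\alpha$, $b=2\beta$, and the same three invocations of \cref{Prop: bi-char summation} via the bi-characters $B,B'',B'$ with the matching choices of $(a_0,b_0)$. Your explicit remark that the even-$a$ and even-$b$ pieces vanish when $2\mid s$ is a small point the paper glosses over, but otherwise the arguments coincide.
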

\begin{proof}
	We write
	$$T_s(X)=\sum_{\substack{a,b\ge 2\\ab\le X}}\mu_s^2(ab)\Jac ba=$$
	\begin{equation}
	\label{Eqn: Ts(X) split sum 2}
	=\left(\overbrace{\sum_{\substack{a,b\ge 2 \\ab\le X}}^{2\nmid a,\;2\nmid b}}^{(I)} +
	\overbrace{\sum_{\substack{a,b\ge 2 \\ab\le X}}^{2\nmid a ,\; 2|| b}}^{(II)}+
	\overbrace{\sum_{\substack{a,b\ge 2 \\ab\le X}}^{2||a ,\; 2\nmid b}}^{(III)}\right)
	\mu_s^2(ab)\Jac ba.\end{equation}
	We have
	$$(I)= \sum_{\substack{a,b\ge 2}}^{ab\le X}\mu_s^2(ab)\uno_2(ab)\Jac ba.$$
	$$(II)= \sum_{\substack{2b,a\ge 2}}^{2b\cdot a\le X}\mu_s^2(2b\cdot a)\uno_2(ab)\Jac{2b}a=\sum_{\substack{b\ge1,\;a\ge2}}^{ab\le X/2}\mu_s^2(ab) \uno_2(ab)\Jac{2b}a.$$
	$$(III)= \sum_{\substack{b,2a\ge 2}}^{b\cdot 2a\le X}\mu_s^2(b\cdot 2a)\uno_2(b)\Jac b{2a}= \sum_{\substack{b\ge 2,\;a\ge 1}}^{ab\le X/2}\mu_s^2(ab)\uno_2(ab) \Jac b{2a}.$$
	
	By propositions \ref{Prop: odd Kronecker is bi-char} and \ref{Prop: bi-char summation} there exists $C>0$ s.t.
	$$|(I)|\le C 2^{\frac12\omega(s)}X^{\frac78}(1+\log X)^{\frac12},$$
	$$
	|(II)|\le C 2^{\frac12\omega(s)}(X/2)^{\frac78}(1+\log (X/2))^{\frac12}\le C 2^{\frac12\omega(s)}X^{\frac78}(1+\log X)^{\frac12},$$
	and
	$$
	|(III)|\le C 2^{\frac12\omega(s)}(X/2)^{\frac78}(\log (1+X/2))^{\frac12}\le C 2^{\frac12\omega(s)}X^{\frac78}(1+\log X)^{\frac12}.$$
	Therefore
	$$\left|T_s(X)\right|
	\le \left|(I)\right| + \left|(II)\right| + \left|(III)\right| \le 3 C 2^{\frac12\omega(s)}X^{\frac78}(1+\log X)^{\frac12}.$$
\end{proof}

\section{Trivial Keis $\triv_a$}
\label{Section: trivial kei proof}
Recall that a kei $\k\in\Kei$ is trivial if $\qq xy=y$ for all $x,y\in\k$.
The following assertions about trivial keis are easily shown:
\begin{lemma}
${}$
\begin{enumerate}
	\item Let $\triv\in\Kei$ be trivial. Then every sub-kei $\triv'\le \triv$ is trivial.
	\item Let $\triv,\triv'\in\Kei$ be trivial. Then any function $f\colon \triv'\to \triv$ is a kei morphism.
\end{enumerate}
\end{lemma}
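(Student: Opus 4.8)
The plan is to unwind the definition of ``trivial'' in each part; both reduce to a one-line computation, so I do not anticipate any genuine obstacle beyond bookkeeping.

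For part (1), I would recall that a sub-kei $\triv'\le\triv$ is, by definition, a subset of $\triv$ that is closed under the binary operation $(x,y)\mapsto\qq xy$, and that the kei structure carried by $\triv'$ is precisely the restriction of the one on $\triv$. Hence for any $x,y\in\triv'$ the element $\qq xy$ computed in $\triv'$ coincides with the one computed in $\triv$, and the latter equals $y$ because $\triv$ is trivial. As $y\in\triv'$, this yields $\qq xy=y$ for all $x,y\in\triv'$, which is exactly the assertion that $\triv'$ is trivial.

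For part (2), I would take an arbitrary function $f\colon\triv'\to\triv$ and check the morphism identity $f(\qq xy)=\qq{f(x)}{f(y)}$ pointwise. Since $\triv'$ is trivial, the left-hand side equals $f(\qq xy)=f(y)$; since $\triv$ is trivial, the right-hand side equals $\qq{f(x)}{f(y)}=f(y)$. The two sides agree for all $x,y\in\triv'$, so $f$ is a morphism in $\Kei$.

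The only point worth spelling out explicitly is the remark used in part (1) that the operation on a sub-kei is inherited by restriction — this is what licenses identifying the two computations of $\qq xy$ — and beyond that there is nothing further to verify.
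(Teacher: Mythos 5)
Your proof is correct, and it is exactly the routine verification the paper has in mind (the paper omits the proof entirely, labeling the assertions as ``easily shown''). Both parts reduce to the definition $\qq xy=y$ of a trivial kei, and your one remark about the sub-kei operation being the restriction is the only point that needed spelling out.
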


Recall in \cite{WEBSITE:DavisSchlankHilbert2023} the computation of the Hilbert polynomial of $\triv_a$:
$$P_{\triv_ \powa}(k)={\powa +k-1\choose \powa-1}=\frac{k^{\powa-1}}{(\powa-1)!}+O(k^{\powa-2}).$$

\subsection{Interpretation of $\triv_\powa$-colorings}

\begin{proposition}
\label{Prop: Trivial colorings factor through pi_0}
Let $0\le \powa\in\NN$ and let $n\in\NNN$. Then
$$\col_{\triv_\powa}(n)=\powa^{\omega(n)}.$$
\end{proposition}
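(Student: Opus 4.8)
The plan is to deduce the statement directly from \cref{Prop: Trivial colorings factor through pi_0 II}, which already computes the set of continuous kei-morphisms from $\kei_n$ into an arbitrary trivial kei. Unwinding \cref{Def: k-col arith}, we have $\col_{\triv_\powa}(n)=\left|\Hom_\Kei^\cont(\kei_n,\triv_\powa)\right|$, and by that proposition this Hom-set is in natural bijection with $\triv_\powa^{\{p\mid n\;\textnormal{prime}\}}$, the set of all functions from the set of prime divisors of $n$ to the underlying set of $\triv_\powa$. Since $\left|\triv_\powa\right|=\powa$ and $\left|\{p\mid n\;\textnormal{prime}\}\right|=\omega(n)$ by definition of $\omega$, counting elements gives $\col_{\triv_\powa}(n)=\powa^{\omega(n)}$. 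This is the whole argument.

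It is worth recording, for a reader who wants the chain of dependencies, where the content of \cref{Prop: Trivial colorings factor through pi_0 II} actually comes from: conciseness of $\A_n$ (\cref{Prop: arithmetic kei is concise}) lets one lift any morphism $\kei_n\to\triv'$ onto its image and observe that the group component lands in $\Inn(\triv')=\{\Id\}$; hence every such morphism is $\G_n$-invariant and factors through $\G_n\backslash\kei_n$ by \cref{lma: arithmetic factor through N quotient}, which by \cref{Lma: quot of arithmetic kei is trivial II} is the trivial kei on the set $\{p\mid n\;\textnormal{prime}\}$. Morphisms between trivial keis being arbitrary set maps, the count follows. I do not expect any obstacle here: the proposition is a bookkeeping corollary of machinery proven earlier.

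I would also briefly check the edge cases so the formula holds on all of $\NNN$. For $n=1$ one has $\F_1=\QQ$ and $\kei_1=\left|\Spec(\ZZ/\ZZ)\right|=\emptyset$, so there is a unique (empty) morphism and $\col_{\triv_\powa}(1)=1=\powa^{0}=\powa^{\omega(1)}$, matching the bijection above over the empty index set. For $\powa=0$ one has $\triv_0=\emptyset$, and for $n>1$ the set $\kei_n$ is nonempty (it surjects onto the nonempty set of primes dividing $n$, cf.\ \cref{Lma: quot of arithmetic kei is trivial II}), so there are no morphisms and $\col_{\triv_0}(n)=0=0^{\omega(n)}$, again consistent. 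Hence the identity $\col_{\triv_\powa}(n)=\powa^{\omega(n)}$ holds for every $\powa\in\NN$ and every $n\in\NNN$.
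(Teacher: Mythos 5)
Your proof is correct and follows exactly the paper's own route: the proposition is an immediate corollary of \cref{Prop: Trivial colorings factor through pi_0 II} together with \cref{Def: k-col arith}, and counting maps $\{p\mid n\;\textnormal{prime}\}\to\triv_\powa$ gives $\powa^{\omega(n)}$. The extra remarks on the chain of dependencies and the edge cases $n=1$, $\powa=0$ are accurate but not needed beyond what the paper records.
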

\begin{proof}
This follows directly from \cref{Prop: Trivial colorings factor through pi_0 II}:
$$\col_{\triv_\powa}(n)= \left| \Hom_{\Kei}^\cont(\kei_n,\triv_\powa)\right| =\left|\triv_\powa^{ \{p|n\;\textnormal{prime}\}}\right|=\powa^{\
omega(n)}.$$
\end{proof}

\subsection{Proof of Main Conjecture for $\k=\triv_\powa$.}

\begin{lemma}
\label{Lma: Triv proof aux lemma}
	Let $\powa\in\NN$. The infinite product
	$$\prod_p \frac{(1+p^{-1})^\powa}{(1+ \powa p^{-1})}$$
	converges.
\end{lemma}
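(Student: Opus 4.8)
The plan is to pass to logarithms and reduce the convergence of the product to the convergence of a series dominated by $\sum_p p^{-2}$.

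First I would dispose of the degenerate cases: if $\powa=0$ every factor equals $1$, and if $\powa=1$ every factor equals $(1+p^{-1})/(1+p^{-1})=1$, so the product is the constant $1$; hence assume $\powa\ge 2$. Each factor $(1+p^{-1})^\powa/(1+\powa p^{-1})$ is a \emph{positive} real number, so its logarithm is well-defined, and a standard fact about infinite products of positive terms says that $\prod_p(1+p^{-1})^\powa/(1+\powa p^{-1})$ converges to a nonzero limit if and only if the series $\sum_p\bigl(\powa\log(1+p^{-1})-\log(1+\powa p^{-1})\bigr)$ converges.

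Next I would Taylor-expand, but only for large primes. For a prime $p>2\powa$ set $t=p^{-1}$, so that $0<t\le 1/2$ and $0<\powa t<1/2$; using $\log(1+u)=u-\tfrac{u^2}2+O(u^3)$ with an implied constant valid uniformly for $|u|\le 1/2$, one obtains
$$\powa\log(1+t)-\log(1+\powa t)=\Bigl(\powa t-\tfrac{\powa}2 t^2\Bigr)-\Bigl(\powa t-\tfrac{\powa^2}2 t^2\Bigr)+O_\powa(t^3)=\binom{\powa}{2}\,t^2+O_\powa(t^3),$$
with implied constant depending on $\powa$ but not on $p$. Hence there is $C_\powa>0$ with $\bigl|\powa\log(1+p^{-1})-\log(1+\powa p^{-1})\bigr|\le C_\powa\,p^{-2}$ for all primes $p>2\powa$.

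Finally, the finitely many factors with $p\le 2\powa$ are positive reals contributing a finite amount to the series of logarithms, while $\sum_{p>2\powa}\bigl|\powa\log(1+p^{-1})-\log(1+\powa p^{-1})\bigr|\le C_\powa\sum_p p^{-2}<\infty$. Thus $\sum_p\bigl(\powa\log(1+p^{-1})-\log(1+\powa p^{-1})\bigr)$ converges, and therefore so does the infinite product $\prod_p (1+p^{-1})^\powa/(1+\powa p^{-1})$, to a strictly positive limit. There is no genuine obstacle here; the only point requiring care is the need to excise the primes $p\le 2\powa$ before Taylor-expanding, so that the argument $\powa p^{-1}$ of the second logarithm stays in a fixed compact subinterval of $(-1,1)$ and the error term is controlled uniformly in $p$.
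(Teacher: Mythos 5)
Your proof is correct and rests on the same core estimate as the paper's: each factor is $1+O_\powa(p^{-2})$, so convergence follows by comparison with $\sum_p p^{-2}$. The only cosmetic difference is that you pass to logarithms and Taylor-expand (after excising the primes $p\le 2\powa$), whereas the paper stays with the product directly, using the binomial theorem to bound $(1+p^{-1})^\powa\le(1+\powa p^{-1})(1+2^\powa p^{-2})$ and comparing with the convergent product $\prod_p(1+p^{-3/2})$.
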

\begin{proof}
For $\powa =0,1$, the claim is obvious. Assume that $\powa\ge 2$. For all prime $p$,
$$1+ \powa p^{-1}\le (1+p^{-1})^ \powa =1+ \powa p^{-1}+\sum_{i=2}^ \powa{\powa\choose i}p^{-i}\le 1+ \powa p^{-1}+2^ \powa p^{-2}\le (1+ \powa p^{-1})(1+2^ \powa p^{-2}).$$
For $p\ge 4^ \powa $, all but finitely many, we have
$$1\le \frac{(1+p^{-1})^ \powa}{(1+ \powa p^{-1})}\le 1+2^ \powa p^{-2} \le 1+p^{-3/2}.$$
The convergence of
$$\prod_p(1+p^{-3/2})=\zeta(3)^{-1}\zeta(3/2)$$
guarantees the convergence of 
$$\prod_p \frac{(1+p^{-1})^ \powa}{(1+ \powa p^{-1})}.$$
\end{proof}

\begin{proposition}
\label{Prop: Main Conj for T_a}
Let $1\le \powa\in\NN$. Then $\col_{\triv_ \powa}$ has generic summatory type
$$\col_{\triv_ \powa}\in\GenType{\powa-1}{\textstyle{\frac1{(\powa-1)!}}}.$$
\end{proposition}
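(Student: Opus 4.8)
The plan is to recognize $\col_{\triv_\powa}$ as a multiplicative arithmetic function of exactly the kind treated in \cref{prop: Tenenbaum 3.12 II}, and then to carry out the Euler‑factor bookkeeping. By \cref{Prop: Trivial colorings factor through pi_0} we have $\col_{\triv_\powa}(n)=\powa^{\omega(n)}$. Fix $s\in\NNN$ and set $f_s\colon\NN\to\RR$, $f_s(n):=\gy(n)\mu^2(n)\powa^{\omega(n)}$. This $f_s$ is multiplicative, and since $\powa^{\omega(n)}=\uno^{* \powa}(n)$ whenever $n$ is square‑free while $f_s$ vanishes on non‑square‑free $n$ (and on $n$ not coprime to $s$), one checks at once on prime powers that $0\le f_s\le\uno^{* \powa}$. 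By the definition of $\NNs$ one has $\counter_{\triv_\powa,s}(X)=\sum_{n\in\NNs\cap[1,X]}\powa^{\omega(n)}=\sum_{1\le n\le X}f_s(n)=\ccounter_{f_s}(X)$ in the notation of that proposition.

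First I would invoke \cref{prop: Tenenbaum 3.12 II} for $f_s$, obtaining $\lim_{X\to\infty}\counter_{\triv_\powa,s}(X)\big/\bigl(X\log^{\powa-1}(X)\bigr)=\tfrac1{(\powa-1)!}M^{(\powa)}(f_s)$. Then I would compute the Euler factors directly from the definition of $M_p^{(\powa)}$: for $p\nmid s$ the local sum is $1+\powa p^{-1}$, giving $M_p^{(\powa)}(f_s)=(1-p^{-1})^\powa(1+\powa p^{-1})$; for $p\mid s$ one has $f_s(p^\nu)=0$ for all $\nu\ge1$, giving $M_p^{(\powa)}(f_s)=(1-p^{-1})^\powa$. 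Using $(1-p^{-1})^\powa(1+p^{-1})^\powa=(1-p^{-2})^\powa$ and comparing with $\gamma_s=\prod_{p\nmid s}(1-p^{-2})\prod_{p\mid s}(1-p^{-1})$, this collapses to the clean identity $M^{(\powa)}(f_s)\big/\gamma_s^{\powa}=\prod_{p\nmid s}\frac{1+\powa p^{-1}}{(1+p^{-1})^\powa}$. Convergence of this product to a nonzero limit — hence of $M^{(\powa)}(f_s)$ itself, being that times $\prod_p(1-p^{-2})^\powa=\zeta(2)^{-\powa}$ times a finite product — is precisely \cref{Lma: Triv proof aux lemma}.

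Combining with $|\NNs\cap[1,X]|=\gamma_s X(1+o(1))$ from \cref{Lma: basic s asymptotics} gives $c_s(\triv_\powa)=\lim_{X\to\infty}\avg_{\triv_\powa,s}(X)\big/(\gamma_s\log X)^{\powa-1}=\tfrac1{\gamma_s^{\powa}}\cdot\tfrac1{(\powa-1)!}M^{(\powa)}(f_s)=\tfrac1{(\powa-1)!}\prod_{p\nmid s}\frac{1+\powa p^{-1}}{(1+p^{-1})^\powa}\in\RR\setminus\{0\}$, which is the first requirement of \cref{Def: gen sum type}. For the second, I would observe that as $s$ runs through $\NNN$ ordered by divisibility every fixed prime eventually divides $s$; since $\sum_p\bigl|\log\frac{1+\powa p^{-1}}{(1+p^{-1})^\powa}\bigr|<\infty$ (the summand is $O(p^{-2})$), the residual product $\prod_{p\nmid s}\frac{1+\powa p^{-1}}{(1+p^{-1})^\powa}$ tends to $1$. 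Hence $c(\triv_\powa)=\lim_{s\in\NNN}c_s(\triv_\powa)=\tfrac1{(\powa-1)!}$, i.e. $\col_{\triv_\powa}\in\GenType{\powa-1}{\frac1{(\powa-1)!}}$.

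I do not expect a genuine obstacle here. The only two points that need care are verifying the hypothesis $0\le f_s\le\uno^{* \powa}$ of \cref{prop: Tenenbaum 3.12 II} — immediate once one notes $\powa^{\omega(n)}=\uno^{* \powa}(n)$ for square‑free $n$ — and controlling the behaviour of the residual Euler product as $s\to\infty$, which is exactly what \cref{Lma: Triv proof aux lemma} is built to supply.
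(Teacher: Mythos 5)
Your proposal is correct and follows essentially the same route as the paper's proof: identify $\col_{\triv_\powa}(n)=\powa^{\omega(n)}$, apply \cref{prop: Tenenbaum 3.12 II} to the multiplicative function $\mu_s^2\cdot\powa^{\omega}$ (your $f_s$ is exactly this), compute the Euler factors, and invoke \cref{Lma: Triv proof aux lemma} for the convergence of the residual product and its limit $1$ as $s$ grows in divisibility order. No gaps.
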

\begin{proof}
In \cref{Prop: Trivial colorings factor through pi_0} we saw that 
$$\col_{\triv_ \powa}(n)= \powa ^{\omega(n)}$$
for all $n\in\NNN$. Let $s\in\NNN$. The function
$\mu_s^2(n) \powa ^{\omega(n)}\colon\NN\to\RR$ is multiplicative, and satisfies
$$0\le \mu_s^2(n) \powa ^{\omega(n)} =\mu_s^2(n)\uno^{* \powa}(n)\le \uno^{* \powa}(n)$$
for all $n\in\NN$. 
By \cref{prop: Tenenbaum 3.12 II}, 
$$\lim_{X\to\infty}\frac{\counter_{\triv_ \powa,s}(X)}{X(\log X)^{\powa-1}}
=\frac1{(\powa-1)!} \prod_{p}(1-p^{-1})^ \powa \powa\sum_\nu\frac{\mu_s^2(p^{\nu}) \powa ^{\omega(p^\nu)}}{p^\nu}=$$
$$=\frac1{(\powa-1)!} \prod_{p|s}(1-p^{-1})^ \powa\cdot \prod_{p\nmid s}(1-p^{-1})^ \powa(1+ \powa p^{-1}).$$
Therefore 
$$c_s(\triv_ \powa)=\gamma_s^{1-\powa}\lim_{X\to\infty}\frac{\avg_{\triv_ \powa,s}(X)}{\log^{\powa-1}(X)}=
\gamma_s^{-\powa}\lim_{X\to\infty}\frac{\counter_{\triv_ \powa,s}(X)}{X\log^{\powa-1}(X)} =$$
$$=\frac{1}{(\powa-1)!}\cdot \prod_{p|s}\frac {(1-p^{-1})^ \powa}{(1-p^{-1})^ \powa}\cdot \prod_{p\nmid s} \frac{(1-p^{-1})^ \powa(1+ \powa p^{-1})}{(1-p^{-2})^{\powa}}=$$
$$=\frac1{(\powa-1)!}\prod_{p\nmid s}\frac{1+ \powa p^{-1}}{(1+p^{-1})^ \powa}.$$
By \cref{Lma: Triv proof aux lemma}, this product converges. Moreover,
it follows that
$$\lim_{s\in\NNN}c_s(\triv_ \powa)=\frac 1{(\powa-1)!}\lim_{s\in\NNN}\prod_{p\nmid s}\frac{1+ \powa p^{-1}}{(1+p^{-1})^ \powa}=\frac1{(\powa-1)!}.$$
Therefore $\col_{\triv_ \powa}\colon\NNN\to\RR$ has summatory type $(\powa-1,\frac1{(\powa-1)!})$.
\end{proof}

\section{The Joyce kei $\joyce$}
\label{Section: Joyce kei proof}
In this section we recall the kei $\joyce$ that was introduced in \cite[\textsection 6]{ARTICLE:Joyce1982}. We will address here not only colorings of $n\in\NNN$ by $\joyce$, but by $\joyce\sqcup \triv_ \powa $ where $\triv_\powa\in\Kei^\fin$ is the trivial kei on $\powa\in\NN$ elements.

\begin{definition}
\label{Def: Joyce kei}
	The kei $\joyce$ has underlying set and structure
	$$\joyce=\{x_+,x_-,y\}\;\;,\;\;\;\;\begin{tabular}{|cc||c|c|c|}
\hline
\multicolumn{2}{|c||}{\multirow{2}{*}{$\qq zw$}}&\multicolumn{3}{|c|}{$z$}\\
\cline{3-5}
&&$x_+$&$x_-$&$y$\\
\hline
\hline
\multirow{3}{*}{$w$}&\multicolumn{1}{|c||}{$x_+$}&
$x_+$&$x_+$&$x_-$\\
\cline{2-5}
& \multicolumn{1}{|c||}{$x_-$}&
$x_-$&$x_-$&$x_+$\\
\cline{2-5}
& \multicolumn{1}{|c||}{$y$}&
$y$&$y$&$y$\\
\hline
\end{tabular}\;\;.$$
The kei $\joyce$ satisfies
$$\Inn(\joyce)=\{\Id_\joyce,\varphi_y\}\simeq\ZZ/2\ZZ.$$
For all $z\in \joyce$, $\varphi_z\neq\Id$ iff $z=y$.
The proper sub-quandles of $\joyce$ are
$$\emptyset,\{x_+\}, \{x_-\} , \{y\} , \{x_+,x_-\},$$
all trivial.
\end{definition}

\begin{lemma}
\label{Lma: Joyce pi0 is morphism}
	Let $\{x_0,y_0\}=\triv_2$ be a trivial kei. Then the map
	$$\eta_0\colon \joyce\to \{x_0,y_0\}\;\;,\;\;\;\;x_\pm\mapsto x_0\;,\;\;y\mapsto y_0$$
	is a kei morphism.
\end{lemma}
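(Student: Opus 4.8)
The plan is to verify directly that $\eta_0$ respects the kei operation, i.e. that $\eta_0(\qq zw) = \qq{\eta_0(z)}{\eta_0(w)}$ for all $z,w \in \joyce$. Since the target $\triv_2 = \{x_0,y_0\}$ is trivial, the right-hand side always simplifies to $\eta_0(w)$, so what must be checked is simply that $\eta_0(\qq zw) = \eta_0(w)$ for all $z,w$. In other words, applying any inner automorphism $\varphi_z$ of $\joyce$ does not change the $\eta_0$-class of an element.

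First I would record that $\eta_0$ is constant on the subset $\{x_+, x_-\}$ (both map to $x_0$) and sends $y$ to $y_0$; equivalently, the fibers of $\eta_0$ are exactly $\{x_+,x_-\}$ and $\{y\}$. Then I would observe from the multiplication table in \cref{Def: Joyce kei} that each $\varphi_z$ permutes the set $\{x_+,x_-\}$ among itself and fixes $y$: indeed $\varphi_{x_\pm} = \Id_\joyce$, while $\varphi_y$ swaps $x_+$ and $x_-$ and fixes $y$ (this is just the statement $\Inn(\joyce) = \{\Id_\joyce, \varphi_y\}$ together with the explicit action of $\varphi_y$, both of which are recorded in \cref{Def: Joyce kei}). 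Hence for every $z$ the fibers of $\eta_0$ are $\varphi_z$-invariant, which is precisely the condition $\eta_0(\varphi_z(w)) = \eta_0(w)$, i.e. $\eta_0(\qq zw) = \eta_0(w) = \qq{\eta_0(z)}{\eta_0(w)}$.

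There is essentially no obstacle here — the statement is a finite case check, and the only mild subtlety is to notice that one does not need to verify all nine entries of the table individually but can instead argue structurally via the fact that $\Inn(\joyce)$ preserves the partition $\{\{x_+,x_-\},\{y\}\}$. If desired, one can alternatively spell out the nine cases: $\qq z{x_\pm} \in \{x_+,x_-\}$ for all $z$, so $\eta_0(\qq z{x_\pm}) = x_0 = \eta_0(x_\pm)$, and $\qq zy = y$ for all $z$, so $\eta_0(\qq zy) = y_0 = \eta_0(y)$; in every case the value equals $\eta_0$ of the second argument, which equals $\qq{\eta_0(z)}{\eta_0(w)}$ since $\triv_2$ is trivial. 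Either way, $\eta_0$ is a kei morphism.
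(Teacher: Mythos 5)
Your proof is correct. The paper states this lemma without proof, treating it as a routine check, and your argument — reducing the morphism condition to the observation that every $\varphi_z\in\Inn(\joyce)$ preserves the partition $\{\{x_+,x_-\},\{y\}\}$ given by the fibers of $\eta_0$, since the target is trivial — is exactly the verification being left to the reader.
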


Recall in \cite{WEBSITE:DavisSchlankHilbert2023} the computation of the Hilbert polynomial of $\joyce$:
$$P_{\joyce}(k)=2k+1,$$
and more generally, for $0\le \powa\in\NN$,
$$P_{\joyce\sqcup \triv_ \powa}(k)=2{k+ \powa +1\choose \powa +1}-{k+ \powa\choose \powa}=\frac2{(\powa +1)!}k^{\powa +1}+O(k^ \powa).$$

\subsection{Interpretation of $\joyce$-colorings}

\begin{proposition}
\label{Prop: arithmetic computation: Joyce colorings}
	Let $n\in\NNN$. Then
	$$\col_{\joyce}(n)=\sum_{ab|n}\Jac ba,$$
	where $\Jac ba$ is the Kronecker symbol.
\end{proposition}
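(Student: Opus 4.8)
If $n=1$ then $\kei_1=\emptyset$ and $\col_\joyce(1)=1=\Jac 11$, so assume $n>1$. The plan is to stratify the colourings by which primes are coloured by $y$. For $f\in\Col_\joyce(n)$ the map $\p\mapsto\varphi_{f(\p)}\in\Inn(\joyce)\simeq\ZZ/2\ZZ$ is a kei morphism from $\kei_n$ to the trivial kei $\ZZ/2\ZZ$, since $\Inn(\joyce)$ is abelian so $\varphi_{\qq{f(\p)}{f(\q)}}=\varphi_{f(\p)}\varphi_{f(\q)}\varphi_{f(\p)}^{-1}=\varphi_{f(\q)}$. By \cref{Prop: Trivial colorings factor through pi_0 II} this morphism factors through $\G_n\backslash\kei_n$, so for each prime $p\mid n$ either $f(\kei_{n,p})=\{y\}$ or $f(\kei_{n,p})\subseteq\{x_+,x_-\}$; let $m_f\mid n$ be the product of the primes of the first kind. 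Thus $\Col_\joyce(n)=\bigsqcup_{m\mid n}\{f:m_f=m\}$, and it suffices to prove $\#\{f:m_f=m\}=\sum_{b\mid n/m}\Jac bm$, since then $\col_\joyce(n)=\sum_{m\mid n}\sum_{b\mid n/m}\Jac bm=\sum_{ab\mid n}\Jac ba$.

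Fix $m\mid n$. Let $\rho_m\colon\G_n\to\ZZ/2\ZZ\simeq\Inn(\joyce)$ be the trivial homomorphism if $m=1$, and the surjection $\G_n\twoheadrightarrow\Gal(\L_m/\QQ)$ if $m>1$ — this makes sense because $\L_m\subseteq\F_n$ by \cref{Prop: Fm subseteq Fn and ramifications}. Since $\L_m$ is ramified at precisely $m$ (\cref{Def: Ln Quad Field}), $\rho_m(\m_{n,\p})\ne\Id$ iff $p\mid m$, matching $\varphi_{f(\p)}\ne\Id$ iff $f(\p)=y$ iff $p\mid m_f$. I claim any $f$ with $m_f=m$ is \emph{$\rho_m$-equivariant}: $f(g\q)=\rho_m(g)(f(\q))$ for all $g\in\G_n,\,\q\in\kei_n$. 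Indeed the crossing relation gives $f(\m_{n,\p}(\q))=\varphi_{f(\p)}(f(\q))=\rho_m(\m_{n,\p})(f(\q))$ for every $\p$; the set of $g$ satisfying the identity is a closed subgroup of $\G_n$ containing all $\m_{n,\p}$, hence is everything, since $\A_n$ is concise (\cref{Prop: arithmetic kei is concise}). Conversely any $\rho_m$-equivariant $f$ with $\varphi_{f(\p)}=\rho_m(\m_{n,\p})$ is a continuous colouring with $m_f=m$.

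A $\rho_m$-equivariant $f$ is determined by its value at one point $\p$ of each $\G_n$-orbit $\kei_{n,p}\simeq\G_n/D_\p$ ($p\mid n$), subject to $\varphi_{f(\p)}=\rho_m(\m_{n,\p})$ and to well-definedness of the extension, namely $\rho_m(D_\p)$ fixing $f(\p)$. For $p\mid m$ the value is forced to be $y$, which all of $\ZZ/2\ZZ$ fixes, so there is a unique choice. For $p\mid n/m$ the value may be $x_+$ or $x_-$, and well-definedness demands $\rho_m(D_\p)=\Id$, i.e.\ $D_\p\subseteq\Gal(\F_n/\L_m)$; since $p\nmid m$ is unramified in $\L_m$ this means $p$ splits completely in $\L_m$. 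Hence $\#\{f:m_f=m\}$ is $2^{\omega(n/m)}$ if every $p\mid n/m$ splits in $\L_m$, and $0$ otherwise. On the other hand $\sum_{b\mid n/m}\Jac bm=\prod_{p\mid n/m}(1+\Jac pm)$, and for $p\nmid m$ one has $\Jac pm=\Jac{m^*}p$ (quadratic reciprocity through the definition of $m^*$, valid at $p=2$ as well since $m$ is odd whenever $2\mid n/m$), which is $1$, $-1$, or (impossibly here) $0$ according as $p$ splits, is inert, or ramifies in $\L_m=\QQ(\sqrt{m^*})$. So each factor is $2$ or $0$, the product matches the count, and summing over $m\mid n$ finishes the proof.

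The step I expect to be most delicate is this last number-theoretic dictionary under the paper's sign conventions for $n^*$ when $2\mid n$ (confirming that splitting of $p$ in $\L_m$, including $p=2$, is read off correctly from $\Jac pm$), together with making the twisted-equivariance description of the middle step exact rather than merely sufficient; the rest is bookkeeping built on the conciseness of $\A_n$.
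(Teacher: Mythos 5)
Your proof is correct and follows essentially the same route as the paper's: both stratify colorings by the set of primes sent to $y$ (your $m$ is the paper's $n_2$), identify the relevant quotient data with the quadratic field $\L_m$, and reduce the count over $p\mid n/m$ to the splitting of $p$ in $\L_m$, read off via $\Jac pm=\Jac{m^*}p$. The only cosmetic difference is that where the paper lifts $f$ to an augmented-kei morphism via \cref{Prop: pro-fin concise augkei lift surj kei morphism} and factors it through $\Spec(\O_{\L_{n_2}}\otimes\ZZ/n\ZZ)$, you establish the $\rho_m$-equivariance directly from conciseness and count fixed points on orbits $\G_n/D_\p$ --- the same computation in slightly different packaging.
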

\begin{proof}
Consider the kei morphism $\eta_0\colon \joyce\to \{x_0,y_0\}\simeq\triv_2$ from \cref{Lma: Joyce pi0 is morphism}. By \cref{Lem: coloring preserves limits} and \cref{Prop: Trivial colorings factor through pi_0 II} there is a map
$$\Col_{\joyce}(n)\xrightarrow{\eta_0\circ-}\Col_{\triv_2}(n)\simeq\triv_2^{\{p|n\;\textnormal{prime}\}}\simeq\{(n_x,n_y)\in\NN^2\;|\;n_xn_y=n\}.$$
For $n_1,n_2\in\NN$ s.t. $n_1n_2=n$, we denote by $\Col_{\joyce}(n)_{(n_1,n_2)}\subseteq\Col_{\joyce}(n)$ the fiber 
$$\Col_{\joyce}(n)_{(n_1,n_2)}=\left\{f\colon \kei_n\to \joyce\;|\; \forall \p\in\kei_n\;,\;\; f(\p)\in\begin{cases}
	\{x_\pm\}&\p|n_1\\
	\{y\}&\p|n_2
\end{cases}\;\;\;\;\right\}$$
over $(n_1,n_2)$, and by 
$$\col_{\joyce}(n)_{(n_1,n_2)}:=\left|\Col_{\joyce}(n)_{(n_1,n_2)}\right|\in\NN,$$
s.t.
$$\col_\joyce(n)=\sum_{n_1n_2=n}\col_\joyce(n)_{(n_1,n_2)}.$$
We show that for all $n_1,n_2\in\NN$ s.t. $n_1n_2=n$,
$$\col_\joyce(n)_{(n_1,n_2)}=\prod_{p|n_1}\left(1+\Jac p{n_2}\right)=
\begin{cases}
	0&,\; \exists\; p|n_1\; \Jac p{n_2}=-1\\
	2^{\omega(n_1)}&,\; \forall\; p|n_1\; \Jac p{n_2}=1
\end{cases}\;\;.$$
If $n_1=n$, then for all $p|n$ we have $\Jac p1=1$, and indeed
$$\col_\joyce(n)_{(n,1)}=\left|\Hom_{\Kei}^{\textnormal{cont}}(\kei_n,\{x_\pm\})\right|=\col_{\triv_2}(n)=2^{\omega(n)}.$$
If $n_1=1$, then $\Jac pn=1$ vacuously for all $p|1$, and
$$\col_\joyce(n)_{(1,n)}=\left|\Hom_{\Kei}^\cont(\kei_n,\{y\})\right|=1=2^{\omega(1)}.$$
Let $n_1,n_2\neq 1$ with $n_1n_2=n$. We denote by $\rho_{n,n_2}$ the map
$$\rho_{n,n_2}\colon \G_n=\Gal(\F_n/\QQ)\twoheadrightarrow \Gal(\L_{n_2}/\QQ)\simeq\ZZ/2\ZZ\simeq\Inn(\joyce).$$
where $\L_{n_2}=\QQ(\sqrt{n_2^*})$ is the unique quadratic field in $\F_n$ that ramifies at precisely $n_2$.
Any $f\in\Col_\joyce(n)_{(n_1,n_2)}$ is necessarily surjective, thus 
by \cref{Prop: pro-fin concise augkei lift surj kei morphism}, lifts uniquely to 
$$(f, \rho_{\A_n,f} )\colon (\kei_n,\G_n,\m_n)\to(\joyce,\Inn(\joyce),\varphi)$$
in $\Pro\AKei^\fin$.
For all $\p\in\kei_n$,
$$\rho_{\A_n,f}(\m_\p)=\varphi_{f(\p)}\neq 1\;\iff\; \rho_{\A_n,f}(\m_\p) =\varphi_{y}\;\iff\;\p\in f^{-1}(y)\;\iff\; \p|n_2.$$
Therefore $\rho_{\A_n,f}\colon \G_n\twoheadrightarrow\Inn(\joyce)$ corresponds to $\L_{n_2}\subseteq\F_n$:
$$\rho_{\A_n,f}=\rho_{n,n_2}
\;\;,\;\;\;\; \ker\rho_{\A_n,f}=\Gal(\F_n/\L_{n_2}).$$
It also follows from \cref{Prop: profinite augkei statements} that  every $f\in\Col_\joyce(n)_{(n_1,n_2)}$ factors through the quotient
$$\k :=\ker\rho_{n,n_2}\backslash\kei_n \simeq \Spec\left(\O_{\K_{n_2}}\otimes\ZZ/n\ZZ\right) \to \joyce$$
in $\Kei^\fin$. 
The kei structure on $\k$ is defined by the induced augmentation with $G:=\Gal(\K_{n_2}/\QQ)$, satisfying $\qq\q\p=\m_{\q}(\p)$ for all $\p,\q\in \k$, where $\m_\q=1_G$ iff $\q|n_1$.
Then for all $\p,\q\in \k$ over respective rational primes $p,q$,
$$\qq\q\p\neq\p\;\iff\;q|n_2,\;p|n_1,\;\textnormal{ and }\; \Jac{n_2^*}p=\Jac p{n_2}=1.$$
Suppose there exists a prime $p|n_1$ s.t. $\Jac p{n_2}=-1$. Let $\p,\q\in\k$ be s.t. $\p|p$ and $\q|n_2$ ($n_2\neq 1$). Then on  one hand, $\qq\q\p=\p$. On the other hand, for any  $f\in\Col_\joyce(n)_{(n_1,n_2)}$,
$$f(\p)\in\{x_\pm\}\;\;\Longrightarrow\;\;\qq{f(\q)}{f(\p)}=\qq y{f(\p)}\neq f(\p).$$
This is a contradiction, therefore $\Col_\joyce(n)_{(n_1,n_2)}=\emptyset$ in these conditions:
$$\exists\; p|n_1\; \Jac p{n_2}=-1\;\Longrightarrow\;\col_\joyce(n)_{(n_1,n_2)}=0.$$
If on the other hand $\Jac p{n_2}=1$ for all $p|n_1$, then $f$ is determined by bijections
$$\Spec(\O_{\K_{n_2}}\otimes\FF_p)\iso \{x_\pm\}$$
for all $p|n_1$, each independent of the other. Therefore
$$\forall\; p|n_1\; \Jac p{n_2}=1\;\Longrightarrow\;\col_\joyce(n)_{(n_1,n_2)}=2^{\omega(n_1)}.$$
It follows that for any $n_1,n_2$ s.t. $n_1n_2=n$,
$$\col_\joyce(n)_{(n_1,n_2)}=\prod_{p|n_1} \left(1+\Jac p{n_2}\right) =\sum_{b|n_1}\Jac b{n_2}.$$
In total,
$$\col_\joyce(n)=\sum_{n_1n_2=n}\col_\joyce(n)_{(n_1,n_2)}= \sum_{n_1n_2=n}\sum_{b|n_1}\Jac b{n_2}= \sum_{ab|n}\Jac ba.$$
\end{proof}

\begin{proposition}
\label{Prop: arithmetic computation: T sqcup Joyce colorings}
	Let $0\le \powa\in\NN$. Then for all $n\in\NNN$,
	$$\col_{\triv_\powa\sqcup \joyce}(n) =\sum_{abc=n} (\powa+1)^{\omega(c)}\Jac ba .$$
\end{proposition}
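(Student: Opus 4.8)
The plan is to assemble the result from three facts already established: the multiplicativity of colorings under disjoint union (\cref{Prop: arithmetic coloring by disjoint union}), the computation $\col_{\triv_\powa}(n)=\powa^{\omega(n)}$ (\cref{Prop: Trivial colorings factor through pi_0}), and the computation $\col_{\joyce}(n)=\sum_{ab\mid n}\Jac ba$ (\cref{Prop: arithmetic computation: Joyce colorings}). First I would apply \cref{Prop: arithmetic coloring by disjoint union} with $\k_1=\triv_\powa$ and $\k_2=\joyce$ (legitimate since $n\in\NNN$ is square-free) to obtain
$$\col_{\triv_\powa\sqcup\joyce}(n)=\sum_{n=n_1n_2}\col_{\triv_\powa}(n_1)\cdot\col_{\joyce}(n_2)=\sum_{n=n_1n_2}\powa^{\omega(n_1)}\!\!\sum_{ab\mid n_2}\Jac ba.$$

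Next I would rewrite the condition $ab\mid n_2$ as the existence of a complementary divisor $c'$ with $abc'=n_2$; since $\Jac ba$ does not depend on $c'$, this turns the above into a single sum over quadruples,
$$\col_{\triv_\powa\sqcup\joyce}(n)=\sum_{n_1abc'=n}\powa^{\omega(n_1)}\Jac ba.$$
Then I would regroup this quadruple sum according to the value $c:=n_1c'$, so that it becomes a sum over triples $(a,b,c)$ with $abc=n$. For each fixed such triple the contribution is $\Jac ba\cdot\sum_{n_1\mid c}\powa^{\omega(n_1)}$, because $n$ — hence $c$ — is square-free, so $(n_1,c')$ ranges exactly over the pairs of complementary divisors of $c$. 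Applying \cref{Lma: divisor arithmetic that doesnt really need wrapping} with $\powb=\powa$ gives $\sum_{n_1\mid c}\powa^{\omega(n_1)}=(\powa+1)^{\omega(c)}$, and therefore
$$\col_{\triv_\powa\sqcup\joyce}(n)=\sum_{abc=n}(\powa+1)^{\omega(c)}\Jac ba,$$
which is the claim.

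This is a purely formal manipulation of finite divisor sums, so I do not anticipate a genuine obstacle; the only step requiring care is the regrouping, where square-freeness of $n$ is exactly what guarantees that each divisor of $c$ is hit once and that the Kronecker factor stays inert. If one prefers to avoid the explicit reindexing, the same identity follows at the level of Dirichlet convolution: writing $g(d):=\sum_{ab=d}\Jac ba$ we have $\col_{\joyce}=g*\uno$ on $\NNN$, while $\col_{\triv_\powa}=\uno^{*\powa}$ there, so $\col_{\triv_\powa\sqcup\joyce}=g*\uno^{*(\powa+1)}$, and evaluating this convolution on a square-free $n$ — where $\uno^{*(\powa+1)}$ takes the value $(\powa+1)^{\omega}$ — reproduces $\sum_{abc=n}(\powa+1)^{\omega(c)}\Jac ba$.
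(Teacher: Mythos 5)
Your proof is correct and follows essentially the same route as the paper: both combine \cref{Prop: arithmetic coloring by disjoint union}, \cref{Prop: Trivial colorings factor through pi_0}, \cref{Prop: arithmetic computation: Joyce colorings}, and \cref{Lma: divisor arithmetic that doesnt really need wrapping}, with the same regrouping of the divisor sum via $c=n_1c'$. Your write-up merely spells out the reindexing (and offers a Dirichlet-convolution gloss) in more detail than the paper does.
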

\begin{proof}
	Let $n\in\NNN$. By \cref{Prop: Trivial colorings factor through pi_0},
	$$\col_{\triv_\powa}=\powa^{\omega(n)}. $$
	By \cref{Prop: arithmetic coloring by disjoint union} and \cref{Lma: divisor arithmetic that doesnt really need wrapping},
$$\col_{\triv_\powa\sqcup \joyce}(n)=\sum_{n_1n_2=n}\col_{\triv_ \powa}(n_1) \col_ \joyce(n_2) =\sum_{n_1n_2=n} \powa ^{\omega(n_1)}\sum_{ab|n_2} \Jac ba =$$
$$= \sum_{abc=n} \Jac ba \sum_{n_1|c}\powa^{\omega(n_1)} =\sum_{abc=n} (\powa+1)^{\omega(c)}\Jac ba .$$
\end{proof}

\subsection{Proof of the Main Conjecture for $\k=\joyce\sqcup \triv_\powa$.}

\begin{proposition}
\label{Prop: J_+ to error term}
	Let $0\le \powa\in\NN$, let $s\in\NNN$ and let $X\ge 1$. Then
	$$\counter_{\joyce\sqcup \triv_\powa,s}(X)=
	2\counter_{\triv_{\powa+2},s}(X)-\counter_{\triv_{\powa+1},s}(X) +
	\sum_{n\le X}\mu_s^2(n)(\powa+1)^{\omega(n)}T_{sn}(X/n),$$
	where for $v\in\NNN$ and $1\le Y\in\RR$,
	$$T_{v}(Y)=\sum_{\substack{a,b\ge2\\ab\le Y}}\mu_{v}^2(ab) \Jac ba.$$
\end{proposition}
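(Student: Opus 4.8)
The plan is to unfold both sides into sums over the square-free integers coprime to $s$ and match them term by term. Starting from the closed formula of \cref{Prop: arithmetic computation: T sqcup Joyce colorings}, namely $\col_{\triv_\powa\sqcup\joyce}(m)=\sum_{abc=m}(\powa+1)^{\omega(c)}\Jac ba$, and recalling that $\mu_s^2$ is the indicator function of the square-free integers coprime to $s$, one gets
$$\counter_{\joyce\sqcup\triv_\powa,s}(X)=\sum_{abc\le X}\mu_s^2(abc)\,(\powa+1)^{\omega(c)}\Jac ba.$$
I would then split this triple sum into four pieces according to whether $a=1$ or $a\ge 2$ and whether $b=1$ or $b\ge 2$; this is legitimate because $\Jac ba$ already vanishes unless $\gcd(a,b)=1$, so no cross terms are lost and $\mu_s^2(abc)$ factors as $\mu_s^2(a)\mu_s^2(b)\mu_s^2(c)$ on the support of the summand.

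For the piece with $a,b\ge 2$, I would freeze $c$, rename it $n$, and observe that the remaining inner sum is $\sum_{\substack{a,b\ge 2\\ ab\le X/n}}\mu_s^2(abn)\Jac ba$. The key elementary identity here is $\mu_s^2(abn)=\mu_s^2(n)\,\mu_{sn}^2(ab)$ — both sides are the indicator that $n$ is square-free and coprime to $s$ while $ab$ is square-free and coprime to $sn$ — together with the remark that $sn\in\NNN$ whenever $\mu_s^2(n)\ne 0$, so that $T_{sn}$ is well defined. This identifies the $a,b\ge 2$ piece with $\sum_{n\le X}\mu_s^2(n)(\powa+1)^{\omega(n)}T_{sn}(X/n)$, exactly the error term in the statement.

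It remains to show the other three pieces add up to $2\counter_{\triv_{\powa+2},s}(X)-\counter_{\triv_{\powa+1},s}(X)$. In each of them at least one of $a,b$ equals $1$, and since $\Jac 1a=\Jac b1=1$ the Kronecker symbol disappears. The $a=1,b=1$ piece is $\sum_{c\le X}\mu_s^2(c)(\powa+1)^{\omega(c)}=\counter_{\triv_{\powa+1},s}(X)$ by \cref{Prop: Trivial colorings factor through pi_0}, while the $a=1,b\ge 2$ and $a\ge 2,b=1$ pieces coincide after relabelling, each equal to $\sum_{m\ge 2,\ mc\le X}\mu_s^2(mc)(\powa+1)^{\omega(c)}$. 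To handle the latter I would apply \cref{Lma: divisor arithmetic that doesnt really need wrapping} with $\powb=\powa+1$, giving $(\powa+2)^{\omega(k)}=\sum_{m\mid k}(\powa+1)^{\omega(m)}$ for square-free $k$; summing over $k=mc\le X$ coprime to $s$ and using that the resulting double sum is symmetric in $m$ and $c$ yields $\counter_{\triv_{\powa+2},s}(X)=\sum_{mc\le X}\mu_s^2(mc)(\powa+1)^{\omega(c)}$. Isolating the $m=1$ term gives $\sum_{m\ge 2,\ mc\le X}\mu_s^2(mc)(\powa+1)^{\omega(c)}=\counter_{\triv_{\powa+2},s}(X)-\counter_{\triv_{\powa+1},s}(X)$, so the three pieces sum to $\counter_{\triv_{\powa+1},s}(X)+2\big(\counter_{\triv_{\powa+2},s}(X)-\counter_{\triv_{\powa+1},s}(X)\big)=2\counter_{\triv_{\powa+2},s}(X)-\counter_{\triv_{\powa+1},s}(X)$. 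Adding the error term from the previous paragraph completes the proof.

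The whole argument is bookkeeping rather than analysis; the only place where care is needed is the coprimality accounting — verifying the factorizations $\mu_s^2(abc)=\mu_s^2(a)\mu_s^2(b)\mu_s^2(c)$ and $\mu_s^2(abn)=\mu_s^2(n)\mu_{sn}^2(ab)$ on the relevant supports — and keeping track of which of the symmetric variables the divisor lemma is applied to when passing to $\counter_{\triv_{\powa+2},s}$.
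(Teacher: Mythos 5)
Your proof is correct and follows essentially the same route as the paper's: both start from the formula $\col_{\joyce\sqcup\triv_\powa}(n)=\sum_{abc=n}(\powa+1)^{\omega(c)}\Jac ba$, split according to whether $a$ and $b$ equal $1$, identify the degenerate pieces with $2\counter_{\triv_{\powa+2},s}-\counter_{\triv_{\powa+1},s}$ via \cref{Lma: divisor arithmetic that doesnt really need wrapping}, and convert the $a,b\ge 2$ piece into $\sum_n\mu_s^2(n)(\powa+1)^{\omega(n)}T_{sn}(X/n)$ using the factorization $\mu_s^2(abn)=\mu_s^2(n)\mu_{sn}^2(ab)$. The only (immaterial) difference is that the paper establishes the identity pointwise in $n$ before summing, whereas you sum first and use the symmetry of the double sum in $m,c$; your explicit verification of the coprimality bookkeeping is if anything more careful than the paper's.
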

\begin{proof}
	Let $n\in\NNN$. By \cref{Lma: divisor arithmetic that doesnt really need wrapping}, we write
	$$\col_{\joyce\sqcup \triv_\powa}(n)=\sum_{abc=n}(\powa+1)^{\omega(c)}\Jac ba=$$
	$$=\left(\sum_{\substack{abc=n\\a=1}}+ \sum_{\substack{abc=n\\b=1}}-\sum_{\substack{abc=n\\a=b=1}}+ \sum_{\substack{abc=n\\a,b\ge2}}\right)(\powa+1)^{\omega(c)}\Jac ba=$$
$$=2\sum_{c|n}(\powa+2)^{\omega(c)}-(\powa+1)^{\omega(n)}+\sum_{\substack{abc=n\\a,b\ge2}}\powa^{\omega(c)}\Jac ba.$$
Therefore by \cref{Prop: Trivial colorings factor through pi_0},
$$\col_{\joyce\sqcup \triv_\powa}(n)= 2\col_{\triv_{\powa+2}}(n)-\col_{\triv_{\powa+1}}(n)+ \sum_{\substack{abc=n\\a,b\ge2}}(\powa+1)^{\omega(c)}\Jac ba. $$
Thus for $X\ge 1$, summing over $1\le n\le X$ we have
$$\counter_{\joyce\sqcup \triv_\powa,s}(X)- 2\counter_{\triv_{\powa+2},s}(X)+\counter_{\triv_{\powa+1},s}(X)=  \sum_{1\le n\le X}\mu_s^2(n)\sum_{\substack{abc=n\\a,b\ge2}}(\powa+1)^{\omega(c)}\Jac ba= $$
$$\sum_{\substack{abc\le X\\a,b\ge2}}\mu_s^2(abc)(\powa+1)^{\omega(c)}\Jac ba= \sum_{1\le c\le X}\mu_s^2(c)(\powa+1)^{\omega(c)} \sum_{\substack{ab\le X/c\\a,b\ge2}} \mu_{sc}^2(ab)\Jac ba
= $$
$$= \sum_{1\le c\le X}\mu_s^2(c)(\powa+1)^{\omega(c)}T_{sc}(X/c).$$
\end{proof}

\begin{proposition}
\label{Prop: Technical statement for J_+ proof}
	Let $0\le\powa\in\NN$ and let $s\in\NNN$. Then
	$$\counter_{\joyce\sqcup \triv_\powa,s}(X)- 2\counter_{\triv_{\powa+2},s}(X)+\counter_{\triv_{\powa+1},s}(X) = o(X\log ^{\powa+1}(X)).$$
\end{proposition}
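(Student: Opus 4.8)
The plan is to start from \cref{Prop: J_+ to error term}, which already reduces the statement to showing that the error term
\[
E_s(X):=\sum_{n\le X}\mu_s^2(n)\,(\powa+1)^{\omega(n)}\,T_{sn}(X/n),\qquad
T_v(Y)=\sum_{\substack{a,b\ge 2\\ ab\le Y}}\mu_v^2(ab)\Jac ba ,
\]
satisfies $E_s(X)=o(X\log^{\powa+1}X)$; equivalently, after unfolding $T_{sn}$, one must bound $\sum_{\substack{abc\le X,\ a,b\ge 2}}\mu_s^2(abc)(\powa+1)^{\omega(c)}\Jac ba$, and only $n\le X/4$ contributes since $a,b\ge 2$ forces $ab\ge 4$. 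A direct appeal to \cref{Prop: bound for T_s(X)} is \emph{not} enough here: it gives $|T_{sn}(Y)|\ll 2^{\frac12\omega(sn)}Y^{7/8}(1+\log Y)^{1/2}$, and since $\mu_s^2(n)\ne 0$ forces $2^{\frac12\omega(sn)}=2^{\frac12\omega(s)}2^{\frac12\omega(n)}$, the factor $2^{\frac12\omega(n)}$ inflates the effective divisor base from $\powa+1$ to $(\powa+1)\sqrt2$, producing a power of $\log X$ that exceeds $\powa+1$ once $\powa$ is large.

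To avoid this loss I would re-derive the bound on $T_{sn}(X/n)$ from scratch, handling the coprimality condition $(ab,n)=1$ hidden inside $\mu_{sn}^2(ab)$ by Möbius inversion \emph{before} applying P\'olya--Vinogradov. Writing $T_{sn}(Y)=\sum_{e\mid n}\mu(e)\sum_{\substack{a,b\ge 2,\ e\mid ab\\ ab\le Y}}\mu_s^2(ab)\Jac ba$, splitting $e=e_ae_b$ with $e_a\mid a$, $e_b\mid b$, substituting $a=e_aa'$, $b=e_bb'$, performing further Möbius inversions to remove the remaining coprimality conditions, and rewriting $\Jac{e_bb'}{e_aa'}$ via Kronecker reciprocity, one recognizes each inner sum as a bi-character sum in $(a',b')$ in the sense of \cref{Dfn: quasi bi-char} with slopes $O(e_a)$ and $O(e_b)$ (built from the characters of \cref{Prop: odd Kronecker is bi-char}), non-principal outside a bounded initial range. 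Applying \cref{Prop: bi-char summation} to each such sum and summing over $e\mid n$ and over the factorizations $e=e_ae_b$ yields, for fixed $s$,
\[
|T_{sn}(Y)|\ \ll_s\ Y^{7/8}(1+\log Y)^{1/2}\,\rho(n),\qquad
\rho(n):=\sum_{e\mid n}\mu^2(e)\,2^{\omega(e)}\,(1+\log e)\,e^{-3/4},
\]
where $\rho$ is multiplicative with $\rho(p)=1+O(p^{-1/2})$. The point is that the P\'olya--Vinogradov loss is now only the harmless constant $2^{\frac12\omega(s)}$, not $2^{\frac12\omega(sn)}$.

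Substituting this into $E_s(X)$ and bounding $(1+\log(X/n))^{1/2}\le(1+\log X)^{1/2}$ gives
\[
|E_s(X)|\ \ll_s\ (1+\log X)^{1/2}\,X^{7/8}\sum_{n\le X}\mu_s^2(n)\,(\powa+1)^{\omega(n)}\rho(n)\,n^{-7/8}.
\]
The remaining sum runs over the multiplicative function $g:=(\powa+1)^{\omega(\cdot)}\rho(\cdot)$, which has $g(p)=(\powa+1)+O(p^{-1/2})$; hence on squarefree integers $g$ factors as $g=h\ast\uno^{*(\powa+1)}$, where $h$ is the multiplicative function supported on squarefree integers with $h(p)=(\powa+1)(\rho(p)-1)=O(p^{-1/2})$, so that $\sum_e\mu^2(e)|h(e)|\,e^{-1}=\prod_p\bigl(1+O(p^{-3/2})\bigr)<\infty$. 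Using $\mu_s^2(n)g(n)\le(|h|\ast\uno^{*(\powa+1)})(n)$ together with \cref{Lma: Piltz comprehensive with error term} and partial summation (which gives $\sum_{m\le Y}\uno^{*(\powa+1)}(m)m^{-7/8}\ll Y^{1/8}(1+\log Y)^{\powa}$), one obtains
\[
\sum_{n\le X}\mu_s^2(n)g(n)\,n^{-7/8}
\ \le\ \sum_{e}|h(e)|\,e^{-7/8}\sum_{m\le X/e}\uno^{*(\powa+1)}(m)m^{-7/8}
\ \ll_{\powa}\ X^{1/8}(1+\log X)^{\powa}.
\]
Combining the last two displays yields $|E_s(X)|\ll_{s,\powa} X(1+\log X)^{\powa+\frac12}=o\bigl(X\log^{\powa+1}X\bigr)$, which is the claim.

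The step I expect to be the main obstacle is the re-derivation of the refined bound on $T_{sn}$: the bookkeeping of the iterated Möbius inversions and the reciprocity rewriting needed to present the inner double sum as a genuinely non-principal bi-character sum with slopes $O(e_a)$, $O(e_b)$, so that \cref{Prop: bi-char summation} applies with a P\'olya--Vinogradov loss depending only on $s$, and so that summing the resulting slope factors $e_a^{1/8}e_b^{1/8}(1+\log e)$ over $e\mid n$ produces the slowly varying $\rho(n)=\prod_{p\mid n}(1+O(p^{-1/2}))$ rather than a factor exponential in $\omega(n)$. Everything downstream --- the convolution identity $g=h\ast\uno^{*(\powa+1)}$ and the divisor-sum estimate --- is routine given the lemmas of \cref{Section: preliminary computations}.
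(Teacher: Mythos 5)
Your reduction via \cref{Prop: J_+ to error term} and your downstream convolution analysis (the factorization $g=h\ast\uno^{*(\powa+1)}$ with $h(p)=O(p^{-1/2})$, and the divisor-sum estimate $\sum_{m\le Y}\uno^{*(\powa+1)}(m)m^{-7/8}\ll Y^{1/8}(1+\log Y)^{\powa}$) are fine, but the proof has a genuine gap exactly where you flag it: the refined bound $|T_{sn}(Y)|\ll_s Y^{7/8}(1+\log Y)^{1/2}\rho(n)$ is asserted, not established. The difficulties you wave at are real and not merely bookkeeping: after writing $\mu_{sn}^2(ab)=\mu_s^2(ab)\sum_{e\mid(ab,n)}\mu(e)$ and substituting $a=e_aa'$, $b=e_bb'$, you must (i) verify that each resulting double sum in $(a',b')$ is literally a bi-character in the sense of \cref{Dfn: quasi bi-char} with slopes $O(e_a)$, $O(e_b)$ --- which requires the modulus of the $b'$-character to be an exact multiple $c_1a'$ of $a'$, uniformly in $a'$, after the reciprocity rewriting of $\Jac{e_bb'}{e_aa'}$; (ii) identify the thresholds $a_0,b_0$ of non-principality, which now depend on whether $e_a,e_b>1$ (e.g.\ for $e_a>1$, $a'=1$ enters the sum and the $b'$-character must be checked non-principal there); and (iii) track the extra coprimality conditions among $a',b',e_a,e_b$, whose removal by further M\"obius inversion reintroduces divisor sums that must not spoil $\rho(p)=1+O(p^{-1/2})$. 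None of this is carried out, so the claimed estimate --- the load-bearing step --- is unproven.

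Moreover, the premise that motivates this detour is mistaken: a direct appeal to \cref{Prop: bound for T_s(X)} \emph{does} suffice, and is what the paper does. The factor $2^{\frac12\omega(n)}$ only inflates the divisor base to $\powb=\lceil(\powa+1)\sqrt2\rceil$ if you sum over the full range $n\le X$; the paper instead splits at $n\le X/Y$ versus $n>X/Y$. On the first range, \cref{Prop: bound for T_s(X)} together with \cref{Lma: basic counting lemma 2} gives
$$\sum_{n\le X/Y}\mu_s^2(n)(\powa+1)^{\omega(n)}2^{\frac12\omega(n)}\,(X/n)^{\frac78}\ll_s X\,Y^{-\frac18}(1+\log X)^{\powb},$$
and the polynomial saving $Y^{-1/8}$ beats any fixed power of $\log X$: taking $Y=(1+\log X)^{8\powb}$ makes this $O_s(X)$. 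On the tail $n>X/Y$ one has $ab\le X/n<Y$, so the trivial bound plus \cref{Lma: Piltz lazy} gives $O\bigl(X\log^{\powa}(X)(\log\log X)^2\bigr)$. Both are $o(X\log^{\powa+1}X)$, with no need to reopen the Pólya--Vinogradov step. So your plan is salvageable in principle but incomplete at its crux, and the complication it is designed to avoid is already handled by a one-parameter splitting of the $n$-sum.
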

\begin{proof}
	By \cref{Prop: J_+ to error term}, 
	$$\counter_{J\sqcup T_a,s}(X)-2\counter_{T_{a+2},s}(X)+\counter_{T_{a+1},s}(X) = \sum_{n\le X}\mu_s^2(n)(\powa+1)^{\omega(n)}T_{sn}(X/n),$$
	which equals
	\begin{equation}
	\label{Eqn: J-col split T_su(X/u) sum J_+ case}
		\left(\overbrace{\sum_{1\le n\le \frac XY}}^{(I)}+\overbrace{\sum_{\frac XY<n\le X}}^{(II)}\right) \mu_s^2(n)(\powa+1)^{\omega(n)}T_{sn}(X/n)
	\end{equation}
	for some $1<Y<X$ to be determined later. 
	By \cref{Prop: bound for T_s(X)}, there exists $C>0$ s.t. the first summand in (\ref{Eqn: J-col split T_su(X/u) sum J_+ case}) is bounded by
	$$|(I)|\le \sum_{1\le n\le \frac XY}\mu_s^2(n)(\powa+1)^{\omega(n)}|T_{sn}(X/n)|\le$$
	$$\le C\cdot \sum_{n\le \frac XY} \mu_s^2(n)(\powa+1)^{\omega(n)}\cdot 2^{\frac12\omega(sn)}(X/n)^{\frac78}(\log (X/n)+1)^{\frac12}\le  $$
	$$\le C\cdot 2^{\frac12\omega(s)} X^{\frac78}(\log X+1)^{\frac12} \sum_{n\le \frac XY} \mu_s^2(n)(\powa+1)^{\omega(n)}\cdot 2^{\frac12\omega(n)}n^{-\frac78}.$$
	Let $\powb=\lceil (\powa+1)\sqrt2\rceil$. Then by \cref{Lma: basic counting lemma 2},
	$$|(I)|\le C\cdot 2^{\frac12\omega(s)} X^{\frac78}(\log X+1)^{\frac12} \sum_{n\le \frac XY} \mu_s^2(n)\powb^{\omega(n)}n^{-\frac78} \le $$
	$$\le C\cdot 2^{\frac12\omega(s)} X^{\frac78}(\log X+1)^{\frac12}\cdot 8(X/Y)^{\frac18}(1+\log (X/Y))^{\powb-1} \le $$
		$$\le 8C\cdot 2^{\frac12\omega(s)} X \cdot Y^{-\frac18}(\log X+1)^{\powb}.$$
		For $X\gg 0$ we may take 
		$$Y=(\log X+1)^{8\powb}\le X,$$
		for which we then have
		$$|(I)|\le C\cdot 2^{\frac12\omega(s)}X.$$
	We turn to $(II)$, the second summand in (\ref{Eqn: J-col split T_su(X/u) sum J_+ case}):
	$$(II)= \sum_{\frac XY<n\le X}\mu_s(n)^2(\powa+1)^{\omega(n)}T_{sn}(X/n) =$$
	$$=\sum_{\frac XY<n\le X}\mu_s^2(n)\uno^{*(\powa+1)}(n)\sum_{\substack{a,b\ge 2\\ab\le X/n}}\mu_{sn}^2(ab)\Jac ba,$$
	which is bounded by
	$$|(II)|\le 
	\sum_{\frac XY<n\le X}\uno^{*(\powa+1)}(n)\sum_{\substack{a,b\ge 1\\ab\le X/n}}1\le\sum_{\substack{a,b\ge1\\ab\le Y}}\sum_{n\le \frac X{ab}}\uno^{*(\powa+1)}(n).$$
	By \cref{Lma: Piltz lazy} there exists $C'>0$ s.t.
	$$|(II)|\le C'\sum_{\substack{ab\le Y}}\frac X{ab}\left(\log^\powa(X/ab)+1\right)\le$$
	$$\le C'X\left(\log^\powa(X)+1\right) \sum_{\substack{ab\le Y}}\frac 1{ab}\le C'X\left(\log^\powa(X)+1\right) \cdot C''\log^2 (Y)=$$
	$$=C'C''X\left(\log^\powa (X)+1\right) (8\powb\log (\log X+1))^2.$$
	Hence
	$$\left| \counter_{\joyce\sqcup \triv_\powa,s}(X)- 2\counter_{\triv_{\powa+2},s}(X)+\counter_{\triv_{\powa+1},s}(X)\right|= |(\ref{Eqn: J-col split T_su(X/u) sum J_+ case})|\le |(I)|+|(II)|=$$
	$$=O_s(X)+O_\powa\left(X\log^\powa(X)(\log\log X)^2\right)=o(X\log^{\powa+1}(X)).$$
\end{proof}

\begin{proposition}
\label{Prop: Main Conj for J_+}
	Let $0\le \powa\in\NN$. Then $\col_{\joyce\sqcup \triv_\powa}$ is of generic summatory type $(\powa +1,\frac2{(\powa +1)!})$:
	$$\col_{\joyce\sqcup \triv_\powa}\in\GenType{\powa +1}{\textstyle{\frac2{(\powa +1)!}}}$$
\end{proposition}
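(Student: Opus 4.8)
The plan is to bootstrap from \cref{Prop: Technical statement for J_+ proof}, which already isolates the hard analytic content, together with the trivial-kei asymptotics established in (the proof of) \cref{Prop: Main Conj for T_a}. Fix $s\in\NNN$. Rewriting \cref{Prop: Technical statement for J_+ proof} gives
$$\counter_{\joyce\sqcup\triv_\powa,s}(X)=2\counter_{\triv_{\powa+2},s}(X)-\counter_{\triv_{\powa+1},s}(X)+o\!\left(X\log^{\powa+1}(X)\right).$$
By \cref{prop: Tenenbaum 3.12 II} applied to the multiplicative function $\mu_s^2(n)\uno^{*\powb}(n)=\mu_s^2(n)\powb^{\omega(n)}$ (which is $\le\uno^{*\powb}$), for every $\powb\ge1$ one has
$$\lim_{X\to\infty}\frac{\counter_{\triv_\powb,s}(X)}{X\log^{\powb-1}(X)}=\frac1{(\powb-1)!}\prod_{p\mid s}(1-p^{-1})^{\powb}\cdot\prod_{p\nmid s}(1-p^{-1})^{\powb}(1+\powb p^{-1}).$$
Taking $\powb=\powa+2$ this term has exact order $X\log^{\powa+1}(X)$, while the $\counter_{\triv_{\powa+1},s}$ term is $O(X\log^{\powa}(X))=o(X\log^{\powa+1}(X))$ and is absorbed into the error. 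Hence
$$\lim_{X\to\infty}\frac{\counter_{\joyce\sqcup\triv_\powa,s}(X)}{X\log^{\powa+1}(X)}=\frac{2}{(\powa+1)!}\prod_{p\mid s}(1-p^{-1})^{\powa+2}\cdot\prod_{p\nmid s}(1-p^{-1})^{\powa+2}(1+(\powa+2)p^{-1}).$$

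Next I would pass to the constant $c_s$. Since $\deg P_{\joyce\sqcup\triv_\powa}=\powa+1$, the relation $\gamma_s^{\powa+2}c_s(f)=\lim_{X\to\infty}\counter_{f,s}(X)/\bigl(X\log^{\powa+1}(X)\bigr)$ (as recorded after \cref{Def: gen sum type}) together with the Euler product $\gamma_s=\prod_{p\nmid s}(1-p^{-2})\cdot\prod_{p\mid s}(1-p^{-1})$ makes the $p\mid s$ factors cancel, exactly as in the proof of \cref{Prop: Main Conj for T_a}, leaving
$$c_s(\joyce\sqcup\triv_\powa)=\frac{2}{(\powa+1)!}\prod_{p\nmid s}\frac{1+(\powa+2)p^{-1}}{(1+p^{-1})^{\powa+2}}.$$
By \cref{Lma: Triv proof aux lemma} with $\powa+2$ in place of $\powa$, the product $\prod_p(1+p^{-1})^{\powa+2}/(1+(\powa+2)p^{-1})$ converges, so each $c_s$ is a well-defined nonzero number; and as $s$ ranges over $\NNN$ in the divisibility order the product over $p\nmid s$ is a tail of a convergent product, hence tends to $1$. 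Therefore $c(\joyce\sqcup\triv_\powa)=\lim_{s\in\NNN}c_s(\joyce\sqcup\triv_\powa)=\tfrac{2}{(\powa+1)!}$, i.e.\ $\col_{\joyce\sqcup\triv_\powa}\in\GenType{\powa+1}{\frac2{(\powa+1)!}}$.

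There is no essentially new obstacle here: the main analytic work --- bounding the bi-character sum $T_s$ and summing it against $(\powa+1)^{\omega(n)}$-type weights --- has already been carried out in \cref{Prop: Technical statement for J_+ proof}. The only points requiring care are bookkeeping ones: verifying that $\counter_{\triv_{\powa+1},s}(X)$ and the error term from \cref{Prop: Technical statement for J_+ proof} are genuinely $o(X\log^{\powa+1}(X))$ for each \emph{fixed} $s$ (no uniformity in $s$ is needed, since the limit over $\NNN$ is taken only after the limit in $X$), and confirming that the $s$-local Euler factors cancel against $\gamma_s^{-(\powa+2)}$ so that the surviving product converges to the correct nonzero value --- a verbatim analogue of the final step of the $\triv_\powa$ computation.
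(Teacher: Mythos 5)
Your proposal is correct and follows essentially the same route as the paper: it invokes \cref{Prop: Technical statement for J_+ proof} to reduce to the trivial-kei counts, discards the $\counter_{\triv_{\powa+1},s}$ term as $o(X\log^{\powa+1}(X))$, and identifies the limit with $2c_s(\triv_{\powa+2})$. The only cosmetic difference is that you re-expand the Euler product for $c_s(\triv_{\powa+2})$ via \cref{prop: Tenenbaum 3.12 II} and \cref{Lma: Triv proof aux lemma}, whereas the paper simply cites $c_s(\joyce\sqcup\triv_\powa)=2c_s(\triv_{\powa+2})$ and the already-established limit from \cref{Prop: Main Conj for T_a}.
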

\begin{proof}
	Let $s\in\NNN$. Following \cref{Prop: Technical statement for J_+ proof},
		$$\lim_{X\to\infty}\frac{\counter_{\joyce\sqcup \triv_\powa,s}(X)- 2\counter_{\triv_{\powa+2},s}(X)+\counter_{\triv_{\powa+1},s}(X)}{X\log^{\powa+1}(X)}=0.$$
By \cref{Prop: Main Conj for T_a}, $\col_{\triv_{\powa+1}}\in\GenType {\powa}{\frac1{\powa!}}$ and $\col_{\triv_{\powa+2}}\in\GenType {\powa+1}{\frac1{(\powa+1)!}}$. For all $s\in\NNN$ we therefore have $\counter_{\triv_{\powa+1},s}(X)=o\left(X\log^{\powa+1}(X)\right)$ and
	$$\lim_{X\to\infty}\frac{\counter_{\joyce\sqcup \triv_\powa,s}(X)}{X\log^{\powa+1}(X)}= 2\lim_{X\to\infty}\frac{\counter_{\triv_{\powa+2},s}(X)}{X\log^{\powa+1}(X)}.$$
	Therefore
	$$c_s(\joyce\sqcup \triv_\powa):= \gamma_s^{-\powa-1}\lim_{X\to\infty}\frac{\avg_{\joyce\sqcup \triv_\powa,s}(X)}{\log^{\powa+1}(X)}=$$
	$$=2\gamma_s^{-\powa-1}\lim_{X\to\infty}\frac{\avg_{\triv_{\powa+2},s}(X)}{\log^{\powa+1}(X)} = 2c_s(\triv_{\powa+2})>0,$$
	and
	$$\lim_{s\in\NNN}c_s(\joyce\sqcup \triv_\powa)= 2\lim_{s\in\NNN}c_s(\triv_{\powa+2})=\frac2{(\powa +1)!}.$$
	Thus
	$$\col_{\joyce\sqcup \triv_\powa}\in\GenType{\powa +1}{\textstyle{\frac 2{(\powa +1)!}}}.$$
\end{proof}

\section{The Dihedral kei $\RGB$}
\label{Section: dihedral 3 kei proof}
\begin{definition}
	Let $D_3\in\Grp$ denote the dihedral group of a triangle. and let $\sigma\colon D_3\to\{\pm1\}$ denote the homomorphism
	$$\sigma\colon D_3\simeq (\ZZ/3\ZZ)\rtimes\{\pm1\}\to\{\pm1\}.$$
	By $\RGB\subseteq D_3$ we denote the set of reflections in $D_3$:
	$$\RGB:=D_3\setminus\ker \sigma =
	\sigma^{-1}(-1)
	.$$
	The set $\RGB$ is comprised of involutions and is closed under conjugation. Therefore $\RGB$ is a kei with structure given by conjugation: $\qq gh:=ghg^{-1}$.
\end{definition}

Recall in \cite{WEBSITE:DavisSchlankHilbert2023} the computation of the Hilbert polynomial of $\RGB $:
$$P_{\RGB}(k)=6.$$

\subsection{Interpretation of $\RGB $-colorings}

\begin{lemma}
\label{Lma: Kei dom Col to Group dom morphisms}
	Let $\k\in\Kei^\fin$ be such that
	\begin{enumerate}
		\item the map $\varphi\colon\k\to\Inn(\k)$ is an injection, and
		\item for every proper sub-kei $\k'<\k$, the set $\{\varphi_x\;|\;x\in\k'\}\subseteq \Inn(\k)$ generates a proper subgroup of $\Inn(\k)$:
		$$\langle\{\varphi_{\k,x} \;|\;x\in\k'\}\rangle<\Inn(\k).$$
	\end{enumerate}
Then for all $n\in\NNN$ there is a bijection
$$\Col_\k^\dom(n)\simeq\{\rho\colon\G_n\twoheadrightarrow\Inn(\k)\;|\;(\rho\circ\m_n)(\kei_n)\subseteq \varphi(\k)\subseteq\Inn(\k)\}.$$
\end{lemma}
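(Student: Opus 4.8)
The plan is to set up a bijection between dominant colorings $f\colon\kei_n\twoheadrightarrow\k$ and surjections $\rho\colon\G_n\twoheadrightarrow\Inn(\k)$ whose composition with the augmentation map $\m_n$ lands in $\varphi(\k)$, by passing through the augmented-kei formalism established in the previous section.

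\medskip

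\noindent\textbf{Construction of the forward map.} Let $f\in\Col_\k^\dom(n)$, i.e.\ $f\colon\kei_n\twoheadrightarrow\k$ is a surjective morphism in $\Pro\Kei^\fin$. By \cref{Prop: arithmetic kei is concise}, $\A_n=(\kei_n,\G_n,\m_n)$ is concise, so by \cref{Prop: pro-fin concise augkei lift surj kei morphism} there is a unique $\rho=\rho_{\A_n,f}\colon\G_n\to\Inn(\k)$ making $(f,\rho)\colon\A_n\to\a_\k=(\k,\Inn(\k),\varphi_\k)$ a morphism in $\Pro\AKei^\fin$; moreover $\rho$ is surjective (the analogue of the last line of the proof of \cref{Prop: concise augkei lift surj kei morphism}, or directly: $\rho(\G_n)=\rho(\overline{\langle\m_{n,\p}\rangle})=\overline{\langle\varphi_{f(\p)}\rangle}=\Inn(\k)$ since $f$ is onto). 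The defining relation $\rho(\m_{n,\p})=\varphi_{f(\p)}$ for all $\p\in\kei_n$ shows $(\rho\circ\m_n)(\kei_n)=\{\varphi_{f(\p)}\mid\p\in\kei_n\}=\varphi(f(\kei_n))=\varphi(\k)\subseteq\Inn(\k)$. Thus $f\mapsto\rho_{\A_n,f}$ lands in the target set.

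\medskip

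\noindent\textbf{Construction of the inverse map.} Conversely, given $\rho\colon\G_n\twoheadrightarrow\Inn(\k)$ with $(\rho\circ\m_n)(\kei_n)\subseteq\varphi(\k)$, note $\G_n$ acts on $\varphi(\k)\simeq\k$ (the last isomorphism of $\G_n$-sets uses hypothesis (1), injectivity of $\varphi$) via $\rho$ composed with the conjugation action of $\Inn(\k)$ on $\varphi(\k)$. Define $f\colon\kei_n\to\k$ by $f(\p)=\varphi^{-1}\bigl((\rho\circ\m_n)(\p)\bigr)$. One checks $f$ is continuous (as $\m_n$ is continuous by \cref{Prop: fun. kei aug. map is continuous per p}, $\rho$ is continuous, $\varphi(\k)$ is finite discrete) and is a kei morphism: for $\p,\q\in\kei_n$, using $\m_{n,\qq\p\q}=\m_{n,\m_{n,\p}(\q)}=\m_{n,\p}\m_{n,\q}\m_{n,\p}^{-1}$ (\cref{Prop: fun. kei is aug. kei per p}) and applying $\rho$, one gets $\varphi_{f(\qq\p\q)}=\varphi_{f(\p)}\varphi_{f(\q)}\varphi_{f(\p)}^{-1}=\varphi_{\qq{f(\p)}{f(\q)}}$, and injectivity of $\varphi$ gives $f(\qq\p\q)=\qq{f(\p)}{f(\q)}$. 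Finally $f$ is surjective: if $\k'=f(\kei_n)<\k$ were proper, then $\langle\{\varphi_x\mid x\in\k'\}\rangle=\overline{\langle(\rho\circ\m_n)(\kei_n)\rangle}=\rho(\overline{\langle\m_{n,\p}\rangle})=\rho(\G_n)=\Inn(\k)$ by conciseness of $\A_n$ and surjectivity of $\rho$, contradicting hypothesis (2). Hence $f\in\Col_\k^\dom(n)$.

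\medskip

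\noindent\textbf{Mutual inverseness.} The two assignments are visibly inverse: starting from $f$, the recovered morphism sends $\p\mapsto\varphi^{-1}(\rho_{\A_n,f}(\m_{n,\p}))=\varphi^{-1}(\varphi_{f(\p)})=f(\p)$; starting from $\rho$, the morphism $f$ just built has $\rho_{\A_n,f}$ equal to $\rho$ by the uniqueness clause of \cref{Prop: pro-fin concise augkei lift surj kei morphism}, since $(f,\rho)$ is already a morphism $\A_n\to\a_\k$ satisfying $\rho(\m_{n,\p})=\varphi_{f(\p)}$. This establishes the claimed bijection.

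\medskip

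\noindent\textbf{Main obstacle.} The conceptual content is light — it is essentially the adjunction $\AKeiFunc\bullet\dashv\KeiFunc\bullet$ restricted to surjections — so the real work is bookkeeping: verifying continuity of the reconstructed $f$, and being careful that hypotheses (1) and (2) are exactly what is needed to (a) identify $\k$ with $\varphi(\k)$ as $\G_n$-sets and (b) force surjectivity of $f$ from surjectivity of $\rho$. The subtlety worth double-checking is that in (2) one needs the topological closure $\overline{\langle\cdots\rangle}$ to still be proper in $\Inn(\k)$ when $\k'$ is proper; since $\Inn(\k)$ is finite this is automatic, but it is the one place where finiteness of $\k$ is silently used.
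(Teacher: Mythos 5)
Your proposal is correct and follows essentially the same route as the paper: lift a dominant coloring $f$ uniquely to $(f,\rho_{\A_n,f})\colon\A_n\to\a_\k$ via conciseness of $\A_n$, and conversely reconstruct $f=\varphi^{-1}\circ\rho\circ\m_n$ using hypothesis (1), verifying the kei-morphism identity by conjugating meridians and surjectivity via hypothesis (2). Your added remarks on continuity and on where finiteness enters are consistent with, and slightly more explicit than, the paper's argument.
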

\begin{proof}
	Let $n\in\NNN$ and let $f\colon\kei_n\twoheadrightarrow \k$. Then by \cref{Prop: pro-fin concise augkei lift surj kei morphism}, $f$ extends uniquely to a morphism $(f,\rho)\colon \A_n\twoheadrightarrow \a_\k$.
	The map $\rho\colon\G_n\twoheadrightarrow\Inn(\k)$ satisfies
	$$\rho(\m_\p)=\varphi_{f(\p)}\in\varphi(\k).$$
	We therefore have a map
	$$\Col_\k^\dom(n)
	\to \{\rho\colon\G_n\twoheadrightarrow\Inn(\k)\;|\;\rho\circ\m_n=\varphi\circ f\}\subseteq \Hom_\Grp^\cont(\G_n,\Inn(\k))^\surj.$$
Conversely, let $\rho\colon\G_n\twoheadrightarrow\Inn(\k)$ be such that $(\rho\circ\m_n)(\kei_n)\subseteq \varphi(\k)\subseteq\Inn(\k)$. Since $\varphi$ is an embedding, there is a unique continuous map $f\colon \kei_n\to\k$ s.t.
$$\rho\circ \m_n=\varphi\circ f\colon \kei_n\to \Inn(\k).$$
	This $f$ is a kei morphism: for all $\p,\q\in\kei_n$,
	$$\varphi_{f(\qq \p\q)}= \rho(\m_{\qq \p\q})=\rho(\m_\p \m_\q \m_\p^{-1})=\rho(\m_\p) \rho(\m_\q) \rho(\m_\p)^{-1} =$$
	$$=\varphi_{f(\p)} \varphi_{f(\q)} \varphi_{f(\p)}^{-1} = \varphi_{\qq{f(\p)}{f(\q)}}.$$
	It follows that $f(\qq \p\q)=\qq{f(\p)}{f(\q)}$ because $\varphi$ is an embedding. This $f$ is also surjective: Let $\k'=f(\kei_n)\le \k$. Then
	$$\Inn(\k)=\rho(\G_n)=\langle\{\rho(\m_\p)\;|\;\p\in\kei_n\}\rangle= \langle\{\varphi_{f(\p)}\;|\;\p\in\kei_n\}\rangle= \langle\{\varphi_{\k,x}\;|\;x\in \k'\}\rangle.$$
	It follows from the assumption on $\k$ that $f(\kei_n) =\k$, that is, $f$ is surjective. Hence we obtain a map
	$$\{\Hom_\Grp^\cont(\G_n,\Inn(\k))^\surj\;|\;(\rho\circ\m_n)(\kei_n)\subseteq \varphi(\k)\}\to
	\Hom_\Kei^\cont(\kei_n,
	\k)^\surj=$$
	$$=\Col_\k^\dom(n).$$
	These constructions are inverse to one another.
\end{proof}

\begin{proposition}
\label{Prop: RGB with D3 is AK}
	Let $\iota\colon \RGB\hookrightarrow D_3$ denote the inclusion. Then $(\RGB,D_3,\iota)$ is an augmented kei, and
	$$(\RGB,D_3,\iota) \simeq \a_{\RGB}\in\AKei.$$
\end{proposition}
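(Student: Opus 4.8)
The statement breaks into two assertions, and the plan is to settle the first by invoking the general construction of augmented keis from conjugacy classes, and the second by a short conciseness argument.

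For the first assertion, I would observe that $\RGB=\sigma^{-1}(-1)\subseteq D_3$ is a union of conjugacy classes — in fact the single class of reflections — and consists entirely of involutions. Hence, by the general example of an augmented kei attached to such a subset of a group (the example recorded immediately after \cref{Def: augmented quandle}), the triple $(\RGB,D_3,\iota)$, with $D_3$ acting on $\RGB$ by conjugation and $\iota$ the inclusion serving as augmentation map, is an augmented kei. Nothing further is needed here.

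For the isomorphism $(\RGB,D_3,\iota)\simeq\a_\RGB=(\RGB,\Inn(\RGB),\varphi_\RGB)$, I would first check that $(\RGB,D_3,\iota)$ is concise: any two distinct reflections generate $D_3$, so $\langle\{\iota_x\mid x\in\RGB\}\rangle=D_3$. Then \cref{Prop: concise augkei lift surj kei morphism}, applied to the surjective kei morphism $\Id_\RGB\colon\RGB\to\RGB$, produces a unique group homomorphism $\rho\colon D_3\to\Inn(\RGB)$ such that $(\Id_\RGB,\rho)\colon(\RGB,D_3,\iota)\to\a_\RGB$ is a morphism in $\AKei$, and it guarantees that $\rho$ is surjective. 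So everything reduces to showing $\rho$ is injective. Unwinding the two defining conditions of $(\Id_\RGB,\rho)$, one finds $\rho(x)=\varphi_{\RGB,x}$ for $x\in\RGB$ and, more generally, that $\rho(g)$ is exactly the permutation $y\mapsto gyg^{-1}$ of $\RGB$; that is, $\rho$ is the conjugation action of $D_3$ on the set $\RGB$. Consequently $\ker\rho$ is the centralizer of $\RGB$ in $D_3$, which equals $Z(D_3)$ because $\RGB$ generates $D_3$, and $Z(D_3)$ is trivial. Thus $\rho$ is an isomorphism, hence so is $(\Id_\RGB,\rho)$. (One could also finish even more cheaply once $\rho$ is known to be surjective: $\Inn(\RGB)$ embeds into the group of permutations of the three-element set $\RGB$, so $|\Inn(\RGB)|\le 6=|D_3|$, which forces $\rho$ to be bijective.)

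I do not anticipate a genuine obstacle; the only point I would be careful to phrase precisely is the identification of the lifted map $\rho$ with the conjugation-permutation homomorphism $D_3\to\operatorname{Sym}(\RGB)$, since that is what makes its kernel visibly equal to $Z(D_3)=\{1\}$. The remainder is just the bookkeeping of the adjunction $\AKeiFunc\bullet\dashv\KeiFunc\bullet$ together with two elementary facts about $D_3$: that it is generated by its reflections, and that it is centerless.
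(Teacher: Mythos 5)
Your proposal is correct and follows essentially the same route as the paper: realize $(\RGB,D_3,\iota)$ as the augmented kei attached to a conjugacy class of involutions, note conciseness since the reflections generate $D_3$, lift $\Id_\RGB$ via \cref{Prop: concise augkei lift surj kei morphism} to a surjection $\rho\colon D_3\twoheadrightarrow\Inn(\RGB)$, and conclude injectivity from $\ker\rho\subseteq Z(D_3)=\{1\}$. The extra remarks (the explicit identification of $\rho$ with the conjugation action, and the cardinality shortcut) are fine but not needed.
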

\begin{proof}
	 The set $\RGB$ is a conjugacy class of involutions in $D_3$, hence $\a:=(\RGB,D_3,\iota)\in\AKei$. This $\a$ is concise because $\langle\RGB\rangle=D_3$. By \cref{Prop: concise augkei lift surj kei morphism}, $\Id_{\RGB}$ lifts to a morphism
	$$(\Id_{\RGB},\rho)\colon \a\twoheadrightarrow \a_{\RGB}.$$
	Suppose $g\in\ker \rho$. Then $gxg^{-1}=x$ for all $x\in \RGB$. Since $\RGB$ generates $D_3$, it follows that $g\in Z(D_3)=\{1\}$. Hence $\rho$ is a group isomorphism, and
	$$(\Id_{\RGB},\rho)\colon(\RGB,D_3,\iota) \iso \a_{\RGB}.$$
\end{proof}

\begin{corollary}
\label{Cor: R_3 is nice}
The kei $\RGB$ satisfies the conditions of \cref{Lma: Kei dom Col to Group dom morphisms}
\end{corollary}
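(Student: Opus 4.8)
The plan is to reduce everything to the explicit identification of $\Inn(\RGB)$ provided by \cref{Prop: RGB with D3 is AK}. That proposition gives an isomorphism of augmented keis $(\RGB,D_3,\iota)\iso\a_{\RGB}=(\RGB,\Inn(\RGB),\varphi_\RGB)$, so in particular $\Inn(\RGB)\simeq D_3$ and, under this identification, $\varphi_\RGB\colon\RGB\to\Inn(\RGB)$ becomes the inclusion $\iota\colon\RGB\hookrightarrow D_3$ of the set of reflections. Condition $(1)$ of \cref{Lma: Kei dom Col to Group dom morphisms} is then immediate, since $\iota$ is injective by construction.

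For condition $(2)$ I would first classify the proper sub-keis of $\RGB$. Write $\RGB=\{r_1,r_2,r_3\}$ for the three reflections of $D_3$. For distinct $i,j$ we have $\qq{r_i}{r_j}=r_ir_jr_i^{-1}$, and conjugating a reflection by a different reflection in $D_3$ produces the third reflection; hence $\qq{r_i}{r_j}=r_k$ with $\{i,j,k\}=\{1,2,3\}$. Consequently no two-element subset of $\RGB$ is closed under the kei operation, so the only sub-keis strictly contained in $\RGB$ are $\emptyset$ and the singletons $\{r_i\}$ (each of which is trivial, since $\qq{r_i}{r_i}=r_i$).

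It then remains to check that for each such $\k'<\RGB$ the subgroup $\langle\{\varphi_{\RGB,x}\mid x\in\k'\}\rangle\le\Inn(\RGB)$ is proper. If $\k'=\emptyset$ this subgroup is trivial, and $\{1\}<D_3$. If $\k'=\{r_i\}$ it equals $\langle\iota(r_i)\rangle=\langle r_i\rangle\simeq\ZZ/2\ZZ$, a proper subgroup of $D_3\simeq\Inn(\RGB)$ since $|D_3|=6$. This establishes condition $(2)$ and hence the corollary.

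There is no serious obstacle here; the only point requiring a moment's care is the verification that a two-element subset of $\RGB$ fails to be a sub-kei, which is exactly the statement that $r_ir_jr_i^{-1}$ is the third reflection — a one-line computation in $D_3$ — after which everything follows from $|\Inn(\RGB)|=|D_3|=6$.
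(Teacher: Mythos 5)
Your proof is correct and follows essentially the same route as the paper: both deduce injectivity of $\varphi_\RGB$ from \cref{Prop: RGB with D3 is AK} and then observe that the proper sub-keis are only the singletons (and $\emptyset$), whose images generate subgroups of order at most $2$ in the order-$6$ group $\Inn(\RGB)\simeq D_3$. The only difference is that you spell out the one-line computation $r_ir_jr_i^{-1}=r_k$ showing that two-element subsets are not sub-keis, which the paper leaves implicit.
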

\begin{proof}
	Following \cref{Prop: RGB with D3 is AK}, the map $\varphi\colon\RGB\to\Inn(\RGB)$ is an embedding.
	Moreover, the non-empty proper sub-keis of $\RGB$ are the singletons, and for all $x\in R_3$, 
	$$|\langle \varphi_{\RGB,x}\rangle|=2<6= |\Inn(\RGB)|.$$
\end{proof}

An \emph{order} is an integral domain $R$ that is free as a $\ZZ$-module. For order $R$, the ring $R_\QQ=\QQ\otimes_\ZZ R$ is a number field. As such $R\subseteq \O_{R_\QQ}$, where $\O_L$, the ring of integers in a number field $L$, is the maximal order in $L$. The \emph{discriminant} $\Disc(R)$ of an order $R$ is the volume of $R$ as a lattice with the trace form:
$$\Disc(R)=\det([\textnormal{Tr}(x_ix_j)]_{i,j})\in\ZZ,$$
where $\{x_i\}$ is a basis for $R$ as a $\ZZ$-module, and $\textnormal{Tr}(x)\in\ZZ$ is the trace of multiplication by $x$. The discriminant of a number field $L$ is defined to be
$$\Disc(L):=\Disc(\O_L).$$
A \emph{fundamental discriminant} is the discriminant of a quadratic number field. A quadratic number field $L$ can be written uniquely as $L=\QQ(\sqrt{m})$ with $m\in\ZZ$ square-free. For such $L=\QQ(\sqrt{m})$,
$$\Disc(L)=\begin{cases}
	m	&,\;m=1\mod 3\\
	4m	&,\;m=2,3\mod 4
\end{cases}\;\;.$$
Every square-free integer $m\in\ZZ$ can be written uniquely as $m=u'n=u n^*$ with $n\in\NNodd$ and $u,u'\in\{\pm1,\pm2\}$. Hence every fundamental discriminant is of the form
$$\Delta=u\cdot n^*\;\;,\;\;\;\;n\in\NNodd\;,\;u\in\{1,-4, 8,-8\}.$$

\begin{definition}
	Let $0\neq D\in\ZZ$. By $H_3(D)$ we denote the set of isomorphism classes of cubic orders $R$ with discriminant $\Disc(R)=D$:
	$$H_3(D)=\left\{R/\ZZ\;|\;\substack{rk_\ZZ(R) =3\;\textnormal{ and }\\ \;\Disc(R)=D}\right\}/_\sim,$$
	and by $h_3(D) :=|H_3(D)|\in\NN$ its cardinality.
\end{definition}

 The following is a well-known fact about cubic number fields, due to \cite{ARTICLE:Hasse1930}:
\begin{lemma}
\label{Hasse 1930}
	Let $L$ be a cubic number field. Then $\Disc(L)=df^2$, where $d\in\ZZ$ is a fundamental discriminant, and $f\in\ZZ$ is such that for every prime $p$, $p|f$ iff $p$ is totally ramified in $L$.
\end{lemma}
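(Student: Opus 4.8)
The plan is to reduce to the Galois closure of $L$ and extract $\Disc(L)$ from conductor--discriminant considerations. Write $M$ for the Galois closure of $L/\QQ$ and $G=\Gal(M/\QQ)$; as a transitive subgroup of $S_3$, $G$ is either $\ZZ/3\ZZ$ or $S_3$. If $G\cong\ZZ/3\ZZ$ then $L=M$ is cyclic, the two nontrivial characters of $G$ are complex conjugate with a common conductor $\mathfrak{f}\in\NN$, and the conductor--discriminant formula gives $|\Disc(L)|=\mathfrak{f}^2$; since the only nontrivial subgroup of $\ZZ/3\ZZ$ is the whole group, a prime ramifies in $L$ iff it is totally ramified iff it divides $\mathfrak{f}$, so $d=1$ and $f=\mathfrak{f}$ work.

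Assume now $G\cong S_3$. Let $K\subseteq M$ be the quadratic resolvent field (equivalently, $K=\QQ(\sqrt{D_0})$ for $D_0$ the discriminant of any defining polynomial of $L$), and set $d:=\Disc(K)$, a fundamental discriminant. Let $\rho$ be the two-dimensional irreducible character of $S_3$. From the decomposition $\mathrm{Ind}_{\langle\tau\rangle}^{S_3}\mathbf{1}\cong\mathbf{1}\oplus\rho$, where $\langle\tau\rangle$ is a transposition subgroup with $M^{\langle\tau\rangle}=L$, the conductor--discriminant formula yields $|\Disc(L)|=\mathfrak{f}(\rho)$; and from $\mathrm{Ind}_{A_3}^{S_3}\psi\cong\rho$, where $\psi$ is a cubic character of $\Gal(M/K)\cong A_3$, the induction formula for Artin conductors yields $\mathfrak{f}(\rho)=|d|\cdot N_{K/\QQ}(\mathfrak{f}_\psi)$, with $\mathfrak{f}_\psi\subseteq\O_K$ the conductor of the cyclic cubic extension $M/K$. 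Combining, $|\Disc(L)|=|d|\cdot N_{K/\QQ}(\mathfrak{f}_\psi)$.

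Next I would show $N_{K/\QQ}(\mathfrak{f}_\psi)$ is a perfect square $f^2$ and identify the primes dividing $f$. As $M/\QQ$ is Galois, $\mathfrak{f}_\psi$ is stable under $\Gal(K/\QQ)$, so its prime factorization splits into Galois orbits; an orbit of primes of $K$ that are split or inert over $\QQ$ contributes an even power of a rational prime to the norm, so the only possible obstruction is a prime $\mathfrak{p}\mid\mathfrak{f}_\psi$ that is ramified over $\QQ$. Such $\mathfrak{p}$ forces the inertia group in $S_3$ at a prime $\mathfrak{P}$ of $M$ above it to equal all of $S_3$, which is possible only with wild inertia $A_3$ and hence at $p=3$; a local computation of the ramification filtration of the wildly ramified cubic extension $M_{\mathfrak{P}}/K_{\mathfrak{p}}$ of the $3$-adic field $K_{\mathfrak{p}}$ then shows the conductor exponent at $\mathfrak{p}$ is even. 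Hence $f:=\sqrt{N_{K/\QQ}(\mathfrak{f}_\psi)}\in\NN$ and $\Disc(L)=d f^2$. Finally $p\mid f$ iff $p\mid\mathfrak{f}_\psi$ iff $p$ ramifies in $M/K$ iff $A_3\subseteq I_p$; and since $L=M^{\langle\tau\rangle}$ with $\langle\tau\rangle\cap A_3=\{1\}$, one checks directly that $A_3\subseteq I_p$ is equivalent to $p$ being totally ramified in $L$, completing the argument.

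The step I expect to be the main obstacle is exactly the wild ramification at $p=2$ and $p=3$: away from $\{2,3\}$ the inertia group is cyclic of order $1,2$ or $3$, immediately giving unramified, partially ramified ($v_p(\Disc L)=v_p(d)=1$), or totally ramified ($v_p(\Disc L)=2$, $v_p(d)=0$) respectively; but at $p=3$ one must rule out $v_3(\Disc L)=2$ and verify the parity of the local conductor exponent of $\psi$, which requires the explicit classification of (wildly) ramified cyclic cubic extensions of the relevant ramified quadratic $3$-adic fields, and at $p=2$ one must check the analogous tame/wild bookkeeping for partial ramification.
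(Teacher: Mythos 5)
The paper does not actually prove this lemma --- it is quoted as a known result with a citation to Hasse (1930) --- so there is no in-text argument to compare against; your proposal is, in effect, supplying the proof the paper outsources. Your outline is correct: the reduction via the Galois closure $M$, the identity $|\Disc(L)|=\mathfrak{f}(\rho)$ from $\mathrm{Ind}_{\langle\tau\rangle}^{S_3}\mathbf{1}=\mathbf{1}\oplus\rho$, the induction formula $\mathfrak{f}(\rho)=|d|\cdot N_{K/\QQ}(\mathfrak{f}_\psi)$, the parity argument for split/inert orbits, the observation that a prime of $K$ that is both ramified over $\QQ$ and divides $\mathfrak{f}_\psi$ forces inertia $S_3$ in $M/\QQ$ and hence $p=3$ with wild inertia $A_3$ (since otherwise the wild inertia subgroup would be a normal Sylow subgroup making the inertia group cyclic, which $S_3$ is not), and the equivalence $A_3\subseteq I_p\iff p$ totally ramified in $L$ all check out.

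The one step you defer --- evenness of the local conductor exponent at the wild prime above $3$ --- is genuinely the crux, and the purely local bounds do \emph{not} suffice: for a ramified cyclic cubic extension of a ramified quadratic extension of $\QQ_3$ the break $t$ in lower numbering can a priori be $1$, $2$, or $3$, giving conductor exponent $a=t+1\in\{2,3,4\}$, so $a=3$ must be excluded by the global ($S_3$-equivariant) structure rather than by classifying cubic extensions of the local field in isolation. The clean way to finish is Serre, \emph{Local Fields}, IV \S 2, Prop.~9: the tame quotient $G_0/G_1$ acts on $G_t/G_{t+1}$ through the $t$-th power of the fundamental tame character $\theta_0\colon G_0/G_1\hookrightarrow\FF_3^\times$. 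Here $G_0=S_3$, $G_1=\cdots=G_t=A_3$, $\theta_0(\tau)=-1$, and conjugation by $\tau$ inverts $A_3$, so $(-1)^t=-1$, forcing $t$ odd and hence $a=t+1$ even. If you instead pursue the route you sketch (explicit classification of wildly ramified cyclic cubics of the ramified quadratic $3$-adic fields), you must remember to use that the extension $M_{\mathfrak{P}}/\QQ_3$ is $S_3$-Galois --- a generic cyclic cubic of $K_{\mathfrak{p}}$ can have $a=3$. With that step supplied, the proof is complete; the only cosmetic caveat is that for cyclic cubic $L$ you get $d=1$, which is a fundamental discriminant under the usual convention but not under the paper's literal definition (``discriminant of a quadratic number field'').
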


\begin{proposition}
\label{Prop: cubic with quadratic discriminant are maximal orders}
	Let $\Delta\in\ZZ$ be a fundamental discriminant. Then
	$$H_3(\Delta)= \left\{\O_L/\ZZ\;|\;\substack{[L:\QQ] =3\;\textnormal{ and }\\ \;\Disc(L)=\Delta}\right\}/_\sim.$$
\end{proposition}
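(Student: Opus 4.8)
The statement claims that for a fundamental discriminant $\Delta$, every cubic order of discriminant $\Delta$ is automatically the maximal order of its field of fractions. The plan is to combine the Hasse-type factorization from \cref{Hasse 1930} with the trivial observation that a fundamental discriminant is squarefree-ish — more precisely, that it cannot be written as $df^2$ with $f>1$ and $d$ a fundamental discriminant. So the first step is to reduce to cubic \emph{fields}: any order $R\in H_3(\Delta)$ sits inside $\O_L$ where $L=R_\QQ$ is a cubic field, and the relation between the two discriminants is $\Disc(R)=[\O_L:R]^2\cdot\Disc(L)$; thus it suffices to show $[\O_L:R]=1$, i.e. that no cubic field $L$ with an order of discriminant $\Delta$ can have $\Disc(L)$ a proper square-divisor quotient of $\Delta$ — equivalently, it suffices to show that $\Disc(L)=\Delta$ and $R=\O_L$.

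Next I would invoke \cref{Hasse 1930}: write $\Disc(L)=d f_L^2$ with $d$ a fundamental discriminant and $f_L$ the product of totally ramified primes (with appropriate multiplicities). Combining with $\Disc(R)=[\O_L:R]^2\,\Disc(L)$, we get $\Delta=[\O_L:R]^2 f_L^2\, d$. Now the key arithmetic input: a fundamental discriminant $\Delta$ is by definition $\Disc$ of a quadratic field, so $\Delta=u\cdot n^*$ with $n\in\NNodd$ squarefree and $u\in\{1,-4,8,-8\}$; in particular the squarefree part of $\Delta$ is $\pm n$ or $\pm 2n$, and one checks directly that $\Delta/m^2$ is never again a fundamental discriminant for any integer $m>1$ (the only way a fundamental discriminant divides another after removing a square is trivially). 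Hence from $\Delta = \big([\O_L:R] f_L\big)^2 d$ with $d$ fundamental we are forced to $[\O_L:R]f_L=1$, so $[\O_L:R]=1$ and $R=\O_L$, and moreover $\Disc(L)=d=\Delta$. Conversely, if $L$ is a cubic field with $\Disc(L)=\Delta$, then $\O_L\in H_3(\Delta)$ trivially, giving the reverse inclusion.

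The main obstacle is the clean statement and proof of the number-theoretic fact that a fundamental discriminant has no nontrivial "square-divisor descent" to another fundamental discriminant — i.e. that if $\Delta = m^2 d$ with $d$ a fundamental discriminant and $m\in\ZZ$, then $m=\pm1$. This requires a short case analysis on the $2$-adic and odd parts: writing $\Delta=u n^*$ as above, the odd part of $\Delta$ is squarefree (being $\pm n$), so any odd prime dividing $m$ is impossible; and at the prime $2$ one checks that $v_2(\Delta)\in\{0,2,3\}$ and that in each case dividing by $4$ either destroys fundamentality or the squarefreeness of the odd part, ruling out $2\mid m$. I would package this as a small lemma (possibly folding it into the proof), then the rest of the argument is the formal bookkeeping above. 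One should also note that \cref{Hasse 1930} is stated for cubic \emph{number fields}, so the reduction from orders to fields via the index formula $\Disc(R)=[\O_L:R]^2\Disc(L)$ is genuinely needed and should be cited or recalled explicitly.
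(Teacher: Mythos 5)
Your proposal is correct and follows essentially the same route as the paper: reduce to the field $L=\QQ\otimes_\ZZ R$ via $\Disc(R)=[\O_L:R]^2\Disc(L)$, apply the Hasse factorization $\Disc(L)=df^2$, and conclude from the fact that no two fundamental discriminants differ by a nontrivial perfect square. The only difference is that the paper simply asserts this last fact, whereas you sketch its (correct) verification by separating the odd and $2$-adic parts.
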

\begin{proof}
	Let $R\in H_3(\Delta)$. Denote by $L:=\QQ\otimes_\ZZ R$, a cubic number field. Let $\O_L$ be the maximal order in $L$. Then
	$$\Delta=\Disc(R)=[\O_L:R]^2\Disc(\O_L)= [\O_L:R]^2\Disc(L).$$
	There exists $f\in\ZZ$ s.t.
	$$\Disc(L)=df^2,$$
	where $d$ is a fundamental discriminant. Thus
	$$\Delta=[\O_L:R]^2f^2d.$$
	 No two fundamental discriminants differ by a perfect square, hence $R=\O_L$ is the maximal order in $L$, and
	 $\Disc(L)=\Delta$. Hence
	$$H_3(\Delta)= \left\{\O_L/\ZZ\;|\;\substack{[L:\QQ] =3\;\textnormal{ and }\\ \;\Disc(L)=\Delta}\right\}/_\sim.$$
\end{proof}

\begin{proposition}
\label{Prop: computation of RGB for n>1}
	Let $1<n\in\NNN$. Then
	$$\col_{\RGB}(n)=3 \left|Cl(\L_n)\otimes_\ZZ \FF_3\right| 
	=\begin{cases}
	3+6h_3(n^*)&,\;2\nmid n\\
	3+6h_3(4n^*)&,\;2|n
	\end{cases}$$
	$$=\begin{cases}
	3+6h_3(n)&,\;n=1\mod 4\\
	3+6h_3(-n)&,\;n=3\mod 4\\
	3+6h_3(4n)&,\;n=2\mod 8\\
	3+6h_3(-4n)&,\;n=6\mod 8
	\end{cases}$$
	where $Cl(\L_n)$ is the class group of $\L_n=\QQ(\sqrt{n^*})$.
\end{proposition}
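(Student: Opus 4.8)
The plan is to unwind the definition of $\col_{\RGB}(n)=|\Hom^\cont_\Kei(\kei_n,\RGB)|$ by separating the dominant (surjective) colorings from the non-dominant ones, and then to identify the surjective colorings with certain $S_3$-extensions of $\QQ$, which by class field theory are governed by the $3$-torsion of the class group of $\L_n$. First I would observe that a kei morphism $f\colon\kei_n\to\RGB$ has image a sub-kei $\k'\le \RGB$; by \cref{Def: Joyce kei}-style inspection the proper sub-keis of $\RGB$ are the three singletons $\{g\}$ for reflections $g$ (which are trivial keis) and the empty kei. A non-surjective $f$ therefore lands in a singleton, and by \cref{Prop: Trivial colorings factor through pi_0 II} there are exactly $3$ such colorings when $n>1$ (one constant coloring for each reflection — note that for $n>1$ the set $\{p|n\}$ is nonempty, so ``constant'' is the only option landing in a singleton, giving $|\RGB|=3$). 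Hence $\col_{\RGB}(n)=3+\col^\dom_{\RGB}(n)$, and it remains to compute $\col^\dom_{\RGB}(n)$.

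Next I would apply \cref{Cor: R_3 is nice} together with \cref{Lma: Kei dom Col to Group dom morphisms}: since $\RGB$ satisfies the hypotheses of that lemma, there is a bijection
$$\Col^\dom_{\RGB}(n)\simeq\{\rho\colon\G_n\twoheadrightarrow\Inn(\RGB)\simeq D_3\;|\;(\rho\circ\m_n)(\kei_n)\subseteq\varphi(\RGB)\}.$$
Under the isomorphism $\Inn(\RGB)\simeq D_3$ from \cref{Prop: RGB with D3 is AK}, $\varphi(\RGB)$ is exactly the set of reflections, so the condition $(\rho\circ\m_n)(\kei_n)\subseteq\varphi(\RGB)$ says that every inertia element $\m_{n,\p}$ maps to a reflection in $D_3$. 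A surjection $\rho\colon\G_n\twoheadrightarrow D_3$ corresponds to a Galois extension $E/\QQ$ with $\Gal(E/\QQ)\simeq D_3$ contained in $\F_n$; the inertia condition forces the inertia at each $p\mid n$ to be generated by a reflection, while $\F_n/\QQ$ being unramified away from $n$ forces $E/\QQ$ to be unramified away from $n$. The key point is that the fixed field $E^{C_3}$ of the rotation subgroup is then a quadratic field ramified exactly at a sub-divisor of $n$ whose inertia structure, combined with the requirement that all of $\kei_n$ (i.e.\ all primes over \emph{all} $p\mid n$) map into reflections, pins it down to be $\L_n$ itself; and $E/\L_n$ is then an unramified cyclic cubic extension on which complex conjugation acts by inversion. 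The number of such $E$ (counted with the $D_3$-surjection, i.e.\ with multiplicity $2$ coming from the two choices of which $C_3$-coset of reflections — equivalently the orbit size under $\Aut$) is precisely $|Cl(\L_n)\otimes\FF_3| - 1$ unramified cubics times the factor accounting for $\rho$ versus $E$, yielding $\col^\dom_{\RGB}(n)=3(|Cl(\L_n)\otimes\FF_3|-1)$, so that $\col_{\RGB}(n)=3|Cl(\L_n)\otimes\FF_3|$.

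For the second and third displayed equalities I would invoke the classical bijection (Hasse, \cref{Hasse 1930}, refined by \cref{Prop: cubic with quadratic discriminant are maximal orders}) between cubic fields $L$ with $\Disc(L)=\Delta$ a fundamental discriminant and nontrivial index-$3$ subgroups of the class group of $\QQ(\sqrt\Delta)$ paired under the $\pm1$-action — each $3$-torsion subgroup $\simeq(\ZZ/3)^r$ of $Cl(\L_n)$ contributes $(3^r-1)/2$ cubic fields. Concretely $|Cl(\L_n)\otimes\FF_3|=3^{r}$ where $r$ is the $3$-rank, and $h_3(\Delta)=(3^r-1)/2$ with $\Delta=\Disc(\L_n)$, which is $n^*$ when $n$ is odd and $4n^*$ when $n$ is even (using the explicit discriminant formula and $n^*=\pm n$ with $n^*\equiv2\bmod 8$ for even $n$, so $\Disc(\L_n)=4n^*$). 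Substituting gives $3\cdot 3^r=3+6\cdot\frac{3^r-1}{2}=3+6h_3(\Disc(\L_n))$, which is exactly the second formula; splitting into residue classes of $n$ mod $4$ and mod $8$ to evaluate $n^*$ explicitly yields the third.

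The main obstacle I anticipate is the bookkeeping in the middle step: correctly translating ``$\rho\colon\G_n\twoheadrightarrow D_3$ with all arithmetic meridians $\m_{n,\p}$ going to reflections'' into ``unramified cyclic cubic extension of $\L_n$,'' including verifying that the quadratic resolvent is forced to be $\L_n$ (not some $\L_m$ for $m\mid n$, $m\neq n$) and getting the constant $3$ versus $6$ right — i.e.\ carefully tracking the difference between counting group surjections $\rho$, counting fields $E$, and counting class-group elements, and confirming that complex conjugation acting as $-1$ on the cubic part is automatic here rather than an extra constraint. The genus-theory/ambiguous-class subtleties (whether the quadratic field could pick up extra ramification or whether $2$ causes trouble in the even case) will need the discriminant computation from \cref{Prop: Fm subseteq Fn and ramifications} and the explicit choice $n^*\equiv 2\bmod 8$ to be handled cleanly.
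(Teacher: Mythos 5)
Your proposal is correct and follows essentially the same route as the paper's proof: the same split $\col_{\RGB}(n)=3+\col^{\dom}_{\RGB}(n)$ via the singleton sub-keis, the same use of \cref{Lma: Kei dom Col to Group dom morphisms} and \cref{Cor: R_3 is nice} to identify dominant colorings with surjections $\G_n\twoheadrightarrow D_3$ lifting the sign character of $\L_n$, the same free $D_3$-conjugation action to pass to unramified cyclic cubic extensions of $\L_n$ counted by $\frac{|Cl(\L_n)\otimes\FF_3|-1}{2}$, and the same appeal to \cref{Hasse 1930} and \cref{Prop: cubic with quadratic discriminant are maximal orders} for the $h_3$ formulas. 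The bookkeeping you flag as the main risk is handled in the paper exactly as you anticipate (each field contributes one free $D_3$-orbit of six surjections, and the sign action of $\Gal(\L_n/\QQ)$ on $Cl(\L_n)$ makes the $D_3$-structure automatic), and your final constants agree with the paper's.
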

\begin{proof}
	Since $n>1$, $\kei_n\neq\emptyset$. The image of a coloring $f\colon\kei_n\to \RGB $ is therefore a nonempty sub-kei of $\RGB $. The proper nonempty sub-keis of $\RGB$ are the three singletons in $\RGB$. Therefore
	$$\col_{\RGB}(n)=3\col_{\triv_1}(n)+\col_{\RGB}^{\dom}(n)=
	3+\col_{\RGB}^{\dom}(n).$$
	Let $\eta\colon\G_n\to\{\pm1\}$ denote the homomorphism
	$$\eta\colon \G_n=\Gal(\F_n/\QQ)\twoheadrightarrow\Gal(\L_n/\QQ)\iso\{\pm1\}.$$
	This is the unique morphism that maps $\m_\p\mapsto -1$ for all $\p\in\kei_n$.
	Since $\RGB=\sigma^{-1}(-1)\subseteq D_3$, then by \cref{Lma: Kei dom Col to Group dom morphisms} and \cref{Cor: R_3 is nice}, we have
	$$\Col_{\RGB}^\dom(n)\simeq\{\rho\colon\G_n\twoheadrightarrow D_3\;|\;(\rho\circ\m_n)(\kei_n)\subseteq \RGB\}= \Hom_{/\eta}(\G_n,D_3)^\surj $$
	where $\Hom_{/\eta}(\G_n,D_3)^\surj$ is the set of surjective lifts
	$$\begin{tikzcd}
	\G_n\ar[r,two heads, dashed,"\rho"]\ar[rd,"\eta"']&D_3\ar[d,"\sigma"]\\
	&\{\pm1\}
	\end{tikzcd}$$
	The $D_3$-action on $\Hom_{/\eta}(\G_n,D_3)^\surj$ via conjugation,
	$$g(\rho)(\gamma):= g\rho(\gamma)g^{-1},$$
	is free because $D_3$ has trivial center. Therefore
	$$\col_{R_3}(n)=
	3+6\cdot | D_3\backslash\Hom_{/\eta}(\G_n,D_3)^\surj|.$$
	The quotient $D_3\backslash\Hom_{/\eta}(\G_n,D_3)^\surj$ classifies $D_3$-Galois number fields $\widetilde L$ containing $\L_n$ s.t. $\widetilde L/\L_n$ is unramified. Since $\Gal(\L_n/\QQ)$ acts via sign on the class group $Cl(\L_n)$ in its entirety, 
	every unramified $\ZZ/3\ZZ$-extension $N/\L_n$ is Galois over $\QQ$ with
	$$\Gal(N/\QQ)\simeq\ZZ/3\ZZ\rtimes\ZZ/2\ZZ\simeq D_3.$$
	Hence these $\widetilde L$ are precisely the unramified $\ZZ/3\ZZ$-Galois extensions of $\L_n$, of which there are
	$$| D_3\backslash\Hom_{/\eta}(\G_n,D_3)^\surj |=\left|\mathbb{P}_{\FF_3}\left(Cl(\L_n)\otimes_\ZZ\FF_3\right)\right|=\frac{\left|Cl(\L_n)\otimes_\ZZ\FF_3\right|-1}2.$$
	Such extensions are classified by cubic fields $L$ with discriminant
	$$\Disc(L)=\Disc(\L_{n})=\begin{cases}
	n^* &,\;2\nmid n\\
	4n^* &,\;2|n
	\end{cases}\;\;.$$
	It follows by \cref{Prop: cubic with quadratic discriminant are maximal orders} that
	$$ D_3\backslash\Hom_{/\eta}(\G_n,D_3)^\surj \simeq H_3(\Disc(\L_n)).$$
	All in all,
	$$\col_{R_3}(n)=
	3+6\cdot | D_3\backslash\Hom_{/\eta}(\G_n,D_3)^\surj|=$$
	$$ =3+6\cdot \frac{\left|Cl(\L_n)\otimes_\ZZ\FF_3\right|-1}2 = 3\left|Cl(\L_n)\otimes_\ZZ\FF_3\right|= $$
	$$=3+6h_3(\Disc(\L_n)) =\begin{cases}
	3+6h_3(n^*)&,\;2\nmid n\\
	3+6h_3(4n^*)&,\;2|n
	\end{cases}\;\;.$$
\end{proof}

\subsection{Proof of the Main Conjecture for $\k=\RGB$}

\begin{lemma}
\label{Lma: local behavior of partially-split}
	Let $\Delta$ be a fundamental discriminant, let $p|\Delta$ be prime, and let $R/\ZZ$ be a cubic order with discriminant $\Delta$. Then
$$\ZZ_p\otimes_\ZZ R\simeq\ZZ_p\times \ZZ_p[{\sqrt{\Delta}}/2].$$
\end{lemma}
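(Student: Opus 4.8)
First I would reduce to the maximal order. Since $\Delta$ is a fundamental discriminant, \cref{Prop: cubic with quadratic discriminant are maximal orders} gives $R=\O_L$ where $L=\QQ\otimes_\ZZ R$ is a cubic field with $\Disc(L)=\Delta$. By \cref{Hasse 1930} we may write $\Disc(L)=df^2$ with $d$ a fundamental discriminant and $f\in\ZZ$ divisible exactly by the primes totally ramified in $L$; since $\Delta=df^2$ and no two fundamental discriminants differ by a perfect square, $f=\pm1$, so \emph{no} prime of $L$ is totally ramified. Now $p\mid\Delta=\Disc(L)$, hence $p$ ramifies in $L$; the only factorization types of $p$ in a cubic field are $(3),\ (1,1,1),\ (1,2),\ (1^2,1),\ (1^3)$, and among the ramified ones $(1^2,1),(1^3)$ we have excluded $(1^3)$. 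Therefore $p\O_L=\p_1^2\p_2$ with residue degrees $1$, and
$$\ZZ_p\otimes_\ZZ R=\ZZ_p\otimes_\ZZ\O_L=\prod_{\p\mid p}\O_{L_\p}=\O_{L_{\p_1}}\times\O_{L_{\p_2}}=\O_K\times\ZZ_p,$$
where $K:=L_{\p_1}$ is a ramified quadratic extension of $\QQ_p$ and $L_{\p_2}=\QQ_p$ (as $e(\p_2)=f(\p_2)=1$).

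\textbf{Identifying $K$.}
The crux is to show $K\cong\QQ_p(\sqrt\Delta)$. Since $\Delta$ has a prime factor it is not a perfect square, so the cyclic cubic case (in which the discriminant is a square, and every ramified prime is totally ramified) is excluded: the Galois closure $\widetilde L/\QQ$ has group $S_3$, and its unique quadratic subfield is the resolvent $\QQ(\sqrt{\Disc L})=\QQ(\sqrt\Delta)$. Fix a prime $\widetilde\p$ of $\widetilde L$ over $\p_1$. Because $L$ has the unramified degree-one prime $\p_2$ over $p$, the decomposition group $D_{\widetilde\p}\le S_3$ fixes one of the three roots of a generator of $L$, hence has order at most $2$; as $\p_1$ is ramified, $|I_{\widetilde\p}|=e(\widetilde\p)\ge e(\p_1)=2$, so $D_{\widetilde\p}=I_{\widetilde\p}$ is generated by a transposition $\tau$ and $[\widetilde L_{\widetilde\p}:\QQ_p]=2$. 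Then $L_{\p_1}\subseteq\widetilde L_{\widetilde\p}$ with $[L_{\p_1}:\QQ_p]=2$, so $L_{\p_1}=\widetilde L_{\widetilde\p}$; and $\QQ(\sqrt\Delta)\subseteq\widetilde L$ with $\tau\notin A_3=\Gal(\widetilde L/\QQ(\sqrt\Delta))$, so the completion of $\QQ(\sqrt\Delta)$ below $\widetilde\p$ is a quadratic extension of $\QQ_p$ inside $\widetilde L_{\widetilde\p}$, hence equals $\widetilde L_{\widetilde\p}$. Thus $K=L_{\p_1}=\widetilde L_{\widetilde\p}=\QQ_p(\sqrt\Delta)$. (Alternatively one can argue via discriminants: $\Disc(\ZZ_p\otimes_\ZZ R)=\Disc(\O_K)\cdot\Disc(\ZZ_p)=\Disc(\O_K)$ equals $\Delta$ up to squares of units in $\ZZ_p$, and among quadratic \'etale $\QQ_p$-algebras the discriminant class in $\QQ_p^\times/(\QQ_p^\times)^2$ coincides with the Kummer class, so again $K=\QQ_p(\sqrt\Delta)$; for $p=2$ this needs the routine check that $v_2(\Disc)\in\{2,3\}$ separates the ramified quadratic extensions.)

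\textbf{Conclusion and main difficulty.}
It remains to identify $\O_{\QQ_p(\sqrt\Delta)}$ with $\ZZ_p[\sqrt\Delta/2]=\ZZ_p[t]/(t^2-\Delta/4)$, noting $\Delta/4\in\ZZ_p$ since $4\mid\Delta$ whenever $2\mid\Delta$. For odd $p$ one has $\ZZ_p[\sqrt\Delta/2]=\ZZ_p[\sqrt\Delta]$ because $2\in\ZZ_p^\times$, and $\ZZ_p[\sqrt\Delta]$ is the maximal order of $\QQ_p(\sqrt\Delta)$ (its discriminant $4\Delta$ has $p$-valuation $v_p(\Delta)\le1$, matching an unramified or tamely ramified quadratic extension). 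For $p=2$, $\Delta=4m$ with $m$ squarefree and $m\equiv2,3\pmod4$, so $\sqrt\Delta/2=\sqrt m$ and $\ZZ_2[\sqrt m]=\O_{\QQ(\sqrt m)}\otimes_\ZZ\ZZ_2=\O_{\QQ_2(\sqrt m)}=\O_K$ by the quoted formula for rings of integers of quadratic fields in the case $m\equiv2,3\pmod4$. Combining, $\ZZ_p\otimes_\ZZ R\cong\ZZ_p\times\ZZ_p[\sqrt\Delta/2]$, as claimed. I expect the only genuine obstacle to be pinning down $K$ as $\QQ_p(\sqrt\Delta)$ rather than the other ramified quadratic extension of $\QQ_p$ (and, in the discriminant variant, the $p=2$ bookkeeping); everything else is a direct application of the standard local--global dictionary for prime splitting.
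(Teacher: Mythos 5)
Your proof is correct, and its skeleton matches the paper's: reduce to the maximal order via \cref{Prop: cubic with quadratic discriminant are maximal orders}, rule out total ramification so that $p$ is partially ramified and $\ZZ_p\otimes_\ZZ R\simeq\ZZ_p\times\O_M$ for a ramified quadratic $M/\QQ_p$, identify $M$ with $\QQ_p(\sqrt\Delta)$, and then compute the ring of integers. The one place where you take a genuinely different route is the identification $M=\QQ_p(\sqrt\Delta)$, which is indeed the crux. You argue via decomposition and inertia groups in the $S_3$-closure $\widetilde L$: the splitting type $(1^2,1)$ forces $D_{\widetilde\p}=I_{\widetilde\p}=\langle\tau\rangle$ for a transposition $\tau$, and since $\tau\notin A_3$ the completion of the quadratic resolvent $\QQ(\sqrt\Delta)$ at the prime below $\widetilde\p$ fills out $\widetilde L_{\widetilde\p}=L_{\p_1}$. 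The paper instead avoids decomposition groups entirely: it observes that $\QQ(\sqrt\Delta)$ is a subquotient of $L\otimes_\QQ L$, so $\QQ_p(\sqrt\Delta)$ is a subquotient of $(L\otimes_\QQ\QQ_p)\otimes_{\QQ_p}(L\otimes_\QQ\QQ_p)\simeq(\QQ_p\times M)^{\otimes 2}\simeq\QQ_p\times M^4$, forcing $M=\QQ_p(\sqrt\Delta)$. Your argument is the classical one and makes the ramification structure explicit; the paper's is shorter and purely tensor-algebraic. Both are complete, and your closing identification of $\O_{\QQ_p(\sqrt\Delta)}$ with $\ZZ_p[\sqrt\Delta/2]$ (splitting into $p$ odd and $p=2$) agrees with the paper's globalized version via $\ZZ[(\Delta+\sqrt\Delta)/2]$.
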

\begin{proof}
	Since $\Delta$ is a fundamental discriminant, then $R$ is the maximal order $R=\O_L$ in the non-Galois cubic number field $L=\QQ\otimes_\ZZ R$. Since $p|\Delta=\Disc(L)$, then $p$ is partially ramified in $L$, hence
	there is some ramified quadratic $M/\QQ_p$ s.t.
	$$\ZZ_p\otimes_\ZZ R\simeq \ZZ_p\times \O_M.$$
The normal closure $\widetilde L$ of $L$ contains $\QQ(\sqrt{\Delta})$. Which is to say that $\QQ(\sqrt{\Delta})$ is a sub-quotient of $L\otimes_\QQ L$. Therefore 
	$\QQ_p(\sqrt{\Delta}):=\QQ_p\otimes_\QQ\QQ(\sqrt{\Delta})$ is a sub-quotient of 
	$$(L\otimes_\QQ L)\otimes_\QQ\QQ_p\simeq(L\otimes_\QQ\QQ_p)\otimes_{\QQ_p} (L\otimes_\QQ\QQ_p)\simeq$$
	$$\simeq (\QQ_p\times M)\otimes_{\QQ_p}(\QQ_p\times M)\simeq \QQ_p\times M^{4}.$$
	Hence $M=\QQ_p(\sqrt{\Delta})$. For all $p|\Delta$,
	$$\O_M=\O_{\QQ_p(\sqrt{\Delta})}=\ZZ_p\otimes_\ZZ \ZZ[\textstyle{\frac{\Delta+\sqrt{\Delta}}2}] =\ZZ_p[\sqrt{\Delta}/2],$$
	and
	$$\ZZ_p\otimes_\ZZ R\simeq\ZZ_p\times\O_M\simeq\ZZ\times\ZZ_p[\sqrt{\Delta}/2].$$

\end{proof}

\begin{corollary}
\label{Lma: QQ2 quadratics}
	Let $R$ be a cubic order s.t. $\Delta:= \Disc(R)$ is a fundamental discriminant. Then there exists even $n\in\NNN$ s.t. $\Delta=4n^*$ iff
	$$\ZZ_2\otimes_\ZZ R\in\{
	\ZZ_2\times\ZZ_2[\sqrt{2}],
	\ZZ_2\times\ZZ_2[\sqrt{-6}]\}.$$
\end{corollary}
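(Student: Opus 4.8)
The plan is to reduce the statement, via \cref{Lma: local behavior of partially-split}, to an assertion about which quadratic extension of $\QQ_2$ is cut out by $\sqrt{\Delta}$, and then to compare that with the classification of fundamental discriminants. First I would note that each side of the equivalence already forces $2\mid\Delta$: if $\Delta=4n^*$ for an even $n\in\NNN$ then $n^*$ is even; and conversely $\ZZ_2[\sqrt{2}]$ and $\ZZ_2[\sqrt{-6}]$ are the (ramified) rings of integers of $\QQ_2(\sqrt{2})$ and $\QQ_2(\sqrt{-6})$ — here $2,-6\equiv2\bmod4$ — so if $\ZZ_2\otimes_\ZZ R$ is isomorphic to $\ZZ_2\times\ZZ_2[\sqrt2]$ or $\ZZ_2\times\ZZ_2[\sqrt{-6}]$ it has a ramified factor, whence $2\mid\Disc(R)=\Delta$. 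Having reduced to $2\mid\Delta$, \cref{Lma: local behavior of partially-split} gives $\ZZ_2\otimes_\ZZ R\simeq\ZZ_2\times\ZZ_2[\sqrt{\Delta}/2]=\ZZ_2\times\O_{\QQ_2(\sqrt{\Delta})}$. Since a finite $\ZZ_2$-algebra factors uniquely into local rings, and $\QQ_2(\sqrt2)\neq\QQ_2(\sqrt{-6})$ because $2\cdot(-6)\sim-3$ is not a square in $\QQ_2^\times$, the right-hand condition is equivalent to $\QQ_2(\sqrt{\Delta})\in\{\QQ_2(\sqrt2),\QQ_2(\sqrt{-6})\}$.

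Next I would decode the left-hand condition. Recall every fundamental discriminant is uniquely $\Delta=u\,n_0^{\,*}$ with $n_0\in\NNodd$ and $u\in\{1,-4,8,-8\}$, and that $2\mid\Delta$ iff $u\neq1$. Using $2^{*}=2$ and the multiplicativity of $(-)^{*}$, I claim the left-hand side holds iff $u=8$: if $\Delta=4n^*$ with $n=2m$ even square-free then $\Delta=8m^{*}$ with $m^{*}$ odd square-free and $\equiv1\bmod4$, so $\Delta=8n_0^{\,*}$ for $n_0:=|m^{*}|$, and by uniqueness of the expression $u=8$; conversely, if $u=8$ then $\Delta=8n_0^{\,*}=4(2n_0)^{*}$ and $n:=2n_0\in\NNN$ is even.

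It remains to glue the two reformulations, i.e.\ to show that for $2\mid\Delta$ one has $\QQ_2(\sqrt{\Delta})\in\{\QQ_2(\sqrt2),\QQ_2(\sqrt{-6})\}$ exactly when $u=8$. For this I would compute the class of $\Delta$ in $\QQ_2^\times/(\QQ_2^\times)^2$: up to $2$-adic squares, $\Delta\sim-n_0^{\,*}$ when $u=-4$ and $\Delta\sim\pm2\,n_0^{\,*}$ when $u=\pm8$. Since $n_0^{\,*}\equiv1\bmod4$, this places $\QQ_2(\sqrt{\Delta})$ in $\{\QQ_2(\sqrt{-1}),\QQ_2(\sqrt{-5})\}$ when $u=-4$, in $\{\QQ_2(\sqrt2),\QQ_2(\sqrt{10})\}$ when $u=8$, and in $\{\QQ_2(\sqrt{-2}),\QQ_2(\sqrt{-10})\}$ when $u=-8$; and since $-6/10=-3/5\equiv1\bmod8$ one has $\QQ_2(\sqrt{-6})=\QQ_2(\sqrt{10})$, so these are three disjoint pairs exhausting the six ramified quadratic extensions of $\QQ_2$, with $\{\QQ_2(\sqrt2),\QQ_2(\sqrt{-6})\}$ the pair attached to $u=8$. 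Chaining the three reductions yields the equivalence. I expect essentially all of the genuine work to sit in this last step, namely the $2$-adic square-class bookkeeping: determining, for each admissible $u$, which of the seven quadratic extensions of $\QQ_2$ the field $\QQ_2(\sqrt{\Delta})$ is, checking that the six ramified ones are pairwise distinct so that the three pairs are disjoint, and spotting the coincidence $\QQ_2(\sqrt{-6})=\QQ_2(\sqrt{10})$.
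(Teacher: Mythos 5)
Your proposal is correct and follows essentially the same route as the paper: both reduce via \cref{Lma: local behavior of partially-split} to identifying $\QQ_2(\sqrt{\Delta})$, dispose of the odd-$\Delta$ case by noting the listed algebras are not \'etale at $2$, and then sweep through the three ramified types $\Delta=-4m^*,\pm8m^*$ of fundamental discriminants, checking $2$-adic square classes (the paper works with congruences mod $8$ and $16$, you with $\QQ_2^\times/(\QQ_2^\times)^2$ directly, e.g.\ the coincidence $\QQ_2(\sqrt{-6})=\QQ_2(\sqrt{10})$ versus the paper's $2m^*\equiv 2,-6\bmod 16$ — the same bookkeeping in different clothing).
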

\begin{proof}
	The set of fundamental discriminants equals
	$$\{c\cdot m^*\;|\;m\in\NN^*_2,\;c=1,-4,-8,8\}.$$
	The proof of the corollary is straightforward under the assumption that $2\nmid \Delta$ since neither $\ZZ_2\times\ZZ_2[\sqrt{2}]$ nor $\ZZ_2\times\ZZ_2[\sqrt{-6}]$ is \'etale over $\ZZ_2$. Assuming therefore that $2|\Delta$, we have
	$$\ZZ_2\otimes_\ZZ R\simeq\ZZ_2\times\ZZ_2[\sqrt{\Delta}/2]$$
For $m\in\NN^*_2$ we have
$$-m^*\equiv3,-1\mod 8\;,\;\; -2m^*\equiv6,-2\mod 16\;,\;\; 2m^*\equiv2,-6\mod 16,$$
Hence
$$\begin{tabular}{|c|c|c|}
	\hline
	$\Delta$&$\ZZ_2\otimes_\ZZ R$ &$\ZZ_2[\sqrt{\Delta}/2]=$\\
	\hline
	$-4m^*$&$\ZZ_2\times\ZZ_2[\sqrt{-m^*}] $&$\ZZ_2[\sqrt{3}], \ZZ_2[\sqrt{-1}] $\\
	\hline
	$-8m^*$&$\ZZ_2\times\ZZ_2[\sqrt{-2m^*}] $&$\ZZ_2[\sqrt{6}], \ZZ_2[\sqrt{-2}] $\\
	\hline $8m^*$&$\ZZ_2\times\ZZ_2[\sqrt{2m^*}] $&$\ZZ_2[\sqrt{2}], \ZZ_2[\sqrt{-6}] $\\
	\hline
\end{tabular}$$
Having done a comprehensive sweep of all fundamental discriminants, we can verify that there exists even $n=2m\in\NNN$ s.t. $\Disc(R)=4n^*=8m^*$ iff
$$\ZZ_2\otimes_\ZZ R\in\{
	\ZZ_2\times\ZZ_2[\sqrt{2}],
	\ZZ_2\times\ZZ_2[\sqrt{-6}]\}.$$
\end{proof}

\begin{definition}
	For prime $p$ we define the following sets $S_p$ and $S_p^\circ$ of cubic $\ZZ_p$-algerbas up to isomorphism:
	$$S_p= \{W(\FF_{p^3})\}\cup\{\ZZ_p\times \O_{\QQ_p[x]/(x^2-\alpha)}\;|\;\alpha\in\QQ_p^\times/(\QQ_p^\times)^2\},$$
	- which for $p\neq 2$ equals
	$$S_p= \{W(\FF_{p^3})\;,\;\ZZ_p^3 \;,\; \ZZ_p\times \ZZ_p[\sqrt{\alpha_p}] \;,\;
	\ZZ_p\times\ZZ_p[\sqrt{p}] \;,\; \ZZ_p\times \ZZ_p[\sqrt{\alpha_p p}]\},$$
	where $\alpha_p$ is a quadratic non-residue modulo $p$ and $W$ denotes the ring of Witt vectors. The set $S_p^\circ$ consists of all \'etale cubic $\ZZ_p$-algebras:
	$$S_p^\circ= \{W(\FF_{p^3}),\ZZ_p^3,\ZZ_p\times W(\FF_{p^2})\} .$$
	For $p=2$ we also define the set $S_2'$ of $\ZZ_2$-algebras:
	$$S_2'= \{
	\ZZ_2\times\ZZ_2[\sqrt{2}], \ZZ_2\times \ZZ_2[\sqrt{-6}]\}.$$
	Let $s\in\NNN$ and let $p$ be prime. We define the sets $\Sigma_{s,p}$ and $\Sigma_{s,p}'$:
	$$\Sigma_{s,p}=\begin{cases}
	S_p^\circ &,\;p|2s\\
	S_p&,\;p\nmid 2s
	\end{cases}\;\;\;\;\;\;\;\;,\;\;\;\;\;\;\;\;\;\;\;\;\;\;\;\;\Sigma_{s,p}'=\begin{cases}
	S_p^\circ &,\;2\nmid s,\;p|s\\
	S_p&,\;2\nmid s,\;2\neq p\nmid s\\
	S_2'&,\;2\nmid s,\;p=2\\
	\emptyset &,\;2|s,\;p=2\\
	S_p^\circ &,\;2|s,\;2\neq p|s\\
	S_p &,\;2|s,\;p\nmid s
	\end{cases}$$
We also define for $p=\infty$, the following sets of $\RR$-algebras:
$$\Sigma_{s,\infty}=\Sigma_{s,\infty}'=\{\RR^3,\RR\times\CC\}.$$
\end{definition}

\begin{proposition}
\label{Prop: R3 form to local conditions}
	Let $s\in\NNN$ and let $R/\ZZ$ be a cubic order. Then 
	$$R\in \bigcup_{\substack{n\in\NNN\\n\textnormal{ odd}}}H_3(n^*)\;\;\;\;\;\;\textnormal{ - resp.}\;\; R\in \bigcup_{\substack{n\in\NNN\\n\textnormal{ even}}}H_3(4n^*) $$
	iff for all prime $p$,
	$$\ZZ_p\otimes_\ZZ R\in \Sigma_{s,p}\;\;\;\;\;\;\textnormal{ - resp. } \ZZ_p\otimes_\ZZ R\in\Sigma_{s,p}'.$$
\end{proposition}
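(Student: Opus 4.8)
The plan is to show that \emph{each} side of the stated equivalence is equivalent to a single arithmetic condition on $\Disc R$, and then to read that condition off from the local algebras $\Zp\otimes_\ZZ R$. Write $L:=\QQ\otimes_\ZZ R$; since the order $R$ is an integral domain of $\ZZ$-rank $3$, $L$ is a cubic number field, and for a prime $p$ put $L_p:=\Qp\otimes_\QQ L$, an \'etale cubic $\Qp$-algebra whose maximal order $\O_{L_p}$ contains $\Zp\otimes_\ZZ R$, with equality exactly when $R$ is maximal at $p$. First I would reduce to the case $R=\O_L$: every algebra occurring in $S_p$, $S_p^\circ$ or $S_2'$ is a finite product of unramified pieces $W(\FF_{p^k})$ and ramified quadratic pieces $\Zp[\sqrt\alpha]$ (with $\alpha$ a uniformiser), hence is a maximal $\Zp$-order; so if $\Zp\otimes R\in\Sigma_{s,p}$ (resp. $\in\Sigma_{s,p}'$) for all $p$ then $R$ is maximal at every $p$, i.e. $R=\O_L$. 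Conversely, if $R\in H_3(n^*)$ (resp. $H_3(4n^*)$), then $n^*$ (resp. $4n^*$) is a fundamental discriminant and $R=\O_L$ by \cref{Prop: cubic with quadratic discriminant are maximal orders}. In both directions we may thus assume $R=\O_L$, so that $\Zp\otimes R=\O_{L_p}$ and $\Disc R=\prod_p p^{\,v_p(\Disc\O_{L_p})}$, the local exponent depending only on the isomorphism class of $\Zp\otimes R$.

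Second, I would set up the local dictionary. For odd $p$, a direct discriminant computation of the five algebras in the definition of $S_p$ gives $v_p(\Disc)=0$ for the three \'etale ones (exactly the members of $S_p^\circ$) and $v_p(\Disc)=1$ for the partially ramified ones $\Zp\times\Zp[\sqrt p]$ and $\Zp\times\Zp[\sqrt{\alpha_p p}]$; and by \cref{Hasse 1930} together with maximality of $\O_L$, the algebra $\O_{L_p}$ fails to lie in $S_p$ precisely when $p$ is totally ramified in $L$, i.e. precisely when $v_p(\Disc\O_L)\ge2$. Hence for odd $p$ one has $\Zp\otimes R\in S_p\iff v_p(\Disc R)\le1$ and $\Zp\otimes R\in S_p^\circ\iff v_p(\Disc R)=0$. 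At $p=2$ in the ``$n^*$'' situation one has $\Sigma_{s,2}=S_2^\circ$, and $\O_{L_2}\in S_2^\circ\iff 2$ is unramified in $L\iff v_2(\Disc R)=0$. In the ``$4n^*$'' situation $\Sigma_{s,2}'$ equals $S_2'$ when $s$ is odd and $\emptyset$ when $s$ is even --- so for even $s$ both sides of the ``resp.'' assertion are vacuous, matching the nonexistence of even squarefree $n$ coprime to $s$. For odd $s$, both members of $S_2'$ have $v_2(\Disc)=3$; once the odd-prime conditions together with $v_2=3$ are in force, $\Disc R$ has the shape $8k$ with $k$ odd and squarefree, which is automatically a fundamental discriminant, so \cref{Lma: QQ2 quadratics} applies and yields $\Zp\otimes R\in S_2'\iff \Disc R=4n^*$ for some even $n\in\NNN$; comparing the two members of $S_2'$ with \cref{Lma: local behavior of partially-split} then shows $k\equiv1\bmod4$, so that such $n=2|k|$ is indeed even and squarefree.

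Third, I would assemble the pieces. Intersecting the local equivalences over all primes, ``$\Zp\otimes R\in\Sigma_{s,p}$ for all $p$'' holds iff $R=\O_L$, $v_2(\Disc R)=0$, $v_p(\Disc R)=0$ for all $p\mid s$, and $v_p(\Disc R)\le1$ otherwise --- i.e. iff $\Disc R$ is an odd squarefree integer coprime to $s$. Since a number field discriminant is $\equiv0$ or $1\bmod4$ (Stickelberger's congruence), such an odd $\Disc R$ is $\equiv1\bmod4$, hence equals $n^*$ for $n:=|\Disc R|$, which is odd, squarefree and coprime to $s$; thus $R\in H_3(n^*)$ with $n\in\NNs$ odd, and conversely each such $R$ has $\Disc R=n^*$ odd, squarefree and coprime to $s$, so the local conditions hold by the dictionary above (identifying $\O_{L_p}$ at the primes $p\mid n$ from $v_p(\Disc\O_L)=1$ and \cref{Hasse 1930}). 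The ``resp.'' statement is proved identically, with ``odd squarefree coprime to $s$'' replaced by ``of the form $4n^*$ with $n$ even, squarefree and coprime to $s$'', the $p=2$ equivalence being supplied by \cref{Lma: QQ2 quadratics} (and \cref{Lma: local behavior of partially-split}).

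I expect the only delicate points to be at $p=2$, and to a much lesser degree $p=3$, where wild ramification breaks the convenient principle ``$v_p(\Disc)\le1\iff p$ not totally ramified'' and one must instead read off the precise $2$-adic shape of $\O_{L_2}$; this, however, is exactly the content of \cref{Lma: local behavior of partially-split} and \cref{Lma: QQ2 quadratics}, so the remaining work is organisational, together with the Stickelberger step that upgrades $\pm|\Disc R|$ to the normalised discriminant $n^*$ (resp. $4n^*$).
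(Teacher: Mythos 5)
Your argument is correct and follows essentially the same route as the paper: reduce to the maximal order via \cref{Prop: cubic with quadratic discriminant are maximal orders}, use \cref{Hasse 1930} to translate membership in $S_p$ (resp.\ $S_p^\circ$) into ``not totally ramified'' (resp.\ ``unramified'') at $p$, and invoke \cref{Lma: local behavior of partially-split} and \cref{Lma: QQ2 quadratics} to sort out $p=2$, with $s$-coprimality amounting to \'etaleness at $p\mid s$. The only cosmetic difference is that where the paper identifies the sets $\{n^*\}$ and $\{4n^*\}$ directly with the corresponding families of fundamental discriminants, you pin down the sign of $\Disc R$ via Stickelberger's congruence; both are fine.
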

\begin{proof}
	Following \cref{Hasse 1930} and \cref{Prop: cubic with quadratic discriminant are maximal orders}, the property that $\Disc(R)$ is a fundamental discriminant is completely determined locally: For prime $p$, let
	$$R_p:=\ZZ_p\otimes R.$$
	Then $R\in H_3(\Delta)$ for some fundamental discriminant $\Delta$ iff for all prime $p$, $R_p$ is a regular ring either isomorphic to the Witt vectors $W(\FF_{p^3})$, or of the form $R_p\simeq \ZZ_p\times \O_M$ for some degree-$2$ \'etale $\QQ_p$-algebra $M$:
	$$R_p\in \{W(\FF_{p^3})\}\cup\{\ZZ_p\times \O_{\QQ_p[x]/(x^2-\alpha)}\;|\;\alpha\in\QQ_p^\times/(\QQ_p^\times)^2\}=S_p.$$
Furthermore, for every prime $p$, $p\nmid \Delta$ iff $R_p/\ZZ_p$ is \'etale, which is to say that
\begin{equation}
\label{Eqn: Disc coprime local condition}
p\nmid \Delta\;\iff\;R_p\in S_p^{\circ}
.\end{equation}
The set $\{n^*\;|\; \textnormal{odd } n\in\NNN\}$ equals the set of odd fundamental discriminants, hence $R\in H_3(n^*)$ for some odd $n\in\NNN$ iff
$$R_p\in\begin{cases}
	S_p &,\;p\neq 2\\
	S_p^\circ &,\;p= 2
\end{cases}\;\;.$$
By \cref{Lma: QQ2 quadratics}, the set $\{4n^*\;|\; \textnormal{even } n\in\NNN\}= \{8m^*\;|\; m\in\NN^*_2\} $ equals the set of fundamental discriminants $\Delta$ for which
$$\QQ_2(\Delta)=\QQ_2(\sqrt{2}), \QQ_2(\sqrt{-6}).$$
Following \cref{Lma: local behavior of partially-split},
$R\in H_3(4n^*)$ for some even $n\in\NNN$ iff
$$R_p\in\begin{cases}
	S_p &,\;p\neq 2\\
	S_2' &,\;p= 2
\end{cases}\;\;.$$
By (\ref{Eqn: Disc coprime local condition}), placing further $s$-coprimality conditions on $\Disc(R)$ is tantamount to intersecting any prior local conditions on $R$ with $S_p^\circ$ for all $p|s$:
The cubic order $R$ is in $H_3(n^*)$ for some odd $n\in\NNs$ iff for every prime $p$,
$$R_p\in\begin{cases}
	S_p &,\;2\neq p\nmid s\\
	S_2^\circ &,\;2=p\nmid s\\
	S_p\cap S_p^\circ &,\;2\neq p|s\\
	S_2^\circ\cap S_p^\circ &,\;2=p|s
\end{cases}\;\;\;\;= \begin{cases}
	S_p &,\;p\nmid 2s\\
	S_p^\circ &,\;p| 2s
	\end{cases}\;\;\;\;=\Sigma_{s,p}.$$
	Similarly, $R$ is in $H_3(4n^*)$ for some even $n\in\NNN$ iff
	for every prime $p$,
$$R_p\in\begin{cases}
	S_p &,\;2\neq p\nmid s\\
	S_2' &,\;2=p\nmid s\\
	S_p\cap S_p^\circ &,\;2\neq p|s\\
	S_2'\cap S_p^\circ &,\;2=p|s
\end{cases}\;\;\;\;= \begin{cases}
	S_p &,\;2\neq p\nmid s\\
	S_2' &,\;2=p\nmid s\\
	S_p^\circ &,\;2\neq p|s\\
	\emptyset &,\;2=p|s
\end{cases}\;\;\;\;=\Sigma_{s,p}'.$$
\end{proof}

\begin{proposition}
\label{Prop: R3 form to interface Bhargava}
Let $s\in\NNN$. Then
$$\lim_{X\to\infty}\frac1X \left(\sum_{\substack{1\neq n\;\textnormal{odd}\\(n,s)=1}}^{n\le X}h_3(n^*) +\sum_{\substack{n \;\textnormal{even}\\(n,s)=1}}^{n\le X}h_3(4n^*)\right) =\frac{\gamma_s}3,$$
where
$$\gamma_s=\prod_{p|s}(1-p^{-1})\prod_{p\nmid s}(1-p^{-2}).$$
\end{proposition}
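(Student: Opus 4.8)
The plan is to convert the two cubic-discriminant sums into a single sum of $3$-torsion orders of the class groups $Cl(\L_n)$, and then apply the Davenport--Heilbronn theorem in its form with congruence conditions.

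\emph{Step 1: rewriting the sum.} Since $\Disc(\L_n)=n^*$ for odd $n\in\NNN$ and $\Disc(\L_n)=4n^*$ for even $n\in\NNN$, the quantity inside the limit is $\tfrac1X\sum_{n\in\NNs,\,1\neq n\le X}h_3(\Disc(\L_n))$. By \cref{Prop: computation of RGB for n>1} — whose proof realises $h_3(\Disc(\L_n))$, via \cref{Prop: cubic with quadratic discriminant are maximal orders} and class field theory, as the number of index-$3$ subgroups of $Cl(\L_n)$ (cyclic cubic fields, having square discriminant, never occur) — one has $h_3(\Disc(\L_n))=\tfrac12\bigl(|Cl(\L_n)\otimes_\ZZ\FF_3|-1\bigr)$ for every $1<n\in\NNN$. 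Hence
$$\sum_{\substack{n\in\NNs\\1\neq n\le X}}h_3(\Disc(\L_n))=\tfrac12\Bigl(\sum_{\substack{n\in\NNs\\1\neq n\le X}}|Cl(\L_n)\otimes_\ZZ\FF_3|\ -\ \#\{\,n\in\NNs:1\neq n\le X\,\}\Bigr).$$

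\emph{Step 2: signatures and elementary densities.} Split $\NNs=\NNs^{+}\sqcup\NNs^{-}$ according to the sign of $n^*$, i.e.\ according to whether $\L_n$ is real or imaginary; this is a union of congruence classes of $n$ modulo $8$ (the four cases appearing in \cref{Prop: computation of RGB for n>1}). The inclusion--exclusion over divisors of $s$ used in \cref{Lma: basic s asymptotics}, together with the equidistribution of square-free integers among the residues mod $8$ that contain square-free integers, shows that $\NNs^{+}$ and $\NNs^{-}$ each have density $\tfrac12\gamma_s$ in $[1,X]$; also $\#\{n\in\NNs:1\neq n\le X\}\sim\gamma_s X$. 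Moreover $\{\Disc(\L_n):n\in\NNs,\ n\neq 1\}$ is precisely the set of fundamental discriminants $\neq 1$ coprime to $s$ that are either odd or $\equiv 8\bmod 32$, each arising from a unique $n$; this is a family of quadratic fields cut out by finitely many congruence conditions on the discriminant (at $2$ and at the primes dividing $s$).

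\emph{Step 3: Davenport--Heilbronn and conclusion.} The key input is the Davenport--Heilbronn theorem with congruence conditions (\cite{ARTICLE:DavenportHeilbronn1971}, in the sharpened form of \cite{WEBSITE:BhargavaShankTsim2012}): for any positive-density family $\mathcal F$ of quadratic fields defined by congruence conditions on the discriminant, the average of $|Cl(K)\otimes_\ZZ\FF_3|$ over $K\in\mathcal F$ with $|\Disc(K)|\le Z$ converges, as $Z\to\infty$, to $\tfrac43$ if $\mathcal F$ consists of real fields and to $2$ if $\mathcal F$ consists of imaginary fields. Applying this to the real and imaginary parts of our family — and noting only that "$n\le X$" corresponds to $|\Disc(\L_n)|\le X$ for odd $n$ but to $|\Disc(\L_n)|\le 4X$ for even $n$, an innocuous rescaling of the counting parameter — gives
$$\sum_{\substack{n\in\NNs^{-}\\1\neq n\le X}}|Cl(\L_n)\otimes\FF_3|\sim 2\cdot\tfrac12\gamma_s X,\qquad \sum_{\substack{n\in\NNs^{+}\\1\neq n\le X}}|Cl(\L_n)\otimes\FF_3|\sim \tfrac43\cdot\tfrac12\gamma_s X,$$
so the total is $\tfrac53\gamma_s X+o(X)$. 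Substituting into Step 1, $\sum_{n\in\NNs,\,1\neq n\le X}h_3(\Disc(\L_n))=\tfrac12\bigl(\tfrac53\gamma_s X-\gamma_s X\bigr)+o(X)=\tfrac{\gamma_s}{3}X+o(X)$, and dividing by $X$ and letting $X\to\infty$ yields the stated limit. The main obstacle is precisely the uniformity of the Davenport--Heilbronn averages over this particular family: one must know that imposing coprimality to $s$ and the somewhat intricate local condition at $2$ (separating $\Disc=n^*$ from $\Disc=4n^*=8m^*$) leaves the limiting averages $\tfrac43$ and $2$ unchanged. This is what the local-conditions refinement in \cite{WEBSITE:BhargavaShankTsim2012} supplies; alternatively one may route the argument through \cref{Prop: R3 form to local conditions} and Bhargava's count of cubic fields with prescribed local specifications, at the cost of a longer Euler-product evaluation. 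The remaining points — that $\NNs^{\pm}$ each carry density $\tfrac12\gamma_s$, and the factor $4$ between the normalisations $n\le X$ and $|\Disc|\le X$ — are routine.
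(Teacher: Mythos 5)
Your argument is correct, but it routes through the Davenport--Heilbronn theorem differently than the paper does. The paper never passes back through class groups at this point: it characterizes the cubic orders counted by $h_3(n^*)$ and $h_3(4n^*)$ purely by local conditions at every prime (\cref{Prop: R3 form to local conditions}, with the delicate case $p=2$ handled by \cref{Lma: QQ2 quadratics}), then applies Theorem 8 of \cite{WEBSITE:BhargavaShankTsim2012} and evaluates the resulting Euler product $C(\Sigma_{s,\infty})\prod_p C(\Sigma_{s,p}) + 4\,C(\Sigma'_{s,\infty})\prod_p C(\Sigma'_{s,p})$ explicitly, obtaining $\gamma_s/3$ directly. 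You instead undo the identity $h_3(\Disc(\L_n))=\tfrac12(|Cl(\L_n)\otimes_\ZZ\FF_3|-1)$ from \cref{Prop: computation of RGB for n>1}, split by signature, and invoke the ``averages $4/3$ and $2$ are unchanged by local conditions on the discriminant'' form of Davenport--Heilbronn. Both arguments rest on the same input from \cite{WEBSITE:BhargavaShankTsim2012}; yours is shorter on the page because the Euler-product evaluation is absorbed into the cited black box, while the paper's explicit local computation is precisely what certifies that the particular conditions here (coprimality to $s$ together with the condition at $2$ isolating $\Disc=n^*$ versus $\Disc=4n^*=8m^*$) are admissible and contribute the factors $(1-p^{-1})$ and $(1-p^{-2})$ that assemble into $\gamma_s$. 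Your handling of the two normalizations ($n\le X$ versus $|\Disc|\le 4X$ for even $n$) and of the densities $\tfrac12\gamma_s$ of $\NNs^{\pm}$ is sound, since the limiting averages are the same constants for each sub-family regardless of the cutoff; just be aware that these elementary density claims, which you rightly call routine, do need the equidistribution of squarefree $s$-coprime integers among the relevant residue classes mod $8$.
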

\begin{proof}
Let $X\ge 1$. By \cref{Prop: R3 form to local conditions}, the sum
$$ \sum_{\substack{1\neq n\;\textnormal{odd}\\(n,s)=1}}^{n\le X}h_3(n^*)$$
counts cubic orders $R/\ZZ$ for which $\ZZ_p\otimes R\in\Sigma_{s,p}$ for all primes $p$, and
$$|\Disc(R)|=|n^*|=n\le X.$$
Likewise, the sum
$$ \sum_{\substack{n\;\textnormal{even}\\(n,s)=1}}^{n\le X}h_3(4n^*)$$
counts cubic orders $R/\ZZ$ for which $\ZZ_p\otimes R\in\Sigma_{s,p}'$ for all primes $p$, and
$$|\Disc(R)|=|4n^*|=4n\le 4X.$$
By Theorem 8 in \cite{WEBSITE:BhargavaShankTsim2012}, 
$$\lim_{X\to\infty}\frac1X \sum_{\substack{1\neq n\;\textnormal{odd}\\(n,s)=1}}^{n\le X}h_3(n^*) =C(\Sigma_{s,\infty})\prod_pC(\Sigma_{s,p}),$$
$$... \lim_{X\to\infty}\frac1{4X} \sum_{\substack{n\;\textnormal{even}\\(n,s)=1}}^{n\le X}h_3(4n^*) = C(\Sigma_{s,\infty}')\prod_pC(\Sigma_{s,p}'),$$
where for $\widetilde S_p$ a set of isomorphism classes of cubic $\ZZ_p$-algebras, $C(\widetilde S_p)$ is a local density factor defined as
$$C(\widetilde S_p)=(1-p^{-1})\sum_{R\in \widetilde S_p}\frac1{\Disc_p(R)}\cdot\frac1{|\Aut(R)|},$$
and for $\widetilde S_\infty$ a set of cubic $\RR$-algebras,
$$C(\widetilde S_\infty)=\frac12\sum_{R\in \widetilde S_\infty}\frac1{|\Aut(R)|}.$$
Hence
$$\lim_{X\to\infty}\frac1X \left(\sum_{\substack{1\neq n\;\textnormal{odd}\\(n,s)=1}}^{n\le X}h_3(n^*) +\sum_{\substack{n \;\textnormal{even}\\(n,s)=1}}^{n\le X}h_3(4n^*)\right) =$$
$$= C(\Sigma_{s,\infty})\prod_pC(\Sigma_{s,p}) +4 C(\Sigma_{s,\infty}')\prod_pC(\Sigma_{s,p}').$$
For all every prime $p\neq 2$, $\Sigma_{s,p}=\Sigma_{s,p}'$ and
$$\sum_{R\in\Sigma_{s,p}}\frac1{\Disc_p(R)}\cdot \frac1{|\Aut(R)|}=\begin{cases}
	\frac16+ \frac12+ \frac13=1&,\;p|s\\
	\frac16+ \frac12+ \frac13+\frac1p\cdot\frac12+\frac1p\cdot\frac12=1+p^{-1}
	&,\;p\nmid s
\end{cases}\;\;.$$
Hence
$$C(\Sigma_{s,p})= C(\Sigma_{s,p}')= (1-p^{-1}) \sum_{R\in\Sigma_{s,p}}\frac1{\Disc_p(R)}\cdot \frac1{|\Aut(R)|}=\begin{cases}
1-p^{-1}&,\;p|s\\
1-p^{-2}&,\;p\nmid s
\end{cases}\;\;.$$
For the case at $\infty$,
$$C(\Sigma_{s,\infty})= C(\Sigma_{s,\infty}')= \frac12(\frac16+\frac12)=\frac13.$$
For $p=2$, we compute $C(\Sigma_{s,2})+4C(\Sigma_{s,2}')$. If $s$ is even, then
$$C(\Sigma_{s,2})+4C(\Sigma_{s,2}')=C(\Sigma_{s,2})=\frac12=1-2^{-1}$$
if $s$ is odd, then
$$C(\Sigma_{s,2})+4C(\Sigma_{s,2}')=\frac12+4\cdot\frac12\left(\frac18\cdot\frac12+ \frac18\cdot\frac12\right)=\frac34=1-2^{-2}.$$
Hence regardless of parity of $s$, we have
$$\lim_{X\to\infty}\frac1X \left(\sum_{\substack{1\neq n\;\textnormal{odd}\\(n,s)=1}}^{n\le X}h_3(n^*) +\sum_{\substack{n \;\textnormal{even}\\(n,s)=1}}^{n\le X}h_3(4n^*)\right) =$$
$$=C(\Sigma_{s,\infty})\prod_pC(\Sigma_{s,p}) +4 C(\Sigma_{s,\infty}')\prod_pC(\Sigma_{s,p}') =$$
$$=\frac13\prod_{p|s}(1-p^{-1})\prod_{p\nmid s}(1-p^{-2})=\frac{\gamma_s}3.$$

\end{proof}

\begin{proposition}
\label{Prop: Main Conj for RGB}
	The function $\col_{\RGB}$ has generic summatory type
	$$\col_{\RGB}\in\GenType 05.$$
\end{proposition}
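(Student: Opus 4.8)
The plan is to assemble the result from the arithmetic interpretation of $\col_{\RGB}$ in \cref{Prop: computation of RGB for n>1} together with the counting asymptotics in \cref{Prop: R3 form to interface Bhargava}. First, recall that for $1<n\in\NNN$ we have $\col_{\RGB}(n)=3+6h_3(\Disc(\L_n))$, where $\Disc(\L_n)=n^*$ for odd $n$ and $=4n^*$ for even $n$, while $\col_{\RGB}(1)=1$. Fix $s\in\NNN$. Splitting the summatory function according to this formula gives
$$
\counter_{\RGB,s}(X)=\sum_{n\in\NNs\cap[1,X]}\col_{\RGB}(n)
= 3\bigl|\NNs\cap[1,X]\bigr| + 6\!\!\sum_{\substack{1\neq n\;\mathrm{odd}\\(n,s)=1}}^{n\le X}\!\! h_3(n^*) + 6\!\!\sum_{\substack{n\;\mathrm{even}\\(n,s)=1}}^{n\le X}\!\!h_3(4n^*) + O(1),
$$
the $O(1)$ absorbing the discrepancy at $n=1$. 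By \cref{Lma: basic s asymptotics}, $\bigl|\NNs\cap[1,X]\bigr|=\gamma_s X(1+o(1))$, and by \cref{Prop: R3 form to interface Bhargava} the two $h_3$-sums together contribute $\tfrac{\gamma_s}{3}X(1+o(1))$. Hence
$$
\lim_{X\to\infty}\frac{\counter_{\RGB,s}(X)}{X} = 3\gamma_s + 6\cdot\frac{\gamma_s}{3} = 5\gamma_s.
$$

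Next I translate this into the language of \cref{Def: gen sum type}. Since $\deg P_{\RGB}=0$, the claim is that $\col_{\RGB}\in\GenType{0}{5}$, i.e. that for every $s\in\NNN$ the limit $c_s(\RGB)=\lim_{X\to\infty}\avg_{\RGB,s}(X)/(\gamma_s\log X)^0 = \lim_{X\to\infty}\avg_{\RGB,s}(X)$ exists, is nonzero, and that $\lim_{s\in\NNN}c_s(\RGB)=5$. From the computation above,
$$
c_s(\RGB)=\lim_{X\to\infty}\frac{\counter_{\RGB,s}(X)}{|\NNs\cap[1,X]|} = \lim_{X\to\infty}\frac{\counter_{\RGB,s}(X)}{X}\cdot\frac{X}{|\NNs\cap[1,X]|} = \frac{5\gamma_s}{\gamma_s}=5.
$$
In particular $c_s(\RGB)=5$ is independent of $s$, so the limit over $s\in\NNN$ is trivially $5$, and $5>0$. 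This is exactly the assertion $\col_{\RGB}\in\GenType{0}{5}$.

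The main obstacle is not in the final bookkeeping but in invoking \cref{Prop: R3 form to interface Bhargava} correctly: one must be careful that the $s$-coprimality conditions, the handling of the prime $2$ (the split between $\Sigma_{s,2}$ and $\Sigma'_{s,2}$, and the factor $4$ coming from $|\Disc(R)|=4n\le 4X$ in the even case), and the restriction to $n\neq 1$ in the odd sum all line up so that the two families of cubic orders counted are exactly $\bigcup_{n\ \mathrm{odd}}H_3(n^*)$ and $\bigcup_{n\ \mathrm{even}}H_3(4n^*)$ with $n\in\NNs$. That verification, however, is precisely what \cref{Prop: R3 form to local conditions} and \cref{Prop: R3 form to interface Bhargava} already carry out, so here it suffices to cite them. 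The remaining steps — substituting $\col_{\RGB}(n)=3+6h_3(\Disc(\L_n))$, pulling out the constant term via \cref{Lma: basic s asymptotics}, and dividing — are routine.
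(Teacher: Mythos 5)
Your proposal is correct and follows essentially the same route as the paper's own proof: substitute the formula $\col_{\RGB}(n)=3+6h_3(\Disc(\L_n))$ from \cref{Prop: computation of RGB for n>1}, apply \cref{Prop: R3 form to interface Bhargava} to the two $h_3$-sums and \cref{Lma: basic s asymptotics} to the constant term, obtaining $5\gamma_s X(1+o(1))$ and hence $c_s(\RGB)=5$ for every $s$. The only cosmetic difference is that you absorb the $n=1$ discrepancy into an $O(1)$ where the paper tracks the constant $-2$ explicitly; this changes nothing.
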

\begin{proof}
	Let $s\in\NNN$. 
	For $n=1$ we have $\kei_n=\emptyset$, therefore
	$$\col_{\RGB}(n)=|\Hom_\Kei(\emptyset, \RGB)|=1.$$
	For all $X\ge 1$ we have
	$$\counter_{\RGB,s}(X)=1+\sum_{(n,s)=1}^{2\le n\le X}\col_{\RGB}(n)=1+ \sum_{\substack{1\neq n\;\textnormal{odd}\\(n,s)=1}}^{n\le X}(3+6h_3(n^*)) +\sum_{\substack{n \;\textnormal{even}\\(n,s)=1}}^{n\le X}(3+6h_3(4n^*))=$$
	$$-2+3\sum_{1\le n\le X}\mu_s^2(n)+6\left( \sum_{\substack{1\neq n\;\textnormal{odd}\\(n,s)=1}}^{n\le X}h_3(n^*) +\sum_{\substack{n \;\textnormal{even}\\(n,s)=1}}^{n\le X} h_3(4n^*)\right).$$
	Therefore
	$$\lim_{X\to\infty}\frac{\counter_{\RGB,s}(X)}X=$$
	$$=3 \lim_{X\to\infty}\frac{\counter_{\triv_1,s}(X)}X+6\lim_{X\to\infty}\left( \sum_{\substack{1\neq n\;\textnormal{odd}\\(n,s)=1}}^{n\le X}h_3(n^*) +\sum_{\substack{n \;\textnormal{even}\\(n,s)=1}}^{n\le X} h_3(4n^*)\right)=$$
	$$=3\gamma_s+6\cdot \frac{\gamma_s}3=5\gamma_s.$$
We find that
$$c_s(\RGB)= \lim_{X\to\infty}\frac{\avg_{\RGB,s}(X)}{\log ^0(X)} =\lim_{X\to\infty}\frac{\counter_{\RGB,s}(X)}{\gamma_sX}=5$$
for all $s\in\NNN$, and subseuqently
$$\lim_{s\in\NNN}c_s(\RGB)=5.$$
Hence
$$\col_{\RGB}\in\GenType 05.$$

\end{proof}

\bibliography{Ariel_Davis_bibliography.bib}
\bibliographystyle{alpha}
\end{document}